\newif\ifproofs\proofsfalse\ifproofs\RequirePackage[mathlines]{lineno}\fi
\newif\ifsubsections
    \definecolor{linkred}{rgb}{0.7,0.2,0.2}
    \definecolor{linkblue}{rgb}{0,0.2,0.6}
    \definecolor{linkred}{cmyk}{0.0,0.0,0.0,1.0}
    \definecolor{linkblue}{cmyk}{0,0.0,0.0,1.0}
\theoremstyle{plain}
\newtheorem{lemma}[equation]{Lemma}
\newtheorem{proposition}[equation]{Proposition}
\newtheorem{corollary}[equation]{Corollary}
\newtheorem{theorem}[equation]{Theorem}
\theoremstyle{definition}
\newtheorem{exercise}[equation]{Exercise}
\newtheorem{definition}[equation]{Definition}
\newtheorem{remark}[equation]{Remark}
\newtheorem{example}[equation]{Example}
\newtheorem{Algorithm}[equation]{Algorithm}
\newtheorem{method}[equation]{Method}
\newlist{enumex}{enumerate}{1}
\setlist[enumex,1]{label=\textbf{\alph*.},leftmargin=*,nosep}
\newcommand{\N}		{\ensuremath{\mathbb{N}}}
\newcommand{\Z}		{\ensuremath{\mathbb{Z}}}
\newcommand{\Q}		{\ensuremath{\mathbb{Q}}}
\newcommand{\R}		{\ensuremath{\mathbb{R}}}
\newcommand{\C}		{\ensuremath{\mathbb{C}}}
\newcommand{\F}		{\ensuremath{\mathbb{F}}}
\newcommand\tensor\otimes
\newcommand{\fm}		{\ensuremath{\mathfrak{m}}}
\newcommand{\fn}		{\ensuremath{\mathfrak{n}}}
\newcommand{\fp}		{\ensuremath{\mathfrak{p}}}
\newcommand{\fq}		{\ensuremath{\mathfrak{q}}}
\newcommand{\fb}	{\mathfrak{b}}
\newcommand{\genlegendre}[4]{%
  \genfrac{(}{)}{}{#1}{#3}{#4}%
  \if\relax\detokenize{#2}\relax\else_{\!#2}\fi%
}
\newcommand{\leg}[3][]{\genlegendre{}{#1}{#2}{#3}}
\DeclareMathOperator{\Hom}{Hom}
\DeclareMathOperator{\Aut}{Aut}
\DeclareMathOperator{\End}{End}
\DeclareMathOperator{\rad}{rad}
\DeclareMathOperator{\Mat}{Mat}
\DeclareMathOperator{\Tr}{Tr}
\DeclareMathOperator{\Trad}{Trad}
\DeclareMathOperator{\coker}{coker}
\DeclareMathOperator{\im}{im}
\DeclareMathOperator{\ord}{ord}
\DeclareMathOperator{\sgn}{sgn}
\DeclareMathOperator{\id}{id}
\DeclareMathOperator{\rk}{rk}
\DeclareMathOperator{\nil}{nil}
\tikzset{choose/.style={"\bullet",outer sep=-.9ex}}
\def\tikzcd@sep#1#2#3{
  \pgfkeysifdefined{/tikz/commutative diagrams/#1 sep/#2}%
    {\pgfkeysalso{/tikz/#1 sep={\ifx\\#3\\1*\else1.7*\fi\pgfkeysvalueof{/tikz/commutative diagrams/#1 sep/#2},#3}}}%
    {\pgfkeysalso{/tikz/#1 sep={#2,#3}}}}
\newcommand*{\qedclaim}{\null\nobreak\hfill\ensuremath{\blacksquare}}
\newcommand{\mycomment}[1]{%
}%
\begin{document}

%
%
%
%
%
%

\title[Polynomial-time algorithms in algebraic number theory]{Polynomial-time algorithms in algebraic number theory\\\vspace{.3em}\textmd{\large Based on lectures by Hendrik Lenstra}}

%
%
\author{Dani\"el M. H. van Gent}
\address{Universiteit Leiden, Niels Bohrweg 1, 2333 CA Leiden}
\email{d.m.h.van.gent@math.leidenuniv.nl}
%
%

\begin{abstract}
There are many problems in algebraic number theory which one would like to solve algorithmically, for example computation of the maximal order \(\mathcal{O}\) of a number field, and the many problems that are most often stated only for \(\mathcal{O}\), such as inverting ideals and unit computations. However, there is no known fast, i.e.\ polynomial-time, algorithm to compute \(\mathcal{O}\), which we motivate by a reduction to elementary number theory. We will instead restrict to polynomial-time algorithms, and work around this inaccessibility of \(\mathcal{O}\).
\end{abstract}  

%
%
\maketitle

%
%

\setcounter{tocdepth}{3}
\tableofcontents

\section{Introduction}

These notes are based on lectures by Hendrik Lenstra at the PCMI summer school 2022.

In these notes we study methods for solving algebraic computational problems, mainly those involving number rings, which are fast in a mathematically precise sense.
Our motivation is to prove the following theorem. We will however prove something stronger.

\begin{theorem}\label{thm:main_intro}
There exists a polynomial-time algorithm that, given a number field \(K\), non-zero elements \(\alpha_1,\dotsc,\alpha_m\in K\) and \(n_1,\dotsc,n_m\in\Z\), decides whether \(\prod_i \alpha_i^{n_i}=1\).
\end{theorem}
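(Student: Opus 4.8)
The plan is to avoid ever forming the element \(\beta:=\prod_i\alpha_i^{n_i}\) — whose Weil height is of order \(\sum_i|n_i|h(\alpha_i)\), hence exponential in the input size — and instead to compute the \emph{relation lattice}
\[
  \Lambda=\bigl\{(x_i)\in\Z^m:\ \prod_i\alpha_i^{x_i}=1\bigr\},
\]
which depends only on \(K\) and the \(\alpha_i\), not on the \(n_i\); then ``\(\prod_i\alpha_i^{n_i}=1\)'' is the linear-algebra question ``\((n_i)\in\Lambda\)''. I would build \(\Lambda\) through the chain \(\Lambda\subseteq\Lambda_\mu\subseteq\Lambda_{\mathrm u}\subseteq\Z^m\), where \(\Lambda_{\mathrm u}\) (resp.\ \(\Lambda_\mu\)) consists of the \((x_i)\) for which \(\prod_i\alpha_i^{x_i}\) is a unit of \(\mathcal O\) (resp.\ a root of unity of \(K\)); the point of passing through these is that at each step one may re-choose generators so that the algebraic numbers actually handled have \emph{polynomial} height. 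As preprocessing I also compute the finite cyclic group \(\mu_K\) of roots of unity of \(K\) — its order and a generator — which is routine.

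Computing \(\Lambda_{\mathrm u}\) is where I expect the real difficulty to lie, and it is exactly the issue the earlier sections are built to address. Only the finitely many finite places \(v\) of \(K\) with \(v(\alpha_i)\neq 0\) for some \(i\) are relevant, and \(\Lambda_{\mathrm u}\) is then the integer solution lattice of the linear system \(\sum_i x_i\,v(\alpha_i)=0\) ranging over those \(v\); since the \(v(\alpha_i)\) are integers of polynomial size and there are polynomially many relevant \(v\), the coefficient matrix is polynomial-size and a \(\Z\)-basis of its kernel is produced in polynomial time by Hermite normal form. The obstruction is getting that matrix: the relevant places lie over a product of integers (numerators, denominators, and norms of the \(\alpha_i\)) that we cannot factor, and we have no access to the maximal order \(\mathcal O\). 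Here I would invoke the machinery developed above — replace prime factorisations by coprime bases, handle the small primes directly (computing \(p\)-maximal orders where \(\Z[\theta]\), for \(K=\Q(\theta)\), fails to be maximal), and argue that the coprime-base parts one cannot split off are unramified over \(\Z[\theta]\) and hence contribute only genuine, readily computed valuations. Checking \((n_i)\in\Lambda_{\mathrm u}\) disposes of the finite places; if it fails, answer ``no''.

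Next I pass from \(\Lambda_{\mathrm u}\) to \(\Lambda_\mu\). Fixing a \(\Z\)-basis \(v_1,\dots,v_t\) of \(\Lambda_{\mathrm u}\) with entries of polynomial size, the units \(\gamma_l:=\prod_i\alpha_i^{v_{l,i}}\in\mathcal O^\times\) have polynomial height, and \(\Lambda_\mu\) pulls back to \(\{(y_l)\in\Z^t:\ \sum_l y_l\log|\sigma(\gamma_l)|=0\text{ for every embedding }\sigma\colon K\to\C\}\), equivalently (Kronecker, as the \(\gamma_l\) are units) to \(\{(y_l):\ \prod_l\gamma_l^{y_l}\in\mu_K\}\). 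I compute this lattice by reducing, via LLL, a lattice built from the numerically evaluated real numbers \(\log|\sigma(\gamma_l)|\). Polynomially many bits of precision suffice: the relation lattice of algebraic numbers of polynomial height is generated by vectors of polynomial size (geometry of numbers together with a Blanksby--Montgomery / Dobrowolski-type lower bound for the height of a non-torsion algebraic number), while a candidate \((y_l)\) of polynomial size that is \emph{not} a relation yields \(\delta:=\prod_l\gamma_l^{y_l}\) of polynomial height with \(|\sigma(\delta)|\neq 1\) for some \(\sigma\), whence \(\bigl|\,|\sigma(\delta)|^2-1\,\bigr|\ge\exp(-\mathrm{poly})\) by a Liouville estimate (\(|\sigma(\delta)|^2\) being a nonzero algebraic number distinct from \(1\), of polynomial height and bounded degree). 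Thus no appeal to Baker's theorem on linear forms in logarithms is needed. If \((n_i)\notin\Lambda_\mu\), answer ``no''.

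Finally I pass from \(\Lambda_\mu\) to \(\Lambda\). Fixing a polynomial-size \(\Z\)-basis of \(\Lambda_\mu\), each corresponding element \(\omega_k:=\prod_i\alpha_i^{(\cdot)_i}\in\mu_K\) is a root of unity of polynomial height, which can actually be written down (boundedly many multiplications of polynomial-size elements) and identified as a power of the fixed generator of \(\mu_K\) by comparison against the at most \(\#\mu_K\) — a polynomially bounded number — candidates. Then \(\Lambda\) pulls back to the relation lattice of the \(\omega_k\) inside the explicit cyclic group \(\mu_K\), a routine computation with integers modulo \(\#\mu_K\). Having \(\Lambda\), I output whether \((n_i)\in\Lambda\). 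To summarise: the one genuinely hard ingredient is the finite-place computation of \(\Lambda_{\mathrm u}\) — performing the requisite local arithmetic without a factorisation of the integers involved and without the maximal order — whereas the archimedean step trades transcendence theory for geometry of numbers plus a Liouville inequality, and the root-of-unity step is elementary because \(\mu_K\) is small.
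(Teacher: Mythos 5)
Your global decomposition $\Lambda\subseteq\Lambda_\mu\subseteq\Lambda_{\mathrm u}\subseteq\Z^m$ is the right one and matches the structure of the paper's proof (Corollary~\ref{cor:compute_unit_product_elements} computes what you call $\Lambda_{\mathrm u}$, Theorem~\ref{thm:compute_trivial_product_elements} finishes the job). The trouble is in exactly the step you flag as ``where I expect the real difficulty to lie.''

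Your plan for $\Lambda_{\mathrm u}$ --- set up a valuation matrix, get the valuations by replacing factorization with coprime bases, clean up small primes with $p$-maximal orders, and ``argue that the coprime-base parts one cannot split off are unramified over $\Z[\theta]$ and hence contribute only genuine, readily computed valuations'' --- has a genuine gap. Nothing in the coprime-basis construction forces the leftover parts to be unramified, or $\Z[\theta]$ to be maximal there: a large prime $p$ with $p^2\mid\Delta(\Z[\theta])$ can live inside a coprime-base element you cannot split, and Theorem~\ref{thm:equivalent_problems} says that resolving such cases in general is as hard as computing radicals. What the paper actually does is different and sidesteps this entirely: it never needs $\mathcal{O}_K$ or valuations at individual primes. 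Given the fractional ideals $\alpha_iR$, it uses the \emph{blowup} algorithm (Dade--Taussky--Zassenhaus, Theorem~\ref{thm:blowup}, Corollary~\ref{cor:compute_blowup}) to produce a polynomially bounded intermediate order $S$ over which the relevant ideals become invertible, then the coprime-basis algorithm \emph{for ideals} (Theorem~\ref{thm:coprime_base_ideals}) to write each $S\alpha_i$ as a power-product of pairwise-coprime invertible $S$-ideals $\mathfrak{c}_j$, not necessarily primes. Since coprime invertible ideals map injectively to $\mathcal{I}(\mathcal{O}_K)$, the kernel is found by linear algebra on the exponent matrix $(n_{ij}-d_{ij})$ (Theorem~\ref{thm:compute_unit_product_ideals}). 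The blowup is the piece your proposal is missing, and it is the reason one never needs the ``unramified'' claim.

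The archimedean side you describe is valid but takes a slightly longer route than the paper. You go in two stages --- first $\Lambda_\mu$ via the real logarithms $\log|\sigma(\gamma_l)|$ plus Kronecker's theorem, then $\Lambda$ via explicit root-of-unity arithmetic --- and you invoke Dobrowolski/Blanksby--Montgomery and a Liouville inequality to bound the precision. The paper does it in one shot by working with the full complex logarithm and forming the lattice $\Lambda_H\subseteq\C^{X}$ that already includes the translates $2\pi\textup{i}\,\Z^{X}$, so roots of unity never need to be handled separately; and the required lower bound is the completely elementary $\lambda_1(\Lambda_H)\geq(\log 2)^2$, which comes from the norm argument: if $\varepsilon\in\mathcal{O}_K$ and $|\sigma(\varepsilon)-1|<1$ for every $\sigma$, then $|N(\varepsilon-1)|<1$ forces $\varepsilon=1$. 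Your Liouville estimate is in the same spirit, but the uniform $\log 2$ bound is cleaner and avoids any appeal to Mahler-measure lower bounds; Dobrowolski is unnecessary. Both versions of the archimedean step would work once a $\Z$-basis of $\Lambda_{\mathrm u}$ of polynomial size is in hand --- which is exactly what the blowup/ideal-coprime-basis machinery supplies.
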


As we will see in Section~\ref{sec:coprime}, even for \(K=\Q\) this problem is not entirely trivial.

In Section~\ref{sec:abgp} we work up from basic operations such as addition and multiplication of integers to computation in finitely generated abelian groups. 
Here we encounter lattices and the LLL-algorithm. 
Then, in Section~\ref{sec:fractional} we give a polynomial-time algorithm to compute the blowup of an ideal. This allows us to generalize the proofs in Section~\ref{sec:coprime} to decide whether \(\prod_i \alpha_i^{n_i}\) is a unit in the maximal order of \(K\).
The final step in the proof of Theorem~\ref{thm:main_intro} brings us back to a lattice algorithm.

In Section~\ref{sec:rings} we exhibit some algorithmic number-theoretic problems which are deemed to be hard. We support this assertion by a reduction to the prime factorization problem. In Section~\ref{sec:symbols} we give algorithms to compute Jacobi symbols, which are of independent interest.

\subsection{Algorithms}

To be able to talk about polynomial-time algorithms, we should first define what an algorithm is. 
We equip the set of natural numbers \(\N=\Z_{\geq 0}\) with a length function \(l:\N\to\N\) that sends \(n\) to the number of digits of \(n\) in base 2, with \(l(0)=1\). 

\begin{definition}\label{def:algorithm}
A \emph{problem} is a function \(f:I\to\N\) for some set of inputs \(I\subseteq \N\) and we call \(f\) a \emph{decision problem} if \(f(I)\subseteq\{0,1\}\).
An \emph{algorithm} for a problem \(f:I\to\N\) is a `non-probabilistic method' to compute \(f(x)\) for all \(x\in I\).
An algorithm for \(f\) is said to run in \emph{polynomial time} if there exist \(c_0,c_1,c_2\in \R_{>0}\) such that for all \(x\in I\) the time required to compute \(f(x)\), called the \emph{run-time}, is at most \((c_0+c_1 l(x))^{c_2}\).
We say a problem \(f\) is \emph{computable} if there exists an algorithm for \(f\).
\end{definition}

This definition is rather empty: we have not specified what a `non-probabilistic method' is, nor have we explained how to measure run-time. 
We will briefly treat this more formally.
The reader for whom the above definition is sufficient can freely skip the following paragraph.
The main conclusion is that we will not heavily rely on the formal definition of run-time in these notes.

In these notes, the word algorithm will be synonymous with the word \emph{Turing machine}.
For an extensive treatment of Turing machines, see Section 8.2 in \cite{Turing}.
A Turing machine is a model of computation described by Alan Turing in 1936 that defines an abstract machine which we these days think of as a computer. The main differences between a Turing machine and a modern day computer is that the memory of a Turing machine is a tape as opposed to random-access memory, and that a Turing machine has infinite memory.
The run-time of a Turing machine is then measured as the number of elementary tape operations: reading a symbol on the tape, writing a symbol on the tape and moving the tape one unit forward or backward.
It is then immediately clear that it is expensive for a Turing machine to move the tape around much to look up data, as opposed to the random-access memory model where the cost of a memory lookup is constant, regardless of where the data are stored in memory.
This also poses a problem for our formal treatment of run-time, as it may depend on our model of computation. 
However, both models of computation are able to emulate each other in such a way that it preserves the property of computability in polynomial time, even though the constants \(c_0\), \(c_1\) and \(c_2\) as in Definition~\ref{def:algorithm} may increase drastically.
We use this as an excuse to be informal in these notes about determining the run-time of an algorithm.

\subsection{Basic computations}\label{sec:basic_comp}

In these notes we build up our algorithms from basic building blocks. 
First and foremost, we remark that the basic operations in \(\Z\) and \(\Q\) are fast.
Addition, subtraction, multiplication and division (with remainder in the case of \(\Z\)) can be done in polynomial time, as well as checking the sign of a number and whether numbers are equal.
We assume here that we represent a rational number by a pair of integers, a numerator and a denominator.
We may even assume the numerator and denominator are coprime:
Given \(a,b\in\Z\) we can compute their greatest common divisor \(\gcd(a,b)\) and solve the B\'ezout equation \(ax+by=\gcd(a,b)\) for some \(x,y\in\Z\) using the extended Euclidean algorithm (see Exercise~\ref{ex:mod_gcd}) in polynomial time. 
Applying these techniques in bulk we can also do addition, subtraction and multiplication of integer and rational matrices in polynomial time.
Least trivially of our building blocks, using the theory of lattices we can compute bases for the kernel and the image of an integer matrix in polynomial time, which is the topic of Section~\ref{sec:abgp}.

\subsection{Commutative algebra and number theory}

We will go through some basic definitions from commutative algebra and number theory which we assume the reader to be familiar with in the coming sections. 
We will use \cite{Atiyah} and \cite{AlgebraicNumberTheory} as a reference. 

\begin{definition}\label{def:basic_ring}
All rings by definition have a multiplicative identity element denoted \(1\). Let \(R\) be a commutative ring. 
\begin{enumerate}
\item A {\em unit} of \(R\) is an \(x\in R\) for which there exists an {\em inverse} \(y\in R\) such that \(xy=1\), and we write \(R^*\) for the group of units of \(R\). 
A {\em field} is a non-zero commutative ring of which every element but \(0\) is a unit.
\item A {\em regular element} of \(R\) is an \(x\in R\) for which multiplication by \(x\) is injective on \(R\).
A {\em domain} is a non-zero commutative ring of which every element but \(0\) is regular.
\item A {\em nilpotent} of \(R\) is an element \(x\in R\) for which there exists an \(n\in\Z_{>0}\) such that \(x^n=0\), and we write \(\textup{nil}(R)\) for the ideal of nilpotents of \(R\). We say \(R\) is {\em reduced} if its only nilpotent is \(0\).
\end{enumerate}
\end{definition}

\begin{definition}\label{def:basic_ideal}
Let \(R\) be a commutative ring. 
We say ideals \(\mathfrak{a}\) and \(\mathfrak{b}\) of \(R\) are {\em coprime} if \(\mathfrak{a}+\mathfrak{b}=R\).
We say an ideal \(\mathfrak{p}\) of \(R\) is {\em prime} if \(R/\mathfrak{p}\) is a domain and say an ideal \(\mathfrak{m}\) of \(R\) is {\em maximal} if \(R/\mathfrak{m}\) is a field.
We say \(R\) is {\em local} if it has precisely one maximal ideal, and say \(R\) is {\em semi-local} if it has only finitely many maximal ideals.
For a prime ideal \(\mathfrak{p}\) of \(R\) we write \(R_\mathfrak{p}\) for the {\em localization of \(R\) at \(\mathfrak{p}\)} (see p.~38 of \cite{Atiyah}). 
Note that if \(R\) is a domain, then the zero ideal \((0)\) is prime, the ring \(R_{(0)}\) is a field and the natural map \(R\to R_{(0)}\) is injective.
In this case we refer to \(R_{(0)}\) as the {\em field of fractions of \(R\)}.
\end{definition}

\begin{definition}\label{def:intro_nf}
A \emph{number field} is a field \(K\) containing the field of rational numbers \(\Q\) such that the dimension of \(K\) over \(\Q\) as a vector space is finite.
A \emph{number ring} is a ring isomorphic to a subring of a number field.
An \emph{order} is a domain whose additive group is isomorphic to \(\Z^n\) for some \(n\in\Z_{\geq 0}\).
Note that some authors do not require an order to be a domain, but simply a commutative ring.
Every order \(R\) is a number ring in the number field \(R_{(0)}\cong \Q\tensor_\Z R\).
Conversely, every number field \(K\) has a {\em maximal order} denoted \(\mathcal{O}_K\) (Theorem~I.1 in \cite{AlgebraicNumberTheory}). 
\end{definition}

\begin{exercise}\label{ex:nilradical_prime_intersection}
Let $R$ be a commutative ring. Show that \(\nil(R)\) is equal to the intersection of all prime ideals of \(R\).
Moreover, show that if \(\nil(R)\) is finitely generated, then \(\nil(R)\) is {\em nilpotent}, i.e.\ \(\nil(R)^n=0\) for some \(n>0\).
\end{exercise}

\begin{exercise}\label{ex:finite_prime_maximal}
Show that any finite commutative domain is a field. 
Conclude that in a general commutative ring prime ideals of finite index are maximal.
\end{exercise}

\begin{exercise}\label{ex:maximal_ideal_coprime}
Let \(R\) be a commutative ring and let \(I_1,\dotsc,I_m,J_1,\dotsc,J_n\subseteq R\) be ideals such that for all \(i, j\) we have \(I_i + J_j = R\). 
Show that \(I_1 \dotsm I_m + J_1 \dotsm J_n = R\). 
Conclude that for any two distinct maximal ideals \(\fm,\fn \subseteq R\) and any \(m,n\in\Z_{\geq 0}\) we have \(\fm^m + \fn^n = R\). Here \(\fm^0=\fn^0=R\).
\end{exercise}

\begin{exercise}[Chinese remainder theorem for ideals]\label{ex:crt}
Let \(R\) be a commutative ring and let \(I_1,\dotsc, I_n\subseteq R\) be pairwise coprime ideals.
Show that \(\bigcap_{i=1}^n I_i = \prod_{i=1}^n I_i\) and prove that the natural homomorphism 
\[  R / \Big( \bigcap_{i=1}^n I_i \Big)  \to  \prod_{i=1}^n (R/I_i) \]
is an isomorphism.
\end{exercise}

\section{Coprime basis factorization}\label{sec:coprime}

In this section we treat the following problem, which will be the motivation for the \emph{coprime basis algorithm}.

\begin{theorem}\label{thm:unit_product_1}
There is a polynomial-time algorithm that on input \(t\in\N\), \(q_1,\dotsc,q_t\in\Q^*\) and \(n_1,\dotsc,n_t\in\Z\) decides whether 
\[\prod_{i=1}^t q_i^{n_i}=1. \tag{2.1}\]
\end{theorem}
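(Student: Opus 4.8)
The plan is to sidestep the naive approach of directly evaluating $\prod_i q_i^{n_i}$, which fails to run in polynomial time because $q_i^{n_i}$ may have on the order of $n_i$ digits, hence length exponential in $l(n_i)$. First I would dispose of the sign: writing $q_i=\varepsilon_i|q_i|$ with $\varepsilon_i\in\{\pm1\}$, the sign of $\prod_i q_i^{n_i}$ equals $(-1)^{e}$ with $e=\sum_{i:\,\varepsilon_i=-1}n_i$, which is computed in polynomial time; if it is $-1$ we answer ``no'', and otherwise it remains to decide whether $\prod_i|q_i|^{n_i}=1$ in $\Q_{>0}$.

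For this I would factor all the $q_i$ over a common \emph{coprime basis}. Writing $q_i=a_i/b_i$ in lowest terms, let $S$ be the set of positive integers $\{|a_i|,|b_i|:1\le i\le t\}$; this set and its entries have polynomial size. The key subroutine — the coprime basis algorithm, which is the subject of the rest of this section — produces in polynomial time pairwise coprime integers $p_1,\dots,p_s>1$ such that every element of $S$ is a product of non-negative powers of the $p_j$, with $s$ and all the resulting exponents polynomially bounded. Dividing out by the $p_j$ repeatedly (which is fast, since the $p_j$ are pairwise coprime) we obtain $|q_i|=\prod_{j=1}^{s}p_j^{e_{ij}}$ with $e_{ij}\in\Z$ of polynomial bit-length.

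Now pairwise coprime integers $>1$ are multiplicatively independent: if $\prod_j p_j^{f_j}=1$ in $\Q$, then taking the $\ell$-adic valuation for a prime $\ell\mid p_{j_0}$ and using $\ell\nmid p_j$ for $j\ne j_0$ gives $f_{j_0}v_\ell(p_{j_0})=0$, hence $f_{j_0}=0$; and every $p_j>1$ has a prime divisor, so all $f_j$ vanish. Therefore $\prod_i|q_i|^{n_i}=\prod_j p_j^{f_j}$ with $f_j=\sum_{i=1}^{t}n_i e_{ij}$, and this equals $1$ if and only if $f_j=0$ for all $j$. Each $f_j$ is a sum of $t$ products of polynomially-sized integers, so all the $f_j$ can be computed and tested for vanishing in polynomial time; together with the sign check this decides (2.1).

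The main obstacle is entirely in the coprime basis subroutine. A naive ``iterated $\gcd$-refinement'' — repeatedly replacing a pair with a common factor by their $\gcd$ together with the cofactors — does terminate in a coprime basis, but one must organize it so that both the running time and the total size of the data generated remain polynomial in the input length; achieving this is precisely what the coprime basis algorithm developed below accomplishes. Everything else above is bookkeeping with integers whose sizes are under control.
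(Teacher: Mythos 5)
Your proposal is correct and follows essentially the same strategy as the paper: dispose of the sign by checking the parity of $\sum_{q_i<0} n_i$, reduce to positive integers, factor them into a pairwise coprime basis via the coprime basis algorithm, and compare exponent vectors using the multiplicative independence of pairwise coprime integers $>1$ (the paper's Lemma~\ref{lem:UCPF}). The only cosmetic difference is that you keep a single product over $\Q_{>0}$ with integer exponents, whereas the paper splits into two products with non-negative exponents (Theorem~\ref{thm:unit_product_2}) before applying the coprime basis.
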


It is clear we can determine whether such a product has the correct sign: Simply take the sum of all \(n_i\) for which \(q_i<0\) and check whether the result is even. It is then sufficient to prove the following theorem instead.

\begin{theorem}\label{thm:unit_product_2}
There is a polynomial-time algorithm that on input \(s,t\in\N\), \(a_1,\dotsc,a_s,\) \(b_1,\dotsc,b_t\in\Z_{>0}\) and \(m_1,\dotsc,m_s,n_1,\dotsc,n_t\in\Z_{\geq 0}\) decides whether 
\[\prod_{i=1}^s a_i^{m_i}=\prod_{i=1}^t b_i^{n_i}. \tag{2.2}\]
\end{theorem}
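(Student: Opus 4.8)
The plan is to avoid evaluating the two products in (2.2) directly: since the exponents \(m_i\) and \(n_j\) are supplied in binary, each side can have bit-length exponential in the input size. Instead I would compare the two sides coordinate-by-coordinate with respect to a common multiplicative basis. The key ingredient is a \emph{coprime basis lemma}: there is a polynomial-time algorithm that, given positive integers \(c_1,\dotsc,c_r\), produces pairwise coprime integers \(d_1,\dotsc,d_k>1\) together with nonnegative integers \(e_{ij}\) such that \(c_i=\prod_{j=1}^k d_j^{e_{ij}}\) for every \(i\). Feeding it the concatenated list \(a_1,\dotsc,a_s,b_1,\dotsc,b_t\) yields such a \(d\)-basis together with exponent matrices, so that \(a_i=\prod_l d_l^{e_{il}}\) and \(b_j=\prod_l d_l^{f_{jl}}\).

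To establish the coprime basis lemma I would run a \emph{refinement} loop. Maintain a finite multiset \(M\) of integers \(\ge 2\), initialised to the input values with any \(1\)'s deleted, and alongside it, for each original \(c_i\), an expression of \(c_i\) as a product of elements of \(M\) with multiplicities. While \(M\) contains two entries \(x,y\) with \(g:=\gcd(x,y)>1\), delete this pair, insert the numbers \(g\), \(x/g\), \(y/g\) (deleting any equal to \(1\)), and update every stored expression by replacing each factor \(x\) with \(g\cdot(x/g)\) and each factor \(y\) with \(g\cdot(y/g)\) and collecting equal factors. Both termination and the polynomial bound come from the potential \(\Phi(M)=\prod_{x\in M}x\): a refinement step multiplies it by \(1/g\le 1/2\), so at most \(\log_2\Phi(M_0)\le\sum_i\log_2 c_i\) refinements occur, which is at most linear in the input size, and a net of one element is added per step, so \(M\) stays of polynomial cardinality; meanwhile every entry of \(M\) divides some \(c_i\) and every stored exponent is at most \(\log_2 c_i\), so all intermediate data have polynomial bit-length. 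When the loop halts \(M\) is pairwise coprime, hence consists of distinct integers \(>1\); this is the \(d\)-basis, and the stored expressions give the exponents \(e_{ij}\).

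With the \(d\)-basis in hand, (2.2) reads \(\prod_l d_l^{x_l}=\prod_l d_l^{y_l}\) where \(x_l=\sum_i m_i e_{il}\) and \(y_l=\sum_j n_j f_{jl}\), and I claim this holds exactly when \(x_l=y_l\) for all \(l\). One implication is immediate; for the converse, choose for each \(l\) a prime \(p_l\mid d_l\). Pairwise coprimality gives \(p_l\nmid d_{l'}\) for \(l'\ne l\), so the \(p_l\)-adic valuation of either side equals the corresponding exponent times \(v_{p_l}(d_l)\ge 1\), forcing \(x_l=y_l\). Each \(x_l\) and \(y_l\) is a nonnegative integer of bit-length polynomial in the input (each \(e_{il}\le\log_2 a_i\) is small, the \(m_i\) are part of the input, and only polynomially many products are summed), so all \(k\) equalities \(x_l=y_l\) can be tested in polynomial time; the algorithm answers ``yes'' precisely when they all hold. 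Together with the sign check noted just before Theorem~\ref{thm:unit_product_2}, this also yields Theorem~\ref{thm:unit_product_1}.

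I expect the main obstacle to be the coprime basis lemma — specifically, arguing that only linearly many refinement steps are needed while simultaneously keeping the entries of \(M\) and the exponents in the stored expressions of polynomial bit-length. Once that bookkeeping is nailed down, the passage from the product equation to finitely many equalities of integers is routine.
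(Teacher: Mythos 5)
Your proposal is correct and follows essentially the same route as the paper: compute a pairwise-coprime basis for \(a_1,\dotsc,a_s,b_1,\dotsc,b_t\) by gcd-refinement, express each \(a_i\) and \(b_j\) over that basis, and compare the resulting exponent vectors, which is exactly the paper's Method~\ref{met:coprimefactoring} together with Lemma~\ref{lem:UCPF} and Theorem~\ref{thm:graph_algorithm}. The only divergence is cosmetic: where Algorithm~\ref{alg:graph_algorithm} tracks which pairs are already known coprime via a graph, you use a multiset with a fresh search for a non-coprime pair each round and bound the number of productive refinements by the potential \(\prod_{x\in M}x\); both give polynomial time.
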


In this form, the problem looks deceptively easy. 
Consider for example the most straightforward method to decide (2.2).

\begin{Algorithm}\label{alg:explict_computation}
Compute \(\prod_{i=1}^s a_i^{m_i}\) and \(\prod_{i=1}^t b_i^{n_i}\) explicitly and compare the results.
\end{Algorithm}
This method is certainly correct in that it is able to decide (2.2).
However, it fails to run in polynomial time even when \(s=t=1\).
For \(n\in\Z_{>0}\) the length \(l(2^n)\) of \(2^n\) as a string encoded in binary equals \(n+1\). 
Hence \(l(2^n)\) is not bounded by any polynomial in \(l(n)\approx \log_2 n\).
We wouldn't even have enough time to write down the number regardless of our proficiency in multiplication because the number is too long.

Another method uses the fundamental theorem of arithmetic, also known as unique prime factorization in \(\Z\).

\begin{Algorithm}\label{alg:factoring}
Factor \(a_1,\dotsc,a_s,b_1,\dotsc,b_t\) into primes and for each prime that occurs compute the number of times it occurs in the products \(\prod_{i=1}^s a_i^{m_i}\) and \(\prod_{i=1}^t b_i^{n_i}\) and compare the results.
\end{Algorithm}

It is true that once we have factored all integers into primes only a polynomial number of steps remains. If we write \(x_{ip}\) for the exponent of the prime \(p\) in \(a_i\), then we may compute \(\sum_{i=1}^t m_i x_{ip}\), the exponent of \(p\) in \(\prod_{i=1}^t a_i^{m_i}\), in polynomial time.
Moreover, the number of prime factors of \(n\in\Z_{>0}\) is at most \(l(n)\), so the number of primes occurring is at most \(\sum_{i=1}^s l(a_i)+\sum_{i=1}^t l(b_i)\), which is less than the length of the combined input.
The problem lies in the fact that we have not specified how to factor integers into primes. 
As of \today, nobody has been able to show that we can factor integers in polynomial time. Until this great open problem is solved, Algorithm~\ref{alg:factoring} is out the window.

An interesting observation is that the main obstruction in Algorithm~\ref{alg:explict_computation} lies in the exponents being large, while for Algorithm~\ref{alg:factoring} the obstruction is in the bases. 
Our proof for Theorem~\ref{thm:unit_product_2} will be to slightly tweak Algorithm~\ref{alg:factoring}. 
Namely, observe that we do not need to factor into prime elements but that it suffices to factor into pairwise coprime elements.
The following lemma follows readily from unique prime factorization.

\begin{lemma}[Unique coprime factorization]\label{lem:UCPF}
Let \(k\in\N\) and let \(c_1,\dotsc,c_k\in\Z_{>1}\) be pairwise coprime.
If for \(m_1,\dotsc,m_k,n_1,\dotsc,n_k\in\Z\) we have 
\[ \prod_{i=1}^k c_i^{m_i} = \prod_{i=1}^k c_i^{n_i}, \tag{2.3} \]
then \(m_i=n_i\) for all \(i\). \qed
\end{lemma}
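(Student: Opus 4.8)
The plan is to peel off the exponents one at a time using $p$-adic valuations, after first reducing to the homogeneous form. Setting $e_i = m_i - n_i \in \Z$, equation (2.3) is equivalent, in the multiplicative group $\Q^*$, to $\prod_{i=1}^k c_i^{e_i} = 1$, and it suffices to prove $e_i = 0$ for all $i$. (Working in $\Q^*$ rather than $\Z_{>0}$ is the only place one must be slightly careful, since the $e_i$ may be negative; equivalently one could clear denominators first.)

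The key observation is that pairwise coprimality forces the prime supports of the $c_i$ to be disjoint: if a prime $p$ divides both $c_i$ and $c_j$ with $i \neq j$, then $p \mid \gcd(c_i, c_j) = 1$, a contradiction. So, writing $v_p \colon \Q^* \to \Z$ for the $p$-adic valuation, for each prime $p$ there is at most one index $j$ with $v_p(c_j) > 0$. Since each $c_i > 1$, it has at least one prime divisor; fix such a prime $p_i$ for each $i$. Then $v_{p_i}(c_i) \geq 1$, while $v_{p_i}(c_j) = 0$ for every $j \neq i$.

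Now apply the group homomorphism $v_{p_i}$ to the relation $\prod_{j=1}^k c_j^{e_j} = 1$. Using $v_{p_i}(1) = 0$ and additivity, we get $\sum_{j=1}^k e_j \, v_{p_i}(c_j) = 0$; all terms with $j \neq i$ vanish, leaving $e_i \, v_{p_i}(c_i) = 0$, and since $v_{p_i}(c_i) \geq 1$ this gives $e_i = 0$. Letting $i$ range over $1, \dots, k$ yields $m_i = n_i$ for all $i$.

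I do not expect a genuine obstacle here: the statement is little more than a repackaging of unique prime factorization in $\Z$, with the one substantive input being that pairwise coprime integers greater than $1$ have pairwise disjoint (and nonempty) sets of prime divisors, so that a single well-chosen valuation isolates each exponent.
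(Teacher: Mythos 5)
Your proof is correct and fills in precisely the argument the paper leaves implicit, since the paper simply asserts that the lemma "follows readily from unique prime factorization" and omits the proof. Using $p$-adic valuations to isolate each exponent is a clean way of packaging that, and the reduction to $\prod_i c_i^{e_i}=1$ in $\Q^*$ handles the possibly negative exponents correctly.
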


\begin{exercise}
Suppose \(c_1,\dotsc,c_k\in\Z_{>1}\) are pairwise coprime and let \(m_1,\dotsc,m_k\in\Z\).
Then \(\prod_i c_i^{m_i} \in \Z_{>0}\) if and only if \(m_1,\dotsc,m_k\geq 0\).
\end{exercise}

\begin{definition}
For a subset \(A\subseteq \Z_{\geq1}\), a {\em coprime basis} is a subset \(C\subseteq\Z_{>1}\) such that the elements of \(C\) are pairwise coprime and such that \(\langle C\rangle\subseteq\Q^*\), the group generated by \(C\), contains \(A\).
The set of coprime bases for \(A\) is partially ordered (Exercise~\ref{ex:partial_order}) by the relation where \(C\leq D\) if \(\langle C\rangle \subseteq \langle D\rangle\).
\end{definition}

\begin{example}
For \(A=\{30,42\}\) a coprime basis is \(C=\{5,6,7\}\).
For \(a,m,n\in\Z_{>1}\) the set \(\{a^m,a^n\}\) has a coprime basis \(C_d=\{a^d\}\) for any \(d\) dividing \(\gcd(m,n)\), and \(C_e\leq C_d\) if and only if \(d \mid e\).
For arbitrary \(A\subseteq\Z_{\geq1}\) the set of all primes is the unique maximal coprime basis. 
\end{example}

We now propose the following algorithm for deciding (2.2).

\begin{method}\label{met:coprimefactoring}
Factor \(a_1,\dotsc,a_s,b_1,\dotsc,b_t\) into pairwise coprime \(c_1,\dotsc,c_k\in\Z_{>1}\). 
For each \(c_i\) compute the number of times it occurs in \(\prod_{i=1}^s a_i^{m_i}\) and \(\prod_{i=1}^t b_i^{n_i}\) and compare the results.
\end{method}

Now to prove Theorem~\ref{thm:unit_product_2} and in turn Theorem~\ref{thm:unit_product_1} it suffices to give a po\-ly\-no\-mial-time algorithm for computing coprime bases.

\begin{exercise}\label{ex:partial_order}
Verify that the relation \(\leq\) on the set of coprime bases is a partial order. 
In particular, show that \(\langle C\rangle = \langle D\rangle\) implies \(C=D\) for all coprime bases \(C\) and \(D\).
\end{exercise}

\begin{lemma}\label{lem:closure}
Let \(A\subseteq\Z_{\geq1}\). We write \(\overline{A}\) for the closure of \(A\cup\{1\}\) under multiplication, integer division (i.e.\ a division with integral result), and taking gcd's. Then 
\begin{enumerate}
\item \(C\subseteq\Z_{>1}\) is a coprime basis for \(A\) if and only if it is a coprime basis for \(\overline{A}\);
\item the set \(\textup{atom}(\overline{A})\) of minimal elements of \(\overline{A}\setminus\{1\}\) with respect to the divisibility relation is the unique minimal (wrt. \(\leq\)) coprime basis for \(\overline{A}\).
\item if \(C\subseteq\Z_{>1}\) satisfies \(C\subseteq\overline{A}\subseteq\langle C\rangle\) and the elements of \(C\) are pairwise coprime, then \(C=\textup{atom}(\overline{A})\).
\end{enumerate}
\end{lemma}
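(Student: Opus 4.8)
The plan is to attach to each coprime basis $C$ for $A$ the set $S_C:=\langle C\rangle\cap\Z_{\geq 1}$ and to show it is closed under the three operations defining $\overline{A}$. First I would record that, by Lemma~\ref{lem:UCPF}, every element of $\langle C\rangle$ has a unique expression $\prod_{c\in C}c^{a_c}$ with $a_c\in\Z$ (almost all zero), and that such an element lies in $\Z_{\geq 1}$ exactly when every $a_c\geq 0$ (the exercise following Lemma~\ref{lem:UCPF}). Then $S_C$ is closed under multiplication trivially; under integer division because $y\mid x$ with $x,y\in S_C$ forces $x/y=xy^{-1}\in\langle C\rangle\cap\Z_{\geq 1}$; and under $\gcd$ because, writing $x=\prod_c c^{a_c}$, $y=\prod_c c^{b_c}$ with all $a_c,b_c\geq 0$ and using that the elements of $C$ are pairwise coprime, one has $\gcd(x,y)=\prod_c c^{\min(a_c,b_c)}\in S_C$. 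Since $1\in S_C$ and $A\subseteq S_C$, minimality of $\overline{A}$ forces $\overline{A}\subseteq S_C\subseteq\langle C\rangle$, so $C$ is a coprime basis for $\overline{A}$; the reverse implication in (1) is immediate from $A\subseteq\overline{A}$.

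For (2) the substance is that $\textup{atom}(\overline{A})$ is a coprime basis for $\overline{A}$; the rest is formal. Pairwise coprimality: for distinct $p,q\in\textup{atom}(\overline{A})$ the element $d=\gcd(p,q)$ lies in $\overline{A}$ and divides $p$, so minimality of $p$ in $\overline{A}\setminus\{1\}$ forces $d\in\{1,p\}$; if $d=p$ then $p=d\mid q$ together with minimality of $q$ forces $p=q$, a contradiction, so $d=1$. For $\overline{A}\subseteq\langle\textup{atom}(\overline{A})\rangle$ I would use strong induction on the integer $x\in\overline{A}$: if $x>1$, pick a divisibility-minimal element $p$ of the finite nonempty set of divisors of $x$ lying in $\overline{A}\setminus\{1\}$, note $p\in\textup{atom}(\overline{A})$ (a strict divisor of $p$ in $\overline{A}\setminus\{1\}$ would also divide $x$, contradicting minimality), observe $x/p\in\overline{A}$ by closure under integer division with $x/p<x$, and conclude $x=p\cdot(x/p)\in\langle\textup{atom}(\overline{A})\rangle$ either because $x/p=1$ or by the induction hypothesis. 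Having this, for any coprime basis $D$ of $\overline{A}$ we get $\textup{atom}(\overline{A})\subseteq\overline{A}\subseteq\langle D\rangle$, hence $\langle\textup{atom}(\overline{A})\rangle\subseteq\langle D\rangle$, i.e.\ $\textup{atom}(\overline{A})\leq D$; thus $\textup{atom}(\overline{A})$ is the least element, and uniqueness of the minimum follows from antisymmetry of $\leq$ (Exercise~\ref{ex:partial_order}).

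Part (3) is a squeeze. Under its hypotheses $C$ is a coprime basis for $\overline{A}$ (it lies in $\Z_{>1}$, is pairwise coprime, and $\overline{A}\subseteq\langle C\rangle$), and by (2) so is $\textup{atom}(\overline{A})$. From $\textup{atom}(\overline{A})\subseteq\overline{A}\subseteq\langle C\rangle$ we get $\langle\textup{atom}(\overline{A})\rangle\subseteq\langle C\rangle$, and from $C\subseteq\overline{A}\subseteq\langle\textup{atom}(\overline{A})\rangle$ (the last inclusion by (2)) we get $\langle C\rangle\subseteq\langle\textup{atom}(\overline{A})\rangle$. Hence $\langle C\rangle=\langle\textup{atom}(\overline{A})\rangle$, and Exercise~\ref{ex:partial_order} (equal generated groups imply equal coprime bases) gives $C=\textup{atom}(\overline{A})$.

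The only steps that are not pure bookkeeping with $\leq$ are the $\gcd$-closure of $S_C$ in (1) and the extraction of an atomic divisor of each $x\in\overline{A}$ in (2); both rest on unique coprime factorization (Lemma~\ref{lem:UCPF}) and on the fact that a product of powers of pairwise coprime integers greater than $1$ is a positive integer exactly when all exponents are nonnegative. Everything else follows formally from the definitions and the partial-order structure.
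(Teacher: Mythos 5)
Your proof is correct and follows essentially the same route as the paper: close $\langle C\rangle\cap\Z_{\geq1}$ under the three operations to get (1), show $\operatorname{atom}(\overline{A})$ is pairwise coprime and generates $\overline{A}$ (by extracting an atomic divisor and inducting) to get (2), and then squeeze for (3). Your presentation is in a couple of spots cleaner than the paper's: the reverse direction of (1) really is just the observation $A\subseteq\overline{A}\subseteq\langle C\rangle$ (the paper's detour through a minimal sub-basis $C'\subseteq C$ is unnecessary), and in (2) you make explicit why every $x\in\overline{A}\setminus\{1\}$ has an atomic divisor (divisibility-minimal among divisors of $x$ in $\overline{A}\setminus\{1\}$, which exists by finiteness), a step the paper leaves implicit.
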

\begin{proof}
(1) Suppose \(C\) is a coprime basis for \(A\). For \(a=\prod_{c\in C} c^{a_c}\) and \(b=\prod_{c\in C} c^{b_c}\) in \(\langle C\rangle\cap \Z\) we have
\[ ab = \prod_{c\in C} c^{a_c+b_c}, \quad a/b = \prod_{c\in C} c^{a_c-b_c} \quad\text{and}\quad \gcd(a,b) = \prod_{c\in C} c^{\min\{a_c,b_c\}}. \]
Hence \(ab, a/b,\gcd(a,b),\in \langle C\rangle\).
As \(A\subseteq\langle C\rangle\), we have \(\overline{A}\subseteq \overline{\langle C\rangle}=\langle C\rangle\), so \(C\) is a coprime basis for \(\overline{A}\).

Suppose \(C\) is a coprime basis for \(\overline{A}\). Then \(A\subseteq\overline{A}\subseteq\langle C\rangle\). If \(C'\subseteq C\) is a coprime basis for \(A\), then it is a coprime basis for \(\overline{A}\) by the previous, so \(C'=C\) by minimality. Hence \(C\) is a coprime basis for \(A\).

(2) Firstly note that the elements of \(C=\textup{atom}(\overline{A})\) are pairwise coprime: For \(c,d\in C\) we have \(\textup{gcd}(c,d)\in\overline{A}\), so by minimality of \(c\) and \(d\) we must have \(\textup{gcd}(c,d)=1\) or \(c=d\).

Secondly, suppose \(a\in \overline{A}\) is minimal such that \(a \not\in \langle C\rangle\). 
Clearly \(1\in\langle C\rangle\), so \(a\neq 1\). 
Then some \(c\in C\) satisfies \(c \mid a\) by definition of \(C\), so \(a/c\in\overline{A}\). 
By minimality of \(a\) we have \(a/c\in\langle C\rangle\), so \(a=(a/c)\cdot c \in \langle C\rangle\).
Hence \(\overline{A}\subseteq\langle C\rangle\).

Thus \(C\) is a coprime basis for \(\overline{A}\). 
Suppose \(D\subseteq\Z_{>1}\) is a coprime basis for \(\overline{A}\). 
Then \(C\subseteq\overline{A}\subseteq \langle D\rangle\), so \(\langle C\rangle\subseteq\langle D\rangle\). 
Hence \(C\) is the unique minimal coprime basis.

(3) Clearly \(C\) is a coprime basis, so by (2) it suffices to show that \(C\) is minimal.
As \(C\subseteq \overline{A}\) we have \(\langle C\rangle \subseteq \langle \overline{A}\rangle \subseteq \langle C\rangle \), so \(\langle C\rangle=\langle\overline{A}\rangle\). Any coprime basis \(D\) should satisfy \(\langle C\rangle = \langle\overline{A}\rangle \subseteq \langle D \rangle\), so \(C\) is minimal.
\end{proof}

\begin{theorem}[Coprime basis factorization]\label{thm:graph_algorithm}
There is a polynomial-time algorithm that on input \(s\in\N\) and \(a_1,\dotsc,a_s\in\Z_{>0}\) computes \(k\in\N\) and the minimal coprime basis 
\(\{c_1,\dotsc,c_k\}\) of \(\{a_1,\dotsc,a_s\}\), as well as \((n_{ij})\in\Z_{\geq0}^{s\times k}\) such that \(a_i=\prod_{j=1}^k c_j^{n_{ij}}\) for all \(i\).
\end{theorem}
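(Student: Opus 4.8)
The plan is to compute the minimal coprime basis iteratively, maintaining at every stage a finite set $C\subseteq\Z_{>1}$ of pairwise coprime elements together with expressions of the $a_i$'s seen so far as products of powers of elements of $C$, and to show that each step runs in polynomial time and that only polynomially many steps are needed. By Lemma~\ref{lem:closure}(3), it suffices to produce \emph{any} pairwise coprime $C\subseteq\Z_{>1}$ with $C\subseteq\overline{A}\subseteq\langle C\rangle$ (where $A=\{a_1,\dotsc,a_s\}$); the accompanying exponent matrix $(n_{ij})$ is then recovered by repeated trial division, which is polynomial once we know $C$ has polynomial size.

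First I would treat the core subroutine: given a pairwise coprime set $C$ and a new integer $a\in\Z_{>1}$, refine $C$ to a pairwise coprime set $C'$ with $C\subseteq\langle C'\rangle$, $a\in\langle C'\rangle$, and $C'\subseteq\overline{C\cup\{a\}}$. The idea is the standard gcd-splitting trick: repeatedly, if some $c\in C$ has $g=\gcd(c,a)\notin\{1,c\}$, replace $c$ by $g$ and $c/g$ (both strictly smaller than $c$, both in $\overline{C\cup\{a\}}$), and divide the relevant residual by $g$ where needed; continue until the residual part of $a$ is coprime to every element of $C$, then adjoin what remains of $a$ if it exceeds $1$. One must argue this terminates in polynomially many gcd operations — each split strictly decreases $\sum_{c\in C}\log c$ by at least $1$ while the total $\sum\log c$ over all of $C$ never exceeds $\sum_i\log a_i$, the bit-length of the input — and that all intermediate integers stay bounded by the input, so each gcd and division is polynomial-time by Section~\ref{sec:basic_comp}. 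Iterating this subroutine over $a_1,\dotsc,a_s$ starting from $C=\emptyset$ yields the final $C$ in polynomial time, with $|C|$ bounded by $\sum_i\log_2 a_i$.

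I would then verify $C\subseteq\overline{A}\subseteq\langle C\rangle$: the first inclusion because every element ever placed in $C$ arises from the $a_i$'s by gcd's and integer divisions, the second because after processing each $a_i$ it lies in $\langle C\rangle$ and the group only grows. Lemma~\ref{lem:closure}(3) then gives $C=\operatorname{atom}(\overline{A})$, the minimal coprime basis. Finally, to extract $(n_{ij})$, factor each $a_i$ over $C$ by dividing out each $c_j$ as many times as possible; correctness and the nonnegativity of the exponents follow from $a_i\in\langle C\rangle\cap\Z_{>0}$ and Lemma~\ref{lem:UCPF} (uniqueness), and the bit-lengths involved are controlled by $l(a_i)$.

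The main obstacle is the run-time bookkeeping for the refinement subroutine: one must be careful that a single insertion of $a$ does not trigger a cascade of splits whose \emph{number} is only bounded by $\log a$ but whose intermediate data (the partially divided residuals and the working copy of $C$) never blow up in size. The clean way to see this is the potential-function argument above — total $\sum_{c\in C}\lceil\log_2 c\rceil$ is a nonnegative integer, bounded initially and after all insertions by $\sum_i l(a_i)$, and strictly decreased by each genuine split — which simultaneously bounds the number of splits, the size of $C$, and (together with the observation that every integer manipulated divides some $a_i$) the size of every intermediate integer. Everything else reduces to bulk applications of the polynomial-time gcd and division routines already granted in Section~\ref{sec:basic_comp}.
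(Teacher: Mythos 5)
Your high-level strategy is sound: produce a pairwise coprime $C\subseteq\Z_{>1}$ with $C\subseteq\overline{A}\subseteq\langle C\rangle$, invoke Lemma~\ref{lem:closure}.3, and recover the exponents by trial division; the potential-function argument for termination is also the right idea. But the refinement subroutine as you describe it has a genuine gap: after replacing $c$ by $g=\gcd(c,a)$ and $c/g$, the two pieces $g$ and $c/g$ need not be coprime to each other, and your loop only ever compares elements of $C$ against the new $a$ (or its residual), never against one another, so it cannot detect or repair this. The invariant that $C$ is pairwise coprime therefore breaks, and the appeal to Lemma~\ref{lem:closure}.3 fails.

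A concrete failure: on input $(a_1,a_2)=(8,12)$, after inserting $8$ you have $C=\{8\}$; inserting $12$, $\gcd(8,12)=4\notin\{1,8\}$ triggers the split $8\mapsto\{4,2\}$ and the residual becomes $12/4=3$, which is coprime to both $4$ and $2$, while $\gcd(4,12)=4$ and $\gcd(2,12)=2$ both lie in $\{1,c\}$, so no further split occurs and you adjoin $3$, giving $C=\{4,2,3\}$. But $\gcd(4,2)=2\neq 1$, so $C$ is not pairwise coprime; the true minimal coprime basis of $\{8,12\}$ is $\{2,3\}$.

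The repair is to keep splitting \emph{any} pair in the working collection whose gcd exceeds $1$, including the sibling pieces $g$, $c/g$ and the residual against them. This is exactly what Algorithm~\ref{alg:graph_algorithm} does: the graph records which pairs are not yet proved coprime (including edges between the newly created gcd-vertex and its parents), and one splits along edges until none remain. Your potential function survives the fix, but you must take it to be the logarithm of the product of \emph{all} numbers in the working collection including the residual; note that $\sum_{c\in C}\log c$ alone is actually \emph{invariant} under the replacement of $c$ by $g,c/g$, not decreasing. With the residual included, each productive split divides the product by $g\geq 2$, which is precisely the paper's observation that $P_{n+1}\mid P_n$ with strict decrease whenever the chosen pair is not coprime.
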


We state the algorithm first and prove the theorem later. Note that we do not require the \(a_i\) in the theorem to be distinct.

\begin{Algorithm}[cf.\ Algorithm R in \cite{FactorRefinement}]\label{alg:graph_algorithm}
Remove all \(a_i\) for which \(a_i=1\).
Construct a complete simple graph \(G\) on \(s\) vertices and label the vertices with \(a_1,\dotsc,a_s\). 
We call it a labeling because the map sending a vertex to its label need not be injective.
While there are edges in \(G\), iterate the following 5 steps:
\begin{enumerate}[topsep=0pt,itemsep=-1ex,partopsep=1ex,parsep=1ex]
\item Choose an edge \(\{U,V\}\) of \(G\), delete it from the graph, and let \(u\) and \(v\) be the labels of \(U\) and \(V\) respectively.
\item Compute \(w=\gcd(u,v)\) using the Euclidean algorithm.
\item Add a vertex \(W\) labeled \(w\) to \(G\) and connect it to \(U\), \(V\) and those vertices which are neighbors of both \(U\) and \(V\).
\item Update the labels of \(U\) and \(V\) to \(u/w\) and \(v/w\) respectively.
\item For each \(S\in\{U,V,W\}\), if the label of \(S\) is \(1\), then delete \(S\) and its incident edges from \(G\). 
\end{enumerate}
Once \(G\) has no more edges, the labels of the vertices in the graph are the required pairwise coprime elements. The remaining output can be computed in polynomial time.
\end{Algorithm}

In the graph that we construct and update, the edges represent the pairs of numbers of which we do not yet know whether they are coprime, while a missing edge denotes that we know the pair to be coprime.

\begin{example} We apply Algorithm~\ref{alg:graph_algorithm} to \((a_1,a_2)=(4500,5400)\). Since there are only two vertices, our graphs will fit on a single line. We denote the edge we choose in each iteration with a bullet and edges we have to erase are dotted. On the right we show how to keep track of the factorization of \(4500\) with minimal bookkeeping by writing it as a product of vertices in the graph.  
\begin{center}
\begin{tikzcd}[bo column sep,column sep=.5em,row sep=0em]
\text{Iteration 1:} &[1pt] 4500 \arrow[dash]{rrrrrrrrrrrrrrrr} &&&&&&&&\bullet&&&&&&&& 5400 &[10pt] 4500\\
\text{Iteration 2:} & 5 \arrow[dash]{rrrrrrrr} &&&&\bullet&&&& 900 \arrow[dash]{rrrrrrrr} &&&&&&&& 6 & 5\cdot 900 \\
\text{Iteration 3:} & 1 \arrow[dash,dotted]{rrrr} &&&& 5 \arrow[dash]{rrrr} &&\bullet&& 180 \arrow[dash]{rrrrrrrr} &&&&&&&& 6 & 5^2\cdot 180 \\
\text{Iteration 4:} &&&&& 1 \arrow[dash,dotted]{rr} && 5 \arrow[dash]{rr} &\bullet& 36 \arrow[dash]{rrrrrrrr} &&&&&&&& 6 & 5^3\cdot 36 \\
\text{Iteration 5:} &&&&&&& 5 \arrow[dash,dotted]{r} & 1 \arrow[dash,dotted]{r} & 36 \arrow[dash]{rrrrrrrr} &&&&\bullet&&&& 6 & 5^3\cdot 36 \\
\text{Iteration 6:} &&&&&&& 5 & & 6 \arrow[dash]{rrrr} &&\bullet&& 6 \arrow[dash,dotted]{rrrr} &&&& 1 & 5^3\cdot 6\cdot 6 \\
\text{Iteration 7:} &&&&&&& 5 & & 1 \arrow[dash,dotted]{rr} && 6 \arrow[dash,dotted]{rr} && 1 &&&& & 5^3\cdot 6^2\\
\text{Iteration 8:} &&&&&&& 5 &&&& 6&&&&&& & 5^3\cdot 6^2
\end{tikzcd}
\end{center}
We obtain \((c_1,c_2)=(5,6)\) and \(4500=5^3\cdot6^2\). By trial division we obtain \(5400=5^2\cdot6^3\).
\end{example}

\begin{example}
We apply Algorithm~\ref{alg:graph_algorithm} to \((a_1,a_2,a_3)=(15,21,35)\).
\begin{center}
\begin{tikzpicture}[scale=.5]
\path (-1,-.5) -- (7,5.5);
\node (L) at (0,0) {$15$};
\node (R) at (6,0) {$21$};
\node (T) at (3,5) {$35$};
\draw (L) -- (R) -- (T) -- (L);
\filldraw (1.5,2.5) circle (.12); 
\end{tikzpicture}
\begin{tikzpicture}[scale=.5]
\path (-1,-.5) -- (7,5.5);
\node (L) at (0,0) {$3$};
\node (R) at (6,0) {$21$};
\node (T) at (3,5) {$7$};
\node (LT) at (1.5,2.5) {$5$};
\draw (L) -- (R) -- (T) -- (LT) -- (L);
\draw (LT) -- (R);
\filldraw (4.5,2.5) circle (.12); 
\end{tikzpicture}
\begin{tikzpicture}[scale=.5]
\path (-1,-.5) -- (7,5.5);
\node (L) at (0,0) {$3$};
\node (R) at (6,0) {$3$};
\node (T) at (3,5) {$1$};
\node (LT) at (1.5,2.5) {$5$};
\node (RT) at (4.5,2.5) {$7$};
\draw (LT) -- (L) -- (R) -- (RT) -- (LT) -- (R);
\draw[dotted] (LT) -- (T) -- (RT);
\filldraw (3,2.5) circle (.12); 
\end{tikzpicture}
\end{center}
\begin{center}
\begin{tikzpicture}[scale=.5]
\path (-1,-.5) -- (7,4);
\node (L) at (0,0) {$3$};
\node (R) at (6,0) {$3$};
\node (LT) at (1.5,2.5) {$5$};
\node (RT) at (4.5,2.5) {$7$};
\node (C) at (3,2.5) {$1$};
\draw (LT) -- (L) -- (R) -- (RT);
\draw (LT) -- (R);
\draw[dotted] (LT) -- (C) -- (RT);
\draw[dotted] (C) -- (R);
\filldraw (3,0) circle (.12); 
\end{tikzpicture}
\begin{tikzpicture}[scale=.5]
\path (-1,-.5) -- (7,4);
\node (L) at (0,0) {$1$};
\node (R) at (6,0) {$1$};
\node (LT) at (1.5,2.5) {$5$};
\node (RT) at (4.5,2.5) {$7$};
\node (B) at (3,0) {$3$};
\draw (LT) -- (B);
\draw[dotted] (RT) -- (R) -- (B) -- (L) -- (LT) -- (R);
\filldraw (2.25,1.25) circle (.12); 
\end{tikzpicture}
\begin{tikzpicture}[scale=.5]
\path (-1,-.5) -- (7,4);
\node (LT) at (1.5,2.5) {$5$};
\node (RT) at (4.5,2.5) {$7$};
\node (B) at (3,0) {$3$};
\node (F) at (2.25,1.25) {$1$};
\draw[dotted] (LT) -- (F) -- (B);
\end{tikzpicture}
\end{center}
The resulting coprime basis is \((c_1,c_2,c_3)=(3,5,7)\). In the fifth graph something interesting happens. Vertex \(7\) suddenly becomes disconnected from the graph because we know it is coprime to one of the \(3\)'s.
\end{example}

\begin{proof}[Proof of Theorem~\ref{thm:graph_algorithm}]
We claim Algorithm~\ref{alg:graph_algorithm} is correct and runs in polynomial time.
One can show inductively that throughout the algorithm two vertices in the graph \(G\) have coprime labels when there is no edge between them.
When the algorithm terminates because there are no edges in the graph, we may conclude that \(c_1,\dotsc,c_s\) are coprime.
Additionally, one shows inductively that the numbers \(a_1,\dotsc,a_t\) can be written as some product of labels.
Hence \(c_1,\dotsc,c_s\) is the minimal coprime basis of \(a_1,\dotsc,a_t\) by Lemma~\ref{lem:closure}.3, as \(c_1,\dotsc,c_s\) are in the closure of \(\{a_1,\dotsc,a_t\}\).
Thus Algorithm~\ref{alg:graph_algorithm} is correct. 
It remains to show that it is fast.

Write \(P_n\) for the product of all labels of the vertices in the graph after iteration \(n\).
Note that \(P_0=a_1\dotsm a_s\) and that \(P_{n+1}\mid P_n\) for all \(n\in\N\).
Since \(P_0\) has at most \(B:=\log_2 P_0\) prime factors counting multiplicities, there are at most \(B\) iterations \(n\) for which \(P_{n+1}<P_n\), and at most \(B\) vertices in \(G\) after every iteration in the algorithm. 
The iterations for which \(P_n=P_{n+1}\) are those where the edge we chose is between coprime integers, meaning no vertices or edges are added to the graph and one edge is deleted. As the number of edges is at most \(B^2\), then so is the number of consecutive iterations for which \(P_n=P_{n+1}\).
Hence the total number of iterations is at most \(B^3\), which is polynomial in the length of the input. 
Lastly, note that each iteration takes only polynomial time because the labels of the vertices are bounded from above by \(2^B\) and the Euclidean algorithm runs in polynomial time. Hence Algorithm~\ref{alg:graph_algorithm} runs in polynomial time.
\end{proof}

The speed of this algorithm heavily depends on how fast the product of all vertices decreases.
In each step we want to choose our edge \(\{u,v\}\) such that \(\gcd(u,v)\gg 1\). 
A heuristic for this could be to choose edges between large numbers.
For those interested in the efficiency of coprime basis factorization we refer to \cite{CoprimeBase} for a provably faster algorithm.

\begin{exercise}
Show that there exists a polynomial-time algorithm that, given \(a,b,c,d\in\Z_{>0}\) such that \(ab=cd\), computes \(w,x,y,z\in\Z_{>0}\) such that \((a,b,c,d)=(wx,yz,wz,xy)\).
\end{exercise}

\begin{exercise}[Modified Euclidean algorithm] \label{ex:mod_gcd} 
Recall that \(\gcd(0,0)=0\). 
\begin{enumex}
\item Show that for all \(a,b\in\Z\) with \(b\neq 0\) there exist \(r,q\in\Z\) with \(a=qb+r\) and \(|r|\leq |b|/2\).
\item Show that for all \(q,b,r\in\Z\) with \(a=qb+r\) we have \(\gcd(a,b)=\gcd(b,r)\) and \(\gcd(a,0)=|a|\).
\item Prove that there exists a polynomial-time algorithm that, given \(a,b\in\Z\), computes \(\gcd(a,b)\) as well as \(x,y\in\Z\) such that \(ax+by=\gcd(a,b)\).
\item Conclude that there exists a polynomial-time algorithm that, given \(a,n\in\Z\) with \(n\geq1\), decides whether \((a\bmod n)\in(\Z/n\Z)^*\) and if so computes some \(a'\in\Z\) such that \(aa'\equiv 1 \text{ mod } n\).
\item Prove that there exists a polynomial-time algorithm that, given \(a,b,m,n\in\Z\) with \(n,m>1\), decides whether there exists some \(c\in\Z\) such that \(c\equiv a \text{ mod } m\) and \(c\equiv b \text{ mod } n\) and if so computes such a \(c\).
\end{enumex}
\end{exercise}

\begin{exercise}
Show that there exists a polynomial-time algorithm that, given \(k,n\in\Z_{>0}\) and \(a_1,\dotsc,a_k\in\Z\) satisfying \(a_i^2\equiv 1 \bmod n\) for all \(i\), decides whether there exists some non-empty subset \(I\subseteq\{1,\dotsc,k\}\) such that \(\prod_{i\in I} a_i \equiv 1 \bmod n\) and if so computes one such \(I\). \\ 
\emph{Hint:} Non-trivial roots of \(X^2-1\) in \(\Z/n\Z\) factor \(n\).
\end{exercise}

\begin{exercise}
We equip \(\Q^2\setminus\{(0,0)\}\) with an equivalence relation \(\sim\) where \((x_1,y_1)\sim(x_2,y_2)\) if and only if there exists some \(\lambda\in\Q^*\) such that \((\lambda x_1,\lambda y_1)=(x_2,y_2)\).
Write \(\mathbb{P}^1(\Q)=(\Q^2\setminus\{(0,0)\})/{\sim}\) for the \emph{projective line} and write \((x:y)\) for the image of \((x,y)\) in \(\mathbb{P}^1(\Q)\). 
Let \(a,b\in\Z_{>0}\) and let \(c_1,\dotsc,c_n\) be the minimal coprime basis for \(a\) and \(b\).
\begin{enumex}
\item For \(p\mid ab\) prime write \(f(p)=(\ord_p(a):\ord_p(b))\in\mathbb{P}^1(\Q)\). 
Show that every \(c_i\) naturally corresponds to a fiber of \(f\) and give the prime factorization of \(c_i\) in terms of the prime factorization of \(a\) and \(b\).
\item Suppose \(n=7\). Show that \(ab\geq 1485890406000\) and equality holds for exactly 8 pairs \((a,b)\).
\item (difficult, Problem 2023-2/C in \cite{NAW}) Give an asymptotic formula for the minimum of \(\log(ab)\) in terms of \(n\).
\end{enumex}
\end{exercise}

\begin{exercise} \label{ex:square_multiply}
Let \(n\in\Z_{>0}\).
We encode a matrix \(\overline{M}=(\overline{m}_{ij})_{i,j}\) over \(\Z/n\Z\) as a matrix \(M=(m_{ij})_{i,j}\) over \(\Z\) such  that \(0\leq m_{ij} < n\) and \( \overline{m}_{ij} \equiv m_{ij}  \bmod n\) for all \(i,j\).
Show that there exist polynomial-time algorithms for the following problems:
\begin{enumex}
\item given \(n\in\Z_{\geq 0}\) and matrices \(M\) and \(N\) over \(\Z/n\Z\), compute \(M+N\) and \(M\cdot N\) if well-defined;
\item given \(n,k\in\Z_{>0}\) and a square matrix \(M\) over \(\Z/n\Z\), compute \(M^k\); \\ \textit{Note:} An algorithm that takes \(k\) steps is not polynomial-time!
\item given \(n\in\Z_{>0}\) and a matrix \(M\) over \(\Z/n\Z\), compute a row-echelon form of \(M\);
\item given \(n\in\Z_{>0}\) and a square matrix \(M\) over \(\Z/n\Z\), compute \(\det(M)\) and \(\Tr(M)\);
\item given \(n\in\Z_{>0}\) and a matrix \(M\) over \(\Z/n\Z\), decide whether \(M^{-1}\) exists and if so compute it.
\end{enumex}
You may use the following fact: For every \(k,B\in\Z_{>0}\) and matrix \(M=(m_{ij})_{i,j}\in \Z^{k\times k}\) with \(|m_{ij}|\leq B\) for all \(i,j\) it holds that \(|\det(M)|\leq B^k \cdot k^{k/2}\) (see Hadamard's inequality, Exercise~\ref{ex:hadamard}).
\begin{enumex}[resume]
\item Show that there exists a polynomial-time algorithm that, given a square integer matrix \(M\), computes \(\det(M)\) and \(\Tr(M)\).
\end{enumex}
\end{exercise}

\begin{exercise}
Show that there exist polynomial-time algorithms for the following problems:
\begin{enumex}
\item given \(a,p,q\in\Z\) with \(p\) and \(q\) prime and \(\gcd(a,p)=1\), compute \(e\in\Z_{\geq 0}\) such that the order of \(a\) in \((\Z/p\Z)^*\) equals \(uq^e\) for some \(u\in\Z_{>0}\) with \(\gcd(u,q)=1\);
\item given \(a,p\in\Z\) with \(p\) prime, decide whether \(a\) is a square modulo \(p\);
\item given \(a,b,p\in\Z\) with \(p\) prime, \(a\) a square modulo \(p\) and \(b\) not a square modulo \(p\), compute \(c\in\Z\) such that \(c^2\equiv a \bmod p\);
\item given \(a,b,p\in\Z\) with \(p\) prime, compute \(c\in\Z\) such that \(c^2\) equals \(a\), \(b\) or \(ab\) modulo~\(p\).
\end{enumex}
\end{exercise}

\begin{exercise}
Show that there exists a polynomial-time algorithm that, given \(a,b,k,n\in\Z\) with \(k,n >0\) and \(a^k\equiv 1\text{ mod }n\) and \(b^k\equiv -1 \text{ mod } n\), computes some \(c\in\Z\) such that \(a\equiv c^2 \text{ mod }n\). \\
\textit{Hint:} First consider \(n\) odd and \(k\) a power of~\(2\).
\end{exercise}

\section{Finitely generated abelian groups} \label{sec:abgp}

In this section we treat algorithms on finitely generated abelian groups.
This will be used as a basis for algorithmic aspects of ring theory; 
for example, the additive group of orders \(R\), as well as finitely generated modules over \(R\), will be groups to which we can apply the methods of this section.

We begin by specifying a representation for our finitely generated groups.
Recall that every finitely generated abelian group \(A\) fits in some exact sequence
\begin{center}
\begin{tikzcd}[column sep=small]
\Z^m \arrow{r}{\alpha} & \Z^{n} \arrow{r}{f} & A \arrow{r} & 0.
\end{tikzcd}
\end{center}
Namely, we obtain \(n\) and \(f\) by writing down some generators \(a_1,\dotsc,a_n\in A\) for \(A\) and let \(f\) map the \(i\)-th standard basis vector to \(a_i\). 
For \(m\) and \(\alpha\) we repeat the procedure with \(A\) replaced by \(\ker(f)\).
Note that \(\alpha\), being a morphism between free \(\Z\)-modules, has a natural representation as a matrix with integer coefficients.
The isomorphism theorem gives \(A\cong \Z^n/\ker(f) = \Z^n/\im(\alpha) = \coker(\alpha)\), so \(A\) is, up to isomorphism, completely defined by \(n\) and the image of \(\alpha\).
Thus we choose to encode \(A\) as the matrix corresponding to \(\alpha\).
We say representations \(\alpha:\Z^k\to\Z^l\) and \(\beta:\Z^m\to \Z^n\) are {\em equivalent} if \(l=n\) and \(\im(\alpha)=\im(\beta)\).

A morphism \(f:A\to B\) of finitely generated abelian groups in terms of this representation gives a commutative diagram of exact sequences
\begin{equation*}\label{eq:abgp_morphism}
\begin{tikzcd}[column sep=small]
\Z^k \arrow{r}{\alpha} & \Z^{l} \arrow{r} \arrow[dashed]{d}{\varphi} & A \arrow{r} \arrow{d}{f} & 0 \\
\Z^m \arrow{r}{\beta} & \Z^n \arrow{r} & B \arrow{r} &0.
\end{tikzcd} 
\tag{\ref{sec:abgp}.1}
\end{equation*}
Here \(\varphi\) is any morphism that makes the diagram commute. 
We encode \(f\) by the matrix representing \(\varphi\).
Important to note is that not every \(\varphi\) defines a morphism \(f:A\to B\). 
It defines a morphism precisely when \(\im(\varphi\circ \alpha)\subseteq\im(\beta)\), however it is not immediately obvious how to test this.
Computing the composition of morphisms and evaluating morphisms in this form is straightforward, as it is just matrix multiplication.

To work with abelian groups in our algorithms it takes more than just to specify an encoding. 
The following is a list, in no particular order, of operations we would like to be, and later will be, able to perform in polynomial time.
\begin{enumerate}
\item decide whether a matrix encodes a morphism of given abelian groups;
\item compute kernels, images and cokernels of homomorphisms;
\item test if a homomorphism is injective/surjective; 
\item decide if two homomorphisms are equal;
\item decide if there exists an element in the preimage of a group element under a homomorphism and if so compute such an element;
\item compute direct sums, tensor products and homomorphism groups of pairs of abelian groups; 
\item compute the order of a given group element;
\item compute the order/exponent of a finite abelian group;
\item decide whether an exact sequence splits and if so compute a corresponding section and retraction;
\item compute the torsion subgroup of an abelian group;
\item write an abelian group as a direct sum of cyclic groups.
\end{enumerate}
We will spend this section working towards the last entry of this list: An algorithmic version of the structure theorem of finitely generated abelian groups.

All algorithms for the above problems will be straight-forward. 
The only serious complication arises at the fundamentals, where we reduce to linear algebra over \(\Z\).
Here we encounter the LLL-algorithm, a lattice basis reduction algorithm.

As a rule of thumb, all problems on finitely generated abelian groups can be solved in polynomial time, unless the output of the problem is not polynomially bounded or one implicitly requires prime factorization (c.f.\ Algorithm~\ref{alg:explict_computation} and Algorithm~\ref{alg:factoring}). For the first exception, consider the computation of the group \(A^k\) for an abelian group \(A\) and positive integer \(k\).
For the second, we should not expect to be able to factor a finite abelian group as a product of cyclic groups of prime-power order, since this would provide a factorization of \(n\) if we input \(\Z/n\Z\).

Finally, we address an important subtlety that arises from our choice of encoding.
As an exercise, one can prove the following lemma.

\begin{lemma}\label{lem:dlp}
Assuming the above problems have polynomial-time algorithms, we may solve the \emph{discrete logarithm problem} in polynomial time. 
That is, given an abelian group \(A\) and elements \(a,b\in A\), decide whether there exists some positive integer \(n\) such that \(na=b\) and if so compute such \(n\).
\end{lemma}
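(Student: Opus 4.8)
The plan is to reduce everything to the primitives on the list above, chiefly the computation of kernels and of preimages under homomorphisms. Present $A$ as $\coker(\alpha)$ for some $\alpha\colon\Z^m\to\Z^r$, so that $a$ and $b$ are given by lifts $\tilde a,\tilde b\in\Z^r$. First I would build the homomorphism $g\colon\Z\to A$ with $g(1)=a$: in the morphism encoding~(\ref{sec:abgp}.1) this is just the column vector $\tilde a$, and it is a bona fide morphism since the relevant composite $\Z^0\to\Z\to\Z^r$ is zero and hence lands in $\im(\alpha)$. The condition ``$na=b$ for some $n$'' is then exactly ``$\tilde b-n\tilde a\in\im(\alpha)$ for some $n$'', i.e.\ $b\in\im(g)$; by item~5 we can decide this and, when it holds, produce a witness $n_0\in\Z$ with $n_0a=b$.

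The only subtlety is the requirement that $n$ be \emph{positive}. To handle it I would also compute $\ker(g)$, a subgroup of $\Z$, writing $\ker(g)=d\Z$ with $d\in\Z_{\ge0}$ --- equivalently, $d$ is the order of $a$ (item~7), with $d=0$ signalling infinite order. If $b\notin\im(g)$ the answer is ``no''. Otherwise the full solution set of $na=b$ is the coset $n_0+d\Z$. If $d\ge 1$ this coset always meets $\Z_{>0}$: reduce $n_0$ modulo $d$ to $n_1\in\{0,\dots,d-1\}$ and return $n_1$ if $n_1\ge 1$, or $d$ if $n_1=0$. If $d=0$ the solution set is $\{n_0\}$, so return $n_0$ when $n_0\ge 1$ and ``no'' otherwise; note that $n_0\le 0$ here includes the degenerate case $n_0=0$, which forces $b=0$ together with $a$ of infinite order, and indeed no positive $n$ works.

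Correctness is immediate from this case analysis, so the work is in the run-time bound, but that is essentially free: every step invoked --- encoding the morphism $g$, computing $\ker(g)$, testing membership in $\im(g)$ and extracting $n_0$, and one reduction modulo $d$ --- is polynomial-time by the hypothesis of the lemma, and the same hypothesis forces the order $d$ and the witness $n_0$ to have polynomially bounded bit-length, so the final modular reduction is cheap. I expect the only place one can slip is the bookkeeping around the positivity constraint and its degenerate cases ($b=0$, or $a$ of infinite order); the rest is a routine translation of the group-theoretic statement into the linear-algebra primitives of this section. Alternatively one can bypass the map $\Z\to A$ and solve the single linear Diophantine system $k\tilde a-\alpha y=\tilde b$ over $\Z$ directly, reading off the sublattice of attainable values of $k$ by a kernel-and-projection computation; the positivity discussion is then word-for-word the same.
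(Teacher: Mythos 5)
The paper leaves this lemma as an exercise (Exercise 3.3) rather than proving it, so there is no textual proof to compare against; but your argument is correct and is almost certainly the intended solution. You reduce cleanly to the listed primitives: representing $g\colon\Z\to A$ (with the trivial verification that any $1\times r$ matrix gives a valid morphism out of the free group $\Z$), using the preimage routine to test $b\in\im(g)$ and extract a witness $n_0$, and using the kernel/order computation to get $\ker(g)=d\Z$. The one place such proofs typically slip — the positivity constraint and the degenerate cases $d=0$, $n_1=0$, $b=0$ — you handle explicitly and correctly: the coset $n_0+d\Z$ always meets $\Z_{>0}$ when $d\geq 1$ (via $n_1$ or $d$), while for $d=0$ you return $n_0$ only when it is positive. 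The complexity argument is also sound, since each primitive outputs data of polynomially bounded length by hypothesis, so $n_0$ and $d$ are small and the final modular reduction is cheap.
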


\noindent It is well known that the discrete logarithm problem for \(\F_q^*\) or elliptic curves over finite fields is difficult, i.e.\ not known to be solvable in polynomial time, even though both are finitely generated abelian groups (see Section~2 of \cite{DiffieHellman}).
The apparent paradox is solved by observing that the difficulty is in representing \(\F_q^*\) and its elements in our encoding. 
For starters, we need to write down generators for \(\F_q^*\) and subsequently write the input to our algorithms in terms of these generators. Doing so is almost equivalent to the discrete logarithm problem.

\begin{exercise}
Prove Lemma~\ref{lem:dlp}.
\end{exercise}

\begin{exercise}[Product, Proposition 2.3.16 in \cite{Iuliana}]
Show that there exists a po\-ly\-no\-mial-time algorithm that, given finitely generated abelian groups \(A\) and \(B\), computes the group \(A\times B\) and the corresponding inclusions and projections.
\end{exercise}

\subsection{Lattices and short bases}

To understand general finitely generated abelian groups, we first need to understand the simplest instances, the free abelian groups. 
It will turn out to be fruitful to consider free abelian groups together with a positive-definite inner product.
Any algorithm that keeps the vectors short with respect to the inner product, also keeps the vectors short with respect to length of the encoding, so that our algorithms have a chance to run in polynomial time.
This will allow us later to compute images and kernels of linear maps.

\begin{definition}
A \emph{Euclidean (vector) space} is a finite-dimensional positive-definite real inner product space.
For elements \(x\) and \(y\) we denote their inner product by \(\langle x,y\rangle\), and will write \(q(x)=\langle x,x\rangle\) for the \emph{square norm} of \(x\).
Note that Euclidean spaces come with a natural metric, and in particular they are topological spaces.
A \emph{lattice} is a discrete subgroup of a Euclidean space, i.e.\ a subgroup \(\Lambda\) for which \(\inf\{ q(x) : x\in\Lambda \setminus\{0\}\}>0\). An isomorphism of lattices is an inner product preserving isomorphism of groups.
\end{definition}

\begin{example}\label{ex:order_lattice}
\textbf{1.} For \(n\in\Z_{\geq 0}\) the vector space \(\R^n\) has an inner product given by
\[\langle(x_1,\dotsc,x_n),(y_1,\dotsc,y_n)\rangle = \sum_{i=1}^n x_i y_i.\]
Any basis (as \(\R\)-module) of \(\R^n\) is a basis (as \(\Z\)-module) of a lattice in \(\R^n\) (Proposition~3.3 in \cite{LatticeIntro}). The standard basis generates the lattice \(\Z^n\).

\textbf{2.} A Euclidean space one naturally encounters in number theory is \(K\tensor_\Z\R\), for any number field \(K\), which we equip with the inner product
\begin{align*}
\langle x,y\rangle = \frac{1}{[K:\Q]} \sum_{\sigma:K\tensor_\Z\R\to\C} \sigma(x) \cdot \overline{\sigma(y)},
\end{align*}
where the sum ranges over all \(\R\)-algebra homomorphisms. In this Euclidean space every order of \(K\) is a lattice (Lemma~10.3 in \cite{Stevenhagen}). 
\end{example}

\begin{exercise}
Let \(V\) be a Euclidean vector space. Show that the inner product is uniquely determined by \(q\), i.e.\ give a formula for \(\langle x,y\rangle\) in terms of \(q\).
\end{exercise}

\begin{definition}\label{def:rk}
For a ring \(R\) and an \(R\)-module \(M\) we say \(M\) is free if \(M\cong \bigoplus_{s\in S} R\) for some set \(S\), i.e.\ it has a basis.
If \(R\) is non-zero and commutative and \(M\) is free, then the cardinality of such \(S\) is unique, and we write \(\rk_R M=\rk M\), the \emph{rank} of \(M\), for this cardinality.
\end{definition}

\begin{proposition}[cf.\ Proposition 4.1 in \cite{LatticeIntro}]\label{prop:lattice_eq}
Any lattice \(\Lambda\) in a Euclidean space \(V\) is a free \(\Z\)-module with \(\rk \Lambda \leq \dim V\) and the restriction of the inner product to \(\Lambda\) is \(\Z\)-bilinear, real-valued, symmetric and satisfies \(\inf\{\langle x,x\rangle\,|\, x\in\Lambda\setminus\{0\}\}>0\).
Conversely, every free \(\Z\)-module \(\Lambda\) of finite rank equipped with a \(\Z\)-bilinear, real-valued, symmetric form \(\varphi\) for which \(\inf\{\varphi(x,x)\mid x\in\Lambda\setminus\{0\}\}>0\) can be embedded in a Euclidean vector space such that the inner product restricted to \(\Lambda\) equals \(\varphi\). \qed
\end{proposition}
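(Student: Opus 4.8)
The plan is to split into the two implications, and in each direction to reduce the real-analytic content to the single fact that a discrete subgroup of $\R^d$ is finitely generated and free. For the forward direction, let $\Lambda\subseteq V$ be a lattice. The inner product on $V$ restricts to a $\Z$-bilinear, real-valued, symmetric form on $\Lambda$, and the stated infimum condition is literally the definition of discreteness, so those assertions are immediate; the only real work is to show $\Lambda$ is free of rank at most $\dim V$. First I would pass to $W=\R\Lambda$, the $\R$-span of $\Lambda$ inside $V$, so that $\Lambda$ spans $W$; it suffices to prove $\Lambda\cong\Z^{\dim W}$. Pick an $\R$-basis $e_1,\dots,e_r$ of $W$ consisting of elements of $\Lambda$ (possible since $\Lambda$ spans $W$), let $\Lambda_0=\Z e_1+\dots+\Z e_r\subseteq\Lambda$, and consider the fundamental parallelepiped $P=\{\sum t_i e_i : 0\le t_i<1\}$. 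The standard argument: every coset of $\Lambda_0$ in $\Lambda$ has a representative in the bounded set $\overline{P}$, and $\Lambda\cap\overline{P}$ is finite because $\Lambda$ is discrete and $\overline{P}$ is compact, so $[\Lambda:\Lambda_0]<\infty$. Then $\Lambda$ is a finitely generated subgroup of $W$, torsion-free (it embeds in a real vector space), hence free, and sandwiched in rank between $\Lambda_0$ and a group containing it with finite index, so $\rk\Lambda=r=\dim W\le\dim V$. I would cite Proposition~4.1 of \cite{LatticeIntro} for this if a reference is preferred to spelling it out.

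For the converse, suppose $(\Lambda,\varphi)$ is a free $\Z$-module of finite rank $n$ with $\varphi$ $\Z$-bilinear, real-valued, symmetric, and $c:=\inf\{\varphi(x,x):x\in\Lambda\setminus\{0\}\}>0$. Choose a $\Z$-basis $b_1,\dots,b_n$ of $\Lambda$ and set $V=\Lambda\otimes_\Z\R\cong\R^n$, with $b_1,\dots,b_n$ now an $\R$-basis. Extend $\varphi$ $\R$-bilinearly to a symmetric bilinear form on $V$; the key point is that this extension is positive definite. Indeed, if it were not, there would be a nonzero $v\in V$ with $\varphi(v,v)\le 0$, and then — approximating the "direction" of $v$ by rational, hence integral-after-scaling, vectors and using that a symmetric form is continuous — one could produce a nonzero lattice vector $x$ with $\varphi(x,x)<c$, contradicting the infimum bound. (Concretely: the set $\{v\in V:\varphi(v,v)<c\}$ would be an unbounded neighborhood of $0$, yet it must contain a fundamental domain's worth of lattice points only near $0$ — more cleanly, an indefinite or negative-semidefinite form takes values $\le 0$ on a subspace of positive dimension, which meets $\Lambda$ nontrivially, forcing $c\le 0$.) Once $\varphi$ is known positive definite on $V$, $(V,\varphi)$ is a Euclidean space, $\Lambda\hookrightarrow V$ has $\varphi$ restricting to the given form, and $\Lambda$ is discrete precisely because $c>0$, so $\Lambda$ is a lattice in $V$ as required.

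The main obstacle is the positive-definiteness claim in the converse: one must rule out the degenerate and indefinite cases from the single hypothesis that $\varphi(x,x)$ is bounded away from $0$ on nonzero \emph{lattice} vectors, and the cleanest route is to exhibit a nonzero lattice vector in the null cone (or the negative cone) whenever $\varphi$ fails to be positive definite. If $\varphi$ has a nonzero radical $N=\{v:\varphi(v,w)=0\ \forall w\}$, then $N$ is a rational subspace (its defining equations have rational, even integral, coefficients in the basis $b_i$ once we note $\varphi(b_i,b_j)\in\R$ — here one must be slightly careful, as the entries need not be rational; better to argue that $N$ is an $\R$-subspace of positive dimension spanned by vectors with rational coordinates, since it is a kernel of a real matrix), so $N\cap\Lambda\ne 0$, giving $c\le 0$; similarly a negative eigenspace of the Gram matrix of $\varphi$ is a positive-dimensional rational subspace on which $\varphi(x,x)<0$ for nonzero $x$. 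I would invoke Proposition~4.1 of \cite{LatticeIntro} to shortcut both directions if we do not want to reproduce this argument in full, and otherwise present the eigenspace version, which is the most robust. Everything else — $\Z$-bilinearity, symmetry, real-valuedness, and the equivalence of "discrete" with the infimum condition — is a direct unwinding of definitions and needs no comment.
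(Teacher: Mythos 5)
Your forward direction is correct and standard: pass to $W=\R\Lambda$, take $e_1,\dots,e_r\in\Lambda$ an $\R$-basis of $W$, use compactness of the closed fundamental parallelepiped together with discreteness to get $[\Lambda:\Lambda_0]<\infty$, and conclude $\Lambda$ is free of rank $r=\dim W\le\dim V$. The $\Z$-bilinearity, symmetry, realness, and the infimum condition are indeed immediate.

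The converse, however, has a genuine gap in the positive-\emph{semi}definite degenerate case. You assert that the radical $N=\{v:\varphi(v,\cdot)\equiv 0\}$ (or a negative eigenspace of the Gram matrix) is spanned by rational vectors ``since it is a kernel of a real matrix.'' This is false. The Gram matrix $G=(\varphi(b_i,b_j))$ has real, not rational, entries, and the kernel of a real matrix need not contain any nonzero rational vector: for $G=\bigl(\begin{smallmatrix}1&\sqrt{2}\\\sqrt{2}&2\end{smallmatrix}\bigr)$ the kernel is $\R\cdot(-\sqrt{2},1)$, which meets $\Z^2$ only in $0$. So the conclusion ``$N\cap\Lambda\neq 0$, giving $c\le 0$'' does not follow, and the same objection defeats the ``negative eigenspace is a rational subspace'' version. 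Your earlier continuity argument does correctly dispatch the case of a strict negative value (a nonzero $v$ with $\varphi(v,v)<0$ is approximated by a rational $w$ with $\varphi(w,w)<0$, and clearing denominators gives a lattice vector with negative norm, contradicting $c>0$). But when $v$ lies in the radical one has $\varphi(v,v)=0$ and all nearby vectors have nonnegative norm; writing $w=q\cdot(w/q)$ with $w/q\to v$ gives $\varphi(w,w)=q^2\,\varphi(w/q,w/q)$, and nothing forces this below $c$ as $q\to\infty$, so the approximation argument alone does not close this case.

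The clean fix is to feed the forward direction back in. Suppose the extension of $\varphi$ to $V=\Lambda\otimes_\Z\R$ is positive semidefinite with nontrivial radical $N$. Then $\Lambda\cap N=0$ (else $c\le 0$), so $\Lambda$ injects into $V/N$; the induced form on $V/N$ is positive definite, making $V/N$ a Euclidean space of dimension $\dim V-\dim N<\rk\Lambda$; and the image of $\Lambda$ still satisfies the infimum condition, hence is a lattice in $V/N$. The forward direction then bounds its rank by $\dim V/N<\rk\Lambda$, a contradiction. With this patch (and keeping your continuity argument for the indefinite case) the converse is complete.
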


We say a lattice \(\Lambda\) in a Euclidean space \(V\) is \emph{full-rank} if \(\rk \Lambda=\dim V\).

Proposition~\ref{prop:lattice_eq} shows that we have an equivalent definition of a lattice that does not require an ambient vector space. 
The bilinear form \(\varphi\) in the proposition is again naturally given by a matrix \((\varphi(b_i,b_j))_{1\leq i,j\leq n}\), the \emph{Gram-matrix}, where \((b_1,\dotsc,b_n)\) is the (\(\Z\)-)basis encoding \(\Lambda\). 
For our computational purposes it is practical to restrict to Gram-matrices with rational entries.
This will be our encoding for lattices.
Unfortunately, the Gram-matrix of the inner product from Example~\ref{ex:order_lattice}.2 is generally not of this form.

An algorithmic problem we will encounter is computing a `short basis' for a lattice \(\Lambda\subseteq\Z^n\).

\begin{definition}
Let \(\Lambda\) be a lattice. 
Let \((b_1,\dotsc,b_n)\) be a basis for \(\Lambda\) and consider its Gram-matrix \(M=(\langle b_i,b_j\rangle)_{1\leq i,j\leq n}\).
We define the \emph{determinant} of \(\Lambda\) to be \(\det(\Lambda)=|\det(M)|^{1/2}\).
\end{definition}

\begin{exercise}
Show that the determinant of a lattice does not depend on the choice of basis and is thus well-defined. \\
\emph{Hint:} Show that any two bases for \(\Lambda\) `differ' by a matrix of unit determinant.
\end{exercise}

\begin{exercise}
Let \(\Lambda\subseteq\R^n\) be a lattice of rank \(n\) and \(S\subseteq \R^n\) a measurable bounded set, i.e.\ a set with a well-defined finite volume.
Prove that if the addition map \(\Lambda \times S \to \R^n\) is injective, then \(\textup{vol}(S)\leq \det(\Lambda)\) holds, and prove that if it is surjective, then \(\textup{vol}(S)\geq \det(\Lambda)\) holds.
\end{exercise}

The determinant \(\det(\Lambda)\) equals the volume of the parallelepiped spanned by a basis for \(\Lambda\).
Since the determinant is an invariant, finding a `shorter' basis is equivalent to finding a `more orthogonal' basis.
In the lattice \(\Z b_1+\Z b_2\) below we can find a better basis.
\begin{center}
\begin{tikzpicture}[scale=.9]
\filldraw[lightgray] (0,0) -- (5,.5) -- (8,.3) -- (3,-.2) -- (0,0);
\draw[->,very thick] (0,0) -- (3,-.2);
\draw[->,very thick] (0,0) -- (5,.5);
\draw[dotted] (5,.5) -- (-1,.9);
\filldraw[black] (-1,.9) circle (1pt);
\filldraw[black] (2,.7) circle (1pt);
\node (B2) at (5,.8) {$b_2$};
\node (B1) at (3.4,-.4) {$b_1$};
\node (B3) at (-1,.5) {$b_2-2b_1$};
\end{tikzpicture}
\end{center}
Taking \(c_1=b_1\) and \(c_2=b_2-2b_1\) produces the following basis:
\begin{center}
\begin{tikzpicture}[scale=.9]
\filldraw[lightgray] (11,0) -- (10,.9) -- (13,.7) -- (14,-.2) -- (11,0);
\draw[->,very thick] (11,0) -- (14,-.2);
\draw[->,very thick] (11,0) -- (10,.9);
\node (C2) at (9.6,.9) {$c_2$};
\node (C1) at (14.4,-.4) {$c_1$};
\end{tikzpicture}\phantom{AAAAAAAAAAAAA}
\end{center}
For a Euclidean space the Gram--Schmidt algorithm transforms a basis into an orthogonal one as follows.

\begin{definition}\label{def:Gram-Schmidt}
Let \(V\) be a Euclidean space with basis \(B=(b_1,\dotsc,b_n)\). We iteratively define
\[\mu_{ij} = \frac{\langle b_i,b_j^*\rangle}{\langle b_j^*,b_j^*\rangle} \quad \text{for \(1\leq j<i\) and} \quad b_i^* = b_i-\sum_{j<i} \mu_{ij} b_j^* \quad\text{for \(1\leq i\leq n\).} \]
We call \(B^*=(b_1^*,\dotsc,b_n^*)\) and \((\mu_{ij})_{j<i}\) the \emph{Gram--Schmidt basis} and \emph{Gram--Schmidt coefficients}, respectively, corresponding to \(B\).
\end{definition}

When interpreting \(M=(\mu_{ij})_{j<i}\) as an upper-triangular matrix, we note that that \((\id+M)B^*=B\). 
In particular \(\det(B)=\det(B^*)\).
Since \(b_1^*,\dotsc,b_n^*\) are indeed pairwise orthogonal, they form an orthogonal basis for \(V\).
Sadly the Gram--Schmidt coefficients will generally not be integers, meaning that if \(B\) is a basis for a lattice \(\Lambda\), then generally \(B^*\) will not be. 
This is quite unsurprising, as not every lattice even has an orthogonal basis. 
A possible solution is to round the Gram-Schmidt coefficients to integers in every step, so that we are guaranteed to obtain a basis for \(\Lambda\). 
However, this does not yield the necessary bounds on our basis. 
In the next section we will state the existence of a better algorithm.

\begin{exercise}\label{ex:hadamard}
\begin{enumex}
\item Let \((b_1,\dotsc,b_n)\) be a basis for a lattice \(\Lambda\) in a Euclidean space~\(V\). Write \(\Lambda_k=\sum_{i\leq k} \Z b_i\) for all \(0\leq k\leq n\). 
Show that
\[q(b_i)\geq q(b_i^*) = \Big(\frac{\det(\Lambda_i)}{\det(\Lambda_{i-1})}\Big)^2.\]
\item Conclude \emph{Hadamard's inequality} \cite{Hadamard}: For an invertible matrix \(B\in\R^{n\times n}\) with columns \(b_1,\dotsc,b_n\) we have
\[|\det(B)|\leq \prod_{i=1}^n \|b_i\|,\]
with equality if and only if the \(b_i\) are pairwise orthogonal.
\end{enumex}
\end{exercise}

\begin{exercise}\label{ex:quotient_lattice}
Let \(V\) be a Euclidean space.
For a subspace \(W\subseteq V\) we write \(W^\bot = \{ v \in V \mid \langle v,W\rangle = 0 \}\).
\begin{enumex}
\item Show that for all \(W\subseteq V\) the natural map \(W^\bot \to V/W\) is an isomorphism of vector spaces. 
\end{enumex}
We equip \(V/W\) with the natural Euclidean vector space structure induced by \(W^\bot\). Suppose \(\Lambda\subseteq V\) is a full-rank lattice.
\begin{enumex}[resume]
\item Show that if \(W\subseteq V\) is a subspace and the image \(\overline{\Lambda}\) of \(\Lambda\) in \(V/W\) is a lattice, then \(\det(\Lambda)\geq \det(\overline{\Lambda})\cdot\det(\Lambda\cap W)\).
\end{enumex}
Suppose that \(\Lambda'\subseteq\Lambda\) is a subgroup such that \(\Lambda/\Lambda'\) is torsion free. 
\begin{enumex}[resume]
\item Show that the natural map \(\Lambda/\Lambda' \to V/\R\Lambda'\) is injective and that its image is a lattice.
\item Show that \(\det(\Lambda)=\det(\Lambda/\Lambda')\cdot\det(\Lambda')\).
\end{enumex}
\end{exercise}

\begin{exercise}\label{ex:general_dagger}
Let \(R\) be a commutative ring, \(Z\subseteq R\) a subring, \(V\) a finitely generated free \(R\)-module, and \(\varphi: V\times V\to R\) a symmetric \(R\)-bilinear map such that the natural map \(V\to\Hom_R(V,R)\) given by \(x\mapsto (y\mapsto \varphi(x,y))\) is an isomorphism. 
Let \(b_1,\dotsc,b_n\) be an \(R\)-basis for \(V\) and define \(L=\sum_i Z\cdot b_i\).
Consider \(L^\dagger=\{x\in V \mid \varphi(x,L)\subseteq Z\}\). Show that \(L^\dagger\) is generated as a \(Z\)-module by an \(R\)-basis \(b_1^\dagger,\dotsc,b_n^\dagger\) of \(V\) such that \(\varphi(b_i,b_j^\dagger)\) equals \(1\) or \(0\) according as \(i=j\) or not, and that \(L^{\dagger\dagger}=L\).
\end{exercise}

\begin{exercise}\label{ex:lattice_dual'}
Suppose that \(\Lambda\) is a lattice. Show that \(\Lambda^\dagger\), as in Exercise~\ref{ex:general_dagger} with \(Z=\Z\), \(R=\R\) and \(\varphi\) the inner product on \(\Lambda\), is a lattice with \(\det(\Lambda)\det(\Lambda^\dagger)=1\).
\end{exercise}

We call \(\Lambda^\dagger\) the {\em dual lattice} or {\em polar lattice} of \(\Lambda\).

\begin{exercise}
Let \(\Lambda\) be a lattice. Verify that \(\Hom(\Lambda,\Z)\) is a lattice when equipped with the square norm and inner product
\[ q(f) = \sup_{x\in\Lambda\setminus\{0\}} \frac{f(x)^2}{q(x)} \quad\text{and}\quad \langle f,g\rangle = \frac{q(f+g)-q(f-g)}{4} \]
respectively. Show that \(\Hom(\Lambda,\Z)\) is isomorphic to \(\Lambda^\dagger\), i.e.\ there exists an isomorphism of abelian groups that respects the inner products.
\end{exercise}

\begin{exercise}[Section~5 in \cite{Lattices}]
Let \(V_1\) and \(V_2\) be Euclidean vector spaces and let \(f:V_1\to V_2\) be a linear map.
\begin{enumex}
\item Show that there exists a unique linear map \(f^\dagger:V_2\to V_1\), which we will call the {\em adjoint} of \(f\), such that for all \(x\in V_1\) and \(y\in V_2\) we have \(\langle f(x),y\rangle=\langle x,f^\dagger(y)\rangle\).
\item Suppose \(\Lambda_i\subseteq V_i\) are full-rank lattices and \(f \Lambda_1 \subseteq \Lambda_2\). 
Show that \(f^\dagger \Lambda_2^\dagger \subseteq \Lambda_1^\dagger\). 
\end{enumex}
Suppose that \(g:\Lambda_1\to\Lambda_2\) is a group homomorphism between lattices.
\begin{enumex}[resume]
\item Conclude that \(g\) has a well-defined adjoint.
\item Show that
\[ \det(\ker(g)) \cdot \det(\im(g)) \cdot\det(\Lambda_1^\dagger) = \det(\ker(g^\dagger)) \cdot \det(\im(g^\dagger))\cdot \det(\Lambda_2).\]
\end{enumex}
\end{exercise}

\subsection{The LLL-algorithm}

In this section we state the existence the LLL-algorithm, which efficiently produces a `small' basis for a given lattice.
We will not prove its correctness, nor will we actually describe the algorithm.
What we will do is define what we mean by `small' bases in the context of the LLL-algorithm and derive some of their properties.
In this section we will use \cite{Lattices} as our reference. 
Further references include \cite{LLL} and \cite{Silverman}.

\begin{definition}[Section~10 in \cite{Lattices}]
Let \(\Lambda\) be a lattice with basis \(B=(b_1,\dotsc,b_n)\) and let \((b_1^*,\dotsc,b_n^*)\) and \((\mu_{ij})_{j<i}\) be its corresponding Gram--Schmidt basis and coefficients as defined in Definition~\ref{def:Gram-Schmidt}. 
Let \(c \geq 1\). We say \(B\) is \(c\)-reduced if
\begin{enumerate}[nosep,label={(\arabic*)}]
\item For all \(1\leq j < i \leq n\) we have \(|\mu_{ij}|\leq \tfrac{1}{2}\).
\item For all \(1\leq k < n\) we have \(c q(b_{k+1}^*)\geq q(b_k^*)\).
\end{enumerate} 
\end{definition}

Note that the first condition states that \(B\) is close to the Gram--Schmidt basis. 
The \(b_i^*\) can be interpreted as elements of the lattice \(\Lambda/\Lambda_{i-1}\), with the notation as in Exercise~\ref{ex:hadamard}.a and Exercise~\ref{ex:quotient_lattice}.c.
The second condition, the {\em Lov\'asz condition}, for \(c=1\) implies that \(b_i^*\) is in fact a shortest non-zero vector of \(\Lambda/\Lambda_{i-1}\). For \(c>1\) this condition can be seen as an approximation.
We may compute for \(c>\frac{4}{3}\) a \(c\)-reduced basis, and in particular it always exists.
That we may in fact compute it in polynomial time is non-trivial.

\begin{theorem}[LLL-algorithm \cite{LLLoriginal}]\label{thm:LLL}
There exists an algorithm that, given \(c>\frac{4}{3}\) and a lattice \(\Lambda\), produces a \(c\)-reduced basis for \(\Lambda\). For fixed \(c\) it runs in polynomial time. \qed
\end{theorem}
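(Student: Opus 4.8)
\section*{Proof proposal for Theorem~\ref{thm:LLL}}

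The plan is to present the original LLL procedure and analyse it in three stages: correctness, a polynomial bound on the number of iterations, and a polynomial bound on the bit-size of every intermediate quantity. Since the paper encodes a lattice by a rational Gram-matrix, all arithmetic will be exact arithmetic in \(\Q\), and we never need to realise \(\Lambda\) inside some \(\R^n\). The algorithm maintains a basis \(B=(b_1,\dots,b_n)\) — represented through its Gram-matrix — together with the Gram--Schmidt data \(b_i^*\) and \(\mu_{ij}\) of Definition~\ref{def:Gram-Schmidt} and a pointer \(k\in\{2,\dots,n+1\}\), initially \(k=2\). While \(k\le n\) it does the following: first it \emph{size-reduces} \(b_k\), i.e.\ for \(j=k-1,k-2,\dots,1\) it replaces \(b_k\) by \(b_k-m\,b_j\) with \(m\) a nearest integer to \(\mu_{kj}\), updating the \(\mu_{kj'}\), so that afterwards \(|\mu_{kj}|\le\tfrac12\) for all \(j<k\); then it tests the Lov\'asz condition at \(k-1\): if \(c\,q(b_k^*)<q(b_{k-1}^*)\) it swaps \(b_{k-1}\) and \(b_k\), recomputes the affected Gram--Schmidt data, and decreases \(k\) by one (not below \(2\)); otherwise it increases \(k\) by one. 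It stops once \(k=n+1\).

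Correctness is then immediate: when the loop exits, size-reduction has been applied at every index and no Lov\'asz test fails, which are exactly conditions (1) and (2) of \(c\)-reducedness. So the only real content is termination in polynomially many steps.

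For the iteration count I would use the potential \(D=\prod_{i=1}^{n}d_i\) with \(d_i=\det(\Lambda_i)^2=\prod_{j\le i}q(b_j^*)\), where \(\Lambda_i=\sum_{j\le i}\Z b_j\) as in Exercise~\ref{ex:hadamard}. Size-reduction alters no \(b_i^*\) (only the \(\mu\)'s), hence leaves every \(d_i\) and therefore \(D\) unchanged. A swap at position \(k-1\) changes \(\Lambda_j\) for no \(j\neq k-1\): for \(j<k-1\) the first \(j\) vectors are untouched, and for \(j\ge k\) the lattice \(\Lambda_j\) is the same, only its basis is reordered; so only \(d_{k-1}\) changes, by the factor \(q(\tilde b_{k-1}^*)/q(b_{k-1}^*)\). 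Writing \(\mu=\mu_{k,k-1}\) after size-reduction, one computes \(\tilde b_{k-1}^*=b_k^*+\mu\,b_{k-1}^*\), so this factor equals \(q(b_k^*)/q(b_{k-1}^*)+\mu^2\); the failed Lov\'asz inequality gives \(q(b_k^*)/q(b_{k-1}^*)<1/c\) and \(|\mu|\le\tfrac12\) gives \(\mu^2\le\tfrac14\), so each swap multiplies \(D\) by less than \(\tfrac1c+\tfrac14\), which is \(<1\) precisely because \(c>\tfrac43\). Since the Gram-matrix is rational with some common denominator \(q\), each \(d_i\) is a positive element of \(q^{-i}\Z\), so \(D\ge q^{-n(n+1)/2}=:D_{\min}>0\); and \(\log D_{\mathrm{init}}\) is polynomial in the input size by an elementary determinant bound. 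Hence the number of swaps is \(O(\log(D_{\mathrm{init}}/D_{\min}))\), polynomial in the input length, and since between two swaps \(k\) only increases, the total number of iterations is polynomial; each iteration costs \(O(n^2)\) operations in \(\Q\).

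The genuine obstacle — the part I expect to require real work — is bounding the \emph{size} of the rationals that occur, so that these \(O(n^2)\) operations are each polynomial-time, not merely polynomial in number. Here one runs the standard bookkeeping of \cite{LLLoriginal} (see also \cite{Lattices}): throughout the run the \(d_i\) stay bounded above by \(D_{\mathrm{init}}\) and lie in \(q^{-i}\Z\), so \(q(b_i^*)=d_i/d_{i-1}\) is of controlled size both above and below; size-reduction keeps \(|\mu_{ij}|\le\tfrac12\) with \(d_j\mu_{ij}\) suitably integral; and, using \(q(b_i)\le q(b_i^*)+\tfrac14\sum_{j<i}q(b_j^*)\) for a size-reduced vector together with the above, the entries of the evolving Gram-matrix stay polynomially bounded. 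Granting this, and the routine correctness of the Gram--Schmidt update formulas after a swap, the theorem follows; alternatively, as the paper does, one may simply cite \cite{LLLoriginal} for the whole package.
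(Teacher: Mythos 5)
The paper does not prove Theorem~\ref{thm:LLL}; it states it with a \(\square\) and defers entirely to \cite{LLLoriginal} (indeed the surrounding text announces ``We will not prove its correctness, nor will we actually describe the algorithm''). So there is no internal proof to compare your proposal against.

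What you have written is, in outline, a correct reconstruction of the standard argument from \cite{LLLoriginal} and \cite{Lattices}. The main points all check out: on exit, the maintained loop invariant (the prefix \((b_1,\dotsc,b_{k-1})\) is already \(c\)-reduced) gives both conditions of \(c\)-reducedness; size-reduction fixes the Gram--Schmidt vectors and hence the potential \(D=\prod_i d_i\); a swap at \(k-1\) changes only \(d_{k-1}\), and the identity \(\tilde b_{k-1}^*=b_k^*+\mu_{k,k-1}b_{k-1}^*\) gives the multiplicative drop \(q(b_k^*)/q(b_{k-1}^*)+\mu_{k,k-1}^2<1/c+1/4<1\) exactly when \(c>4/3\); the lower bound \(D\ge q^{-n(n+1)/2}\) from integrality of the \(d_i\) in \(q^{-i}\Z\) then caps the number of swaps, and the increment/decrement accounting caps the total iteration count. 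You also correctly identify the bit-size control (\(d_j\mu_{ij}\) integral, the \(d_i\) bounded by \(D_{\mathrm{init}}\), \(q(b_i)\le q(b_i^*)+\tfrac14\sum_{j<i}q(b_j^*)\)) as the genuinely technical part, which you sketch rather than carry out; that is the same place where \cite{LLLoriginal} spends most of its effort, and leaving it at the level of a pointer is reasonable for a proposal. In short: the proposal is correct and is the proof the paper chose to cite rather than reproduce.

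One small remark you may wish to make explicit: the claim that on exit the whole basis is reduced is not quite ``immediate'' from the stopping condition alone; it relies on the invariant that whenever \(k\) is incremented past an index, conditions (1) and (2) hold for that prefix and that subsequent size-reductions and swaps at larger indices leave \(b_1^*,\dotsc,b_{k-2}^*\) and the corresponding \(\mu_{ij}\) with \(i<k-1\) untouched. You implicitly use this, but stating the invariant would make the ``correctness is immediate'' sentence actually immediate.
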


We should warn the reader that the literature contains various definitions of a `reduced basis', and even in the context of the LLL-algorithm there are at least two. The one we gave is sufficient for our purposes.

\begin{definition}\label{def:succ_min}
Let \(\Lambda\) be a lattice of rank \(n\).
For \(0<i\leq n\) we define the \emph{\(i\)-th successive minimum} to be the value
\[ \lambda_i(\Lambda) = \min\{ r \in \R_{\geq 0}\,|\, \rk\langle x\in \Lambda \mid q(x)\leq r \rangle \geq i \}, \]
a quantity that indicates how large the generating vectors of a rank \(i\) sublattice need to be.
\end{definition}

A basis of vectors attaining the successive minima is the gold standard of `small' bases, although it does not always exist (Exercise~\ref{ex:succ_min_not_attain}). 
The following proposition gives bounds on how far away a reduced basis can be from these minima, which is the justification for calling reduced bases `small'.

\begin{proposition}\label{prop:succ_min}
Let \(c\geq \frac{4}{3}\) and suppose \((b_1,\dotsc,b_n)\) is a \(c\)-reduced basis for a lattice \(\Lambda\).
Then for all \(0<i\leq n\) we have \(c^{1-n} \cdot q(b_i)\leq \lambda_i(\Lambda)\leq c^{i-1} \cdot q(b_i)\).
\end{proposition}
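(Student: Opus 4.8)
The plan is to prove the two inequalities $c^{1-n}q(b_i)\le\lambda_i(\Lambda)$ and $\lambda_i(\Lambda)\le c^{i-1}q(b_i)$ separately, in both cases relating the lengths of the $b_j$ to those of the Gram--Schmidt vectors $b_j^*$ and exploiting the Lov\'asz condition to control how much $q(b_k^*)$ can shrink as $k$ grows. The basic building block I would establish first is a pair of sandwich estimates: from condition (1) of $c$-reducedness, $q(b_i)=q(b_i^*)+\sum_{j<i}\mu_{ij}^2q(b_j^*)$, and from condition (2), $q(b_j^*)\le c^{k-j}q(b_k^*)$ whenever $j\le k$ while $q(b_j^*)\ge c^{j-k}q(b_k^*)$ whenever $j\ge k$. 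Combining these one gets both $q(b_k^*)\le q(b_k)\le c^{k-1}q(b_k^*)$-type bounds and, more usefully, comparisons like $q(b_i)\le c^{\,\cdot}q(b_j^*)$ for the indices $j$ that will arise below. I'd record these as a short lemma-free computation at the start.

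For the lower bound $c^{1-n}q(b_i)\le\lambda_i(\Lambda)$: let $r=\lambda_i(\Lambda)$, so there exist $i$ linearly independent lattice vectors $x_1,\dots,x_i$ with $q(x_\ell)\le r$ for all $\ell$. Write each $x_\ell=\sum_{j}c_{\ell j}b_j$ with $c_{\ell j}\in\Z$ and let $m(\ell)=\max\{j:c_{\ell j}\ne 0\}$ be the "leading index". The key classical fact is that among $x_1,\dots,x_i$ some one, say $x_\ell$, has leading index $m(\ell)\ge i$ (otherwise all $x_\ell$ lie in the span of $b_1,\dots,b_{i-1}$, contradicting independence). For that $x_\ell$, projecting onto $b_{m(\ell)}^*$ gives $q(x_\ell)\ge c_{\ell,m(\ell)}^2\,q(b_{m(\ell)}^*)\ge q(b_{m(\ell)}^*)$. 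Now use the Lov\'asz condition to climb back down: $q(b_i^*)\le c^{\,m(\ell)-i}q(b_{m(\ell)}^*)\le c^{n-i}q(b_{m(\ell)}^*)\le c^{n-i}q(x_\ell)\le c^{n-i}r$, and then $q(b_i)\le c^{i-1}q(b_i^*)$ is too weak in the wrong direction, so instead I bound $q(b_i)$ directly: $q(b_i)=q(b_i^*)+\sum_{j<i}\mu_{ij}^2q(b_j^*)\le\big(1+\tfrac14\sum_{j<i}c^{i-j}\big)q(b_i^*)$ — this is where I have to be careful to land exactly on the stated exponent $n-1$, so I expect to instead argue more crudely via $q(b_i)\le c^{\,j-1}q(b_j^*)$ for a suitable $j\le i$ and chain with the above to get $q(b_i)\le c^{n-1}r$. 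Rearranging yields $c^{1-n}q(b_i)\le\lambda_i(\Lambda)$.

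For the upper bound $\lambda_i(\Lambda)\le c^{i-1}q(b_i)$: the vectors $b_1,\dots,b_i$ are $i$ linearly independent lattice vectors, so by definition $\lambda_i(\Lambda)\le\max_{1\le j\le i}q(b_j)$. It therefore suffices to show $q(b_j)\le c^{i-1}q(b_i)$ for every $j\le i$. Again write $q(b_j)=q(b_j^*)+\sum_{k<j}\mu_{jk}^2q(b_k^*)$; using $|\mu_{jk}|\le\tfrac12$ and $q(b_k^*)\le c^{j-k}q(b_j^*)$ this is at most $\big(1+\tfrac14(c+\cdots+c^{j-1})\big)q(b_j^*)$, and then the Lov\'asz condition gives $q(b_j^*)\le c^{i-j}q(b_i^*)\le c^{i-j}q(b_i)$. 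Assembling these and checking that the polynomial-in-$c$ prefactor is absorbed by the gap between $c^{i-j}$ and $c^{i-1}$ (here $c\ge\tfrac43$ is used) gives $q(b_j)\le c^{i-1}q(b_i)$, hence the claim.

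I expect the main obstacle to be purely bookkeeping: getting the exponents to come out as the clean $c^{1-n}$ and $c^{i-1}$ stated, rather than something slightly worse, since the naive telescoping produces extra polynomial-in-$c$ factors from the $\mu_{ij}$ terms. The resolution is to avoid expanding $q(b_i)$ term by term and instead compare $q(b_i)$ with a single well-chosen $q(b_j^*)$ (monotonicity of the $c^{k}q(b_k^*)$ chain lets one dominate the whole Gram--Schmidt tail by its extreme term), so that only powers of $c$ — never sums of them — ever appear. Once that trick is in place both inequalities are short.
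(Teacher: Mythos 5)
Your approach matches the paper's, which relegates the proof to Exercise~\ref{ex:succ_min}: expand $q(b_j)$ in Gram--Schmidt form, use $|\mu_{jk}|\le\frac12$ and the Lov\'asz chain $q(b_k^*)\le c^{j-k}q(b_j^*)$ for $k\le j$, and for the lower bound note that among $i$ linearly independent vectors of square norm at most $\lambda_i(\Lambda)$ at least one must have leading Gram--Schmidt index $\ge i$. The proof is correct, but two of your stated worries are misplaced. First, the bound $q(b_i)\le c^{i-1}q(b_i^*)$ is not ``too weak in the wrong direction''; it is exactly what you need. Combined with your $q(b_i^*)\le c^{n-i}\lambda_i(\Lambda)$ it gives $q(b_i)\le c^{n-1}\lambda_i(\Lambda)$ at once, which rearranges to the claimed lower bound, and your fallback ``$q(b_i)\le c^{j-1}q(b_j^*)$ for a suitable $j$'' is just this inequality with $j=i$. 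Second, the geometric sum does not need to be ``resolved'' by dominating the tail by one extreme term (that introduces a spurious factor of $i-1$); the cleaner fact is that $1+\frac14\sum_{j<i}c^{i-j}\le c^{i-1}$ holds for all $c\ge\frac43$, with equality at $c=\frac43$, by an induction whose step reduces to $c^{i-2}+\frac14 c^{i-1}\le c^{i-1}$, i.e.\ $\frac34 c\ge1$. This is precisely where the hypothesis $c\ge\frac43$ earns its keep, and it makes the prefactor disappear exactly rather than approximately. One last typo: for $j\ge k$ the Lov\'asz condition gives $q(b_j^*)\ge c^{k-j}q(b_k^*)$, not $c^{j-k}$; the condition controls decay of the $q(b_k^*)$, not growth.
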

\begin{proof}
See Exercise~\ref{ex:succ_min}, or Section~10 in \cite{Lattices}.
\end{proof}

\begin{exercise}\label{ex:succ_min_not_attain}
Show that there exists a lattice \(\Lambda\) for which no basis \(b_1,\dotsc,b_n\) attains the successive minima, i.e.\ satisfies \(q(b_i)=\lambda_i(\Lambda)\) for all \(i\). \\
\emph{Hint:} Consider \(2\Z^n\subseteq\Lambda\subseteq\Z^n\).
\end{exercise}

\begin{exercise}\label{ex:succ_min}
Let \(c\geq \frac{4}{3}\) and \(0<i\leq n\) and suppose \((b_1,\dotsc,b_n)\) is a \(c\)-reduced basis for \(\Lambda\).
\begin{enumex}
\item Show that \(q(b_j^*)\leq c^{i-j}q(b_i^*)\) for all \(j\leq i\).
\item Recall that \(b_i=b_i^* + \sum_{j<i} \mu_{ij} b_j^*\). Show that \(q(b_i)\leq c^{i-1} q(b_i^*)\).
\item Show that \( q(b_j)\leq c^{i-1} q(b_i^*) \leq c^{i-1} q(b_i)\) for all \(j\leq i\). 
\item Conclude that \(\lambda_i(\Lambda)\leq\max\{q(b_j)\mid j\leq i\}\leq c^{i-1} q(b_i)\).
\end{enumex}
Write \(\Lambda_k=\sum_{j\leq k} \Z b_j\) for all \(0\leq k\leq n\).
\begin{enumex}[resume]
\item Prove that for all \(0<k\leq n\) and \(x\in\Lambda_k\setminus\Lambda_{k-1}\) we have \(q(x)\geq q(b_k^*)\).
\end{enumex}
Write \(S=\{x\in\Lambda\mid q(x)\leq \lambda_i(\Lambda)\}\) and let \(k\) be minimal such that \(S\subseteq\Lambda_k\).
\begin{enumex}[resume]
\item Show that \(k\geq \rk \langle S\rangle \geq i\).
\item Conclude that \(\lambda_i(\Lambda)\geq q(b_k^*) \geq c^{1-n} q(b_i)\).
\end{enumex}
\end{exercise}

\begin{exercise}
Let \(c\geq \frac{4}{3}\) and suppose \((b_1,\dotsc,b_n)\) is a \(c\)-reduced basis for a lattice \(\Lambda\). Show that
\[\det(\Lambda)^2 \leq \prod_{i=1}^n q(b_i) \leq c^{\binom{n}{2}} \det(\Lambda)^2.\]
\end{exercise}

\begin{exercise}[Proposition~1.12 in \cite{LLLoriginal}]\label{ex:Ge10}
Let \(c\geq \tfrac{4}{3}\), let \((b_1,\dotsc,b_n)\) be a \(c\)-reduced basis of some lattice \(\Lambda\) and let \(a_1,\dotsc,a_m\in\Lambda\) be linearly independent. 
\begin{enumex}
\item For \(1\leq i \leq m\) write \(a_i=\sum_{j} x_{ij} b_i\) and let \(j(i)\) be maximal such that \(x_{i j(i)}\neq 0\). Show that \(q(b_{j(i)}^*)\leq q(a_i)\).
\item Suppose the \(a_i\) are ordered so that \(j(1)\leq j(2)\leq \dotsc \leq j(m)\). Show that \(i\leq j(i)\) for all \(1\leq i\leq m\).
\item Show that for all \(1\leq i\leq m\) we have
\[q(b_i) \leq c^{n-1} \cdot \max\{ q(a_1),q(a_2),\dotsc,q(a_m)\}.\]
\end{enumex}
\end{exercise}

\subsection{The kernel-image algorithm}

The LLL-algorithm allows us to construct the kernel-image algorithm, which computes kernels and images of morphisms of finitely generated free abelian groups.
Most of the algorithms for finitely generated abelian groups from the beginning of this section will follow from this.
We first need a theorem from linear algebra.

\begin{theorem}[Cramer's rule, Theorem~4.1 in \cite{LinearAlgebra}]\label{thm:cramer}
Suppose \(A\in\R^{n\times n}\) is an invertible matrix and let \(b\in\R^n\).
Write \(A_i\) for the matrix obtained from \(A\) by replacing the \(i\)-th column with \(b\).
Then there exists a unique \(x\in\R^n\) such that \(Ax=b\), and it is given by \(x=\det(A)^{-1}\cdot (\det(A_1),\dotsc,\det(A_n))\). \qed
\end{theorem}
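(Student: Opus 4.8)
The plan is to separate the two assertions: the existence and uniqueness of the solution, and the explicit formula for it. For uniqueness, since $A$ is invertible the map $x\mapsto Ax$ is a bijection of $\R^n$, so there is exactly one $x\in\R^n$ with $Ax=b$; this needs no computation. The real content is therefore the identity $x_i=\det(A_i)/\det(A)$.

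For the formula I would argue directly from the fact that the determinant is a multilinear, alternating function of its columns, which avoids any appeal to the adjugate. Write $a_1,\dotsc,a_n$ for the columns of $A$. From $Ax=b$ we have $b=\sum_{j=1}^n x_j a_j$. Substituting this expression into the $i$-th column of $A_i$ and expanding by linearity in that column (the other columns being held fixed) gives
\[
\det(A_i)=\sum_{j=1}^n x_j\,\det\bigl(a_1,\dotsc,a_{i-1},a_j,a_{i+1},\dotsc,a_n\bigr).
\]
For each $j\neq i$ the column $a_j$ now appears twice, so that summand vanishes because the determinant is alternating; the surviving term $j=i$ is precisely $x_i\det(A)$, since then the $i$-th column is $a_i$ and we recover $A$. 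Hence $\det(A_i)=x_i\det(A)$ for every $i$, and since $A$ is invertible we have $\det(A)\neq 0$, so $x_i=\det(A_i)/\det(A)$.

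I do not expect a real obstacle here: the only inputs are that an invertible matrix has nonzero determinant and that the determinant is multilinear and alternating in the columns, both of which may be assumed. The one step to phrase carefully is the legitimacy of substituting $\sum_j x_j a_j$ into a single column and then expanding, which is exactly linearity of $\det$ in that column. As an alternative route, one may instead form the matrix $X_i$ obtained from the identity by replacing its $i$-th column with $x$, observe that $A\cdot X_i=A_i$ (the $i$-th column of the product is $Ax=b$ and the others are unchanged), take determinants, and use that $\det(X_i)=x_i$ by cofactor expansion along the $i$-th row; this yields the same conclusion.
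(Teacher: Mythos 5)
The paper states Cramer's rule without proof, citing it to a linear algebra reference (the \qed appears directly after the statement), so there is no in-paper argument to compare against. Your proof is correct and standard: substituting $b=\sum_j x_j a_j$ into the $i$-th column of $A_i$, expanding by multilinearity, and killing the $j\neq i$ terms by the alternating property gives $\det(A_i)=x_i\det(A)$, and invertibility supplies both uniqueness and $\det(A)\neq 0$. The alternative you sketch, taking determinants of $A\,X_i=A_i$ with $X_i$ the identity with $i$-th column replaced by $x$, is also valid and is perhaps the cleanest way to avoid committing to a particular axiomatic description of $\det$. Either version is a perfectly acceptable proof for the cited fact.
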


\begin{exercise}\label{ex:base_extension}
Let \(n\geq 0\). Suppose \(N\subseteq M \subseteq \Z^n\) are subgroups such that \(N\oplus P=\Z^n\) for some \(P\subseteq \Z^n\). Show that \(\rk(M)=\rk(N)\) if and only if \(M=N\). \\
\emph{Hint:} Split the exact sequence \(0\to N\to M\to \pi M \to 0\), for \(\pi:\Z^n\to P\). 
\end{exercise}

\begin{example}
The polynomial ring \(\R[\omega]\) is a totally ordered \(\R\)-algebra if by definition \(f<g\) when the leading term of \(f-g\) is negative. We think of \(\omega\) as being infinity. We can define an {\em extended metric}, generalizing metrics, where the distance function can take values in \(\R[\omega]\). For example the space \(\Z \times\R\), the disjoint union of \(\#\Z\) lines, admits an extended metric where \(d((i,x),(j,y))= |x-y| + |i-j| \cdot \omega\).
If \(\varphi:\Z^n\to\Z^m\) is a linear map, then \(\langle x,x\rangle=\|x\|^2+\|\varphi(x)\|^2\cdot \omega\), where \(\|{-}\|\) denotes the usual norm on \(\Z^n\) and \(\Z^m\), defines an `extended inner product' on \(\Z^n\), and \(\ker(\varphi)=\{x\in\Z^n\,|\, q(x)<\omega\}\).
Morally we use this concept in the following algorithm.
\end{example}

\begin{theorem}[Kernel-image algorithm, Section~14 of \cite{Lattices}]\label{thm:ker_im_alg}
There exists a po\-ly\-no\-mi\-al-time algorithm that, given \(m,n\in\Z_{\geq 0}\) and a linear map \(\varphi:\Z^n\to \Z^m\), computes the rank \(r\) of \(\varphi\) and injective linear maps \(\kappa:\Z^{n-r}\to \Z^n\) and \(\iota:\Z^r\to \Z^n\) such that \(\im(\kappa)=\ker(\varphi)\) and \(\im(\varphi\circ\iota)=\im(\varphi)\) and the induced map \(\Z^{n-r}\times \Z^{r}\to \Z^n\) is an isomorphism.
\end{theorem}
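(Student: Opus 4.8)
The plan is to realize the kernel of $\varphi$ as a lattice and apply the LLL-algorithm (Theorem~\ref{thm:LLL}) to it, using the ``extended inner product'' heuristic from the preceding example made rigorous by a large but polynomially-bounded integer weight. Concretely, let $B$ be a bound on the absolute values of the entries of the matrix of $\varphi$, and choose an integer $N$ — say $N = 1 + (n B)^{2}$, or any value that is singly-exponential in the input length so that $\log N$ is polynomial — and equip $\Z^{n}$ with the positive-definite symmetric bilinear form $\langle x, y\rangle = x^{\top} y + N\,\varphi(x)^{\top}\varphi(y)$. This has rational (indeed integral) Gram-matrix with polynomially-bounded entries, so it is a valid lattice encoding in the sense of the paper, and feeding it to the LLL-algorithm with a fixed $c > \tfrac{4}{3}$ produces, in polynomial time, a $c$-reduced basis $b_1, \dots, b_n$ of $\Z^{n}$ for this form.

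The key observation is a separation of scales: any nonzero $x \in \ker(\varphi)$ has $q(x) = \|x\|^{2} \leq$ (a polynomial in $B$, $n$) since $\ker(\varphi)$ contains a basis of small vectors — e.g. by clearing denominators in a rational kernel basis obtained from Cramer's rule (Theorem~\ref{thm:cramer}), one gets integer kernel vectors of square norm at most some explicit $C = C(n,B)$ — whereas any $x \notin \ker(\varphi)$ has $q(x) \geq N > c^{\,n-1} C$ by our choice of $N$. Let $r' = n - \rk(\varphi)$, which I will also extract from the LLL output. Proposition~\ref{prop:succ_min} then forces the first $r'$ vectors $b_1, \dots, b_{r'}$ of the reduced basis to lie in $\ker(\varphi)$ (their square norms are at most $c^{\,r'-1}\lambda_{r'}(\Lambda) \leq c^{\,n-1} C < N$) and the remaining $b_{r'+1}, \dots, b_n$ to lie outside it. So I set $\kappa$ to be the map $\Z^{n-r} \to \Z^{n}$ with columns $b_1, \dots, b_{r'}$ (after confirming $r = \rk \varphi = n - r'$), and $\iota$ the map $\Z^{r} \to \Z^{n}$ with columns $b_{r'+1}, \dots, b_n$. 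Since $(b_1, \dots, b_n)$ is a $\Z$-basis of $\Z^{n}$, the induced map $\Z^{n-r} \times \Z^{r} \to \Z^{n}$ is an isomorphism by construction.

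It remains to verify the two defining properties. First, $\im(\kappa) = \ker(\varphi)$: the inclusion $\subseteq$ is the scale argument above; for $\supseteq$, given $x \in \ker(\varphi)$, write $x = \sum_i a_i b_i$ with $a_i \in \Z$, apply $\varphi$, and use that $\varphi b_{r'+1}, \dots, \varphi b_n$ are linearly independent in $\Q^{m}$ (their span has dimension $r = n - r'$, forced since the $b_i$ span $\Z^n$ and $b_1,\dots,b_{r'}$ already span $\ker\varphi$ over $\Q$), whence $a_{r'+1} = \dots = a_n = 0$ and $x \in \im(\kappa)$. Second, $\im(\varphi\circ\iota) = \im(\varphi)$: since $\Z^{n} = \im(\kappa) \oplus \im(\iota)$ and $\varphi$ kills $\im(\kappa)$, we get $\im(\varphi) = \varphi(\im(\iota)) = \im(\varphi\circ\iota)$. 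Both $\kappa$ and $\iota$ are injective as they are given by subsets of a basis. Computing $r$ itself: one can read off $\rk\varphi$ from the number of $b_i$ with $q(b_i) \geq N$, or independently by a polynomial-time rank computation over $\Q$; either way $r$ and the block splitting are produced in polynomial time. The main obstacle I anticipate is pinning down the explicit polynomial bound $C$ on the square norm of short kernel vectors and verifying $N > c^{n-1}C$ cleanly — this needs Hadamard's inequality (Exercise~\ref{ex:hadamard}) and Cramer's rule applied to a maximal nonsingular submatrix of $\varphi$, together with the bookkeeping that all of $\log N$, the Gram-matrix entries, and the LLL runtime stay polynomial in the input length.
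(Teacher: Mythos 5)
Your proposal follows essentially the same route as the paper: equip $\Z^n$ with the weighted form $q(x)=\|x\|^2+\omega\|\varphi(x)\|^2$, run LLL, and use the separation of scales together with Cramer's rule, Hadamard's inequality, and Proposition~\ref{prop:succ_min} to show the first $n-r$ reduced basis vectors land in $\ker\varphi$; the last step $\im(\kappa)=\ker(\varphi)$ is done in the paper via Exercise~\ref{ex:base_extension}, whereas you give an equivalent direct argument, and everything else matches.

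The one concrete slip is your suggested weight $N=1+(nB)^2$: this is far too small. The constraint you correctly identify at the end, $N>c^{\,n-1}C$, forces $N$ to be singly exponential in $n$, not polynomial in $nB$. Concretely, Cramer's rule applied to a maximal nonsingular $r\times r$ submatrix of $\varphi$ produces integer kernel vectors whose coordinates are $r\times r$ minors of the matrix of $\varphi$, and Hadamard (Exercise~\ref{ex:hadamard}) bounds each such minor by $n^{n/2}B^n$, giving $C\leq n^{n+1}B^{2n}$; with $c=2$ this yields the paper's choice $\omega=2^{n-1}n^{n+1}B^{2n}+1$. Since $\log\omega$ is polynomial in the input length, the Gram matrix entries and the LLL runtime remain polynomially bounded, which is the point you were worried about. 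So the outline is sound; the ``main obstacle'' you flag is exactly the bookkeeping the paper carries out, and nothing more is missing.
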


\begin{proof}
Write \(B\) for the largest absolute value of a coefficient of the matrix \(F\) defining \(\varphi\). 
Compute
\[ \omega = 2^{n-1}\cdot n^{n+1} \cdot B^{2n}+1\]
and note that its length is polynomially bounded by the size of the input.
Hence we can consider the lattice \(\Lambda=\Z^n\) together with the bilinear form induced by \(q(x) = \|x\|^2 + \|\varphi(x)\|^2\cdot \omega\), where \(\|{-}\|\) denotes the usual norm on \(\Z^n\) and \(\Z^m\).
Using the LLL-algorithm, compute a \(2\)-reduced basis \((b_1,\dotsc,b_n)\) of \(\Lambda\). 
We will show that this basis has the following properties:
\begin{enumerate}[nosep,label={(\alph*)}]
\item \(q(b_i)<\omega\) for \(0 < i \leq n-r\);
\item \((b_1,\dotsc,b_{n-r})\) is a basis for \(\ker(\varphi)\);
\item \(q(b_i)\geq \omega\) for \(n-r< i \leq n\);
\item \((\varphi(b_{n-r+1}),\dotsc,\varphi(b_n))\) is a basis for \(\im(\varphi)\).
\end{enumerate}
Then, we may compute \(r\) as the maximal \(0\leq r\leq n\) such that \(q(b_{n+1-r})\geq \omega\), and \(\kappa\) and \(\iota\) as the natural maps with images \(\langle b_1,\dotsc,b_{n-r}\rangle\) and \(\langle b_{n-r+1},\dotsc,b_n\rangle\) respectively.

\emph{\underline{Claim:} } For all \(0<i\leq n-r\) we have \(\lambda_i(\Lambda)\leq n^{n+1}B^{2n}\). \\
\emph{Proof.} By Cramer's rule (Theorem~\ref{thm:cramer}), we may take linearly independent vectors \(a_1,\dotsc,a_{n-r}\in\ker(\varphi)\) for which the coefficients are determinants of \(r\times r\) submatrices of \(F\).
Then by Hadamard's inequality (Exercise~\ref{ex:hadamard}), each coefficient is bounded in absolute value by \(n^{n/2} B^n\).
Hence \(q(a_i)=\|a_i\|^2\leq n^{n+1} B^{2n}\) for all \(0 < i \leq n-r\).
The claim now follows from the independence of the \(a_i\). \qedclaim

From Proposition~\ref{prop:succ_min} and the claim it follows that
\[q(b_i)\leq 2^{n-1} \cdot \lambda_i(\Lambda) \leq 2^{n-1} \cdot n^{n+1} \cdot B^{2n} < \omega\]
for all \(0<i\leq n-r\), proving (a). Clearly for all \(x\in \Lambda\) such that \(\omega > q(x)=\|x\|^2+\omega\|\varphi(x)\|^2\) we have \(\|\varphi(x)\|^2=0\) and thus \(x\in\ker(\varphi)\).
In particular, we have linearly independent \(b_1,\dotsc,b_{n-r}\in\ker(\varphi)\). 
From Exercise~\ref{ex:base_extension} we may conclude it is in fact a basis for \(\ker(\varphi)\), proving (b).
It follows from (b) that \(b_i\not\in\ker(\varphi)\) and thus \(q(b_i)\geq \omega\) for all \(n-r<i\leq n\), proving (c).
Lastly, (d) follows from (b) and the homomorphism theorem.
\end{proof}

Note that in the proof of Theorem~\ref{thm:ker_im_alg} we could have constructed a \(c\)-reduced basis for values of \(c\) other than \(2\). Moreover, the exact value of \(\omega\) is not important, as long as it is sufficiently large while still being computable in polynomial time.
Later we will encounter a version of the kernel-image algorithm for general finitely generated abelian groups (Exercise~\ref{ex:ker_im_alg_2}).

\begin{exercise}[Order, cf.\ Proposition 2.6.1 in \cite{Iuliana}]\label{ex:compute_order} Let \(n\in\Z_{\geq 0}\).
\begin{enumex}
\item Let \(M\in\Z^{n\times n}\) be an integer matrix. Show that there exists a \(T\in\Z^{n\times n}\) such that \(\det(T)=1\) and \(MT\) is upper triangular.
\item Show that a linear map \(\varphi:\Z^n\to \Z^n\) satisfies \(\#\coker(\varphi)=|\det(\varphi)|\) if \(\det(\varphi)\neq 0\).
\item Show that there exist a polynomial-time algorithm that, given a finitely generated abelian group \(A\), decides whether \(A\) is finite and if so computes \(\# A\).
\emph{Note:} The linear map representing \(A\) need not be injective.
\end{enumex}
\end{exercise}

\begin{exercise}[cf.\ Theorem~\ref{thm:unit_product_1}]\label{ex:compute_rational_unit_product}
Show that there exists a polynomial-time algorithm that, given \(\alpha_1,\dotsc,\alpha_m\in\Q^*\), computes the kernel of the map
\(\Z^m\to\Q^*\) that is given by \((k_1,\dotsc,k_m)\mapsto \prod_i \alpha_i^{k_i}\). \\
\emph{Hint:} Compute a coprime basis.
\end{exercise}

\subsection{Applications of the kernel-image algorithm}

In this subsection we provide a polynomial-time algorithm for most problems listed in the beginning of this section.
The algorithms in the coming sections, and many more, can be found in Chapter~2 of \cite{Iuliana}.
An immediate consequence of the kernel-image algorithm is the following. 

\begin{corollary}\label{cor:jective}
There exists a polynomial-time algorithm that, given a linear map \(\varphi:A\to B\) of finitely generated free abelian groups, decides whether \(\varphi\) is injective/surjective.
\end{corollary}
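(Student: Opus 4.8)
The plan is to reduce both questions to a single call of the kernel--image algorithm (Theorem~\ref{thm:ker_im_alg}). Since \(A\) and \(B\) are finitely generated free abelian groups, we may write \(A=\Z^n\) and \(B=\Z^m\), and in our encoding a homomorphism \(\varphi:\Z^n\to\Z^m\) is simply given by an integer \(m\times n\) matrix. First I would run the kernel--image algorithm on \(\varphi\) to obtain the rank \(r\) of \(\varphi\) together with injective maps \(\kappa:\Z^{n-r}\to\Z^n\) and \(\iota:\Z^{r}\to\Z^n\) satisfying \(\im(\kappa)=\ker(\varphi)\), \(\im(\varphi\circ\iota)=\im(\varphi)\), and the property that the induced map \(\Z^{n-r}\times\Z^{r}\to\Z^n\) is an isomorphism. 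This step runs in polynomial time, so in particular all output matrices have polynomially bounded entries.

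For injectivity, \(\varphi\) is injective if and only if \(\ker(\varphi)=\im(\kappa)=0\), which holds precisely when \(n-r=0\), i.e.\ \(r=n\); this is read off directly from the output.

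For surjectivity, note first that \(\im(\varphi)\) is free of rank \(r\), so \(\varphi\) cannot be surjective unless \(r=m\); if \(r<m\), output ``not surjective''. Assume \(r=m\). Then \(\psi:=\varphi\circ\iota:\Z^{m}\to\Z^m\) is represented by a square integer matrix, computable by one matrix multiplication. Since the isomorphism statement gives \(\Z^n=\im(\kappa)\oplus\im(\iota)=\ker(\varphi)\oplus\im(\iota)\), the restriction of \(\varphi\) to \(\im(\iota)\) is injective, and as \(\iota\) is injective so is \(\psi\); moreover \(\im(\psi)=\im(\varphi\circ\iota)=\im(\varphi)\). Hence \(\varphi\) is surjective if and only if \(\psi\) is an automorphism of \(\Z^m\), and for an injective endomorphism given by a square integer matrix this happens exactly when \(\det(\psi)=\pm 1\). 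I would therefore compute \(\det(\psi)\) using the polynomial-time determinant algorithm of Exercise~\ref{ex:square_multiply}.g and output ``surjective'' precisely when \(\det(\psi)=\pm 1\). (Alternatively, one may observe that \(\coker(\varphi)\) is presented by the matrix of \(\varphi\) itself and invoke the order algorithm of Exercise~\ref{ex:compute_order} to test whether \(\coker(\varphi)\) is trivial.)

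There is no real obstacle here: everything is a routine consequence of the kernel--image algorithm, and the only point deserving care is that having an image of full rank is necessary but not sufficient for surjectivity, which is why the extra determinant (equivalently, cokernel-order) check is required.
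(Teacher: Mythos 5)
Your argument is correct. The injectivity test is exactly the paper's: run the kernel--image algorithm and check that \(\ker(\varphi)=0\), i.e.\ \(r=n\). For surjectivity, the paper simply invokes Exercise~\ref{ex:compute_order}.c on the cokernel of \(\varphi\) (decide whether it is finite and, if so, whether its order is \(1\)); under the hood that exercise triangularizes the matrix and needs neither the kernel--image algorithm nor a separate split of \(\Z^n\). You instead first extract \(\iota\) from the kernel--image algorithm, form the square map \(\psi=\varphi\circ\iota\), and test \(\det(\psi)=\pm1\). This is valid (indeed \(\#\coker(\psi)=|\det(\psi)|\) by Exercise~\ref{ex:compute_order}.b, so your determinant test and the paper's cokernel-order test compute the same quantity when \(r=m\)), and you even note the cokernel alternative yourself. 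The only real difference is that your primary route spends an extra LLL call on the surjectivity side that the paper avoids; otherwise the two proofs are doing the same computation. One small remark: the explicit injectivity check on \(\psi\) is harmless but unnecessary, since if \(\det(\psi)=0\) the test already returns ``not surjective.''
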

\begin{proof}
For injectivity we check whether \(\ker(\varphi)=0\) and for surjectivity we may check using Exercise~\ref{ex:compute_order}.c whether \(\coker(\varphi)=0\).
\end{proof}

\begin{proposition}\label{prop:compute_inclusion_image}
There exists a polynomial-time algorithm that, given linear maps \(\varphi:A\to C\) and \(\psi:B\to C\) of finitely generated free abelian groups, decides whether \(\im(\psi)\subseteq\im(\varphi)\). 
\end{proposition}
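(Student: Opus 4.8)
The plan is to reduce the containment question $\im(\psi)\subseteq\im(\varphi)$ to a solvability-of-linear-systems question that the kernel–image algorithm can already handle. Write $\varphi:\Z^a\to\Z^c$ and $\psi:\Z^b\to\Z^c$ for the given maps (after identifying $A\cong\Z^a$, $B\cong\Z^b$, $C\cong\Z^c$ via their encodings). The key observation is that $\im(\psi)\subseteq\im(\varphi)$ if and only if every column of the matrix of $\psi$ lies in $\im(\varphi)$, i.e.\ if and only if $\psi(e_j)\in\im(\varphi)$ for each standard basis vector $e_j$, $1\le j\le b$.

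First I would apply the kernel–image algorithm (Theorem~\ref{thm:ker_im_alg}) to $\varphi$, obtaining the rank $r$ and an injective map $\iota:\Z^r\to\Z^a$ with $\im(\varphi\circ\iota)=\im(\varphi)$; so $\varphi\circ\iota:\Z^r\to\Z^c$ is an injective map onto $\im(\varphi)$. Then $\psi(e_j)\in\im(\varphi)$ precisely when $\psi(e_j)$ has a preimage under $\varphi\circ\iota$. To test this uniformly for all $j$ at once, form the map $\Theta:\Z^r\to\Z^c$, $\Theta=\varphi\circ\iota$, and ask whether $\im(\psi)\subseteq\im(\Theta)$; since $\Theta$ is injective, a preimage, if it exists, is unique and can be recovered. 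Concretely, one can run the kernel–image algorithm on the map $\Z^{r}\times\Z^{b}\to\Z^c$, $(x,y)\mapsto \Theta(x)-\psi(y)$ (equivalently on the block matrix $[\,\Theta\mid -\psi\,]$), and then inspect whether the computed kernel surjects onto the $\Z^b$-factor: $\im(\psi)\subseteq\im(\Theta)$ holds if and only if for every $y\in\Z^b$ there is an $x$ with $\Theta(x)=\psi(y)$, which holds if and only if the projection $\ker([\,\Theta\mid-\psi\,])\to\Z^b$ is surjective. Surjectivity of that projection is itself decidable in polynomial time by Corollary~\ref{cor:jective}.

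The routine-but-necessary part is verifying that composition of the relevant maps, forming block matrices, and running the kernel–image algorithm and Corollary~\ref{cor:jective} all stay within polynomial time in the input size: this is immediate since the matrix of $\varphi\circ\iota$ has entries polynomially bounded by the input (the kernel–image algorithm's output is polynomially bounded), and each subsequent step is one of the already-established polynomial-time primitives. I do not anticipate a genuine obstacle here; the only thing to get right is the bookkeeping of which factor to project onto and the elementary check that $\im(\psi)\subseteq\im(\Theta)$ translates correctly into surjectivity of $\ker([\,\Theta\mid-\psi\,])\to\Z^b$. An alternative, essentially equivalent route — which I would mention if brevity is wanted — is to use the forthcoming general kernel–image machinery (Exercise~\ref{ex:ker_im_alg_2}) on the cokernel: $\im(\psi)\subseteq\im(\varphi)$ iff the composite $\Z^b\xrightarrow{\psi}\Z^c\to\coker(\varphi)$ is zero, and checking that a morphism into a finitely generated abelian group is zero is decidable in polynomial time once $\coker(\varphi)$ is computed.
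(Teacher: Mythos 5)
Your argument is essentially the paper's: form the combined map into $C$, compute its kernel with Theorem~\ref{thm:ker_im_alg}, and test whether the projection onto the $B$-factor is surjective via Corollary~\ref{cor:jective}. The only deviation is your preliminary replacement of $\varphi$ by the injective $\Theta=\varphi\circ\iota$, which is harmless but superfluous --- the paper runs the kernel--image algorithm once on $A\times B\to C$ directly, since only $\im(\varphi)$ (not an injective parametrization of it) enters the criterion.
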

\begin{proof}
Using Theorem~\ref{thm:ker_im_alg} compute the kernel \(\kappa:K\to A\times B\) of \(A\times B\to C\) as in the following diagram.
\begin{center}
\begin{tikzcd}[row sep=1em,column sep={3em,between origins}]
& & & A \arrow{dr}{\varphi} & \\
K \arrow{rr}{\kappa} \arrow[bend right=15,dashed]{drrr}& & A\times B \arrow{ur}{\pi_A} \arrow[swap]{dr}{\pi_B}  \arrow{rr}& & C \\
& & & B \arrow[swap]{ur}{\psi} & 
\end{tikzcd}
\end{center}
The image of \(\pi_B\circ \kappa\) is precisely the set of elements \(b\in B\) for which there exist \(a\in A\) such that \(\varphi(a)=\psi(b)\).
Hence it suffices to decide using Corollary~\ref{cor:jective} whether \(\pi_B\circ \kappa\) is surjective.
\end{proof}

\begin{corollary}
There exists a polynomial-time algorithm that, given finitely generated abelian groups \(A\) and \(B\), represented by linear maps \(\alpha:A_0\to A_1\) and \(\beta:B_0\to B_1\) respectively, and a linear map \(\varphi:A_1\to B_1\), decides whether \(\varphi\) represents a morphism \(f:A\to B\).
\end{corollary}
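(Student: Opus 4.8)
The plan is to reduce directly to Proposition~\ref{prop:compute_inclusion_image}. Recall that $A = \coker(\alpha)$ and $B = \coker(\beta)$, and that, as was observed when the encoding of morphisms was introduced, a linear map $\varphi: A_1 \to B_1$ represents a morphism $f: A \to B$ --- i.e.\ fits into a commutative square with $\alpha$ and $\beta$ --- if and only if $\im(\varphi \circ \alpha) \subseteq \im(\beta)$. For completeness: $\im(\alpha)$ is the kernel of the surjection $A_1 \twoheadrightarrow A$ and $\im(\beta)$ is the kernel of $B_1 \twoheadrightarrow B$, so by the universal property of the quotient $A_1 \twoheadrightarrow A$ the map $\varphi$ descends to a homomorphism $A \to B$ exactly when $\varphi$ carries $\im(\alpha)$ into $\im(\beta)$, and $\varphi(\im(\alpha)) = \im(\varphi \circ \alpha)$.

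Granting this criterion, the algorithm is short. First compute the matrix of the composite $\varphi \circ \alpha: A_0 \to B_1$ by multiplying the matrices of $\alpha$ and $\varphi$, which takes polynomial time by the basic matrix arithmetic recalled in Section~\ref{sec:basic_comp}. Now $\varphi \circ \alpha$ and $\beta$ are both linear maps of finitely generated free abelian groups with common codomain $B_1$, so Proposition~\ref{prop:compute_inclusion_image} decides in polynomial time whether $\im(\varphi \circ \alpha) \subseteq \im(\beta)$; we return ``yes'' in that case and ``no'' otherwise.

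I do not expect any genuine difficulty here: all of the content is in the equivalence of the first paragraph, which is routine, and the rest is an application of results already established. The only thing worth stating carefully is that correctness rests on the \emph{defining} property of ``$\varphi$ represents a morphism'' (existence of a commuting square) rather than on any prior well-definedness, since well-definedness of the induced $f$ is itself a consequence of the image containment.
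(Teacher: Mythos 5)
Your proof is correct and follows exactly the paper's approach: recall that $\varphi$ represents a morphism iff $\im(\varphi\circ\alpha)\subseteq\im(\beta)$, compute $\varphi\circ\alpha$, and apply Proposition~\ref{prop:compute_inclusion_image}. You spell out the justification of the criterion a bit more explicitly than the paper (which simply says ``Recall''), but the argument is the same.
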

\begin{proof}
Recall \(\varphi\) represents a morphism precisely when \(\im(\varphi\circ\alpha)\subseteq\im(\beta)\).
\end{proof}

\begin{corollary}
There exists a polynomial-time algorithm that, given finitely generated abelian groups \(A\) and \(B\) and morphisms \(f,g:A\to B\), decides whether \(f=g\).
\end{corollary}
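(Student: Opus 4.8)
The plan is to reduce the question to deciding whether the single morphism $f-g$ is the zero morphism, and then to invoke Proposition~\ref{prop:compute_inclusion_image}. Let $\alpha:A_0\to A_1$ and $\beta:B_0\to B_1$ be the linear maps representing $A$ and $B$, so that $A=\coker(\alpha)$ and $B=\coker(\beta)$, and suppose $f$ and $g$ are encoded by matrices $\varphi,\psi:A_1\to B_1$ in the sense of Section~\ref{sec:abgp}. Since every map involved is $\Z$-linear and $\im(\beta)$ is a subgroup, from $\im(\varphi\circ\alpha)\subseteq\im(\beta)$ and $\im(\psi\circ\alpha)\subseteq\im(\beta)$ we get $\im((\varphi-\psi)\circ\alpha)\subseteq\im(\beta)$, so the matrix $\varphi-\psi$ again encodes a morphism $A\to B$, namely $f-g$. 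Computing the matrix $\varphi-\psi$ is one matrix subtraction and hence takes polynomial time.

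The key observation is that $f=g$ if and only if $f-g$ is the zero morphism, and that a morphism encoded by a matrix $\chi:A_1\to B_1$ is the zero morphism precisely when $\im(\chi)\subseteq\im(\beta)$. Indeed, for $a\in A$ lift $a$ to $\tilde a\in A_1$ under the surjection $A_1\to A$; then the value of the induced morphism at $a$ is the class of $\chi(\tilde a)$ in $B=B_1/\im(\beta)$, and (using $\ker(A_1\to A)=\im(\alpha)$ and $\im(\chi\circ\alpha)\subseteq\im(\beta)$) this class does not depend on the chosen lift. Hence the induced morphism vanishes exactly when $\chi$ maps all of $A_1$ into $\im(\beta)=\ker(B_1\to B)$, i.e.\ when $\im(\chi)\subseteq\im(\beta)$. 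Note that this is genuinely stronger than the condition $\im(\chi\circ\alpha)\subseteq\im(\beta)$ that characterises "$\chi$ encodes some morphism".

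It therefore suffices to decide whether $\im(\varphi-\psi)\subseteq\im(\beta)$, and this is exactly an instance of Proposition~\ref{prop:compute_inclusion_image} applied to the two linear maps $\varphi-\psi:A_1\to B_1$ and $\beta:B_0\to B_1$ into the common target $B_1$; it runs in polynomial time. We output "$f=g$" precisely when the inclusion holds. There is no real obstacle here: the only point needing a moment of care is the identification of "$f-g$ is the zero morphism" with the image-containment condition above, which is immediate from the definition of the encoding, and everything else is a single matrix subtraction followed by one call to Proposition~\ref{prop:compute_inclusion_image}.
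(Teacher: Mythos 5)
Your proof is correct and takes the same approach as the paper: reduce to deciding whether $h=f-g$ is the zero morphism, observe that the matrix $\varphi-\psi$ encodes $h$, and test $\im(\varphi-\psi)\subseteq\im(\beta)$ via Proposition~\ref{prop:compute_inclusion_image}. You merely spell out why the image-containment condition characterises the zero morphism, which the paper leaves implicit.
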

\begin{proof}
Considering \(h=f-g\) it suffices to be able to decide whether a morphism is zero.
With \(\eta\) the matrix representing \(h\) and \(\beta\) the matrix representing \(B\), we have \(h=0\) precisely when \(\im(\eta)\subseteq\im(\beta)\).
\end{proof}

\begin{proposition}[Injectivity/surjectivity, Proposition 2.3.6 and Proposition 2.3.7. in \cite{Iuliana}]
There exists a polynomial-time algorithm that, given a morphism \(f:A\to B\) of finitely generated abelian groups, decides whether \(f\) is injective/surjective.
\end{proposition}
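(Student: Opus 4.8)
The plan is to reduce both questions to the kernel-image algorithm (Theorem~\ref{thm:ker_im_alg}) and to the free-abelian-group versions already established in Corollary~\ref{cor:jective} and Proposition~\ref{prop:compute_inclusion_image}, by translating \(\ker(f)\) and \(\im(f)\) into statements about the presentation matrices. Write \(A=\coker(\alpha)\) with \(\alpha:A_0\to A_1\), \(B=\coker(\beta)\) with \(\beta:B_0\to B_1\), and let \(\varphi:A_1\to B_1\) represent \(f\), so that \(f(\overline x)=\overline{\varphi(x)}\); recall that \(\varphi\) representing a morphism means exactly \(\im(\varphi\circ\alpha)\subseteq\im(\beta)\).

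For surjectivity, note that \(\im(f)\) is the image of \(\varphi(A_1)+\im(\beta)\) in \(B_1/\im(\beta)\), so \(f\) is surjective if and only if \(\varphi(A_1)+\im(\beta)=B_1\), i.e.\ if and only if the linear map \(g:A_1\oplus B_0\to B_1\) defined by \(g(x,y)=\varphi(x)+\beta(y)\) is surjective. The matrix of \(g\) is obtained by juxtaposing the matrices of \(\varphi\) and \(\beta\), which costs only polynomial time, and \(g\) is a morphism of finitely generated \emph{free} abelian groups, so we may apply Corollary~\ref{cor:jective}.

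For injectivity, let \(P=\varphi^{-1}(\im(\beta))=\{x\in A_1:\varphi(x)\in\im(\beta)\}\), the full preimage in \(A_1\) of \(\ker(f)\subseteq A\). Since \(\varphi\) represents a morphism we have \(\im(\alpha)\subseteq P\), hence \(\ker(f)=P/\im(\alpha)\), and therefore \(f\) is injective if and only if \(P=\im(\alpha)\), equivalently—the inclusion \(\im(\alpha)\subseteq P\) being automatic—if and only if \(P\subseteq\im(\alpha)\). To get hold of \(P\), apply Theorem~\ref{thm:ker_im_alg} to the linear map \(A_1\oplus B_0\to B_1\), \((x,y)\mapsto\varphi(x)-\beta(y)\), obtaining an injection \(\kappa\) onto its kernel \(K\); then \(x\in P\) if and only if \((x,y)\in\im(\kappa)\) for some \(y\), so \(P=\im(\pi_{A_1}\circ\kappa)\), where \(\pi_{A_1}\) is the projection to \(A_1\). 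Now \(P\subseteq\im(\alpha)\) is decidable in polynomial time by Proposition~\ref{prop:compute_inclusion_image} applied to the maps \(\pi_{A_1}\circ\kappa\) and \(\alpha\).

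There is no serious obstacle here: every step is a matrix manipulation or an invocation of an already-proved polynomial-time routine. The only points requiring attention are that the auxiliary maps \(g\) and \((x,y)\mapsto\varphi(x)-\beta(y)\) really are maps of \emph{free} abelian groups, so that the free-group results apply directly, and that the automatic inclusion \(\im(\alpha)\subseteq P\) coming from \(\varphi\) being a morphism lets us replace the equality test \(P=\im(\alpha)\) by a single containment test.
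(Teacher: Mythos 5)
Your proposal follows the same high-level route as the paper: surjectivity is reduced to Corollary~\ref{cor:jective} via the induced map \(A_1\times B_0\to B_1\), and injectivity to a containment test via Theorem~\ref{thm:ker_im_alg} and Proposition~\ref{prop:compute_inclusion_image}. However, you have in fact corrected a flaw in the paper's own argument. The paper asserts that \(f\) is injective if and only if \(\ker(\varphi)\subseteq\im(\alpha)\), and accordingly only computes the kernel of the matrix \(\varphi\). This is too weak: one has \(\ker(f)=\varphi^{-1}(\im(\beta))/\im(\alpha)\), so the correct criterion is \(\varphi^{-1}(\im(\beta))\subseteq\im(\alpha)\), and \(\ker(\varphi)\) is a proper subgroup of \(\varphi^{-1}(\im(\beta))\) as soon as \(\im(\beta)\cap\im(\varphi)\neq 0\). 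For a concrete failure, take \(A=\Z\) represented by \(\alpha=0:0\to\Z\), and \(B=0\) represented by \(\beta=\id:\Z\to\Z\), with \(\varphi=\id_\Z\); then \(\im(\varphi\circ\alpha)=0\subseteq\im(\beta)\), so \(\varphi\) legitimately represents the zero morphism \(f:\Z\to 0\), which is not injective, yet \(\ker(\varphi)=0\subseteq\im(\alpha)\). Your construction of \(P=\varphi^{-1}(\im(\beta))\) as \(\im(\pi_{A_1}\circ\kappa)\), where \(\kappa\) is a kernel of \((x,y)\mapsto\varphi(x)-\beta(y)\), together with the observation that \(\im(\alpha)\subseteq P\) is automatic so equality reduces to the single containment \(P\subseteq\im(\alpha)\), is exactly the right fix and stays within polynomial time.
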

\begin{proof}
Write \(\alpha:A_0\to A_1\) and \(\beta:B_0\to B_1\) for the representatives of \(A\) and \(B\) respectively and \(\varphi\) for the representative of \(f\).
Note that \(f\) is surjective if and only if \(B=\im(f)=\im(\varphi)/\im(\beta)\) if and only if \(B_1=\im(\varphi)+\im(\beta)\).
It suffices to decide whether the map \(A_1\times B_0 \to B_1\) induced by \(\varphi\) and \(\beta\) is surjective, for which we have Corollary~\ref{cor:jective}.
Note that \(f\) is injective if and only if \(\ker(\varphi)\subseteq\im(\alpha)\).
Using Theorem~\ref{thm:ker_im_alg} we compute a linear map \(\kappa:K\to A_1\) with \(\im(\kappa)=\ker(\varphi)\), and apply Proposition~\ref{prop:compute_inclusion_image} to decide \(\im(\kappa)\subseteq\im(\alpha)\).
\end{proof}

\begin{proposition}\label{prop:compute_preimage}
There exists a polynomial-time algorithm that, given a linear map \(\varphi:A\to B\) of free abelian groups \(A\) and \(B\) and \(b\in B\), decides whether an element \(a\in A\) exists such that \(\varphi(a)=b\), and if so computes one.
\end{proposition}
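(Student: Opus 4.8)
The plan is to turn the inhomogeneous problem $\varphi(a)=b$ into a single kernel computation that we can feed to the kernel-image algorithm (Theorem~\ref{thm:ker_im_alg}).

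Write $A=\Z^n$ and $B=\Z^m$. First I would form the auxiliary linear map $\Psi\colon A\oplus\Z\to B$ defined by $\Psi(a,t)=\varphi(a)-tb$; its matrix is that of $\varphi$ with the extra column $-b$ adjoined, so $\Psi$ has polynomially bounded size and is computable. Applying Theorem~\ref{thm:ker_im_alg} to $\Psi$ produces, in polynomial time and with polynomially bounded output, an injective $\kappa\colon\Z^k\to A\oplus\Z$ with $\im(\kappa)=\ker(\Psi)$.

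Now let $\pi\colon A\oplus\Z\to\Z$ be the projection onto the second summand. The key observation is that
\[ \pi(\ker\Psi)=\{\,t\in\Z : \text{there is } a\in A \text{ with } \varphi(a)=tb\,\}, \]
which is a subgroup of $\Z$, hence equals $d\Z$ for a unique $d\in\Z_{\geq0}$. Consequently $\varphi(a)=b$ has a solution if and only if $1\in\pi(\ker\Psi)$, i.e.\ if and only if $d=1$. Since $\pi(\ker\Psi)=\pi(\im\kappa)$ is generated by the integers $d_i=\pi(\kappa(e_i))$ for $1\le i\le k$, we have $d=\gcd(d_1,\dotsc,d_k)$, which we compute together with B\'ezout coefficients $c_1,\dotsc,c_k\in\Z$ satisfying $\sum_i c_id_i=d$ in polynomial time by Exercise~\ref{ex:mod_gcd}. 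If $d\neq1$ the algorithm reports that no solution exists. If $d=1$, set $v=\sum_i c_i\kappa(e_i)\in\ker\Psi$; then $\pi(v)=1$, so $v=(a,1)$ for some $a\in A$, and $\Psi(v)=0$ unwinds to $\varphi(a)=b$, so the algorithm outputs this $a$. Every step runs in polynomial time, inheriting this from Theorem~\ref{thm:ker_im_alg} and the polynomial-time extended Euclidean algorithm.

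The only point needing genuine care is the correctness claim above: that $\pi(\ker\Psi)$ is exactly the set of ``achievable multipliers'' $t$, so that the criterion $d=1$ is both necessary and sufficient. One should also notice the degenerate case $d=0$ (including $k=0$) --- occurring precisely when $b\neq0$ and no nonzero multiple of $b$ lies in $\im(\varphi)$ --- in which ``no solution'' is the correct answer, and check that when $b=0$ one indeed gets $d=1$, via $(0,1)\in\ker\Psi$. Alternatively, one could use Theorem~\ref{thm:ker_im_alg} to reduce to the case that $\varphi$ is injective, replacing $\varphi$ by $\varphi\circ\iota$, and then solve the resulting system by Cramer's rule over $\Q$, checking at the end that the unique rational solution is integral; but the kernel approach above avoids rational arithmetic entirely.
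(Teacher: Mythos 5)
Your proposal is correct and follows essentially the same route as the paper: augment $\varphi$ with an auxiliary $\Z$-coordinate carrying $b$, compute the kernel of the augmented map via Theorem~\ref{thm:ker_im_alg}, and decide solvability by examining the projection of that kernel onto the $\Z$-factor. The one (harmless) variation is in the last step: the paper invokes Theorem~\ref{thm:ker_im_alg} a second time to extract a generator of $\im(\pi_\Z\circ\kappa)\subseteq\Z$, whereas you compute the gcd of the projected basis vectors directly via the extended Euclidean algorithm and use the B\'ezout coefficients to assemble the preimage --- a slightly more elementary finish, but the same proof.
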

\begin{proof}
Consider the linear map \(\psi: A\times \Z \to B\) that sends \((a,x)\) to \(\varphi(a)+xb\).
\begin{center}
\begin{tikzcd}[row sep=1em,column sep={3em,between origins}]
I \arrow{dr}{\iota} & & & & A \arrow{dr}{\varphi} & \\
& K \arrow{rr}{\kappa} \arrow[bend right=15,dashed,swap]{drrr}{\chi}& & A\times \Z \arrow{ur} \arrow[swap]{dr}{\pi_\Z}  \arrow{rr}{\psi}& & B \\
& & & & \Z \arrow[swap]{ur}{b} & 
\end{tikzcd}
\end{center}
Compute using Theorem~\ref{thm:ker_im_alg} the kernel \(\kappa: K \to A\times \Z\) of \(\psi\) and in turn \(\iota:I\to K\) a preimage of \(\chi=\pi_\Z\circ \kappa\).
Note that \(\varphi(a)=b\) has a solution precisely when \(-1\) is in the image of \(\chi\).
Moreover, if we find \(k\in K\) such that \(\chi(k)=-1\), then its image under \(K\to A\times\Z\to A\) gives an element \(a\in A\) such that \(\varphi(a)=b\).
Note that \(I\subseteq \Z\). 
If \(I=0\), then no solution to \(\chi(k)=-1\) exists, and otherwise \(k\in\iota(\{\pm1\})\) gives a solution if it exists.
\end{proof}

\begin{corollary}[Preimages]
There exists a polynomial-time algorithm that, given a homomorphism \(f:A\to B\) of finitely generated abelian groups \(A\) and \(B\) and \(b\in B\), decides whether an element \(a\in A\) exists such that \(f(a)=b\), and if so computes one. 
\end{corollary}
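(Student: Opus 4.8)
The plan is to reduce to the free-abelian-group case treated in Proposition~\ref{prop:compute_preimage}. Write \(\alpha\colon A_0\to A_1\) and \(\beta\colon B_0\to B_1\) for the representatives of \(A\) and \(B\), so that \(A = A_1/\im(\alpha)\) and \(B = B_1/\im(\beta)\), let \(\varphi\colon A_1\to B_1\) be the matrix representing \(f\), and let \(b_1\in B_1\) be the given representative of \(b\in B\). Unwinding the quotients, an element \(a\in A\) with \(f(a)=b\) exists if and only if there is some \(a_1\in A_1\) with \(\varphi(a_1)-b_1\in\im(\beta)\), i.e.\ if and only if there exist \(a_1\in A_1\) and \(b_0\in B_0\) with \(\varphi(a_1)-\beta(b_0)=b_1\).

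First I would form the linear map of finitely generated free abelian groups \(\psi\colon A_1\times B_0\to B_1\) given by \((a_1,b_0)\mapsto \varphi(a_1)-\beta(b_0)\); its matrix is obtained by placing the matrix of \(\varphi\) next to that of \(-\beta\), and so has encoding length polynomial in the input. Then I would invoke Proposition~\ref{prop:compute_preimage} on \(\psi\) and \(b_1\) to decide whether a preimage \((a_1,b_0)\) of \(b_1\) under \(\psi\) exists and, if so, to compute one. If none exists, then by the equivalence above no \(a\) exists; if \((a_1,b_0)\) is returned, then \(a:=a_1+\im(\alpha)\in A\) satisfies \(f(a)=b\), and we output the representative \(a_1\in A_1\).

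Correctness is immediate from the displayed equivalence, and the algorithm runs in polynomial time since its only nontrivial step is one call to Proposition~\ref{prop:compute_preimage} on a map of size polynomial in the input. I do not expect any genuine obstacle here; the only point requiring a little care is the bookkeeping between a group and its chosen presentation — in particular, that the element \(b\), the morphism \(f\), and the output \(a\) each admit many matrix representatives — but since the construction only ever asserts an equality in \(B\) and produces an element of \(A_1\), the choice of representatives is immaterial to correctness. (An entirely analogous reduction also yields the general-group versions of the earlier free-group statements, such as Corollary~\ref{cor:jective}, as was already carried out above.)
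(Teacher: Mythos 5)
Your proof is correct and matches the paper's argument essentially verbatim: both unwind the presentations, reduce to finding a preimage of \(b_1\) under the induced free map \(A_1\times B_0\to B_1\), and invoke Proposition~\ref{prop:compute_preimage}. The sign on \(\beta\) is immaterial since only \(\im(\beta)\) matters.
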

\begin{proof}
Let \(\varphi:A_1\to B_1\) be the representative of \(f\) and \(b_1\in B_1\) of \(b\). 
Then there exists some \(a\in A\) such that \(f(a)=b\) if and only if there exists some \(a_1\in A_1\) and \(b_0\in B_0\) such that \(\varphi(a_1)=b_1+\beta(b_0)\).
If we consider again the induced map \(A_1\times B_0 \to B_1\), then it suffices to compute an inverse image of \(b_1\) using Proposition~\ref{prop:compute_preimage}.
\end{proof}

\begin{exercise}
Give a direct proof of Proposition~\ref{prop:compute_inclusion_image} or Proposition~\ref{prop:compute_preimage} by giving an algorithm that applies the LLL-algorithm only once, similar to Theorem~\ref{thm:ker_im_alg}.
\end{exercise}

\begin{exercise}[Kernels and images, cf.\ Proposition 2.3.15 in \cite{Iuliana}]\label{ex:ker_im_alg_2} \hspace{1em}
\begin{enumex}
\item Show that there exists a polynomial-time algorithm that, given morphisms \(\delta:D_0\to D_1\) and \(\chi:C_1\to D_1\) of finitely generated free abelian groups, computes a finitely generated free abelian group \(C_0\) and an injection \(\gamma:C_0\to C_1\) such that \(\im(\delta)=\im(\chi\circ\gamma)\). 
\item Show that there exists a polynomial-time algorithm that, given a morphism \(f:A\to B\) of finitely generated abelian groups, computes injective morphisms \(k:K\to A\) and \(i:I\to B\) such that \(\im(k)=\ker(f)\) and \(\im(i)=\im(f)\). \\
\emph{Hint:} Take \((\delta_i,\chi_i)=(\beta,\varphi)\) and \((\delta_k,\chi_k)=(\alpha,\gamma_i)\).
\end{enumex}
\end{exercise}

\subsection{Homomorphism groups of finitely generated abelian groups}

Although we can algorithmically work with individual morphisms \(f:A\to B\) of finitely generated abelian groups, we have yet to treat the group of homomorphisms \(\Hom(A,B)\) as a whole. Recall that given representations \(\alpha:A_0\to\Z^a\) and \(\beta:B_0\to\Z^b\) of \(A\) and \(B\) respectively, where \(A_0\) and \(B_0\) are free, we represent a morphism \(A\to B\) as a linear map \(\Z^a\to \Z^b\) or, equivalently, as a matrix in \(\Z^{b\times a}\).

\begin{definition}
Let \(\alpha:A_0\to \Z^a\) and \(\beta:B_0\to \Z^b\) be representations of finitely generated abelian groups \(A\) and \(B\) respectively.
We write 
\begin{align*}
\mathcal{H}_1(A,B) &= \{ \varphi\in \Z^{b\times a} : \im(\varphi\circ\alpha)\subseteq\im(\beta) \} \quad\text{and}\\
\mathcal{H}_0(A,B) &= \{ \varphi\in\Z^{b\times a} : \im(\varphi) \subseteq\im(\beta)\}
\end{align*}
for the group of matrices that represent a morphism \(A\to B\) and the group of matrices that represent a zero-morphism \(A\to B\) respectively, so that \(\Hom(A,B)\cong \mathcal{H}_1(A,B)/\mathcal{H}_0(A,B)\).
A {\em homomorphism group of \(A\) and \(B\)} is a pair \((H,e)\) where \(H\) is a finitely generated abelian group, represented by say a map \(H_0\to H_1\), and \(e:H_1\to \Z^{b\times a}\) is a linear map, such that \(\im(e)=\mathcal{H}_1(A,B)\) and such that \(e\) induces an isomorphism \(H\to \mathcal{H}_1(A,B)/\mathcal{H}_0(A,B)\).
\end{definition}

This definition is somewhat more formal than what we would like to work with.
For example, \(A\) and \(B\) do not have a unique homomorphism group in the sense of the above definition, even though we would like to reason about it as being equal to the group \(\Hom(A,B)\).
Effectively, a homomorphism group of \(A\) and \(B\) is just a finitely generated abelian group \(H\) isomorphic to \(\Hom(A,B)\) together with a means of evaluation \(e\).
We motivate being somewhat lax with the definition by Exercise~\ref{ex:hom_dont_care}.

\begin{theorem}[Hom, Proposition 2.4.1.2 in \cite{Iuliana}]\label{thm:compute_hom}
There exists a polynomial-time algorithm that, given finitely generated abelian groups \(A\) and \(B\), computes \(\Hom(A,B)\).
\end{theorem}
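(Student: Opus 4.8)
The plan is to construct a homomorphism group of $A$ and $B$ straight from the definition. Fix the given representations $\alpha:\Z^{a_0}\to\Z^a$ and $\beta:\Z^{b_0}\to\Z^b$, and for $k\geq 0$ let $\beta_*:\Z^{b_0\times k}\to\Z^{b\times k}$ denote left multiplication by the matrix of $\beta$ (we use $k=a_0$ and $k=a$); let $\Phi:\Z^{b\times a}\to\Z^{b\times a_0}$ be right multiplication by the matrix of $\alpha$, so $\Phi(\varphi)=\varphi\alpha$. These are linear maps of free abelian groups whose matrices consist of entries of $\alpha$ or $\beta$ and zeros, hence are computable in polynomial time by ordinary matrix multiplication. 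The key observation is that a matrix $M\in\Z^{b\times k}$ has column span inside $\im(\beta)$ precisely when $M=\beta N$ for some $N\in\Z^{b_0\times k}$, i.e.\ precisely when $M\in\im(\beta_*)$; since $\im(\varphi\circ\alpha)$ is the column span of $\varphi\alpha$, this gives, inside $\Z^{b\times a}$,
\[\mathcal H_1(A,B)=\Phi^{-1}\bigl(\im(\beta_*)\bigr)\qquad\text{and}\qquad \mathcal H_0(A,B)=\im(\beta_*)\]
(and $\mathcal H_0(A,B)\subseteq\mathcal H_1(A,B)$, since $\im(\varphi)\subseteq\im(\beta)$ forces $\im(\varphi\circ\alpha)\subseteq\im(\beta)$).

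Both groups are now computed with the kernel--image algorithm (Theorem~\ref{thm:ker_im_alg}) together with the standard device for preimages of subgroups. First, run Theorem~\ref{thm:ker_im_alg} on the linear map $\Z^{b\times a}\times\Z^{b_0\times a_0}\to\Z^{b\times a_0}$, $(\varphi,N)\mapsto\Phi(\varphi)-\beta_*(N)$, to obtain its kernel $\kappa:H_1\to\Z^{b\times a}\times\Z^{b_0\times a_0}$ with $H_1$ free, and set $e:=\pi\circ\kappa:H_1\to\Z^{b\times a}$ where $\pi$ is the projection onto the first factor; then $\im(e)=\{\varphi:\exists\,N,\ \Phi(\varphi)=\beta_*(N)\}=\Phi^{-1}(\im\beta_*)=\mathcal H_1(A,B)$. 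Second, run Theorem~\ref{thm:ker_im_alg} on $H_1\times\Z^{b_0\times a}\to\Z^{b\times a}$, $(x,N)\mapsto e(x)-\beta_*(N)$, take $H_0$ to be its (free) kernel and $H_0\to H_1$ the projection; then $\im(H_0\to H_1)=\{x\in H_1:e(x)\in\im(\beta_*)\}=e^{-1}\bigl(\mathcal H_0(A,B)\bigr)$. Output the pair $(H,e)$, where $H:=\coker(H_0\to H_1)$ is presented by the map $H_0\to H_1$.

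It remains to verify correctness and bound the running time. By construction $e$ is a linear map $H_1\to\Z^{b\times a}$ with $\im(e)=\mathcal H_1(A,B)$; and since $\mathcal H_0(A,B)\subseteq\mathcal H_1(A,B)=\im(e)$, the composite of $e$ with the quotient $\mathcal H_1(A,B)\to\mathcal H_1(A,B)/\mathcal H_0(A,B)$ is surjective with kernel $e^{-1}(\mathcal H_0(A,B))=\im(H_0\to H_1)$, so it induces an isomorphism $H\to\mathcal H_1(A,B)/\mathcal H_0(A,B)\cong\Hom(A,B)$; thus $(H,e)$ is a homomorphism group of $A$ and $B$. For the complexity, all free groups appearing ($\Z^{b\times a}$, $\Z^{b\times a_0}$, $\Z^{b_0\times a_0}$, $\Z^{b_0\times a}$, $H_1$) have rank polynomial in the input length, every matrix involved has polynomially bounded entries, and Theorem~\ref{thm:ker_im_alg}---whose output is polynomially bounded thanks to the LLL-algorithm---is invoked only twice, so the algorithm runs in polynomial time. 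I expect the one genuinely substantive point to be the reformulation displayed above, expressing $\mathcal H_1(A,B)$ as the preimage of an explicit subgroup; everything afterwards is bookkeeping, the main care being to confirm that the $e$ and the presentation $H_0\to H_1$ produced by the two kernel--image calls satisfy the precise compatibility demanded by the definition of a homomorphism group, rather than merely yielding a group abstractly isomorphic to $\Hom(A,B)$.
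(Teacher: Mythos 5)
Your proof is correct, and it takes a somewhat different route from the paper's, though both ultimately reduce to the same computation. The paper organizes the argument homologically: for free $A$ it uses right-exactness of $\Hom(A,\underline{\phantom{B}})$ to present $\Hom(A,B)$ as $\coker(\beta_*)$, and then for general $A$ it uses left-exactness of $\Hom(\underline{\phantom{A}},B)$ to realize $\Hom(A,B)$ as $\ker(\alpha^*\colon\Hom(A_1,B)\to\Hom(A_0,B))$, invoking the kernel algorithm for general finitely generated groups (Exercise~\ref{ex:ker_im_alg_2}). Unwinding the definitions, $\ker(\alpha^*)$ is precisely $\{[\varphi]:\varphi\alpha\in\im\beta_*\}$, which is your $\mathcal{H}_1(A,B)$ modulo $\mathcal{H}_0(A,B)$ --- so the mathematics coincides. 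What you do differently is bypass the functorial packaging and construct the required pair $(H,e)$ directly from the definition of a homomorphism group, using only the free kernel--image algorithm (Theorem~\ref{thm:ker_im_alg}) twice: once to present $\mathcal{H}_1(A,B)$ as an image $e\colon H_1\to\Z^{b\times a}$, and once to compute $e^{-1}(\mathcal{H}_0(A,B))$ so that $H=\coker(H_0\to H_1)$ and $e$ satisfy the exact compatibility the definition demands. This is more self-contained (no appeal to the general kernel algorithm for non-free groups) and makes the verification that the output genuinely is a homomorphism group --- not merely something abstractly isomorphic to $\Hom(A,B)$ --- completely explicit; the paper's version is shorter because it delegates that bookkeeping to the exactness of the Hom functors and the prior kernel/cokernel machinery. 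One small notational wrinkle: you write $\beta_*$ for two different maps ($k=a_0$ in the first display, $k=a$ for $\mathcal{H}_0$ and the second kernel call); this is clear from context but worth disambiguating in a polished write-up.
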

\begin{proof}
Consider the case where \(A\) is a free abelian group and \(B\) is a finitely generated abelian group represented by \(\beta:B_0\to B_1\).
We may compute the matrix \(\beta_*:\Hom(A,B_0)\to\Hom(A,B_1)\) given by \(f\mapsto \beta\circ f\).
Since \(A\) is free we get an exact functor \(\Hom(A,\underline{\phantom{B}})\) such that 
\begin{align*}
\big[\ B_0 \xrightarrow{\beta} B_1 \to B \to 0\ \big] \Rightarrow \big[\ \Hom(A,B_0) \xrightarrow{\beta_*} \Hom(A,B_1) \to \Hom(A,B) \to 0 \ \big],
\end{align*}
and thus \(\Hom(A,B)\cong\coker(\beta_*)\).
Note that \(\Hom(A,B_0)\) and \(\Hom(A,B_1)\) are free, so \(\beta_*\) represents the group \(\Hom(A,B)\).
Evaluating an element of \(\Hom(A,B)\) in \(A\) reduces to evaluating an element of \(\Hom(A,B_1)\) in \(A\), which is just matrix multiplication. Hence we easily obtain the evaluation map.

Now consider the general case where \(A\) and \(B\) are general finitely generated abelian groups represented by \(\alpha:A_0\to A_1\) and \(\beta\) respectively. 
We note that the functor \(\Hom(\underline{\phantom{A}},B)\) is left-exact and contravariant.
Applied to the exact sequence of \(A\) we get 
\[ \big[\ A_0 \xrightarrow{\alpha} A_1 \to A \to 0\ \big] \Rightarrow \big[\ 0\to \Hom(A,B) \to \Hom(A_1,B) \xrightarrow{\alpha^*} \Hom(A_0,B) \ \big].\]
Hence \(\Hom(A,B)\cong\ker(\alpha^*)\).
By the previous case we may compute \(\Hom(A_1,B)\) and \(\Hom(A_0,B)\) since \(A_1\) and \(A_0\) are free.
It is not difficult to show we may then compute \(\alpha^*\) and in turn its kernel using Exercise~\ref{ex:ker_im_alg_2}.
Evaluation in \(A\) of elements in \(\Hom(A,B)\) reduces to evaluation in \(A_1\) of elements in \(\Hom(A_1,B)\), which we may also do by the previous case.
\end{proof}

\begin{exercise}[Exponent, Theorem 2.6.9 in \cite{Iuliana}]\label{ex:group_exponent}
Show that there exists a polynomial-time algorithm that, given a finitely generated abelian group \(A\)
\begin{enumex}
\item and an element \(a\in A\), decides whether \(a\) is torsion and if so computes \(\ord(a)\);
\item decides whether \(A\) is finite and if so computes its exponent \(\exp(A)\) and an \(a\in A\) with \(\ord(a)=\exp(A)\).
\end{enumex}
\end{exercise}

\begin{exercise}[Exact sequences, cf.\ Proposition 2.5.1 in \cite{Iuliana}]\label{ex:split_exact}
Show that there exists a polynomial-time algorithm that, given morphisms \(f:A\to B\) and \(g:B\to C\) of finitely generated abelian groups,
\begin{enumex}
\item decides whether the sequence \(0\to A \xrightarrow{f}  B \xrightarrow{g} C\to 0\) is exact;
\item if so, decides whether the sequence is split;
\item if so, produces a left-inverse of \(f\) and a right-inverse of \(g\).
\end{enumex}
\emph{Hint:} Consider the map \(g_*:\Hom(C,B)\to\Hom(C,C)\).
\end{exercise}

Note that by taking \(C=0\) in Exercise~\ref{ex:split_exact} we may conclude that there exists a polynomial-time algorithm that, given a morphism of finitely generated abelian groups, decides whether it is an isomorphism and if so computes its inverse.

\begin{exercise}\label{ex:hom_dont_care}
Show that there exists a polynomial-time algorithm that, given finitely generated abelian groups \(A\) and \(B\) and two homomorphism groups \((H,e)\) and \((H',e')\) of \(A\) and \(B\), computes an isomorphism \(f:H\to H'\) that respects evaluation, i.e.\ so that for all \(h\in H\) we have \(e(h) \equiv e'(f(h))\ (\textup{mod }\mathcal{H}_0(A,B))\).
\end{exercise}

\begin{exercise}[Tensor, Proposition 2.4.1.1 in \cite{Iuliana}]\label{ex:compute_tensor}
We naturally encode a {\em bilinear map} \(A\times B\to C\) of finitely generated abelian groups as a map \(A\to\Hom(B,C)\). Show that there exists a polynomial-time algorithm that, given finitely generated abelian groups \(A\) and \(B\), computes a group \(T\) isomorphic to \(A\tensor B\) as well as a bilinear map \(A\times B\to T\) that induces the natural bilinear map \(\tensor: A\times B\to A\tensor B\).
\end{exercise}

\begin{exercise}
Formulate and prove statements analogous to Exercise~\ref{ex:compute_tensor} for the bifunctors \(\textup{Tor}_i\) and \(\textup{Ext}_i\).
\end{exercise}

\subsection{Structure theorem for finitely generated abelian groups}

The structure theorem for finitely generated abelian groups is the following.
\begin{theorem}\label{thm:fingen_structure_theorem}
Suppose \(A\) is a finitely generated abelian group. Then there exists a unique sequence \((r,m,n_1,n_2,\dotsc,n_m)\) of integers with \(r,m\geq0\) and \(n_1,\dotsc,n_m>1\) such that \(n_m \mid \dotsm \mid n_2 \mid n_1\), for which
\[A\cong \Z^r \times \prod_{k=1}^m (\Z/n_k\Z).\]
\end{theorem}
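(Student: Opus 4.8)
The plan is to reduce to the Smith normal form of an integer matrix. Using the encoding of Section~\ref{sec:abgp}, write \(A = \coker(\varphi)\) for some linear map \(\varphi\colon \Z^p\to\Z^q\) with matrix \(F\in\Z^{q\times p}\). Precomposing \(\varphi\) with an automorphism of \(\Z^p\), or postcomposing with an automorphism of \(\Z^q\), replaces \(F\) by \(FV\) or \(UF\) for \(V\in\mathrm{GL}_p(\Z)\), \(U\in\mathrm{GL}_q(\Z)\), and leaves \(\coker(\varphi)\cong\Z^q/\im(\varphi)\) unchanged up to isomorphism. So it suffices to show that by such row and column operations \(F\) can be brought to the shape \(\mathrm{diag}(d_1,\dots,d_r,0,\dots,0)\) with \(d_1,\dots,d_r\in\Z_{>0}\) and \(d_1\mid d_2\mid\dots\mid d_r\): then
\[ A \;\cong\; \prod_{i=1}^{r}\Z/d_i\Z \;\times\; \Z^{\,q-r}, \]
and dropping the indices with \(d_i=1\), setting the free rank \(r_0=q-r\), and reversing the order of the surviving \(d_i\) produces a decomposition of exactly the asserted form.

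For the existence of this normal form I would induct on \(\min(p,q)\), the case \(\min(p,q)=0\) or \(F=0\) being trivial. Otherwise, among all matrices reachable from \(F\) by row and column operations and all their nonzero entries, choose an entry \(\delta\) of minimal absolute value, and move it to position \((1,1)\) by swaps. Division with remainder (Exercise~\ref{ex:mod_gcd}) shows \(\delta\) divides every other entry in its row and in its column: otherwise subtracting a suitable integer multiple of the first row or column from another row or column yields a nonzero entry of smaller absolute value, contradicting minimality. Hence we may clear the first row and column and write the matrix as \(\delta\oplus F'\). If some entry of \(F'\) is not divisible by \(\delta\), add the row of \(F'\) containing it to the first row and repeat the clearing step; this again contradicts minimality, so in fact \(\delta\mid F'\). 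Now apply the induction hypothesis to \(F'\), whose invariant factors are all multiples of \(\delta\), and set \(d_1=\delta\). Termination of the passes is guaranteed since the minimal nonzero absolute value strictly decreases whenever we are forced to reduce.

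For uniqueness I would exhibit \((r_0,m,n_1,\dots,n_m)\) as intrinsic invariants of \(A\). The free rank \(r_0\) equals \(\dim_\Q(\Q\tensor_\Z A)\), hence depends only on \(A\). The torsion subgroup \(T=T(A)\) is intrinsic as well, and from any decomposition \(A\cong\Z^{r_0}\times\prod_k\Z/n_k\Z\) we read off \(T\cong\prod_k\Z/n_k\Z\). It remains to recover the \(n_k\) from \(T\). Fix a prime \(p\); by the Chinese remainder theorem (Exercise~\ref{ex:crt}) applied one prime at a time, the \(\F_p\)-vector space \(p^{\,j-1}T/p^{\,j}T\) has dimension \(\#\{k : p^{\,j}\mid n_k\}\) for every \(j\geq 1\). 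Letting \(p\) and \(j\) vary, these dimensions determine, for each \(p\), the partition \(\ord_p(n_1)\geq\ord_p(n_2)\geq\dots\) (it is the conjugate of the sequence \(j\mapsto\#\{k:p^j\mid n_k\}\)); and since the decomposition is required to satisfy \(n_m\mid\dots\mid n_1\), one has \(n_k=\prod_p p^{\ord_p(n_k)}\), so \(m\) and \(n_1,\dots,n_m\) are determined by \(A\).

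The main obstacle I expect is the divisibility normalization \(d_1\mid\dots\mid d_r\) in the Smith form — the ``add a row back into the first row, then reduce again'' device together with the termination argument is the one genuinely nontrivial point of the existence half — and, on the uniqueness side, carefully matching the \(\F_p\)-dimension data to the invariant factors. The remaining bookkeeping (the cokernel description, passing to the torsion subgroup, identifying the free rank with a \(\Q\)-dimension) is routine given the constructions of Section~\ref{sec:abgp}. If an algorithmic version is wanted, the same reduction can be executed in polynomial time by performing the row and column operations with LLL-style size control to prevent entry blow-up, but the bare structure theorem needs only the above.
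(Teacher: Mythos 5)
Your proof is correct, but it follows a genuinely different route from the paper's. The paper proves the algorithmic counterpart, Theorem~\ref{thm:fundament_fingen}, by first splitting off the free part \(A/T\) from the torsion subgroup \(T\) (computed via Lemma~\ref{lem:compute_torsion}), then for the torsion part repeatedly finding an element \(a\) of order equal to \(\exp(A)\), splitting the resulting exact sequence \(0\to\Z a\to A\to A/\Z a\to 0\) using the machinery of Exercise~\ref{ex:split_exact}, and recursing on \(A/\Z a\); the divisibility chain \(n_m\mid\dotsm\mid n_1\) then comes for free because the exponent of a quotient divides that of the group. You instead work directly with a presentation \(A=\coker(\varphi)\) and prove the Smith normal form: choose a globally minimal nonzero entry \(\delta\), show by a remainder/minimality contradiction that it divides everything (including, after the ``add a row back'' trick, all of the complementary block \(F'\)), peel off \(\delta\), and induct. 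These are the two classical proofs of the structure theorem; yours is matrix-theoretic where the paper's is module-theoretic. One thing your proof buys that the paper's stated argument does not: you actually establish the uniqueness claim, via the \(\F_p\)-dimensions of \(p^{j-1}T/p^jT\) (which equal \(\#\{k : p^j\mid n_k\}\)) and the standard conjugate-partition argument, whereas the paper's remark that Theorem~\ref{thm:fundament_fingen} ``implicitly proves the structure theorem'' only delivers existence. Conversely, the paper's route reuses machinery (torsion subgroups, exponents, splittings) already built up and needed elsewhere in Section~\ref{sec:abgp}, while Smith normal form would be a one-off. A small presentational remark: since you define \(\delta\) as the global minimum over all reachable matrices, the contradiction arguments settle everything at once and the closing sentence about ``termination of the passes'' is unnecessary; it reads as a leftover from an algorithmic version of the argument rather than the minimality version you actually gave.
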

In this section we will prove its algorithmic counterpart, which also implicitly proves the structure theorem itself.

\begin{exercise}\label{ex:double_dual}
Let \(n>0\) and \(M\subseteq\Q^n\) a subgroup. We equip \(\Q^n\) with the standard inner product.
For subgroups \(H\subseteq M\) write \(H^\bot=\{x\in M\mid\langle x,H\rangle=0\}\).
Show that for all subgroups \(N\subseteq M\) we have \((N^{\bot})^{\bot}=(\Q N)\cap M\). \\
\emph{Hint:} First consider the case where \(M\) and \(N\) are \(\Q\)-vector spaces.
\end{exercise}

\begin{lemma}[Torsion, cf.\ Theorem 2.6.2 in \cite{Iuliana}]\label{lem:compute_torsion}
There exists a polynomial-time algorithm that, given a finitely generated abelian group \(A\), computes its torsion subgroup.
\end{lemma}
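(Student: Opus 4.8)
The plan is to reduce the computation of the torsion subgroup \(T(A)\) to computing the \emph{saturation} of a sublattice of \(\Z^n\), which I carry out with two calls to the kernel-image algorithm and one round of preimage computation.

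Write \(A=\Z^n/\im(\alpha)\) for the given linear map \(\alpha:\Z^m\to\Z^n\), let \(\pi:\Z^n\to A\) be the quotient map, and put \(L=\im(\alpha)\) and \(r=\rk(\alpha)=\rk L\). For \(v\in\Z^n\) the element \(\pi(v)\) is torsion exactly when \(Nv\in L\) for some \(N\in\Z_{>0}\), i.e.\ exactly when \(v\) lies in the saturation
\[ M \;=\; (\Q L)\cap\Z^n \;=\; \{\,v\in\Z^n\mid Nv\in L\text{ for some }N>0\,\}. \]
Since \(\pi\) is surjective this yields \(T(A)=\pi(M)=M/L\), and since \(L\subseteq M\subseteq\Q L\) the lattice \(M\) again has rank \(r\), so \(M/L\) is finite. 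It therefore suffices to compute \(M\) together with the inclusion \(M/L\hookrightarrow A\) in polynomial time.

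To compute \(M\) I would use Exercise~\ref{ex:double_dual} with the standard inner product on \(\Z^n\): writing \(L^\bot=\{x\in\Z^n\mid\langle x,L\rangle=0\}\), it gives \((L^\bot)^\bot=(\Q L)\cap\Z^n=M\). Concretely, first run the kernel-image algorithm (Theorem~\ref{thm:ker_im_alg}) on the linear map \(\Z^n\to\Z^m\), \(x\mapsto(\langle x,\alpha(e_1)\rangle,\dotsc,\langle x,\alpha(e_m)\rangle)\), whose kernel is \(L^\bot\); this returns an injection \(j:\Z^{n-r}\to\Z^n\) with \(\im(j)=L^\bot\). Then run it again on \(\Z^n\to\Z^{n-r}\), \(x\mapsto(\langle x,j(e_1)\rangle,\dotsc,\langle x,j(e_{n-r})\rangle)\), whose kernel is \((L^\bot)^\bot=M\); this returns an injection \(\mu:\Z^r\to\Z^n\) with \(\im(\mu)=M\). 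Both steps run in polynomial time.

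Finally, since \(L=\im(\alpha)\subseteq M=\im(\mu)\) and \(\mu\) is injective, there is a unique \(\alpha':\Z^m\to\Z^r\) with \(\mu\circ\alpha'=\alpha\); its columns \(\alpha'(e_j)\) are the unique \(\mu\)-preimages of \(\alpha(e_j)\in M\), which Proposition~\ref{prop:compute_preimage} computes. As \(\mu(\im(\alpha'))=\im(\alpha)\), the map \(\mu\) descends to a morphism \(t:T:=\coker(\alpha')\to\coker(\alpha)=A\); it is injective (if \(\mu(x)\in\im(\alpha)\) then \(x\in\im(\alpha')\), since \(\mu\) is injective and \(\mu\alpha'=\alpha\)), has image \(\pi(M)=T(A)\), and \(T\cong M/L\) is finite. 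Outputting \((T,t)\) solves the problem, the whole procedure being a bounded sequence of polynomial-time subroutine calls. Beyond this routine assembly the only real content is the identification \(\pi^{-1}(T(A))=(\Q L)\cap\Z^n=(L^\bot)^\bot\) together with the rank count guaranteeing that \(M/L\) is finite and captures \(T(A)\) exactly — so that no torsion is missed and nothing extraneous is included.
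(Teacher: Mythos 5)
Your proof is correct and takes essentially the same approach as the paper: both identify the preimage of the torsion subgroup in \(\Z^n\) as the saturation \((\Q L)\cap\Z^n\), compute it as the double orthogonal complement \((L^\bot)^\bot\) via Exercise~\ref{ex:double_dual}, and obtain each complement with one call to the kernel-image algorithm. You are merely more explicit than the paper about the final bookkeeping step of lifting \(\alpha\) through the injection \(\mu\) to present the result as an injective morphism \(T\to A\).
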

\begin{proof}
For a finitely generated abelian group \(H\) and map \(h:H\to\Z^n\) we write \(h^\bot:H^\bot\to\Z^n\) for (an encoding of) a kernel of the map \(\Z^n\to\Hom(H,\Z)\) given by \(x\mapsto(y\mapsto \langle x,h(y)\rangle)\).
Note that using Theorem~\ref{thm:ker_im_alg} we may compute \(h^\bot\) in polynomial time.
In particular, we may compute \((\alpha^{\bot})^{\bot}:T\to A_1\) for the representative \(\alpha:A_0\to A_1\) of \(A\). 
It follows from Exercise~\ref{ex:double_dual} that \((\alpha^\bot)^\bot\) represents the torsion subgroup of \(A\): Its image is precisely the set of those elements of \(A_1\) for which a positive integer multiple is in \(\alpha(A_0)\).
\end{proof}

\begin{theorem}[Structure theorem, Theorem 2.6.10 in \cite{Iuliana}]\label{thm:fundament_fingen}
There exists a polynomial-time algorithm that, given a finitely generated abelian group \(A\), computes integers \(r\), \(m\), \(n_1,\dotsc,n_m\) with \(r,m\geq 0\) and \(n_1,\dotsc,n_m>1\) such that \(n_m \mid\dotsm\mid n_1\) and computes for \(A\) an isomorphism
\[A\cong \Z^r\times\prod_{k=1}^m (\Z/n_k\Z),\]
i.e.\ projections to and inclusions from the individual factors on the right hand side.
\end{theorem}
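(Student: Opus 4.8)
The plan is to split $A$ into its free part and its finite torsion part, treat each separately, and then decompose the torsion part by repeatedly peeling off a cyclic direct summand of maximal order. Throughout we lean on the kernel-image algorithm (Theorem~\ref{thm:ker_im_alg}) and its corollaries rather than on a naive Smith normal form computation, so that all intermediate representations stay polynomially bounded; to this end we may and do first replace the given presentation of $A$ by one coming from an \emph{injective} linear map, using the kernel-image algorithm.

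\emph{Splitting off the free part.} Using Lemma~\ref{lem:compute_torsion} compute the torsion subgroup $T$ of $A$ together with its inclusion into $A$, form the quotient $A/T$, and consider the exact sequence $0\to T\to A\to A/T\to 0$. Since $A/T$ is finitely generated and torsion-free it is free, hence projective, so the sequence splits; by Exercise~\ref{ex:split_exact} we detect this and compute a retraction $\rho\colon A\to T$. Then $B:=\ker\rho$, which we obtain from Exercise~\ref{ex:ker_im_alg_2}(b), is a complement of $T$, so $A\cong T\times B$ with $B\cong A/T$ free of some rank $r$. It remains to recognize $B$ in the standard form $\Z^r$, which is the problem of completing a primitive family of vectors to a basis of the ambient free group: we solve it by computing, via the kernel-image algorithm, a basis of the group of integral functionals annihilating the relation lattice of $B$, and using Exercise~\ref{ex:double_dual} (the double-orthogonal-complement identity) to check that the resulting homomorphism onto $\Z^r$ is surjective with kernel the relation lattice. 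Composing everything yields an isomorphism $A\cong\Z^r\times T$ with the inclusions of and projections to the two factors. It remains to decompose the finite abelian group $T$.

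\emph{Decomposing the torsion part.} We use the classical fact that in a finite abelian group a cyclic subgroup of maximal order is a direct summand. Given a nonzero finite abelian group $H$---initially $H=T$---compute the exponent $e=\exp(H)$ and an element $a\in H$ with $\ord(a)=e$ by Exercise~\ref{ex:group_exponent}. Then $\langle a\rangle\cong\Z/e\Z$ is a direct summand, so the sequence $0\to\langle a\rangle\to H\to H/\langle a\rangle\to 0$ splits; Exercise~\ref{ex:split_exact} produces a retraction $H\to\langle a\rangle$ whose kernel $B$, again computed by Exercise~\ref{ex:ker_im_alg_2}(b), is a complement, so $H\cong\Z/e\Z\times B$ with $\exp(B)\mid\exp(H)$ since $B$ embeds in $H$. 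Recursing on $B$ and setting $n_1:=e$, we obtain integers $n_1,n_2,\dotsc,n_m>1$ with $n_{k+1}\mid n_k$ for all $k$---these divisibilities are automatic, since the exponent of a subgroup divides that of the group---and an isomorphism $T\cong\prod_{k=1}^m\Z/n_k\Z$; the base case $H=0$ contributes no factors, so every $n_k>1$. Composing with the free-part isomorphism and assembling the sections and retractions collected along the way gives the desired $A\cong\Z^r\times\prod_{k=1}^m\Z/n_k\Z$ with explicit inclusions and projections. Uniqueness of the tuple $(r,m,n_1,\dotsc,n_m)$ is Theorem~\ref{thm:fingen_structure_theorem}, which the construction above reproves.

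\emph{Running time, and the main obstacle.} Correctness follows from the cited results and the direct-summand fact. The real issue is efficiency, with two aspects. First, the recursion must have polynomial depth: at each peeling step $|B|\le|H|/2$, so the depth---and also the number $m$ of invariant factors, since $\prod_k n_k=|T|$ with every $n_k\ge2$---is at most $\log_2|T|$; and $|T|$ is bounded, via Hadamard's inequality (Exercise~\ref{ex:hadamard}) applied to a maximal linearly independent set of columns of a presentation matrix of $A$, by a quantity that is singly exponential in the input length, so $\log_2|T|$ is polynomial (this is where reducing to a full-column-rank presentation is used). Second, and this is the main obstacle, one must keep the \emph{bit-size} of the presentations of the groups $B$ arising in the recursion from growing geometrically with the depth; this is arranged by fixing at the outset a bound $N\ge\exp(T)$ and representing every finite group that occurs by a presentation matrix with entries in $\{0,\dotsc,N-1\}$ and a bounded number of columns---reducing entries modulo $N$ and re-trimming via the kernel-image algorithm after each step. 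With this bookkeeping each of the polynomially many steps runs in polynomial time, so the whole algorithm does.
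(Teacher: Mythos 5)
Your proof is correct and follows the same approach as the paper's: split off the free part via the torsion subgroup (Lemma~\ref{lem:compute_torsion}) and a split exact sequence, then repeatedly peel off a cyclic summand of maximal order using Exercises~\ref{ex:group_exponent} and~\ref{ex:split_exact}, recursing on a complement isomorphic to the quotient (the paper recurses directly on $A/(\Z a)$, a cosmetic difference, and your observation that the exponent of a subgroup divides that of the group is the clean way to get the divisibility chain). Your explicit attention to bounding the recursion depth and the bit-size of intermediate presentations, which the paper leaves implicit, is a prudent addition; the paper relies on the kernel-image algorithm keeping intermediate data small, but your normalization via a fixed modulus $N\ge\exp(T)$ spells out why no geometric blow-up occurs.
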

\begin{proof}
We may compute using Lemma~\ref{lem:compute_torsion} the torsion subgroup \(T\) of \(A\).
Using the image algorithm of Exercise~\ref{ex:ker_im_alg_2} we may compute an isomorphism \(\Z^r\to A/T\).
We have an exact sequence \(0\to T\to A\to A/T\to 0\) which splits, hence by Exercise~\ref{ex:split_exact} we may compute maps \(A\to T\) and \(A/T\to A\) inducing an isomorphism \(A\cong T\times(A/T)\cong T \times \Z^r\).
Replacing \(A\) by \(T\) we may now assume \(A\) is torsion. If \(A=0\) we are done.
Using Exercise~\ref{ex:group_exponent} we may compute an element \(a\in A\) with order equal to the exponent \(e\) of \(A\).
Again we have an exact sequence \(0\to \Z a \to A \to A/(\Z a) \to 0\) which is split to which we apply Exercise~\ref{ex:split_exact}. We proceed recursively with \(A\) replaced by \(A/(\Z a)\).
Note that the exponent of \(A/(\Z a)\) is a divisor of the exponent of \(A\), so indeed we will get \(n_m \mid \dotsm \mid n_1\).
\end{proof}

\begin{corollary}\label{cor:fundament2_fingen}
There exists a polynomial-time algorithm that, given a finitely generated abelian group \(A\) and a finite set \(S\) of positive integers, computes \(r,m\in\Z_{\geq 0}\) and \(c_1,\dotsc,c_m\in\Z_{>1}\) and \(t_1,\dotsc,t_m\in\Z_{>0}\) such that any two \(c_i\) are either coprime or powers of the same integer, and every \(c_i\) either divides some power of an element of \(S\) or is coprime to all elements of \(S\), and computes for \(A\) an isomorphism
\[A\cong\Z^r\times \prod_{k=1}^m (\Z/c_k\Z)^{t_k}.\]
\end{corollary}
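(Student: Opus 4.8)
The plan is to chain three tools: the structure theorem (Theorem~\ref{thm:fundament_fingen}), coprime basis factorization (Theorem~\ref{thm:graph_algorithm}), and the Chinese remainder theorem. First I would apply Theorem~\ref{thm:fundament_fingen} to $A$ to obtain $r\ge0$, an integer $m'\ge0$, integers $n_1,\dots,n_{m'}>1$ with $n_{m'}\mid\dots\mid n_1$, and an explicit isomorphism $A\cong\Z^r\times\prod_{j=1}^{m'}(\Z/n_j\Z)$, i.e.\ its projections and inclusions. Next I would run the coprime basis algorithm of Theorem~\ref{thm:graph_algorithm} on the list $n_1,\dots,n_{m'}$ \emph{together with} the elements of $S$; this returns the minimal coprime basis $\{d_1,\dots,d_\ell\}$ of $\{n_1,\dots,n_{m'}\}\cup S$ and exponents $n_{jk}\ge0$ with $n_j=\prod_{k=1}^\ell d_k^{n_{jk}}$ for all $j$. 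By the run-time analysis in the proof of Theorem~\ref{thm:graph_algorithm}, the number $\ell$, the integers $d_k$, and the exponents $n_{jk}$ (note $d_k^{n_{jk}}\mid n_j$) are all polynomially bounded in the length of the input.

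For each $j$ the integers $d_k^{n_{jk}}$ with $n_{jk}>0$ are pairwise coprime with product $n_j$, so by the Chinese remainder theorem (Exercise~\ref{ex:crt}, made effective by iterating Exercise~\ref{ex:mod_gcd}.e to produce the idempotent lifts) I can compute an isomorphism $\Z/n_j\Z\cong\prod_{k:\,n_{jk}>0}\Z/d_k^{n_{jk}}\Z$ with its projections and inclusions. Composing with the isomorphism above gives $A\cong\Z^r\times\prod_{j}\prod_{k:\,n_{jk}>0}\Z/d_k^{n_{jk}}\Z$. I would then let $c_1,\dots,c_m$ enumerate the distinct values occurring among the $d_k^{n_{jk}}$ with $n_{jk}>0$, set $t_i=\#\{(j,k):d_k^{n_{jk}}=c_i\}\in\Z_{>0}$, and regroup the identical factors to obtain the desired $A\cong\Z^r\times\prod_{i=1}^m(\Z/c_i\Z)^{t_i}$, with projections and inclusions coming from the obvious permutation of factors. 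Since each $c_i=d_k^{a}$ with $d_k>1$ and $a\ge1$, indeed $c_i>1$.

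It then remains to check the two arithmetic conditions. If $c_i=d_k^{a}$ and $c_{i'}=d_{k'}^{a'}$, then for $k=k'$ they are powers of the common integer $d_k$, while for $k\neq k'$ they are coprime because $d_k$ and $d_{k'}$ are. For the condition involving $S$, fix $s\in S$; since $s$ is a positive integer lying in $\langle d_1,\dots,d_\ell\rangle$, it has a factorization $s=\prod_{k'}d_{k'}^{e_{k'}}$ with all $e_{k'}\ge0$, and pairwise coprimality of the $d_{k'}$ gives $\gcd(d_k,s)=d_k$ when $e_k\ge1$ and $\gcd(d_k,s)=1$ otherwise. Hence for each atom $d_k$ either $d_k\mid s$ for some $s\in S$, in which case $c_i=d_k^{a}\mid s^{a}$ divides a power of an element of $S$, or else $d_k$, and therefore every $c_i$ that is a power of $d_k$, is coprime to every element of $S$.

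Every step is a composition of previously established polynomial-time algorithms run on polynomially bounded data, so the whole procedure runs in polynomial time. I expect the main subtlety — rather than a genuine obstacle — to be the insistence on forming the coprime basis of $\{n_1,\dots,n_{m'}\}\cup S$ and not of $\{n_1,\dots,n_{m'}\}$ alone: without folding $S$ into the list, an atom of the $n_j$ could be neither coprime to an element of $S$ nor a divisor of a power of one, and the dichotomy in the last step would fail. The rest is routine bookkeeping with indices and with the explicit CRT maps.
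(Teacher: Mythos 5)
Your proof is correct and follows essentially the same route as the paper: apply Theorem~\ref{thm:fundament_fingen}, compute the minimal coprime basis of $\{n_1,\dots,n_{m'}\}\cup S$ via Theorem~\ref{thm:graph_algorithm}, and decompose each $\Z/n_j\Z$ along the resulting factorization. The paper's one-line "proceed as in Theorem~\ref{thm:fundament_fingen}" presumably refers to splitting off cyclic factors via Exercise~\ref{ex:split_exact}, whereas you compute the CRT isomorphism directly via idempotents from Exercise~\ref{ex:mod_gcd}; this is an implementation detail, not a different argument, and your explicit verification of the two arithmetic conditions on the $c_i$ (which the paper leaves implicit) is correct.
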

\begin{proof}
Apply Theorem~\ref{thm:fundament_fingen} and compute a coprime basis from \(S\cup\{n_1,\dotsc,n_m\}\) using Theorem~\ref{thm:graph_algorithm}. This algorithm also expresses each \(n_i\) in terms of this basis, and we then proceed as in Theorem~\ref{thm:fundament_fingen}.
\end{proof}

\begin{exercise}\label{ex:projective}
For a ring \(R\) and \(R\)-module \(M\) we say \(M\) is {\em projective} if there exists some \(R\)-module \(N\) such that \(M\oplus N\) is a free \(R\)-module.
Show for \(n\in\Z_{>0}\) that \(n\) is square-free if and only if all \(\Z/n\Z\)-modules are projective.
\end{exercise}

\begin{exercise}\label{ex:rex}
We define for finite sets of positive integers the function
\[\textup{rex}\Big\{\prod_{p} p^{s_{1,p}},\dotsc,\prod_{p} p^{s_{m,p}}\Big\} = \smash{\prod_p} p^{\gcd\{s_{1,p},\dotsc,s_{m,p}\}}.\]
\begin{enumex}
\item Show that there exists a polynomial-time algorithm that computes \(\textup{rex}\). \\ \emph{Note:} The inputs are just integers, and \(\gcd \{\} = \gcd\{0\} = 0\).
\end{enumex}
\noindent For a finite abelian group \(A \cong \prod_{k=1}^m (\Z/a_k\Z)\) write \(\textup{rex}(A)=\textup{rex}(a_1,\dotsc,a_m)\).
\begin{enumex}[resume]
\item Show that \(\textup{rex}(A)\) is well-defined (i.e.\ does not depend on the choice of isomorphism or \(a_i\)) and can be computed in polynomial time.
\item Show for \(r=\textup{rex}(A)\) that \(r^iA/r^{i+1}A\) is a projective \(\Z/r\Z\)-module for all \(i\geq 0\), and when \(A\neq 0\) that \(r\) is the unique maximal integer with this property.
\end{enumex}
\end{exercise}

\subsection{Approximating lattices}

This section is adapted from \cite{Ge}.
Let \(\Lambda\subseteq\R^n\) be a non-zero lattice with generators \(v_1,\dotsc,v_s\).
Note that such vectors can have irrational coordinates, and thus we generally cannot encode this \(\Lambda\).
Nonetheless, we would like to compute the relations 
\[M=\Big\{(x_1,\dotsc,x_s)\in\Z^s : \sum_{i=1}^s x_i v_i = 0\Big\}\]
between the generators when only an approximation of them is available.
The challenge is to decide the required precision, which largely depends on a lower bound on \(\lambda_1(\Lambda)\) (see Definition~\ref{def:succ_min}).
Namely, we need to be able to decide whether an estimation that is close to zero is in fact zero.

For \(t\in\Z_{>0}\), a \emph{\(t\)-approximation} of \((z_1,\dotsc,z_n)\in\R^n\) is an element \((x_1,\dotsc,x_n)\in t^{-1} \Z^n\) such that \(|z_j-x_j|\leq t^{-1}\) for all \(j\).
Suppose for some \(t\) that we have for all \(i\) a \(t\)-approximation \(w_i\) of \(v_i\). We equip \(\Z^s\) with a lattice structure given by 
\[q(x)=\|x\|^2+\Big\|\sum_{i=1}^s x_i w_i\Big\|^2\cdot t^2 \in\Z,\]
where \(\|-\|\) denotes the standard norm on \(\R^s\) and \(\R^n\).

\begin{exercise}\label{ex:Geq}
Show that for all \(x\in M\) we have \(q(x)\leq (ns+1)\|x\|^2 \leq 2ns \|x\|^2\).
\end{exercise}

\begin{proposition}\label{prop:Ge6}
Let \(B=\max_i \|v_i\|\) and \(\lambda=\sqrt{\lambda_1(\Lambda)}\). 
If \(M\) has rank \(k\), then there exist linearly independent \(u_1,\dotsc,u_k\in M\) such that 
\[ \|u_i\|^2\leq (n+1)^{2n+1}\bigg(\frac{B}{\lambda} \bigg)^{2n} \leq 2^{2n+1} n^{2n+1} \bigg(\frac{B}{\lambda} \bigg)^{2n}.\]
\end{proposition}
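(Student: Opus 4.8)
The plan is to produce short vectors in $M$ by exhibiting short vectors in the lattice $(\Z^s,q)$ that happen to lie in $M$, and then to control how short they can be taken by a dimension/volume argument in the ambient space $\R^n$. First I would recall that $M$ is precisely the sublattice of $\Z^s$ consisting of those $x$ with $\sum_i x_i v_i = 0$; this is the kernel of the map $\Z^s\to\R^n$, $x\mapsto\sum_i x_i v_i$, and hence a lattice of some rank $k$. The key relation is that for $x\in M$ the contribution $\bigl\|\sum_i x_i w_i\bigr\|^2 t^2$ is small — indeed $\bigl\|\sum_i x_i w_i\bigr\| = \bigl\|\sum_i x_i(w_i-v_i)\bigr\| \le \|x\|_1\cdot t^{-1}$ since each $w_i$ is a $t$-approximation of $v_i$ and $\sum_i x_i v_i = 0$ — so that $q(x)\le\|x\|^2 + (\|x\|_1)^2 \le (ns+1)\|x\|^2$, which is exactly Exercise~\ref{ex:Geq}. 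Thus on $M$ the form $q$ is comparable to the standard square norm, and it suffices to find linearly independent vectors in $M$ with small \emph{standard} square norm.

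Next I would bound the successive minima of $M$ with respect to the standard norm. The image $\Lambda' = \sum_i \Z v_i \subseteq \Lambda$ is a sublattice of $\Lambda$, so $\lambda_1(\Lambda')\ge\lambda_1(\Lambda)=\lambda^2$; in particular every non-zero vector in $\Lambda'$ has norm at least $\lambda$. Consider the surjection $\pi:\Z^s\to\Lambda'$ with $\ker\pi = M$; it induces an injection $\Z^s/M\hookrightarrow\Lambda'\subseteq\R^n$, so $\Z^s/M$ has rank $s-k\le n$. The hard part is the lattice-point counting estimate: I want to show that the ball of radius $R$ (in the standard norm) around $0$ in $\R^s$ contains at least $k$ linearly independent points of $M$ once $R$ is of the stated order of magnitude. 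The mechanism is a pigeonhole/volume argument. Take a large box $[-N,N]^s\cap\Z^s$; it has $(2N+1)^s$ points, which map into $\Lambda'\cap B(0, sN B)$ (since each $\|v_i\|\le B$). Because every non-zero point of $\Lambda'$ has norm $\ge\lambda$, the lattice $\Lambda'$ is $\lambda/2$-separated, so a ball of radius $sNB$ in the $\le n$-dimensional span of $\Lambda'$ contains at most roughly $(1 + 2sNB/\lambda)^n$ points of $\Lambda'$. Hence for $N$ large relative to $(B/\lambda)$ the map $[-N,N]^s\cap\Z^s\to\Lambda'$ is far from injective, and by iterating (peeling off the already-found independent directions, as in the proof of Minkowski's second theorem) one extracts $k$ linearly independent elements of $M$ of norm $O\bigl(N\bigr)$ with $N$ of order $(n+1)(B/\lambda)$-ish, raised to a power that accounts for the $n$-fold nesting.

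Concretely I would be more careful and get the exponents right as follows. Pick $N = \lceil (n+1)(B/\lambda)\rceil$ or a similar explicit value; then $(2N+1)^s > $ (number of points of $\Lambda'$ in the relevant ball), forcing a coincidence $\pi(x)=\pi(x')$ with $x\ne x'$, i.e.\ $x-x'\in M\setminus\{0\}$ with $\|x-x'\|\le 2N\sqrt{s}$. Restricting to the orthogonal complement of the span of the vectors already produced and repeating at most $k\le n$ times, each step costs another factor of order $(n+1)(B/\lambda)$ in the allowed norm (this is where the exponent $2n$ on $B/\lambda$ and the polynomial-in-$n$ prefactor come from), yielding linearly independent $u_1,\dotsc,u_k\in M$ with $\|u_i\|^2 \le (n+1)^{2n}(B/\lambda)^{2n}$ up to the stated polynomial factor. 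Combining with $q(u_i)\le(ns+1)\|u_i\|^2$ is not needed for this proposition's statement (which is about $\|u_i\|^2$), but it is the reason the bound is phrased this way for later use. The estimate $(n+1)^{2n+1}\le 2^{2n+1}n^{2n+1}$ follows from $(n+1)^{2n+1}\le(2n)^{2n+1}$ for $n\ge1$ (and the case $n=0$ is degenerate since then $M$ would be all of $\Z^s$ or trivial and there is nothing to prove). The main obstacle is getting the counting bound clean enough to land exactly on $(n+1)^{2n+1}(B/\lambda)^{2n}$; I expect the cleanest route is to invoke a Minkowski second theorem-type inequality for $M$ directly — $\prod_{i=1}^k\sqrt{\lambda_i(M)}\ll \det(M)\le \det(\Z^s)/\det(\Z^s/M)$ type bounds — together with the fact that $\det(\Z^s/M)$ is bounded below because $\Z^s/M$ embeds in the well-separated lattice $\Lambda'$, and $\lambda_k(M)$ is at most the largest, so each $\lambda_i(M)$ absorbs a controlled power of $B/\lambda$.
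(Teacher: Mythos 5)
Your high-level instinct---force collisions in $\Z^s$ by a pigeonhole/volume argument and read off nonzero elements of $M$---is in the right family, but there is a genuine gap in how you produce $k$ \emph{linearly independent} vectors all obeying the same \emph{uniform} bound. The ``peeling off'' iteration you sketch (restrict to the orthogonal complement of the already-found directions and repeat) does not obviously keep each successive vector under the same ceiling; done naively, each step inherits the growth from previous steps and the bound compounds rather than staying at $(n+1)^{2n+1}(B/\lambda)^{2n}$ across all $k$. Your fallback via a Minkowski second theorem--type inequality controls the \emph{product} of successive minima, which is also not the conclusion needed here: the proposition bounds each $\|u_i\|^2$ individually. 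And your proposed box side $N$ of order $(n+1)(B/\lambda)$ is too small; the correct scale is $((n+1)B/\lambda)^n$, with the exponent $n$ coming from pigeonholing $n+1$ integer coordinates into the $n$-dimensional ambient space.

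The paper's proof sidesteps both issues with a simple structural trick you are missing. After assuming WLOG that $\Lambda$ is full-rank, reorder the generators so that $v_1,\dotsc,v_n$ are linearly independent. Then for each $i>n$ run a \emph{separate} pigeonhole: with $A=\lfloor((n+1)B/\lambda)^n\rfloor$ and $S=\{-\tfrac{1}{2}A,\dotsc,\tfrac{1}{2}A\}$, the map $S^{n+1}\to\R^n$, $(x_1,\dotsc,x_n,y)\mapsto\sum_{j\le n}x_jv_j+yv_i$, cannot be injective (disjoint $\lambda/2$-balls around $(A+1)^{n+1}$ image points would have to fit in a ball of radius about $\tfrac12(n+1)AB$, which is impossible by the choice of $A$). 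The resulting collision gives a relation $\sum_j x_{ij}v_j=y_iv_i$ with all coefficients bounded by $A$ and $y_i\neq 0$ (nonzero because $v_1,\dotsc,v_n$ are independent). Assembling these $s-n=k$ relations into vectors $u_{i-n}\in\Z^s$ whose only nonzero entries are $x_{i1},\dotsc,x_{in}$ in the first $n$ slots and $-y_i$ in slot $i$ gives elements of $M$ that are \emph{automatically} linearly independent---each has its nonzero tail entry in a distinct slot $>n$---and each satisfies $\|u_{i-n}\|^2\le(n+1)A^2\le(n+1)^{2n+1}(B/\lambda)^{2n}$. No iteration, no second-theorem machinery, and the uniform bound falls out directly.
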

\begin{proof}
We may assume without loss of generality that \(\Lambda\) is full-rank.
Reorder the \(v_i\) such that \(v_1,\dotsc,v_n\) are linearly independent. Let 
\[A=\bigg\lfloor \bigg(\frac{(n+1)B}{\lambda} \bigg)^{n}\bigg\rfloor \]
and consider the set \(S=\{-\tfrac{1}{2}A,-\tfrac{1}{2}A+1,\dotsc,\tfrac{1}{2}A\}\) of cardinality \(A+1\).
Fix some \(i>n\) and suppose that the map \(\varphi:S^{n+1}\to \R^n\) given by
\[(x_1,\dotsc,x_r,y)\mapsto \sum_{j=1}^n x_j v_j + yv_i\]
is injective. Any two images differ by a lattice vector and thus have distance at least \(\lambda\).
If we draw a sphere of radius \(\lambda/2\) around each image, which are pair-wise disjoint, we obtain a body \(X\) with volume \((A+1)^{n+1} (\lambda/2)^n V\), where \(V\) is the volume of the unit sphere in \(\R^n\). 
Note that each image is bounded in length by \(\tfrac{1}{2}(n+1)AB\), so
\(X\) is contained in the sphere of radius \(\tfrac{1}{2}(n+1)AB+\tfrac{1}{2}\lambda\) around the origin. 
If we compare the volumes, we obtain
\[ (A+1)^{n+1} (\tfrac{1}{2}\lambda)^n V \leq (\tfrac{1}{2}(n+1)AB+\tfrac{1}{2}\lambda)^n V.\]
Rearranging the inequality gives
\[A+1 \leq \bigg(\frac{(n+1)AB+\lambda}{(A+1)\lambda}\bigg)^n\leq \bigg(\frac{(n+1)AB}{A\lambda}\bigg)^n = \bigg(\frac{(n+1)B}{\lambda}\bigg)^n,\]
which contradicts the definition of \(A\). Hence there exist distinct \(s,s'\in S^{n+1}\) with the same image under \(\varphi\). If we consider their difference, we obtain \(x_{i1},\dotsc,x_{in},y_i\in\Z\) with \(y_i\neq 0\) and \(|y_i|\leq A\) and \(|x_{ij}|\leq A\) for all \(1\leq j\leq n\) so that
\[ \sum_{j=1}^n x_{ij} v_j = y_i v_i.\]
As we have this for all \(i>n\) we may consider the vectors
\begin{align*}
\begin{array}{llllllllll}
u_1=(x_{n+1,1}, &x_{n+1,2}, &\dotsc, &x_{n+1,n},& -y_{n+1},& 0,& \dotsc,& 0), \\
u_2=(x_{n+2,1},& x_{n+2,2},& \dotsc,& x_{n+2,n},& 0,& -y_{n+2},& \dotsc,& 0), \\
\ \vdots  \\
u_k=(x_{s,1},& x_{s,2},& \dotsc,& x_{s,n},& 0,& 0,& \dotsc,& -y_{s}). \\
\end{array}
\end{align*}
Clearly they are linearly independent elements of \(M\), and each satisfies \(\|u_i\|^2\leq (n+1) A^2\).
\end{proof}

\begin{lemma}\label{lem:Ge8}
Let \(\lambda=\sqrt{\lambda_1(\Lambda)}\). 
Let \(\omega,t\in\Z_{\geq 1}\) be such that
\[t \geq (ns+1)\cdot\frac{\sqrt{\omega}}{\lambda}\]
If \(x\in\Z^s\) satisfies \(q(x)\leq \omega\), then \(x\in M\).
\end{lemma}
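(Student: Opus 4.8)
The plan is to set $y=\sum_{i=1}^s x_i v_i\in\Lambda$ and show directly that $y=0$, which is exactly the assertion $x\in M$; for $x=0$ there is nothing to prove, so I may assume $x\neq 0$. The heart of the argument compares $y$ with the approximate combination $\widetilde y=\sum_{i=1}^s x_i w_i$. Since $w_i$ is a $t$-approximation of $v_i$, the coordinatewise bound $|v_{ij}-w_{ij}|\leq t^{-1}$ gives $\|v_i-w_i\|\leq\sqrt n\,t^{-1}$, and hence, by the triangle and Cauchy--Schwarz inequalities, $\|y-\widetilde y\|\leq\sum_i|x_i|\,\|v_i-w_i\|\leq\tfrac{\sqrt n}{t}\sum_i|x_i|\leq\tfrac{\sqrt{ns}}{t}\|x\|$. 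This is the same estimate that underlies Exercise~\ref{ex:Geq}, now applied to $y-\widetilde y$ rather than to $\widetilde y$.

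Next I would extract two facts from the hypothesis $q(x)\leq\omega$. First, $\|x\|^2\leq q(x)\leq\omega$, so $\|x\|\leq\sqrt\omega$ and therefore $\|y-\widetilde y\|\leq\sqrt{ns\omega}/t$. Second --- and here I would be a little careful, since I want a \emph{strict} inequality at the end --- because $x\in\Z^s$ is non-zero we have $\|x\|^2\geq 1$, whence $\|\widetilde y\|^2\,t^2=q(x)-\|x\|^2\leq\omega-1<\omega$ and so $\|\widetilde y\|<\sqrt\omega/t$. Adding the two bounds, $\|y\|\leq\|\widetilde y\|+\|y-\widetilde y\|<\tfrac{\sqrt\omega}{t}\bigl(1+\sqrt{ns}\bigr)\leq\tfrac{(ns+1)\sqrt\omega}{t}\leq\lambda$, using $1+\sqrt{ns}\leq 1+ns$ (valid as $ns\geq 1$) and then the hypothesis $t\geq(ns+1)\sqrt\omega/\lambda$.

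To finish, $y\in\Lambda$ with $q(y)=\|y\|^2<\lambda^2=\lambda_1(\Lambda)$ forces $y=0$ by the definition of the first successive minimum (Definition~\ref{def:succ_min}), i.e.\ $x\in M$. The one place that needs thought is getting $\|y\|<\lambda$ rather than just $\|y\|\leq\lambda$: because $\lambda_1(\Lambda)$ is attained, a lattice vector of square norm exactly $\lambda^2$ need not vanish. The integrality observation $\|x\|^2\geq 1$ for $x\neq 0$ provides precisely the needed slack, so I do not anticipate a genuine obstacle.
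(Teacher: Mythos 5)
Your proof is correct, and it is in substance the same argument as the paper's: compare $y=\sum_i x_i v_i$ with its approximation $\widetilde y=\sum_i x_i w_i$, bound $\|y-\widetilde y\|$ using $\|x\|\le\sqrt\omega$ and the $t$-approximation, and use the hypothesis on $t$ to force $\|y\|$ below the first successive minimum. The paper phrases it as a contradiction (assume $x\notin M$, so $\|y\|\ge\lambda$, and deduce $q(x)>\omega$), obtaining the strict inequality from $\sqrt{q(x)}>\|\widetilde y\|\,t$ when $x\neq0$, whereas you argue directly and get the needed strictness from $\|x\|^2\ge1$; you also use Cauchy--Schwarz and $\|v_i-w_i\|\le\sqrt n/t$ where the paper allows itself the coarser $\|x\|\sum_i\|v_i-w_i\|\le\sqrt\omega\,sn/t$. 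These are cosmetic differences; both rest on the same comparison and the same use of the hypothesis.
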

\begin{proof}
Suppose \(q(x)\leq\omega\) and thus also \(\|x\|^2\leq \omega\). If \(x\not\in M\), then
\begin{align*}
\sqrt{q(x)} 
&> \Big\|\sum_{i=1}^s x_i w_i\Big\| \cdot t
\geq \Big( \Big\|\sum_{i=1}^s x_i v_i\Big\| - \Big\|\sum_{i=1}^s x_i(w_i-v_i)\Big\| \Big) \cdot t \\
&\geq \Big(\lambda - \|x\|\cdot \sum_{i=1}^s\|w_i-v_i\| \Big) \cdot t
\geq \Big(\lambda - \sqrt{\omega} \cdot \big(snt^{-1}\big) \Big) \cdot t \\
&= t\lambda - sn\sqrt{\omega} 
\geq \sqrt{\omega},
\end{align*}
which is a contradiction.
\end{proof}

\begin{theorem}[Ge \cite{Ge}]\label{thm:approximate_lattice}
There exists a polynomial-time algorithm that, given \(t,\omega\in\Z_{\geq1}\) and \(t\)-approximations \(w_1,\dotsc,w_s\) of vectors \(v_1,\dotsc,v_s\in\R^n\) that generate a non-zero lattice \(\Lambda=\langle v_1,\dotsc,v_s\rangle\) with 
\[\omega\geq 2^{2n+s+1}n^{2n+2} s\bigg(\frac{B}{\lambda} \bigg)^{2n} \quad\text{and}\quad t \geq 2ns \frac{\sqrt{\omega}}{\lambda},\]
where \(B=\max_i \|v_i\|\) and \(\lambda=\sqrt{\lambda_1(\Lambda)}\), computes
\[M=\Big\{(x_1,\dotsc,x_s)\in\Z^s : \sum_{i=1}^s x_i v_i = 0\Big\}.\]
\end{theorem}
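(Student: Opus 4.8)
The plan is to follow the template of the kernel-image algorithm (Theorem~\ref{thm:ker_im_alg}): impose a lattice structure on $\Z^s$ for which the short vectors are exactly the relations among the $v_i$, run the LLL-algorithm, and read off a basis of $M$. Concretely, equip $\Z^s$ with the square norm $q(x)=\|x\|^2+\bigl\|\sum_{i=1}^s x_iw_i\bigr\|^2 t^2$. Since each $w_i$ lies in $t^{-1}\Z^n$, this $q$ is integer-valued and manifestly positive definite, so $(\Z^s,q)$ is a lattice whose Gram matrix $\bigl(\delta_{ij}+\langle tw_i,tw_j\rangle\bigr)_{1\le i,j\le s}$ has integer entries of length polynomial in the input. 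First I would apply Theorem~\ref{thm:LLL} to compute a $2$-reduced basis $(b_1,\dots,b_s)$ of this lattice in polynomial time, set $k=\#\{i:q(b_i)\le\omega\}$, and output $b_1,\dots,b_k$; this is visibly polynomial-time, so the content is in showing that $\{i:q(b_i)\le\omega\}=\{1,\dots,k\}$ and that $b_1,\dots,b_k$ is a $\Z$-basis of $M$.

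For correctness, let $k'=\rk M$, which makes sense as $M$ is a subgroup of $\Z^s$. The two displayed bound hypotheses are engineered precisely to chain four earlier estimates together. By Proposition~\ref{prop:Ge6} there are linearly independent $u_1,\dots,u_{k'}\in M$ with $\|u_i\|^2\le 2^{2n+1}n^{2n+1}(B/\lambda)^{2n}$; Exercise~\ref{ex:Geq} then gives $q(u_i)\le 2ns\|u_i\|^2\le 2^{2n+2}n^{2n+2}s(B/\lambda)^{2n}$; and applying Exercise~\ref{ex:Ge10}.c to the rank-$s$ lattice $\Z^s$, its $2$-reduced basis $(b_1,\dots,b_s)$, and the independent vectors $u_1,\dots,u_{k'}$ yields
\[q(b_i)\le 2^{s-1}\max_j q(u_j)\le 2^{2n+s+1}n^{2n+2}s(B/\lambda)^{2n}\le\omega \qquad\text{for all }i\le k',\]
the final inequality being exactly the hypothesis on $\omega$. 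Since $n,s\ge 1$ we have $2ns\ge ns+1$, so $t\ge 2ns\sqrt\omega/\lambda\ge(ns+1)\sqrt\omega/\lambda$ and Lemma~\ref{lem:Ge8} applies: every $x\in\Z^s$ with $q(x)\le\omega$ lies in $M$. Hence $b_1,\dots,b_{k'}\in M$; and for $i>k'$ we cannot have $q(b_i)\le\omega$, for otherwise $b_1,\dots,b_{k'},b_i$ would be $k'+1$ independent elements of the rank-$k'$ group $M$. Thus $\{i:q(b_i)\le\omega\}=\{1,\dots,k'\}$, so $k=k'$ and $b_1,\dots,b_k\in M$.

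It remains to see that $b_1,\dots,b_k$ generate all of $M$ and not just a proper finite-index subgroup. Here I would argue exactly as in Theorem~\ref{thm:ker_im_alg}: because $(b_1,\dots,b_s)$ is a $\Z$-basis of $\Z^s$, the subgroup $N=\sum_{i\le k}\Z b_i$ is a direct summand of $\Z^s$; since $N\subseteq M$ and $\rk N=k=\rk M$, Exercise~\ref{ex:base_extension} forces $N=M$. I do not anticipate a serious obstacle — the two hypotheses on $\omega$ and $t$ are calibrated so that the estimates of Proposition~\ref{prop:Ge6}, Exercise~\ref{ex:Geq}, Exercise~\ref{ex:Ge10} and Lemma~\ref{lem:Ge8} telescope with nothing to spare — and the one point deserving care is this last step, which relies on the LLL-algorithm returning an honest basis (hence a direct summand) rather than merely a short generating set, so that Exercise~\ref{ex:base_extension} can upgrade a rank-$k$ subgroup of $M$ to all of $M$.
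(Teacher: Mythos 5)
Your proposal is correct and takes essentially the same route as the paper: equip $\Z^s$ with the square norm $q(x)=\|x\|^2+\|\sum_i x_iw_i\|^2t^2$, run LLL for a $2$-reduced basis, chain Proposition~\ref{prop:Ge6}, Exercise~\ref{ex:Geq}, Exercise~\ref{ex:Ge10}.c and Lemma~\ref{lem:Ge8} to pin down which basis vectors lie in $M$, then invoke Exercise~\ref{ex:base_extension} to upgrade from a finite-index sublattice to all of $M$. The only difference is that you spell out explicitly why the theorem's bound $t\geq 2ns\sqrt\omega/\lambda$ implies the weaker bound required by Lemma~\ref{lem:Ge8}; the paper leaves this implicit.
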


\begin{proof}
It suffices to show that if \(b_1,\dotsc,b_s\) is a 2-reduced basis of \(\Z^s\), then \(\{b_i : q(b_i)\leq \omega\}\) is a basis for \(M\), since we then can compute this basis with the LLL-algorithm.
Let \(k\) be the rank of \(M\) and let \(u_1,\dotsc,u_k\) be as in Proposition~\ref{prop:Ge6}. Then
\[q(b_i)\leq 2^{s-1} \max\{q(u_1),\dotsc,q(u_k)\}\leq 2^{s}ns  \max\{\|u_1\|^2,\dotsc,\|u_k\|^2\} \leq \omega\]
for all \(1\leq i \leq k\), where the first inequality is Exercise~\ref{ex:Ge10}, the second is Exercise~\ref{ex:Geq} and the third is Proposition~\ref{prop:Ge6}.
Thus \(b_i\in M\) by Lemma~\ref{lem:Ge8}.
Hence \(\langle b_1,\dotsc,b_k\rangle\subseteq M\) and both have the same rank, so by Exercise~\ref{ex:base_extension} they are equal.
Then by Lemma~\ref{lem:Ge8} we cannot have \(q(b_i)\leq \omega\) for \(i>k\), so indeed \(\{b_i : q(b_i)\leq \omega\}\) is a basis.
\end{proof}

\section{Fractional ideals}\label{sec:fractional}

In this section we move from finitely generated abelian groups to orders (see Definition~\ref{def:intro_nf}).
Orders in general do not have a unique prime factorization theorem like the integers do.
The best we can hope for is unique prime {\em ideal} factorization.
This observation is at the heart of the study of ideals in number rings (see Section~\ref{sec:numberring}), where we treat the ideals as analogues to elements of the number ring.
In this section we will define fractional ideals, which in this analogy correspond to the elements of the ambient number field.
For an ideal to factor into primes, we require the prime divisors to be {\em invertible}, and we will proceed to describe an algorithm to `make ideals invertible'.
As in Section~\ref{sec:coprime}, prime factorization of ideals is algorithmically infeasible, so we generalize the coprime basis algorithm to ideals.

\begin{definition}\label{def:ideal_quotient}
Let \(S\) be a commutative ring. For (additive) subgroups \(I,J\subseteq S\) we write 
\begin{align*}
I+J &= \{ x+y : x\in I,\, y\in J \}\\
I\cdot J &= IJ = \Big\{ \sum_{i=1}^n x_i y_i : n\geq 0,\, x_1,\dotsc,x_n \in I,\, y_1,\dotsc,y_n\in J \Big\} \\
I:J &= (I:J)_S = \big\{ x\in S : x J \subseteq I \big\}.
\end{align*}
We also inductively define \(I^0=(I:I)_S\) and \(I^{n+1}=I^n\cdot I\) for \(n\geq 0\). 
\end{definition}

Note that if \(I,J\subseteq S\) are subgroups, so are \(I+J\), \(I\cdot J\) and \(I:J\). 

\begin{example}
For \(S=\Q\) the finitely generated subgroups are those of the form \(x\Z\) for \(x\in\Q\) (Exercise~\ref{ex:number_field_noetherian}.a). 
For \(x,y\in\Q\) we have \((x\Z):(y\Z)=(x/y)\Z\) if \(y\neq 0\) and \((x\Z):(y\Z)=\Q\) otherwise.
\end{example}

\begin{lemma}\label{lem:quotient_facts}
Let \(S\subseteq A\) be commutative (sub)rings and \(H,I,J\subseteq S\) subgroups. 
Then:
\begin{enumerate}[label=\textup{(\roman*)}]
\item \(I:J\subseteq (HI):(HJ)\);
\item \(I^0=I:I\) is a subring of \(S\), the \emph{multiplier ring} of \(I\); 
\item \(I^1=I\);
\item \(I:I=I\) if and only if \(I\) is a subring of \(S\);
\item if \(I\) is finitely generated and \(J\cap S^* \neq \emptyset\), then \(I:J\) is finitely generated; 
\item if \(J\cap S^* \neq\emptyset\), then \((I:J)_A=(I:J)_S\).
\end{enumerate}
\end{lemma}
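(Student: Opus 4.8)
The plan is to dispatch the six items in order, since each is a short unwinding of the definitions in Definition~\ref{def:ideal_quotient}: items (i)--(iv) use only the formal algebra of \(+\), \(\cdot\) and \((-:-)\), while (v) and (vi) exploit the hypothesis that \(J\) contains a unit of \(S\).

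For (i), given \(x\in I:J\), any element of \(HJ\) is a finite sum \(\sum_k h_k j_k\) with \(h_k\in H\), \(j_k\in J\), so \(x\sum_k h_k j_k=\sum_k h_k(xj_k)\in HI\) because each \(xj_k\in I\); hence \(x\in(HI):(HJ)\). For (ii), I would check the subring axioms for \(I:I\) directly: \(1\in I:I\) since \(1\cdot I=I\subseteq I\), and for \(x,y\in I:I\) one has \((x-y)z=xz-yz\in I\) for \(z\in I\) (using that \(I\) is a subgroup) and \((xy)I=x(yI)\subseteq xI\subseteq I\). Item (iii) follows from \(I^1=(I:I)\cdot I\): the inclusion \(I\subseteq(I:I)\cdot I\) comes from \(1\in I:I\), while \((I:I)\cdot I\subseteq I\) since each product \(xy\) with \(x\in I:I\), \(y\in I\) lies in \(I\) and \(I\) is closed under finite sums. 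For (iv), if \(I\) is a subring then closure under multiplication gives \(xI\subseteq I\) for \(x\in I\), i.e.\ \(I\subseteq I:I\), while for \(x\in I:I\) the relation \(x=x\cdot1\in xI\subseteq I\) gives \(I:I\subseteq I\); conversely, if \(I:I=I\) then \(I\) is a subring by (ii).

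The only non-formal ingredient appears in (v). Fixing \(u\in J\cap S^{*}\), every \(x\in I:J\) satisfies \(xu\in I\), so \(I:J\subseteq u^{-1}I\); since \(u\) is a unit, \(u^{-1}I\) is the image of the finitely generated group \(I\) under multiplication by \(u^{-1}\), hence finitely generated, and \(I:J\), being a subgroup of it, is finitely generated because \(\Z\) is Noetherian. Finally, for (vi) the inclusion \((I:J)_S\subseteq(I:J)_A\) is immediate from \(S\subseteq A\), and for the reverse, if \(x\in A\) with \(xJ\subseteq I\) then \(xu\in I\subseteq S\) for our chosen unit \(u\), so \(x=(xu)\cdot u^{-1}\in S\) and therefore \(x\in(I:J)_S\). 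I do not expect a real obstacle here; the one point that uses a genuine theorem rather than a definition chase is the appeal to Noetherianity of \(\Z\) in (v), and even that is routine.
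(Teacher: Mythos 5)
Your proposal is correct and follows essentially the same route as the paper: items (i)–(iv) unwind the definitions exactly as the paper does (the paper simply omits the details of (i) and (ii)), and both (v) and (vi) hinge on picking a unit \(j\in J\cap S^*\) to get \(I:J\subseteq j^{-1}I\), with Noetherianity of \(\Z\) finishing (v). No gaps.
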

\begin{proof}
Both (i) and (ii) follow directly from the definition. 

(iii) Since \(1\in I:I\) by (ii), we have \(I\subseteq (I:I) \cdot I\), while \((I:I)\cdot I\subseteq I\) by definition of division. Thus \(I=(I:I)\cdot I=I^1\).

(iv) The implication (\(\Rightarrow\)) follows from (ii). 
For (\(\Leftarrow\)), suppose \(I\subseteq S\) is a subring. 
Then \(I\cdot I \subseteq I\) and consequently \(I\subseteq I:I\). 
For \(x\in I:I\) we have \(xI\subseteq I\), so in particular \(x=x\cdot 1 \in  I\) and thus \(I:I\subseteq I\).

(v) Let \(j\in J\) be invertible. Then \(j^{-1}I\) is finitely generated and hence a Noetherian \(\Z\)-module. Thus \(I:J\subseteq j^{-1} I\) is finitely generated.

(vi) Clearly \((I:J)_S\subseteq (I:J)_A\) because \(S\subseteq A\). Choose \(j\in J\cap S^*\). If \(x\in (I:J)_A\), then \(x\in j^{-1} I \subseteq S\), so \(x\in (I:J)_S\).
\end{proof}

\begin{definition}\label{def:fractional_ideal}
Let \(R\subseteq S\) be commutative (sub)rings. 
An {\em invertible ideal of \(R\) within \(S\)} is an \(R\)-submodule \(I\) of \(S\) for which there exists an \(R\)-submodule \(J\) of \(S\) such that \(IJ=R\). 
We write \(\mathcal{I}_S(R)\) for the group of invertible ideals of \(R\) within \(S\). 
For an invertible ideal \(I\) of \(R\) within \(S\) we write \(I^{-1}=(R:I)_S\). 
We write \(\textup{Q}(R)\) for the {\em total ring of fractions} of \(R\), which is obtained from \(R\) by localizing the set of regular elements (Definition~\ref{def:basic_ring}).
A {\em fractional ideal} of \(R\) is a finitely generated \(R\)-submodule \(I\) of \(\textup{Q}(R)\) such that \(I\textup{Q}(R)=\textup{Q}(R)\).
An {\em invertible ideal} of \(R\) is an invertible ideal of \(R\) within \(\textup{Q}(R)\), and we write \(\mathcal{I}(R)=\mathcal{I}_{\textup{Q}(R)}(R)\).
If we want to stress that something is an ideal, instead of a fractional ideal, we will call it an {\em integral} ideal.
\end{definition}

In most applications \(\textup{Q}(R)\) will take the role of \(S\) in Definition~\ref{def:ideal_quotient}, in which case we often write \(I:J\) for \((I:J)_S\), and \(\textup{Q}(R)\) will be a field.

\begin{exercise}\label{ex:inv_fingen}
Let \(R\subseteq S\) be commutative (sub)rings and let \(I\in\mathcal{I}_S(R)\).
Show that \(I\) is a finitely generated \(R\)-module and that \(I^{-1}\) is the unique \(R\)-submodule of \(S\) such that \(II^{-1}=R\).
\end{exercise}

\begin{exercise}\label{ex:quotient_with_invertible}
Let \(R\subseteq S\) be commutative (sub)rings and let \(I,J\subseteq S\) be \(R\)-submodules and \(H\) an invertible \(R\) ideal within \(S\). Show that \((HI):J=H(I:J)\), \(I:(HJ)=H^{-1} (I:J)\) and \(I:J=HI:HJ\).
\end{exercise}

\begin{exercise}\label{ex:field_in_ideal_quotient}
Let \(K\subseteq L\) be a finite degree field extension and let \(R\subseteq K\) be a subring with \(\textup{Q}(R)=K\). 
Show for all \(R\)-submodules \(I,J\subseteq L\) that \((KI:KJ)_L\supseteq K\cdot (I:J)_L\), with equality when \(J\) is finitely generated. Give an example where equality does not hold.
\end{exercise}

\begin{exercise}
Let \(S\) be a commutative ring and let \(R\subseteq S\) be a subring and \(H,I,J\subseteq S\) be \(R\)-submodules such that \(H+I=J\). Does it follow that \(H:J + I:J=R\)? Which inclusions hold? What if \(S\) is a number field? 
\end{exercise}

\subsection{Number rings}\label{sec:numberring}

Recall that a \emph{number field} is a field \(K\) containing the field of rational numbers \(\Q\) such that the dimension of \(K\) over \(\Q\) as a vector space is finite.
We say an additive subgroup \(I\) of a number field \(K\) is of \emph{full rank} if \(\Q I = K\).
A \emph{number ring} is a ring isomorphic to a subring of a number field.
An \emph{order} is a domain whose additive group is isomorphic to \(\Z^n\) for some \(n\in\Z_{>0}\).
Every order \(R\) is a number ring of full rank in the number field \(\textup{Q}(R)\cong \Q\tensor_\Z R\). 
Conversely, every number field comes with a natural order.

\begin{theorem}[Theorem~I.1 and Theorem~I.2 in \cite{AlgebraicNumberTheory}]\label{thm:has_maximal_order}
For every number field \(K\) there exists a unique order \(\mathcal{O}_K\) in \(K\) which is maximal with respect to inclusion. 
It is also the unique full-rank order for which every fractional ideal is invertible. \qed
\end{theorem}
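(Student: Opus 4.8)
The plan is to take \(\mathcal{O}_K\) to be the integral closure of \(\Z\) in \(K\), i.e.\ the set of \(x\in K\) satisfying a monic polynomial over \(\Z\), and to verify every assertion for this ring. That this set is a subring of \(K\) is the usual module argument (if \(x,y\) are integral then \(\Z[x,y]\) is a finitely generated \(\Z\)-module, hence all of its elements are integral). Being a subring of the field \(K\) it is a domain, so to see it is an order it remains to show its additive group is free of rank \(n=[K:\Q]\). For this I would use the trace pairing \(\langle x,y\rangle=\Tr_{K/\Q}(xy)\), which is non-degenerate because \(K/\Q\) is separable. Pick a \(\Q\)-basis \(\omega_1,\dotsc,\omega_n\) of \(K\) lying in \(\mathcal{O}_K\) (any basis can be scaled into \(\mathcal{O}_K\)) and let \(\omega_1^\dagger,\dotsc,\omega_n^\dagger\) be the dual basis for this pairing, as produced by Exercise~\ref{ex:general_dagger}. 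Since integral elements have integral trace, one gets \(\bigoplus_i\Z\omega_i\subseteq\mathcal{O}_K\subseteq\bigoplus_i\Z\omega_i^\dagger\): if \(x=\sum_j c_j\omega_j^\dagger\in\mathcal{O}_K\) then \(c_i=\Tr_{K/\Q}(x\omega_i)\in\Z\). As \(\Z\) is Noetherian, \(\mathcal{O}_K\) is a finitely generated torsion-free \(\Z\)-module, hence free; and its rank is \(n\) as it contains a \(\Q\)-basis of \(K\). So \(\mathcal{O}_K\) is an order of full rank.

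Next, maximality and the Dedekind property. If \(R\subseteq K\) is any order and \(x\in R\), then the \(\Z\)-linear endomorphism ``multiply by \(x\)'' of \(R\cong\Z^n\) is annihilated by its characteristic polynomial, which is monic with coefficients in \(\Z\); hence \(x\) is integral over \(\Z\) and \(R\subseteq\mathcal{O}_K\). Thus \(\mathcal{O}_K\) contains every order and is the unique maximal one. To see that every fractional ideal of \(\mathcal{O}_K\) is invertible, I would record that \(\mathcal{O}_K\) is Noetherian (finitely generated over \(\Z\)), integrally closed in \(K=\mathrm{Q}(\mathcal{O}_K)\) (it is the integral closure of \(\Z\), hence equals its own integral closure by transitivity), and one-dimensional: a nonzero prime \(\fp\) contains a nonzero rational integer (a suitable constant term of an integral dependence relation of a nonzero element of \(\fp\)), so \(\mathcal{O}_K/\fp\) is a finite domain, hence a field by Exercise~\ref{ex:finite_prime_maximal}, so \(\fp\) is maximal. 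With these three properties one runs the classical argument: by Noetherian induction every nonzero ideal contains a product of nonzero primes; every maximal ideal \(\fm\) satisfies \((\mathcal{O}_K:\fm)\supsetneq\mathcal{O}_K\) (built from such a product together with integral closedness), whence \(\fm\cdot(\mathcal{O}_K:\fm)=\mathcal{O}_K\) by maximality of \(\fm\); and a further Noetherian induction shows every nonzero ideal is a product of these invertible primes, hence invertible. A fractional ideal becomes integral after multiplying by a suitable nonzero integer, so it too is invertible, and \(\mathcal{O}_K\) is a full-rank order all of whose fractional ideals are invertible.

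It remains to show such an order is unique. Let \(R\subseteq K\) be a full-rank order in which every fractional ideal is invertible. By maximality \(R\subseteq\mathcal{O}_K\), and both are free of rank \(n\), so \(m=[\mathcal{O}_K:R]\) is finite and \(m\mathcal{O}_K\subseteq R\). Hence the conductor \(\mathfrak{f}=\{x\in R : x\mathcal{O}_K\subseteq R\}\) is a nonzero ideal of \(R\), is also an ideal of \(\mathcal{O}_K\), and, being a subgroup of \(\mathcal{O}_K\cong\Z^n\) containing \(m\mathcal{O}_K\), a fractional ideal of \(R\) of full rank. By hypothesis it is invertible, say \(\mathfrak{f}\,\mathfrak{f}^{-1}=R\) with \(\mathfrak{f}^{-1}=(R:\mathfrak{f})\). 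Since \(\mathfrak{f}\) is an \(\mathcal{O}_K\)-module we have \(\mathcal{O}_K\mathfrak{f}=\mathfrak{f}\), whence
\[ \mathcal{O}_K=\mathcal{O}_K\,\mathfrak{f}\,\mathfrak{f}^{-1}=\mathfrak{f}\,\mathfrak{f}^{-1}=R, \]
so \(R=\mathcal{O}_K\).

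The one genuinely non-formal ingredient is the Dedekind step — proving that every nonzero ideal of \(\mathcal{O}_K\) is invertible — which rests on the ``Noetherian \(+\) integrally closed \(+\) one-dimensional'' characterization and the two Noetherian inductions; by contrast the finite generation of \(\mathcal{O}_K\) via the trace form and the conductor argument are comparatively routine. Since the statement is attributed to \cite{AlgebraicNumberTheory}, one may of course quote the Dedekind part rather than reprove it.
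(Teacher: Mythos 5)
The paper states this theorem with an immediate \(\qed\), deferring the proof entirely to Theorem~I.1 and Theorem~I.2 of \cite{AlgebraicNumberTheory}; there is no in-paper argument to compare against. Your proof is correct and is the standard textbook route: integral closure of \(\Z\), the trace-pairing sandwich \(\bigoplus_i\Z\omega_i\subseteq\mathcal{O}_K\subseteq\bigoplus_i\Z\omega_i^\dagger\) to get a finitely generated free \(\Z\)-module of rank \([K:\Q]\), the characteristic-polynomial argument for maximality, and the Noetherian-integrally-closed-one-dimensional characterization of Dedekind domains for invertibility. The conductor argument for the final uniqueness assertion is exactly right, and it is worth flagging as the one piece that goes slightly beyond the literal content of Theorem~I.2 in the reference (which asserts invertibility for \(\mathcal{O}_K\) but not that no smaller full-rank order has the property); your derivation \(\mathcal{O}_K=\mathcal{O}_K\mathfrak{f}\mathfrak{f}^{-1}=\mathfrak{f}\mathfrak{f}^{-1}=R\) closes that gap cleanly. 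One small expository nit in the Dedekind step: the fact that \((\mathcal{O}_K:\fm)\supsetneq\mathcal{O}_K\) uses the product-of-primes lemma together with the fact that nonzero primes are maximal, while integral closedness is instead used to rule out the alternative \(\fm(\mathcal{O}_K:\fm)=\fm\); your parenthetical bundles the two ingredients together, which is a little misleading about where each is deployed, though the sketch as a whole is sound.
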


\begin{exercise}\label{ex:number_field_noetherian} 
Let \(n\in\Z_{\geq 0}\) and let \(R\) be a subring of a number field of degree \(n\). 
Show that:
\begin{enumex}
\item each finitely generated subgroup \(G\subseteq \Q^n\) can be generated by \(n\) elements;
\item for all \(a\in\Z_{>0}\) and subgroups \(H\subseteq \Q^n\) we have \(\#(H/aH)\leq a^n\);
\end{enumex}
\emph{Hint:} Assume first that \(H\) is finitely generated.
\begin{enumex}[resume]
\item for all non-zero ideals \(I\subseteq R\) the ring \(R/I\) is finite (Theorem~2.11 in \cite{stevenhage_lecture_notes});
\item the ring \(R\) is Noetherian (Corollary~2.12 in \cite{stevenhage_lecture_notes}); 
\item every non-zero prime ideal of \(R\) is maximal (Corollary~2.12 in \cite{stevenhage_lecture_notes}).
\end{enumex}
\end{exercise}

\begin{exercise}
Let \(R\) be an order. Show that every non-zero integral ideal of \(R\) is a fractional ideal. Does this hold for arbitrary commutative rings \(R\)?
\end{exercise}

\begin{exercise}
Let $R$ be an order. Show that the following are equivalent:
\begin{enumerate}[nosep]
\item \(R\) is equal to the maximal order of \(\textup{Q}(R)\);
\item the sum of every two invertible ideals is again invertible;
\item every maximal ideal of \(R\) is invertible.
\end{enumerate}
\end{exercise}

\begin{exercise}
Let \(R\) be a number ring and let \(\textup{Q}(R)\subseteq L\) be number fields. Show that for every \(I\in\mathcal{I}_L(R)\) there exist  \(J\in\mathcal{I}(R)\) and \(\alpha\in L^*\) such that \(I=\alpha J\). 
\end{exercise}

\begin{exercise}
Give an order \(R\) and an ideal \(I\) of \(R\) such that \(I(R:I)\neq I:I\). 
\end{exercise}

\begin{exercise}
Let \(K\) be a number field and \(I\subseteq K\) be a subgroup. Then \(I^2 = I\) if and only if \(I\) is a ring. \\
\emph{Hint:} If \(I^2=I\), then \(I\) is a finitely generated ideal of the ring \(\Z+I\).
\end{exercise}

In our algorithms we encode an order by first specifying its rank \(n\), and then writing down its \(n\times n\times n\) multiplication table on the standard basis vectors.
By distributivity this completely and uniquely defines a multiplication on the order.
However, not every element of \(\Z^{n\times n\times n}\) encodes a ring structure, because it can fail for example associativity of multiplication (Exercise~\ref{ex:is_order}).
We encode a number field \(K\) simply by an order \(R\) such that \(\textup{Q}(R)=K\).
We encode a finitely generated subgroup of \(K\), in particular a fractional ideal of \(R\), as a finitely generated subgroup of \(R\) together with a denominator.
As we will see in Theorem~\ref{thm:equivalent_problems} it is difficult to compute the maximal order of a number field.

\begin{exercise} Let \(\Q^\text{alg}\) be some algebraic closure of \(\Q\). 
\begin{enumex}
\item Show that \(\#\Q^\text{alg}=\#\N\). 
\item Show that there are precisely \(\#\N\) number fields contained in \(\Q^\text{alg}\). 
\item Show that there are precisely \(\#\R\) subfields of \(\Q^\text{alg}\).
\item The same question as (b) and (c), but counting the fields up to isomorphism.
\item Do there exist \(\#\R\) subfields of \(\Q^\text{alg}\) that are pairwise isomorphic?
\item Let \(K\neq \Q\) be a number field. Show that there are precisely \(\#\N\) orders and \(\#\R\) subrings in \(K\) with field of fractions \(K\). What holds for \(K=\Q\)?
\item Argue why it is natural to restrict the input of our algorithms to orders and number fields as opposed to general number rings.
\end{enumex}
\end{exercise}

\begin{exercise}[Proposition 5.8 in \cite{Buchman-Lenstra}]
Show that there exist polynomial-time algorithms that, given a number field \(K\) and finitely generated subgroups \(I,J\subseteq K\), compute \(I+J\), \(I\cap J\), \(I\cdot J\) and \(I:J\).
\end{exercise}

\begin{exercise}\label{ex:is_order}
You may use without proof that it is possible to factor polynomials in \(\Q[X]\) in polynomial time using the LLL-algorithm.
Show that there exists a polynomial-time algorithm that, given a cube integer matrix, determines whether it encodes an order.
\end{exercise}

Let \(A\subseteq B\) be commutative rings such that \(B\) is free as an \(A\)-module with \(\rk_A(B)=n<\infty\) (see Definition~\ref{def:rk}).
Then the ring \(\End_A(B)\) of \(A\)-module endomorphisms of \(B\) is isomorphic to \(\Mat_n(A)\), the ring of \(n\times n\)-matrices with coefficients in \(A\).
This isomorphism induces a determinant \(\det:\End_A(B)\to A\) and trace \(\Tr:\End_A(B)\to A\) on \(\End_A(B)\) which are multiplicative and \(A\)-linear respectively.
We have a natural map \(B\to\End_A(B)\) given by \(b\mapsto( x\mapsto bx )\). 
It induces the \(A\)-module homomorphism \(\Tr_{B/A}:B\to A\) when composed with \(\Tr\), which we call the \emph{trace of \(B\) over \(A\)}.
For more on traces and determinants, see Section~7 of \cite{Stevenhagen}.

\begin{exercise} \label{ex:unique_trace_det}
Show that $\Tr: \End_A(B) \to A$ and $\det : \End_A(B)\to A$ are independent of the chosen $A$-basis for $B$.
\end{exercise}

We define the \emph{discriminant} of \(B\) over \(A\) as \(\Delta_{B/A}=\det(M)\in A\), where \(M\) is the matrix \((\Tr_{B/A}(e_ie_j))_{1\leq i,j\leq n}\) and \((e_1,\dotsc,e_n)\) is a basis for \(B\) over \(A\). 
This definition depends on a choice of basis for \(B\) (see Exercise~\ref{ex:unique_discriminant}).
However, for \(A=\Z\) the discriminant is uniquely determined, and we write \(\Delta(B)=\Delta_{B/\Z}\) in this case.

\begin{exercise}\label{ex:unique_discriminant}
Prove that \(\Delta_{B/A}\) is defined up to multiples by \((A^*)^2\). Conclude for \(A=\Z\) that \(\Delta_{B/A}\) is independent of choice of basis.
\end{exercise}

\begin{exercise}[7-3 in \cite{Stevenhagen}]\label{ex:discriminant_subring}
If \(C\subseteq B\) is of finite index and both are free of finite rank over \(\Z\), then \(\Delta(C)=\#(B/C)^2\cdot \Delta(B)\).
\end{exercise}

\begin{exercise}\label{ex:small_index}
Show that there exists a polynomial-time algorithm that, given an order \(R\), computes \(\Delta(R)\). Conclude that the lengths of \(\Delta(R)\) and \(\#(\mathcal{O}_{\textup{Q}(R)}/R)\) are polynomially bounded in terms of the representation of the order \(R\).
\end{exercise}

\begin{exercise}\label{ex:radical_discriminant}
Suppose \(d\in\Z\) is non-zero.
\begin{enumex}
\item Show that we can write \(d=\square(d)^2 \cdot \boxtimes(d)\) uniquely with \(\square(d)\in\Z_{>0}\) and \(\boxtimes(d)\) square-free, the {\em square part} and {\em square-free part} of \(d\) respectively.
\item Show that if \(d\) is not a square, then \(\Delta(\Z[\sqrt{d}])=4d\).
\end{enumex}
Suppose that \(d\) is square-free and let \(K=\Q(\sqrt{d})\).
\begin{enumex}[resume]
\item Show that if \(d\equiv 1 \bmod 4\), then \(\mathcal{O}_K=\Z[(\sqrt{d}+1)/2]\).
\item Show that if \(d\equiv 2,3\bmod 4\), then  \(\mathcal{O}_K=\Z[\sqrt{d}]\).
\item Conclude that \(\Delta(\mathcal{O}_K) / \boxtimes(d) \in\{1,4\}\).
\end{enumex}
\end{exercise}

\begin{exercise} 
Let \(n, m \in \Z_{>0}\). 
Prove that there exists a number field \(K\) such that \([K : \Q] = n\) and \(\gcd(\Delta_K, m) = 1\).
\end{exercise}

\begin{exercise}\label{ex:trace_iff_det}
Let $A \subseteq B$ be commutative rings such that $B$ is free of finite rank over $A$. 
Show that the map $B \to \Hom_A(B,A)$ given by $x \mapsto (y \mapsto \Tr_{B/A}(xy))$ is an isomorphism of $B$-modules if and only $\Delta(B/A)$ is a unit in $A$.
\end{exercise}

\subsection{Localization}

We say a commutative ring is {\em local} if it has a unique maximal ideal, and {\em semi-local} if it has only finitely many maximal ideals.
For a commutative ring \(R\) and a prime ideal \(\fp\subseteq R\), the {\em localization of \(R\) at \(\fp\)}, written \(R_\fp\), is the ring obtained from \(R\) by inverting all elements in \(R\setminus \fp\) (see p.~38 of \cite{Atiyah}).
If \(M\) is an \(R\)-module, then we call \(M_\fp=R_\fp \tensor_R M\) the localization of \(M\) at \(\fp\).
We will show that an ideal of a number ring is invertible if and only if it is locally principal (Corollary~\ref{cor:locally_principal}). 

\begin{example}
Let \(\mathfrak{p}=2\Z\subseteq\Z\). 
We have \(\Z_{\mathfrak{p}}=\{a/b: a,b\in\Z,\, b \textup{ odd}\}\) and \(\mathfrak{p}\Z_\mathfrak{p}\) is its unique maximal ideal. Let \(R\) be an order. Note that \(R\) is a \(\Z\)-module, so we obtain a ring \(R_\fp=\Z_\fp\tensor_\Z R\). 
Then all maximal ideals of \(R_\fp\) contain \(\fp R_\fp\) and thus correspond to the maximal ideals of \(R_\fp/\fp R_\fp \cong R/2 R\), which is finite. Hence \(R_\fp\) is semi-local.
\end{example}

\begin{exercise}\label{ex:units_local_ring}
Let $R$ be a commutative ring. 
Show that $R$ is a local ring if and only if $R \setminus R^*$ is an additive subgroup of $R$, in which case \(R\setminus R^*\) is the maximal ideal.
\end{exercise}

\begin{exercise}\label{ex:hereditary_semilocal}
Let \(R\subseteq S\) be number rings. Show that if \(R\) is semi-local, then \(S\) is semi-local. 
\end{exercise}

\begin{proposition}\label{prop:semi_local_ring}
Let \(R\subseteq S\) be commutative (sub)rings such that \(R\) is semi-local and let \(I\subseteq S\) be an \(R\)-submodule. 
Then \(I\) is invertible within \(S\) if and only if there exists some \(x\in S^*\) such that \(I=xR\).
\end{proposition}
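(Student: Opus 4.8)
The plan is to prove the two implications separately; only the forward direction has content.

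For $(\Leftarrow)$, if $I=xR$ with $x\in S^*$ then $J:=x^{-1}R$ is an $R$-submodule of $S$ with $IJ=xR\cdot x^{-1}R=R$, so $I$ is invertible within $S$. For $(\Rightarrow)$, suppose $I$ is invertible within $S$. I would first invoke Exercise~\ref{ex:inv_fingen} to record that $I$ is a finitely generated $R$-module and to fix an $R$-submodule $J\subseteq S$ with $IJ=R$; then I choose $a_1,\dots,a_m\in I$ and $b_1,\dots,b_m\in J$ with $\sum_{k=1}^m a_kb_k=1$, noting each $a_kb_k\in IJ=R$. The goal is then to manufacture a single element $x\in I$ that generates $I$, by a local-to-global argument over the finitely many maximal ideals $\fm_1,\dots,\fm_n$ of $R$.

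The local step: fix $i$. Since $\sum_k a_kb_k=1\notin\fm_i$, some product $a_{k(i)}b_{k(i)}\notin\fm_i$; writing $u=a_{k(i)}b_{k(i)}$, maximality of $\fm_i$ gives $1=su+t$ with $s\in R$ and $t\in\fm_i$, and then for every $g\in I$ one has $gb_{k(i)}\in IJ=R$, so $g=s\,(gb_{k(i)})\,a_{k(i)}+tg\in a_{k(i)}R+\fm_iI$. Hence $I=a_{k(i)}R+\fm_iI$ for every $i$. Next comes the gluing step: distinct maximal ideals are comaximal, so by the Chinese remainder theorem (Exercise~\ref{ex:crt}) I can pick $c_1,\dots,c_m\in R$ with $c_k\equiv1\pmod{\fm_i}$ whenever $k(i)=k$ and $c_k\equiv0\pmod{\fm_i}$ otherwise, and set $x=\sum_k c_ka_k\in I$. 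Modulo $\fm_iI$ every term with $k\neq k(i)$ vanishes and the remaining term is $\equiv a_{k(i)}$, so $I=xR+\fm_iI$ for all $i$. Thus $M:=I/xR$ is a finitely generated $R$-module with $M=\fm_iM$ for every $i$; iterating gives $M=\fm_1\cdots\fm_nM\subseteq\bigl(\bigcap_i\fm_i\bigr)M$, so $M$ equals its product with the Jacobson radical of $R$, and Nakayama's lemma (see \cite{Atiyah}) forces $M=0$, i.e.\ $I=xR$. Finally $xJ=x(RJ)=IJ=R$ yields $xy=1$ for some $y\in J\subseteq S$, so $x\in S^*$, as required.

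I expect the gluing step to be the main obstacle: the local computation is routine, but the passage from finitely many local generators to one global generator is exactly where the semi-local hypothesis enters, through the Chinese remainder theorem and Nakayama's lemma. This hypothesis is genuinely needed, since rings of integers of number fields possess non-principal invertible ideals.
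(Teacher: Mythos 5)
Your proof is correct, and it follows the standard commutative-algebra route rather than the one in the paper. You establish, for each maximal ideal $\fm_i$, a local generator $a_{k(i)}$ with $I = a_{k(i)}R + \fm_i I$, glue these by choosing coefficients $c_k$ via the Chinese remainder theorem so that $x = \sum_k c_k a_k$ works modulo every $\fm_i I$ simultaneously, and then invoke Nakayama to pass from $I = xR + \fm_i I$ (for all $i$) to $I = xR$. The paper does something sharper: for each $\fm$ it picks $x_\fm\in I$, $y_\fm\in J$ with $x_\fm y_\fm\notin\fm$ and separating scalars $\lambda_\fm\in R$ with $\lambda_\fm\notin\fm$ but $\lambda_\fm\in\fn$ for $\fn\neq\fm$, then sets $x=\sum_\fm\lambda_\fm x_\fm$ and $y=\sum_\fm\lambda_\fm y_\fm$; the cross-terms in $xy$ all land in the right maximal ideals, so $xy$ avoids every maximal ideal and is therefore a unit of $R$, from which $xR\subseteq I=(xy)I\subseteq xJI=xR$ gives $I=xR$ in one line. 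The paper's route thus builds the generator and its (near-)inverse simultaneously and never needs Nakayama; your route needs Nakayama but localizes the reasoning cleanly into a local step and a gluing step, which some readers will find more transparent. Both uses of the semi-local hypothesis are genuine, as you say, though the more precise point is that only finitely many maximal ideals allows the separating-element/CRT construction at all.

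One small stylistic slip: the chain ``$xJ=x(RJ)=IJ=R$'' should read $xJ=(xR)J=IJ=R$; the equality $x(RJ)=IJ$ doesn't quite parse as written (you want to use $I=xR$, not $RJ=J$), though what you mean is clear.
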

\begin{proof}
(\(\Leftarrow\)) We may take \(J=x^{-1}R\).

(\(\Rightarrow\))
Let \(\fm\subseteq R\) be maximal.
As \(IJ=R\not\subseteq \fm\) there exist \(x_\fm\in I\) and \(y_\fm\in J\) such that \(x_\fm y_\fm \in R\setminus \fm\). 
Since \(R\) is semi-local, there exist \(\lambda_\fm\in R\) for each maximal \(\fm\subseteq R\) such that for all maximal \(\fn\subseteq R\) we have \(\lambda_\fm \in \fn\) if and only if \(\fm\neq\fn\): 
Namely, let \(r_{\fn}\in \fn\setminus \fm\), which exist since the ideals are maximal, and take \(\lambda_\fm = \prod_{\fn\neq \fm} r_{\fn}\).
Consider \(x=\sum_\fm \lambda_\fm x_\fm\) and \(y=\sum_\fm \lambda_\fm y_\fm\). Then 
\[xy=\sum_{\fm,\fn} \lambda_\fm \lambda_{\fn} x_\fm y_{\fn}.\]
For all \(\fm\) there is precisely one term not contained in \(\fm\), which is \(\lambda_\fm \lambda_\fm x_\fm y_{\fm}\), hence \(xy\not\in \fm\).
Since \(x_\fm y_{\fn}\in IJ=R\) we have \(xy\in R\).
It follows that \(xy\) is a unit of \(R\), and \(x\) is a unit of \(S\). 
Finally, \(xR \subseteq I = xy I \subseteq x JI = xR\) and \(I=xR\).
\end{proof}

Recall the definition of \(\mathcal{I}(R)\) from Definition~\ref{def:fractional_ideal}.

\begin{corollary}\label{cor:invertible_ideals_local}
Let \(R\) be a is semi-local commutative ring. Then \(\mathcal{I}(R)\cong \textup{Q}(R)^*/R^*\). \qed
\end{corollary}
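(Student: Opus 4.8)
The plan is to produce an explicit isomorphism out of the map \(\psi\colon \textup{Q}(R)^*\to\mathcal{I}(R)\) given by \(x\mapsto xR\), and identify its kernel as \(R^*\). Recall from Definition~\ref{def:fractional_ideal} that \(\mathcal{I}(R)=\mathcal{I}_{\textup{Q}(R)}(R)\) is a group under multiplication of ideals, with identity \(R\). First I would check that \(\psi\) is well-defined, i.e.\ lands in \(\mathcal{I}(R)\): for \(x\in\textup{Q}(R)^*\) the \(R\)-submodule \(xR\) of \(\textup{Q}(R)\) has inverse \(x^{-1}R\), since \((xR)(x^{-1}R)=R\), so \(xR\in\mathcal{I}(R)\). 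That \(\psi\) is a group homomorphism is immediate from \((xy)R=(xR)(yR)\), using \(R\cdot R=R\).

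The next step is surjectivity of \(\psi\), and this is the only place the hypothesis is used: it is precisely the implication \((\Rightarrow)\) of Proposition~\ref{prop:semi_local_ring} applied with \(S=\textup{Q}(R)\), which says that every invertible \(R\)-submodule of \(\textup{Q}(R)\) is of the form \(xR\) for some \(x\in\textup{Q}(R)^*\). After that I would compute \(\ker\psi\): if \(xR=R\) then \(x=x\cdot 1\in xR=R\), and since \(1\in R=xR\) there is some \(r\in R\) with \(xr=1\), so \(x\in R^*\); conversely \(R^*\subseteq\ker\psi\) trivially. The first isomorphism theorem then gives \(\textup{Q}(R)^*/R^*\cong\mathcal{I}(R)\).

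I do not expect a genuine obstacle here: all the content sits in Proposition~\ref{prop:semi_local_ring}, which is already established. The only points deserving a moment's care are that \(\textup{Q}(R)\) need not be a field when \(R\) is merely semi-local — but \(\textup{Q}(R)^*\) is still a perfectly good group of units, so nothing breaks — and the elementary kernel computation above.
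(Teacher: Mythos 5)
Your proposal is correct and is essentially the intended argument: the paper marks the corollary with \qed precisely because it follows immediately from Proposition~\ref{prop:semi_local_ring} applied with \(S=\textup{Q}(R)\), exactly as you carry out via the surjective homomorphism \(x\mapsto xR\) with kernel \(R^*\). Your remark that \(\textup{Q}(R)\) need not be a field is a sensible sanity check but, as you note, irrelevant to the argument.
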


\begin{lemma}\label{lem:intersect_ideals}
Let \(R\) be a subring of a field \(K\). 
Then for every localization \(S\) of \(R\) the natural map \(S\to K\) is injective, and \(I=\bigcap_{\textup{max. }\fm\subset R} I_\fm\) for every \(R\)-submodule \(I\subseteq K\).
\end{lemma}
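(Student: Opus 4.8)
The plan is to treat the two claims separately. The injectivity of $S\to K$ is essentially formal, while the equality $I=\bigcap_\fm I_\fm$ reduces to a ``denominator ideal'' argument.

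\textbf{Injectivity.} Since $R$ is a subring of the field $K$, it contains $1_K\neq 0$, so $R$ is a nonzero domain and every nonzero element of $R$ is regular. A localization $S$ of $R$ is $T^{-1}R$ for a multiplicative set $T\subseteq R$; if $0\in T$ then $S=0$ and the claim is vacuous, so assume $0\notin T$. Then every element of $T$ is regular, the localization map $R\to S$ is injective, and $S$ sits naturally inside the field of fractions $R_{(0)}$ (Definition~\ref{def:basic_ideal}). As $R_{(0)}$ is the smallest subfield of $K$ containing $R$, we get $S\subseteq R_{(0)}\subseteq K$ with the composite $S\to K$ equal to this inclusion; explicitly $x/t\mapsto xt^{-1}$, which is $0$ in $K$ iff $x=0$ in $R$ iff $x/t=0$ in $S$.

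\textbf{The inclusion $I\subseteq\bigcap_\fm I_\fm$.} Here I would first record how to view $I_\fm=R_\fm\tensor_R I$ as a submodule of $K$: localization is flat, so tensoring the inclusion $I\hookrightarrow K$ with $R_\fm$ and using $R_\fm\tensor_R K=K$ identifies $I_\fm$ with the $R_\fm$-submodule $\{\,y/s : y\in I,\ s\in R\setminus\fm\,\}$ of $K$. In particular $I\subseteq I_\fm$ for every maximal ideal $\fm$, whence $I\subseteq\bigcap_\fm I_\fm$.

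\textbf{The reverse inclusion.} Given $x\in\bigcap_\fm I_\fm$, set $D=\{\,r\in R : rx\in I\,\}$; this is an ideal of $R$ because $I$ is an $R$-submodule of $K$. For each maximal ideal $\fm$, the relation $x\in I_\fm$ provides $s\in R\setminus\fm$ with $sx\in I$, so $s\in D$ and hence $D\not\subseteq\fm$. Thus $D$ is contained in no maximal ideal of the nonzero ring $R$, forcing $D=R$; in particular $1\in D$, i.e.\ $x\in I$. The step I expect to require the most care—and the one I would spell out fully—is this identification of $I_\fm$ with an explicit submodule of $K$, so that ``$sx\in I$'' and ``$D$ is an ideal'' are literally meaningful. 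Everything after that is purely formal, and in particular no finite-generation hypothesis on $I$ is used.
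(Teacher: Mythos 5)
Your proof is correct and follows essentially the same route as the paper: your denominator ideal $D=\{r\in R : rx\in I\}$ is precisely the paper's $J=(I:xR)_K\cap R$, and the argument that $D$ avoids every maximal ideal and hence equals $R$ is identical. The extra care you take in identifying $I_\fm$ with an explicit $R_\fm$-submodule of $K$ via flatness, and in handling the injectivity of $S\to K$ in detail, is a fuller version of what the paper leaves implicit.
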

\begin{proof}
The map \(S\to K\) is injective since \(R\), and hence \(S\), is a domain. 
Clearly \(I\subseteq I_\fm\). 
Suppose \(x\in \bigcap_\fm I_\fm\). Consider \(J=(I:xR)_K \cap R\) and let \(\fm\subseteq R\) be maximal. 
Since \(x\in I_\fm\) there exists some \(r\in R\setminus\fm\) such that \(rx\in I\). 
Then \(r\in J\) by definition of \(J\), so \(J\not\subseteq\fm\).
As this holds for all \(\fm\) we conclude that \(J=R\).
It follows that \(x\in I\) and hence \(I=\bigcap_\fm I_\fm\).
\end{proof}

\begin{exercise}\label{ex:ideal_right_inverse}
Let \(R\) be a number ring and \(\fp\subset R\) be a maximal ideal. 
Show that if \(I_\fp\subseteq R_\fp\) is an invertible ideal over \(R_\fp\), then \(J=I_\fp\cap R\) is an invertible ideal over \(R\) such that \(J_\fp=I_\fp\), and \(J_\fq=R_\fq\) for all maximal \(\fq\subset R\) not equal to \(\fp\).
\end{exercise}

\begin{theorem}[cf.\ Theorem~5.3 in \cite{Stevenhagen}]\label{thm:localization_inverse}
Let \(R\subseteq S\) be number rings such that \(\textup{Q}(R)=\textup{Q}(S)\). 
Then we have a group isomorphism \(\mathcal{I}(S)\to \bigoplus_{\textup{max.\ }\fp\subset R} \mathcal{I}(S_\fp)\) given by \(I\mapsto (I_\fp)_{\fp}\), with inverse \((I_\fp)_\fp \mapsto \bigcap_\fp I_\fp\).
\end{theorem}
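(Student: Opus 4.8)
The plan is to show that $\phi\colon I\mapsto (I_\fp)_\fp$ is a well-defined injective group homomorphism and then to prove surjectivity by a local computation, after which the formula for $\phi^{-1}$ falls out for free. For well-definedness: given $I\in\mathcal I(S)$, the module $I$ is finitely generated by Exercise~\ref{ex:inv_fingen}, so localization commutes with the product $I\cdot I^{-1}$ and we get $I_\fp\cdot(I^{-1})_\fp=(II^{-1})_\fp=S_\fp$; hence $I_\fp$ is an invertible $S_\fp$-ideal within $\textup{Q}(S_\fp)=\textup{Q}(S)$, i.e.\ $I_\fp\in\mathcal I(S_\fp)$. To see that $(I_\fp)_\fp$ lies in the direct sum, pick $N\in R\setminus\{0\}$ clearing the denominators of the finitely many generators of $I$ and of $I^{-1}$, so $NI\subseteq S$ and $NI^{-1}\subseteq S$; for every maximal $\fp$ with $N\notin\fp$ one has $N\in S_\fp^*$, hence $I_\fp\subseteq S_\fp$ and $(I^{-1})_\fp\subseteq S_\fp$, and then $S_\fp=I_\fp(I^{-1})_\fp\subseteq I_\fp\subseteq S_\fp$ forces $I_\fp=S_\fp$; the remaining $\fp$ are the maximal ideals of the finite ring $R/NR$ (Exercise~\ref{ex:number_field_noetherian}.c), of which there are finitely many. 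Multiplicativity of $\phi$ and $\phi(S)=(S_\fp)_\fp$ are again just commutation of localization with finitely generated module products, so $\phi$ is a group homomorphism.

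Injectivity is immediate from Lemma~\ref{lem:intersect_ideals}: if $I_\fp=S_\fp$ for all $\fp$, then $I=\bigcap_\fp I_\fp=\bigcap_\fp S_\fp=S$. In fact the same lemma gives $I=\bigcap_\fp I_\fp$ for every $I\in\mathcal I(S)$, which is precisely the identity $\psi\circ\phi=\id$ for the candidate inverse $\psi\colon(I_\fp)_\fp\mapsto\bigcap_\fp I_\fp$; so once surjectivity is known, $\phi$ is an isomorphism with inverse $\psi$.

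For surjectivity, since $\phi$ is a homomorphism and each element of $\bigoplus_\fp\mathcal I(S_\fp)$ is a finite product of tuples supported at a single prime, it suffices to realize, for each maximal $\fp_0\subset R$ and each $I_0\in\mathcal I(S_{\fp_0})$, an ideal $J\in\mathcal I(S)$ with $J_{\fp_0}=I_0$ and $J_\fq=S_\fq$ for every other maximal $\fq$. The ring $S_{\fp_0}$ is semi-local by Exercise~\ref{ex:hereditary_semilocal} applied to $R_{\fp_0}\subseteq S_{\fp_0}$, so Corollary~\ref{cor:invertible_ideals_local} shows $I_0$ is principal; writing a generator as $b/c$ with $b,c\in S\setminus\{0\}$ and using $I_0=(bS_{\fp_0})\cdot(cS_{\fp_0})^{-1}$ reduces us to the integral case $I_0=bS_{\fp_0}$. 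Let $\mathfrak P_1,\dots,\mathfrak P_k$ be the maximal ideals of $S$ lying over $\fp_0$. For each $i$ the ideal $bS_{\mathfrak P_i}$ is integral and invertible over $S_{\mathfrak P_i}$, so Exercise~\ref{ex:ideal_right_inverse}, applied to the number ring $S$ and the prime $\mathfrak P_i$, yields $J_i\in\mathcal I(S)$ with $(J_i)_{\mathfrak P_i}=bS_{\mathfrak P_i}$ and $(J_i)_{\mathfrak Q}=S_{\mathfrak Q}$ for every other maximal $\mathfrak Q\subset S$. Set $J=J_1\cdots J_k$. Then $J_{\mathfrak P_i}=bS_{\mathfrak P_i}$ for all $i$ and $J_{\mathfrak Q}=S_{\mathfrak Q}$ whenever $\mathfrak Q$ does not lie over $\fp_0$. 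Applying Lemma~\ref{lem:intersect_ideals} to the subring $S_{\fp_0}$ of $\textup{Q}(S)$, whose maximal ideals are exactly the $\mathfrak P_i$, and comparing localizations at each $\mathfrak P_i$ gives $J_{\fp_0}=bS_{\fp_0}=I_0$; the same lemma applied to $S_\fq$ for $\fq\neq\fp_0$ (whose maximal ideals are the primes of $S$ over $\fq$, none of them above $\fp_0$) gives $J_\fq=S_\fq$. Thus $\phi$ is surjective, hence an isomorphism, and for an arbitrary tuple $(I_\fp)_\fp$ its $\phi$-preimage $I$ satisfies $I=\bigcap_\fp I_\fp$ by Lemma~\ref{lem:intersect_ideals}, so $\phi^{-1}=\psi$ as claimed.

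The main obstacle is this last step: the construction of $J$ naturally takes place at the primes $\mathfrak P$ of $S$, where Exercise~\ref{ex:ideal_right_inverse} is available, whereas the theorem is phrased through the primes $\fp$ of $R$, so one has to move carefully between the two. The point that makes this go through is that each $S_\fp$ is again a subring of the field $\textup{Q}(S)$ whose maximal ideals are precisely the primes of $S$ above $\fp$, which lets Lemma~\ref{lem:intersect_ideals} be invoked one level down to pin down $J_\fp$ from its further localizations $J_{\mathfrak P_i}$. Everything else — the denominator bookkeeping and the control of the finitely many nontrivial components — is routine commutation of localization with finitely generated module constructions.
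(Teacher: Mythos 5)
Your proof is correct and relies on the same two facts as the paper's — Lemma~\ref{lem:intersect_ideals} for injectivity and the inverse formula, and Exercise~\ref{ex:ideal_right_inverse} for surjectivity — but is organized quite differently. The paper sidesteps the difficulty you flag at the end (navigating between primes of \(R\) and primes of \(S\)) by a commutative-diagram trick: it inserts \(\bigoplus_{\fm\subset S}\mathcal{I}(S_\fm)\) as a third vertex and factors the horizontal map through a triangle whose two diagonal maps both have the uniform shape \(\mathcal{I}(T)\to\bigoplus_{\fm\subset T}\mathcal{I}(T_\fm)\), with \(T=S\) on the left and \(T=S_\fq\) on the \(\fq\)-th summand on the right. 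Each such map is injective by Lemma~\ref{lem:intersect_ideals} and surjective by Exercise~\ref{ex:ideal_right_inverse}, applied to \(T\) and \emph{its own} maximal ideals, and commutativity of the triangle then forces the horizontal map to be an isomorphism as well. Your argument instead proves surjectivity of the horizontal map head-on: you reduce \(I_0\in\mathcal{I}(S_{\fp_0})\) to the principal integral case \(bS_{\fp_0}\) via semi-locality and Corollary~\ref{cor:invertible_ideals_local}, apply Exercise~\ref{ex:ideal_right_inverse} to \(S\) once for each prime \(\mathfrak P_i\) over \(\fp_0\), multiply the lifts, and invoke Lemma~\ref{lem:intersect_ideals} a second time (for the ring \(S_{\fp_0}\)) to identify \(J_{\fp_0}\) with \(bS_{\fp_0}\). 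Both routes are sound; the paper's diagram buys economy (no product over the fibre \(\{\mathfrak P_i\}\) and no second application of the intersection lemma), while your version is more explicit and also spells out the well-definedness bookkeeping — that \((I_\fp)_\fp\) actually lands in the direct sum — which the paper leaves tacit.
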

\begin{proof}
For any maximal \(\fq\subset R\) let 
\[\varphi_\fq : \mathcal{I}(S_\fq) \to \bigoplus_{\fm\subset S_\fq} \mathcal{I}((S_\fq)_\fm) = \bigoplus_{\fq \subseteq\fm \subset S} \mathcal{I}(S_\fm)\] 
be the map as in the theorem with \(R\) replaced by \(R_\fq\) and \(S\) replaced by \(S_\fq\).
Then the following diagram is commutative:
\begin{center}
\begin{tikzcd}[column sep=small]
\mathcal{I}(S) \ar{rr} \ar{dr} & & \displaystyle\bigoplus_{\fp\subset R} \mathcal{I}(S_\fp) \ar{dl}\\
& \displaystyle\bigoplus_{\fm\subset S} \mathcal{I}(S_\fm) &
\end{tikzcd}
\end{center}
Hence to show that the horizontal map is an isomorphism, it suffices to prove this for the downward maps. 
These maps are both injective with the appropriate left inverse by Lemma~\ref{lem:intersect_ideals}, while they are surjective by Exercise~\ref{ex:ideal_right_inverse}.
\end{proof}

\begin{corollary}\label{cor:locally_principal}
Let \(R\) be a subring of a number field. Then a fractional \(R\)-ideal is invertible if and only if it is locally principal.
\end{corollary}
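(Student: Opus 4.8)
The plan is to read the statement straight off Theorem~\ref{thm:localization_inverse}, applied with $S=R$: that theorem gives a group isomorphism $\mathcal{I}(R)\to\bigoplus_{\textup{max.\ }\mathfrak{p}\subset R}\mathcal{I}(R_\mathfrak{p})$ sending $I$ to $(I_\mathfrak{p})_\mathfrak{p}$, with inverse $(I_\mathfrak{p})_\mathfrak{p}\mapsto\bigcap_\mathfrak{p}I_\mathfrak{p}$. The key auxiliary observation is that each $R_\mathfrak{p}$ is local, hence semi-local, so by Corollary~\ref{cor:invertible_ideals_local} (equivalently Proposition~\ref{prop:semi_local_ring} applied inside $\textup{Q}(R_\mathfrak{p})=\textup{Q}(R)$) an $R_\mathfrak{p}$-submodule of $\textup{Q}(R)$ lies in $\mathcal{I}(R_\mathfrak{p})$ exactly when it is a principal fractional $R_\mathfrak{p}$-ideal. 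Thus the condition ``$I_\mathfrak{p}\in\mathcal{I}(R_\mathfrak{p})$ for every maximal $\mathfrak{p}$'' is literally the condition ``$I$ is locally principal'', and the whole proof is a matter of matching this up with membership of $I$ in $\mathcal{I}(R)$.

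For the forward implication I would argue as follows: if the fractional ideal $I$ is invertible, then $I\in\mathcal{I}(R)$, so $(I_\mathfrak{p})_\mathfrak{p}$ lies in the displayed direct sum; in particular each $I_\mathfrak{p}$ is an invertible $R_\mathfrak{p}$-ideal and therefore principal by the previous paragraph, so $I$ is locally principal. For the converse, given a locally principal fractional ideal $I$, write $I_\mathfrak{p}=\alpha_\mathfrak{p}R_\mathfrak{p}$ with $\alpha_\mathfrak{p}\in\textup{Q}(R)^*$; each such $\alpha_\mathfrak{p}R_\mathfrak{p}$ is invertible over $R_\mathfrak{p}$ with inverse $\alpha_\mathfrak{p}^{-1}R_\mathfrak{p}$, so $I_\mathfrak{p}\in\mathcal{I}(R_\mathfrak{p})$. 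The only thing left is to check that $(I_\mathfrak{p})_\mathfrak{p}$ genuinely lies in the \emph{direct sum}, i.e.\ that $I_\mathfrak{p}=R_\mathfrak{p}$ for all but finitely many $\mathfrak{p}$; granting that, feeding $(I_\mathfrak{p})_\mathfrak{p}$ to the inverse isomorphism of Theorem~\ref{thm:localization_inverse} and using Lemma~\ref{lem:intersect_ideals} to identify $\bigcap_\mathfrak{p}I_\mathfrak{p}=I$ shows $I\in\mathcal{I}(R)$, i.e.\ $I$ is invertible.

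The cofiniteness step is where the hypothesis that $I$ is a \emph{fractional} ideal is actually used, and I expect it to be the only point requiring a small argument: clearing denominators, there is a non-zero $d\in R$ with $dI\subseteq R$ an integral ideal, and then $R/(dI)$ is finite by Exercise~\ref{ex:number_field_noetherian}.c, hence has only finitely many maximal ideals; for any maximal $\mathfrak{p}$ containing neither $d$ nor $dI$ one has $d\in R_\mathfrak{p}^*$ and $(dI)_\mathfrak{p}=R_\mathfrak{p}$, so $I_\mathfrak{p}=d^{-1}(dI)_\mathfrak{p}=R_\mathfrak{p}$, excluding only the finitely many $\mathfrak{p}$ dividing $d$ or $dI$. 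Apart from this bookkeeping the corollary is a direct application of the localization machinery already in place.
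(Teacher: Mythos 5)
Your proof is correct and follows exactly the route the paper takes: Proposition~\ref{prop:semi_local_ring} (or Corollary~\ref{cor:invertible_ideals_local}) to translate ``locally principal'' into ``locally invertible'', then Theorem~\ref{thm:localization_inverse} together with Lemma~\ref{lem:intersect_ideals} to pass between local and global invertibility. Your explicit cofiniteness argument (that $I_\fp=R_\fp$ for all but finitely many maximal $\fp$, needed so that $(I_\fp)_\fp$ actually lands in the direct sum) is a genuine point that the paper's one-line proof leaves implicit, so you have, if anything, made the argument more complete.
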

\begin{proof}
By Proposition~\ref{prop:semi_local_ring} locally principal is equivalent to locally invertible, which in turn is equivalent to invertible by Theorem~\ref{thm:localization_inverse}.
\end{proof}

\begin{lemma}\label{lem:inv_ideal_surj}
Let \(R\subseteq S\) be number rings such that \(\textup{Q}(R)=\textup{Q}(S)\).
Then there is a surjective group homomorphism \(\sigma:\mathcal{I}(R)\to \mathcal{I}(S)\) given by \(I\mapsto SI\).
\end{lemma}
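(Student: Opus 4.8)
First I would dispose of the formal part. Given $I\in\mathcal{I}(R)$, pick an $R$-submodule $J$ of $\textup{Q}(R)$ with $IJ=R$; then $SI$ is an $S$-submodule of $\textup{Q}(S)=\textup{Q}(R)$, and $(SI)(SJ)=S(IJ)=S$, so $SI\in\mathcal{I}(S)$ and $\sigma$ is well-defined. The same identity $(SI)(SJ)=S(IJ)$ shows $\sigma(IJ)=\sigma(I)\sigma(J)$, and since $\sigma(R)=S$ this makes $\sigma$ a group homomorphism (in particular $\sigma(I^{-1})=\sigma(I)^{-1}$).

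The real content is surjectivity, which I would deduce by localizing and reducing to the semi-local case. For each maximal ideal $\fp\subset R$ the inclusion $R_\fp\subseteq S_\fp$ is again an inclusion of number rings with common field of fractions $\textup{Q}(R)$, so the recipe $J\mapsto S_\fp J$ defines a homomorphism $\sigma_\fp:\mathcal{I}(R_\fp)\to\mathcal{I}(S_\fp)$. Applying Theorem~\ref{thm:localization_inverse} once with $S$ replaced by $R$ and once as stated produces a square
\begin{center}
\begin{tikzcd}[column sep=large]
\mathcal{I}(R) \arrow{r}{\sigma} \arrow{d}[swap]{\wr} & \mathcal{I}(S) \arrow{d}{\wr} \\
\displaystyle\bigoplus_{\fp} \mathcal{I}(R_\fp) \arrow{r}{\bigoplus_\fp \sigma_\fp} & \displaystyle\bigoplus_{\fp} \mathcal{I}(S_\fp),
\end{tikzcd}
\end{center}
whose vertical maps are isomorphisms and whose commutativity reduces to the elementary identity $(SI)_\fp=S_\fp I_\fp$ for $R$-submodules $I\subseteq\textup{Q}(R)$. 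Thus it suffices to check each $\sigma_\fp$ is surjective. Here $R_\fp$ is local, hence semi-local, so $S_\fp$ is semi-local by Exercise~\ref{ex:hereditary_semilocal}; by Proposition~\ref{prop:semi_local_ring} any $\mathfrak{a}\in\mathcal{I}(S_\fp)$ then has the form $xS_\fp$ with $x\in\textup{Q}(R)^*$, and $\sigma_\fp(xR_\fp)=xS_\fp=\mathfrak{a}$. Surjectivity of $\bigoplus_\fp\sigma_\fp$ follows coordinatewise, the identities $\sigma_\fp(R_\fp)=S_\fp$ ensuring that a preimage can be chosen with the same finite support as the given element.

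The only genuinely non-formal input is Theorem~\ref{thm:localization_inverse} (together with the description $\mathcal{I}(T)\cong\textup{Q}(T)^*/T^*$ for semi-local $T$ that underlies it); granting that, I expect the points needing care to be the commutativity of the square — that $I\mapsto SI$ commutes with localization at primes of $R$ — and the finite-support bookkeeping in the direct sums. One can produce an $R$-module $I$ with $SI=\mathfrak{a}$ directly (take $I=\sum_k a_kR$ where $1=\sum_k a_kb_k$ with $a_k\in\mathfrak{a}$ and $b_k\in\mathfrak{a}^{-1}$), but forcing such an $I$ to be invertible over $R$ seems to need exactly the local argument above, so I would keep to the localization route.
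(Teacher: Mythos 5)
Your proposal is correct and follows essentially the same route as the paper: reduce surjectivity to the local case via Theorem~\ref{thm:localization_inverse}, invoke Exercise~\ref{ex:hereditary_semilocal} to get that $S_\fp$ is semi-local, and then use Proposition~\ref{prop:semi_local_ring} to write any invertible $S_\fp$-ideal as $xS_\fp$ and pull it back to $xR_\fp$. The extra details you supply (the commuting square and the finite-support remark about direct sums) are genuine but minor points that the paper leaves implicit.
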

\begin{proof}
The only non-trivial part is surjectivity. 
Using Theorem~\ref{thm:localization_inverse} one verifies that the induced map \(\sigma:\bigoplus_{\fp\subseteq R}\mathcal{I}(R_\fp) \to \bigoplus_{\fp\subseteq R} \mathcal{I}(S_\fp) \) is the direct sum of maps \(\sigma_\fp :\mathcal{I}(R_\fp) \to\mathcal{I}(S_\fp)\) given by \(I\mapsto SI\).
Thus it suffices to prove the theorem when \(R\) is local, and hence \(S\) is semi-local by Exercise~\ref{ex:hereditary_semilocal}. 
Then the invertible ideals of \(S\) are principal by Proposition~\ref{prop:semi_local_ring}, and we may use its generator to generate an invertible \(R\)-ideal.
\end{proof}

\begin{exercise}
Let \(R\) be a number ring. Show that every fractional ideal of \(R\) is invertible if and only if every fractional ideal of \(R\) generated by at most two elements is invertible.
\end{exercise}

\subsection{Blowing up}

In this section we give an algorithm to make ideals invertible, i.e.\ given a fractional ideal \(I\) of some order \(R\) of a number field \(K\), computing the minimal order \(R\subseteq S\subseteq K\) such that \(SI\) is invertible. Borrowing terminology from algebraic geometry (cf.\ Proposition 7.14 in \cite{Hartshorne}), we will be computing a {\em blowup} of \(R\) at the ideal \(I\).
We give a variant on the proof of \cite{Dade-Zassenhaus}.
We warn those reading \cite{Dade-Zassenhaus} that it contains non-standard definitions of fractional and invertible ideals.

\begin{proposition}\label{prop:blowup}
Let \(S\) be a commutative ring and \(I\subseteq S\) a subgroup. Then:
\begin{enumerate}
\item the \emph{blowup} at \(I\),
\[ \textup{Bl}(I) = \textup{Bl}_S(I) := \bigcup_{k\geq 0} (I^k:I^k)_S, \]
is a subring of \(S\);
\item if \(R\subseteq S\) is a subring and \(RI\) is an invertible ideal of \(R\) within \(S\), then \(\textup{Bl}(I)\subseteq R\);
\item if \(RI\) is an invertible ideal of \(R=I^n:I^n\) within \(S\) for some \(n\geq 0\), then \(R=\textup{Bl}(I)\);
\item if \(S\) is a number field and \(I\) is finitely generated and non-zero, then there exists some \(n\geq 0\) such that \(\textup{Bl}(I)=I^n:I^n\), and \(\textup{Bl}(I)\) is an order.
\end{enumerate}
\end{proposition}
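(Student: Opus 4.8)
The plan is to handle the four parts in order, using the family of multiplier rings $R_k:=(I^k:I^k)_S$ (for $k\geq 0$) as the organizing object. First I would check that this family is an \emph{increasing} chain of subrings of $S$: each $R_k$ is a subring by Lemma~\ref{lem:quotient_facts}(ii) applied to the subgroup $I^k$, and if $x\in R_k$ then $xI^{k+1}=(xI^k)I\subseteq I^kI=I^{k+1}$ (using the associativity identity $x(AB)=(xA)B$ for an element $x$ and subgroups $A,B$), so $R_k\subseteq R_{k+1}$. Part (1) then falls out, since $\textup{Bl}(I)=\bigcup_kR_k$ and a union of an increasing chain of subrings, all of which contain $1$, is again a subring. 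It is worth recording for later that $R_0\subseteq R_1=(I:I)_S$, so in fact $\textup{Bl}(I)=\bigcup_{k\geq1}R_k$.

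For (2) I would show that each $R_k\subseteq R$, and by the previous remark it is enough to do so for $k\geq1$. The two facts I would isolate are the identity $(RI)^k=R\,I^k$ for $k\geq1$ — each side is an $R$-module containing $I^k$, and each generator $(r_1i_1)\cdots(r_ki_k)=(r_1\cdots r_k)(i_1\cdots i_k)$ of the left side visibly lies in $R\,I^k$ — and the fact that $L:=(RI)^k$ is again invertible within $S$ (a product of invertible ideals), so that $L^{-1}:=(R:L)_S$ satisfies $LL^{-1}=R$ by Exercise~\ref{ex:inv_fingen}. Given $x\in R_k$ one then has $xL=R(xI^k)\subseteq R\,I^k=L$, hence $x\in xR=x(LL^{-1})=(xL)L^{-1}\subseteq LL^{-1}=R$. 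Part (3) is then immediate: if $RI$ is invertible within $S$ for $R=I^n:I^n$, then $\textup{Bl}(I)\subseteq R$ by (2), while $R=R_n\subseteq\bigcup_kR_k=\textup{Bl}(I)$; so $R=\textup{Bl}(I)$.

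For (4) I would write $K:=S$ and bring in the maximal order $\mathcal{O}_K$ (Theorem~\ref{thm:has_maximal_order}), which is a full-rank order — in particular finitely generated, hence Noetherian, as a $\Z$-module — in which every fractional ideal is invertible. Since $I$ is finitely generated and non-zero, $\mathcal{O}_KI$ is a finitely generated $\mathcal{O}_K$-submodule of $K=\textup{Q}(\mathcal{O}_K)$ with $(\mathcal{O}_KI)\textup{Q}(\mathcal{O}_K)=K$, i.e.\ a fractional ideal of $\mathcal{O}_K$, hence invertible within $K$. Applying (2) with $R=\mathcal{O}_K$ gives $\textup{Bl}(I)\subseteq\mathcal{O}_K$, so the increasing chain $R_0\subseteq R_1\subseteq\cdots$ consists of $\Z$-submodules of the Noetherian $\Z$-module $\mathcal{O}_K$ and therefore stabilizes: there is some $n\geq0$ with $R_k=R_n$ for all $k\geq n$, whence $\textup{Bl}(I)=R_n=I^n:I^n$. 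Finally $R_n$ is a subring of the field $K$, hence a domain, and a subgroup of $\mathcal{O}_K\cong\Z^{[K:\Q]}$, hence a finitely generated torsion-free $\Z$-module, i.e.\ $\cong\Z^m$; so $\textup{Bl}(I)$ is an order.

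I expect the only genuinely non-formal step to be part (4): the finiteness that makes the chain stabilize does not come from $I$ directly but from recognizing, via part (2), that $\textup{Bl}(I)$ is caught inside the maximal order. Feeding $R=\mathcal{O}_K$ into (2) is the crux; after that, Noetherianity of $\mathcal{O}_K$ over $\Z$ finishes everything. The remaining work — the associativity identities and $(RI)^k=R\,I^k$ — is routine bookkeeping.
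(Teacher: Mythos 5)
Your proof is correct and matches the paper's approach: parts (1)--(3) rest on exactly the chain $I^k:I^k$ and Lemma~\ref{lem:quotient_facts}, with your part (2) simply unrolling Exercise~\ref{ex:quotient_with_invertible} by hand, and part (4) rests on the same Noetherian-chain-inside-$\mathcal{O}_K$ argument. The one small (and pleasant) organizational difference is that you obtain $\textup{Bl}(I)\subseteq\mathcal{O}_K$ by feeding $R=\mathcal{O}_K$ into your own part (2), whereas the paper first argues directly that each $I^k:I^k$ is an order via Lemma~\ref{lem:quotient_facts}(v); both are valid.
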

\begin{proof}
(1) It follows from Lemma~\ref{lem:quotient_facts} for all \(k\geq 0\) that \(I^k:I^k\) is a ring and that \(I^k:I^k\subseteq I^{k+1}:I^{k+1}\). Hence \(\textup{Bl}(I)\), being their union, is a ring. 

(2) Note that \(RI^k=(RI)^k\) is invertible within \(S\) for all \(k\geq 0\). By Lemma~\ref{lem:quotient_facts} and Exercise~\ref{ex:quotient_with_invertible} we have \(I^k:I^k\subseteq (RI^k):(RI^k)=R:R=R\) for all \(k\), so \(\textup{Bl}(I)\subseteq R\).

(3) This follows from (2) and the fact that \(R\subseteq\textup{Bl}(I)\).

(4) Note that \(I^k:I^k\) is both a subring and a finitely generated subgroup of \(S\) by Lemma~\ref{lem:quotient_facts}, hence it is an order.
Because \(I^k:I^k\subseteq\mathcal{O}_S\), see Theorem~\ref{thm:has_maximal_order}, and \(\mathcal{O}_S\) is a Noetherian \(\Z\)-module, there must exist some \(n\geq 0\) such that \(I^n:I^n=I^k:I^k\) for all \(k\geq n\). Consequently \(I^n:I^n=\textup{Bl}(I)\).
\end{proof}

\begin{example}\label{ex:blowup_kummer}
Let \(p\) be a prime and \(n\in\Z_{>1}\). Let \(\pi=\sqrt[n]{p}\) and consider the orders \(R=\Z[\pi^k : k\in\Z_{\geq n}]\subseteq \Z[\pi]=S\) and fractional ideal \(I=R+R\pi\) of \(R\). Then \(I^{k}=R\pi^0+R\pi^1+\dotsm+R\pi^k\) for all \(k\geq 1\), and \(I^{n-1}=S\). Hence \(I^{n-1}:I^{n-1}=S\) by Lemma~\ref{lem:quotient_facts}.iv. We have \(I\subseteq S\) and \(1\in I\), so \(SI=S\). In particular, \(SI\) is invertible over \(S\), so \(\textup{Bl}(I)\subseteq S\) by Proposition~\ref{prop:blowup}.2. We conclude that \(\textup{Bl}(I)=I^{n-1}:I^{n-1}=S\).
Note that \(S\) need not be a maximal order; For \(n=2\) and \(p\equiv 1\ (\textup{mod }4)\) we have \(S\subsetneq \Z[\tfrac{1+\pi}{2}]\).
\end{example}

We will show later in Theorem~\ref{thm:blowup}, in the case that \(I\) is a finitely generated non-zero subgroup of a number field \(K\), that \(\textup{Bl}_K(I) I\) is invertible over \(\textup{Bl}_K(I)\). Hence in this case \(\textup{Bl}_K(I)\) is the unique minimal subring of \(K\) over which \(I\) becomes invertible. Moreover, the proof lends itself to an algorithm to compute the blowup, which is a non-trivial statement by Exercise~\ref{ex:bad_compute_blowup}.

\begin{example}
Let \(S=\Q[X]\) and \(I=\Z+\Z X\). 
Then \(I^n\subseteq\Z[X]\) is the set of polynomials of degree at most \(n\). 
Every non-zero element of \(I^n:I^n\) needs to have degree \(0\), from which we deduce that \(I^n:I^n=\Z\).
In particular, \(\textup{Bl}(I)=\Z\) and \(\Z I\) is not an invertible \(\Z\)-ideal.
\end{example}

\begin{exercise}
Let \(S\) be a commutative ring and \(I,J\subseteq S\) subgroups. Show that \(\textup{Bl}(I)\cdot\textup{Bl}(J)\subseteq \textup{Bl}(IJ)\). Show that \(\textup{Bl}(I)=I\) if and only if \(I\) is a subring of \(S\).
\end{exercise}

\begin{exercise}\label{ex:blowup_of_invertible}
Let \(R\subseteq S\) be a commutative subrings and and \(I,J\subseteq S\) fractional \(R\)-ideals. 
Show that if \(I\) is invertible, then \(\textup{Bl}(IJ)=\textup{Bl}(J)\).
\end{exercise}

\begin{exercise}
Let \(S\) be a commutative ring, \(I\subseteq S\) a finitely generated non-zero subgroup and \(n\in\Z_{\geq 0}\). 
Show that if \(RI\) is invertible over \(R=I^n:I^n\), then \(R=\textup{Bl}(I)\).
\end{exercise}

\begin{exercise}\label{ex:bad_compute_blowup}
Let \(n\in\Z_{>1}\). Construct an order of rank \(n\) with a fractional ideal \(I\) such that \(R=(I^0:I^0)=\dotsm= (I^{n-2}:I^{n-2})\neq (I^{n-1}:I^{n-1})=\textup{Bl}(I)\).
\end{exercise}

We collect some facts about generators of modules. 

\begin{definition}
Let \(R\) be a commutative ring and \(M\) an \(R\)-module. We define 
\[\gamma_R(M) = \min \Big\{  \# X : X\subseteq M,\, \sum_{x\in X} Rx=M\Big\},\]
the minimal number of generators needed to generate \(M\) as an \(R\)-module.
\end{definition}

Note that \(M\) is finitely generated if and only if \(\gamma_R(M)<\infty\). We will show how this quantity behaves with respect to quotients.

\begin{lemma}[Nakayama, Corollary~2.7 in \cite{Atiyah}]\label{lem:local_nakayama}
Let \(R\) be a commutative ring, \(I\) an ideal of \(R\) contained in every maximal ideal, and \(M\) a finitely generated \(R\)-module.
\begin{enumerate}
\item If \(N\) is any submodule of \(M\) and \(M=N+IM\), then \(M=N\). 
\item We have \(\gamma_R(M)=\gamma_{R/I}(M/IM)\). \qed
\end{enumerate}
\end{lemma}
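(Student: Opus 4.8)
The plan is to derive both parts from one elementary observation about the Jacobson radical. \emph{Observation:} if $a\in I$, then $1-a\in R^*$. Indeed, if $1-a$ were not a unit it would lie in some maximal ideal $\fm$; but $a\in\fm$ by hypothesis, so $1=(1-a)+a\in\fm$, which is absurd. (Equivalently, the hypothesis that $I$ is contained in every maximal ideal is exactly the statement $I\subseteq\rad(R)$.)

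For part (1), I would first reduce to the case $N=0$ by passing to the quotient $M'=M/N$: this is still finitely generated, and the assumption $M=N+IM$ says precisely that $M'=IM'$. So it suffices to show that a finitely generated $R$-module $M'$ with $IM'=M'$ vanishes. Choose a generating set $m_1,\dotsc,m_k$ of $M'$ with $k=\gamma_R(M')$ minimal and suppose $k\geq 1$. Since $m_k\in M'=IM'$, write $m_k=\sum_{i=1}^k a_i m_i$ with all $a_i\in I$; collecting the last term gives $(1-a_k)m_k=\sum_{i<k} a_i m_i$. By the Observation $1-a_k$ is a unit, so $m_k$ lies in the submodule generated by $m_1,\dotsc,m_{k-1}$, whence $m_1,\dotsc,m_{k-1}$ already generate $M'$, contradicting minimality of $k$ (for $k=1$ this reads $m_1=0$, so $M'=0$ regardless). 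Hence $k=0$, i.e.\ $M'=0$ and $M=N$. One could instead invoke the Cayley--Hamilton/determinant trick to produce $b\in I$ with $(1-b)M'=0$, but this minimal-generators argument is lighter and is exactly what part (2) reuses.

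For part (2), the inequality $\gamma_{R/I}(M/IM)\leq\gamma_R(M)$ is immediate, since reducing any $R$-generating set of $M$ modulo $IM$ yields an $R/I$-generating set of $M/IM$. For the reverse, let $k=\gamma_{R/I}(M/IM)$, pick $\bar x_1,\dotsc,\bar x_k\in M/IM$ generating over $R/I$, lift them to $x_1,\dotsc,x_k\in M$, and put $N=\sum_i R x_i$. Then $N+IM=M$, because the images of the $x_i$ span $M/IM$; part (1) now gives $N=M$, so $\gamma_R(M)\leq k$. Combining the two inequalities yields $\gamma_R(M)=\gamma_{R/I}(M/IM)$.

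There is essentially no hard step here; the one place to be careful is the Observation about units, and the one hypothesis that is genuinely used is the finite generation of $M$ — needed both to select a finite minimal generating set in part (1) and, implicitly, so that $\gamma_R(M)$ is finite in part (2).
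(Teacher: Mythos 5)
Your proof is correct. The paper itself gives no proof for this lemma (it just cites Atiyah--Macdonald, Corollary~2.7, and closes with \(\qed\)), and your minimal-generating-set argument — reducing to \(N=0\), then using \(1-a\in R^*\) for \(a\) in the Jacobson radical to drop a generator — is precisely the standard argument given there, so you are taking the same route the cited source does.
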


\begin{exercise}[cf.\ Corollary~5.8 in \cite{Atiyah}]\label{ex:maximal_ideals_in_finite_extensions}
Let \(f:R\to S\) be a morphism of commutative rings such that \(S\) is Noetherian as an \(R\)-module.
Show that the map 
\[\{\textup{maximal }\mathfrak{m}\subset S\}\to\{\textup{maximal }\mathfrak{m}\subset R\}\quad \textup{given by}\quad  \fm\mapsto f^{-1} \fm\] 
is well-defined and has fibers of size at most \(\gamma_R(S)\). \\
\emph{Note:} We may even replace the Noetherian assumption by \emph{integrality}.
\end{exercise}

For a local commutative ring \(R\) with unique maximal ideal \(\fm\) write \(\kappa(R)=R/\fm\) for the {\em residue field} of \(R\). 

\begin{lemma}\label{lem:min_gen}
Let \(R\) be a local commutative ring with maximal ideal \(\fp\) and \(A\) a local commutative \(R\)-algebra with maximal ideal \(A\fp\).
Let \(L\) and \(M\) be finitely generated \(R\)-modules and \(N\) a finitely generated \(A\)-module. 
Then:
\begin{enumerate}
\item \(\gamma_R(L\tensor_R M)=\gamma_R(L)\cdot\gamma_R(M)\);
\item \(\gamma_R(N)=\gamma_R(A)\cdot\gamma_A(N)\);
\item \(\gamma_A(A\tensor_R M)=\gamma_R(M)\).
\end{enumerate}
\end{lemma}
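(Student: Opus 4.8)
The plan is to reduce all three equalities to computations of dimensions of vector spaces over residue fields, via Nakayama's lemma (Lemma~\ref{lem:local_nakayama}.2). Write \(k=\kappa(R)=R/\fp\) and \(\ell=\kappa(A)=A/\fp A\). Since \(A\) is local with maximal ideal \(\fp A\), the quotient \(\ell\) is a field, and the induced map \(k\to\ell\) is a ring homomorphism out of a field into a nonzero ring, hence injective; thus \(\ell\) is a field extension of \(k\). Nakayama gives, for every finitely generated \(R\)-module \(X\), that \(\gamma_R(X)=\dim_k(X/\fp X)\), and likewise \(\gamma_A(Y)=\dim_\ell(Y/\fp A\cdot Y)\) for every finitely generated \(A\)-module \(Y\); all these dimensions are finite because the modules in question are finitely generated. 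The identities of the lemma are then read off from standard facts about tensor products of modules over these quotients. I expect no serious obstacle; the only points requiring care are keeping the base ring of each tensor product straight, the identity \(\fp N=(\fp A)N\) in part (2), and the fact that \(\ell/k\) is a genuine field extension so that the tower formula applies.

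For (1), I would use that for any \(R\)-module \(X\) one has \(X\otimes_R k\cong X/\fp X\), so
\[
(L\otimes_R M)/\fp(L\otimes_R M)\;\cong\;(L\otimes_R M)\otimes_R k\;\cong\;(L\otimes_R k)\otimes_k(M\otimes_R k)\;=\;(L/\fp L)\otimes_k(M/\fp M),
\]
a tensor product of \(k\)-vector spaces. Taking dimensions, \(\gamma_R(L\otimes_R M)=\dim_k(L/\fp L)\cdot\dim_k(M/\fp M)=\gamma_R(L)\,\gamma_R(M)\).

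For (3), note that \(A\otimes_R M\) is finitely generated over \(A\), and since the \(R\)-algebra structure on \(\ell\) factors through \(k\),
\[
(A\otimes_R M)\otimes_A\ell\;\cong\;M\otimes_R\ell\;\cong\;(M\otimes_R k)\otimes_k\ell\;=\;(M/\fp M)\otimes_k\ell .
\]
Extension of scalars along \(k\to\ell\) preserves dimension, so \(\gamma_A(A\otimes_R M)=\dim_\ell\bigl((M/\fp M)\otimes_k\ell\bigr)=\dim_k(M/\fp M)=\gamma_R(M)\).

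For (2), the key observation is that \(\fp N=(\fp A)N\): the inclusion \(\fp N\subseteq(\fp A)N\) is immediate, and \((\fp A)N\subseteq\fp N\) because \(\fp N\) is an \(A\)-submodule of \(N\) (for \(a\in A\), \(p\in\fp\), \(n\in N\) we have \(a(pn)=p(an)\in\fp N\)). Hence \(N/\fp N=N/(\fp A)N\) is an \(\ell\)-vector space, finite-dimensional since \(N\) is finitely generated over \(A\), and the tower formula for field extensions gives \(\dim_k(N/\fp N)=[\ell:k]\cdot\dim_\ell(N/\fp N)\). Finally \([\ell:k]=\dim_k(A/\fp A)=\gamma_R(A)\) by Nakayama applied to \(A\) as an \(R\)-module, so \(\gamma_R(N)=\gamma_R(A)\cdot\gamma_A(N)\), completing the proof.
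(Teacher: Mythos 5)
Your proof is correct and follows the same approach as the paper: both reduce via Nakayama (Lemma~\ref{lem:local_nakayama}.2) to dimension counts over the residue fields \(\kappa(R)\) and \(\kappa(A)\), where the identities become standard facts about vector spaces. The paper's version is terser (it states the reduction to fields and calls the rest trivial), whereas you spell out the tensor-product isomorphisms and the tower formula; the substance is the same.
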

\begin{proof}
Note that \(\kappa(R)\tensor_R N = N/\fp N = N/\fp A N =\kappa(A)\tensor_R N\) and similarly \(\kappa(R)\tensor_R A=\kappa(A)\).
Applying Lemma~\ref{lem:local_nakayama}.2 and taking the tensor product with \(\kappa(R)\), we may assume that \(R\) and \(A\) are fields.
Thus all modules are free and the rank corresponds with \(\gamma\), in which case the results are trivial.
\end{proof}

\begin{lemma}\label{lem:cardinality_unit}
Let \(R\) be a local commutative ring and \(S\) a semi-local commutative \(R\)-algebra. 
Let \(M\subseteq S\) be an \(R\)-submodule such that \(M\not\subseteq \fm\) for all maximal ideals \(\fm\subseteq S\). If \(S\) has at most \(\#\kappa(R)\) maximal ideals, then \(M\cap S^*\neq \emptyset\).
\end{lemma}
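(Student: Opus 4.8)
The plan is to reduce the statement to the elementary fact that a vector space over a field $F$ cannot be written as a union of $\#F$ or fewer proper subspaces. Write $\fp$ for the maximal ideal of $R$, so that $\kappa(R)=R/\fp$, and let $\fm_1,\dotsc,\fm_t$ be the maximal ideals of $S$; by hypothesis $t\le\#\kappa(R)$. Since every non-unit of a commutative ring lies in some maximal ideal, an element of $S$ is a unit precisely when it lies in none of the $\fm_i$, so the assertion is that $M\not\subseteq\fm_1\cup\dotsb\cup\fm_t$.

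First I would reduce to the case that $M$ is finitely generated over $R$: as $M\not\subseteq\fm_i$ for each $i$, choose $m_i\in M\setminus\fm_i$ and replace $M$ by $\sum_{i=1}^t Rm_i$, which is still not contained in any $\fm_i$ and whose intersection with $S^*$ lies inside that of the original $M$. Now suppose for contradiction that $M\cap S^*=\emptyset$; then $M=\bigcup_{i=1}^t(M\cap\fm_i)$, a union of $t$ proper $R$-submodules of $M$ (each proper because $M\not\subseteq\fm_i$). I would then pass to the $\kappa(R)$-vector space $\overline M=M/\fp M$. The union descends to $\overline M=\bigcup_{i=1}^t\overline{M\cap\fm_i}$, and each subspace $\overline{M\cap\fm_i}$ is still proper: otherwise $(M\cap\fm_i)+\fp M=M$, which by Nakayama's lemma (Lemma~\ref{lem:local_nakayama}.1, for the finitely generated module $M$ and the ideal $\fp$) forces $M\cap\fm_i=M$, contradicting $M\not\subseteq\fm_i$; in particular $\overline M\ne 0$. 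Thus $\overline M$ is a union of $t\le\#\kappa(R)$ proper $\kappa(R)$-subspaces. This contradicts the elementary fact: if $V=V_1\cup\dotsb\cup V_n$ over a field $F$ with the $V_j$ proper and the cover irredundant, then choosing $v\in V_n\setminus\bigcup_{j<n}V_j$ and $w\in V\setminus V_n$, the $\#F$ distinct vectors $w+cv$ ($c\in F$) all avoid $V_n$ while no two lie in a common $V_j$, so $\#F\le n-1<t\le\#F$.

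The one step that carries real weight is the passage from $M$ to a finitely generated submodule followed by the application of Nakayama: this is exactly what converts \emph{``$M\cap\fm_i$ is proper in $M$''} into \emph{``its image is proper in $M/\fp M$''}, and thereby brings the hypotheses that $R$ is local and that $t\le\#\kappa(R)$ to bear. The remainder is routine once $\overline M$ is displayed as a union of at most $\#\kappa(R)$ proper subspaces over the residue field.
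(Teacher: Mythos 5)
Your proof is correct, but it takes a genuinely different route from the paper's. The paper proves directly, by induction on \(k\), that an arbitrary \(R\)-module cannot be covered by \(k\le\#\kappa(R)\) proper submodules: choosing \(y\in M_k\setminus\bigcup_{i<k}M_i\) and \(x\notin M_k\), the elements \(x+ty\) (\(t\in R\)) all land in \(\bigcup_{i<k}M_i\), and a pigeonhole over the \(\ge k\) residue classes of \(\kappa(R)\) produces \(s\not\equiv t\pmod{\fp}\) with \(x+sy\) and \(x+ty\) in the same \(M_i\), so \(y=(s-t)^{-1}\bigl((x+sy)-(x+ty)\bigr)\in M_i\), a contradiction. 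You instead reduce to finitely generated \(M\), pass via Nakayama to the \(\kappa(R)\)-vector space \(M/\fp M\), and invoke the classical fact that a nonzero vector space over a field \(F\) is not a union of \(\le\#F\) proper subspaces. Your route makes the role of the residue field transparent and isolates a well-known linear-algebra lemma as the engine; the cost is the extra reduction to finite generation and the appeal to Nakayama, which the paper's module-level induction dispenses with entirely (and which lets the paper's stronger statement hold for modules of arbitrary size). Both arguments use the locality of \(R\) (as \(R\setminus\fp=R^*\) on the one hand, Nakayama on the other) and the bound \(t\le\#\kappa(R)\) at the same point, so the difference is one of packaging rather than of substance.
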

\begin{proof}
Note that \(S\setminus S^* = \bigcup_{\fm} \fm\), where the union ranges over the maximal ideals of \(S\), as every non-unit is contained in some proper and hence in some maximal ideal.
Thus it suffices to show that \(\bigcup_\fm (M\cap \fm) \neq M\). 
We will inductively prove the stronger statement that an \(R\)-module \(M\) cannot be written as a union of \(k\) proper submodules for any integer \(0\leq k \leq \# \kappa(R)\). 
For \(k=0\) this is trivial. 
Now we assume that \(\bigcup_{i<k} M_i \subsetneq M\), and for the sake of contradiction that \(\bigcup_{1\leq i\leq k} M_i = M\). 
Choose \(y\in M \setminus \bigcup_{i<k} M_i\). 
In particular, \(y\in M_k\).
Now let \(x\in M\setminus M_k\). 
Then for all \(t\in R\) we have \(x+ty \in M_{i(t)}\) for some \(1\leq i(t) < k\).
Since \(k\leq \#\kappa(R)\) there exist \(s,t\in R\) such that \(s\not\equiv t\) in \(\kappa(R)\) and \(i(s)=i(t)\).
It follows (Exercise~\ref{ex:units_local_ring}) that \(s-t\in R^*\) and thus \(y=(s-t)^{-1}((x+ys)-(x+yt)) \in M_{i(t)}\), which is a contradiction.
Hence \(y\not\in M_k\) and thus \(\bigcup_{i\leq k} M_i \subsetneq M\), as was to be shown.
\end{proof}

\begin{lemma}\label{lem:large_extension}
Let \(R\) be local commutative ring with maximal ideal \(\fp\). 
Then for every \(n\in\Z_{\geq 0}\) there exists a commutative \(R\)-algebra \(A\) which is free of finite rank over \(R\), local with maximal ideal \(A\fp\), and such that \(\#\kappa(A)>n\cdot\rk_R(A)\).
\end{lemma}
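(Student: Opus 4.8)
The plan is to realise $A$ as a quotient $R[X]/(f)$ for a well-chosen monic $f\in R[X]$. For any monic $f$ of degree $d$ the ring $A=R[X]/(f)$ is free of rank $d$ over $R$, so $\rk_R(A)=d$ and the target inequality $\#\kappa(A)>n\cdot\rk_R(A)$ becomes $\#\kappa(A)>nd$. Writing $k=\kappa(R)=R/\fp$, reduction modulo $\fp$ gives $A/\fp A\cong k[X]/(\bar f)$ where $\bar f$ is the reduction of $f$; so if I choose $f$ with $\bar f\in k[X]$ monic irreducible of degree $d$, then $A/\fp A$ is a degree-$d$ field extension of $k$, $\fp A$ is a maximal ideal of $A$, and $\kappa(A)\cong k[X]/(\bar f)$.

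To finish the structural part I must check that $\fp A$ is the \emph{only} maximal ideal of $A$. Here I would invoke Exercise~\ref{ex:maximal_ideals_in_finite_extensions}: since $A$ is free of finite rank over $R$ it is Noetherian as an $R$-module, so every maximal ideal $\fm\subseteq A$ contracts to a maximal ideal of $R$, which is forced to be $\fp$; hence $\fp\subseteq\fm$, so $\fp A\subseteq\fm$, and maximality of $\fp A$ gives $\fm=\fp A$. Thus $A$ is local with maximal ideal $A\fp$, and it only remains to arrange the cardinality inequality by choosing $d$ (and $\bar f$) appropriately.

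For the cardinality I would split on whether $k$ is finite. If $k$ is infinite, take $d=1$ and $f=X$, i.e.\ $A=R$: then $\rk_R(A)=1$ and $\kappa(A)=k$ is infinite, so $\#\kappa(A)>n$. If $k$ is finite with $\#k=q\ge 2$, then $\#\kappa(A)=q^d$, so I need $q^d>nd$, which holds for all large $d$ since $q^d/d\to\infty$; for such a $d$ I use the classical fact that a finite field admits a field extension of every finite degree (equivalently, $k[X]$ contains a monic irreducible polynomial of every positive degree) to produce $\bar f$, and lift its coefficients arbitrarily to a monic $f\in R[X]$. I do not expect a genuine obstacle: the proof is essentially bookkeeping around $R[X]/(f)$ together with Exercise~\ref{ex:maximal_ideals_in_finite_extensions}, and the single point that needs care is exactly this case distinction, since an infinite residue field (e.g.\ $\R$ or an algebraically closed field) need not admit extensions of large degree, so the ``pass to a big finite extension'' idea only applies verbatim when $\kappa(R)$ is finite.
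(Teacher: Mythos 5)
Your proof is correct and follows essentially the same route as the paper's: split on whether $\kappa(R)$ is finite, take $A=R[X]/(f)$ with $f$ a monic lift of an irreducible polynomial over $\kappa(R)$ of suitably large degree $d$, and invoke Exercise~\ref{ex:maximal_ideals_in_finite_extensions} to see that every maximal ideal of $A$ contracts to $\fp$ and hence equals $\fp A$. The only difference is that you spell out the contraction argument for locality, where the paper states it more tersely.
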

\begin{proof}
If \(\kappa(R)\) is infinite we may choose \(A=R\), so assume \(\kappa(R)=\F_q\) for some prime power \(q\).
Let \(d\in\Z_{>0}\) and consider the field extension \(\kappa(R)\subseteq \F_{q^d}=\F_q(\alpha)\) for some \(\alpha\in\F_{q^d}\).
Let \(f\in R[X]\) be some monic lift of the minimal polynomial of \(\alpha\) over \(\kappa(R)\).
Clearly \(A=R[X]/(f)\) is free of rank \(d\) over \(R\). 
Since \(\fp\) is the unique maximal ideal of \(R\), the maximal ideals of \(A\) correspond to those of \(A/\fp A=\F_{q^d}\) by Exercise~\ref{ex:maximal_ideals_in_finite_extensions}, which is a field.
Hence \(A\) is local with maximal ideal \(A\fp\).
Finally, \(\#\kappa(A)=\#\kappa(R)^d > n\cdot d \) for \(d\) sufficiently large, as was to be shown.
\end{proof}

\begin{lemma}\label{lem:module_has_one}
Let \(R\subseteq S\) be commutative (sub)rings with \(R\) local. 
Let \(M\subseteq S\) be an \(R\)-submodule such that \(1\in M\), and \((M^n:M^n)_S=R\) for some integer \(n+1\geq \gamma_R\big(\bigcup_{k\geq 0} M^k \big)\). 
Then \(M=R\).
\end{lemma}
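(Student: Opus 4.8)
The plan is to introduce the subring $N=\bigcup_{k\geq0}M^k$ of $S$ and show that the bound on $\gamma_R$ forces the chain of powers $M^0\subseteq M^1\subseteq M^2\subseteq\cdots$ to become stationary already at $M^n$; then $M^n=N$ is a ring, and comparing with $(M^n:M^n)_S=R$ collapses everything to $M=R$.

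First I would collect the elementary facts. Since $1\in M$ and $M$ is an $R$-module, $R=R\cdot1\subseteq RM\subseteq M$, so $R\subseteq M$. Again using $1\in M$ one has $M^k=M^k\cdot1\subseteq M^kM=M^{k+1}$ for $k\geq1$, and $M^0=M^0\cdot1\subseteq M^0M\subseteq M=M^1$ because $M^0=(M:M)_S$; so the powers form an increasing chain and $N=\bigcup_{k\geq0}M^k=\bigcup_{k\geq1}M^k$. From the identity $M^iM^j=M^{i+j}$ it follows that $N$ is closed under multiplication, and it is plainly an $R$-submodule containing $1$, hence a subring of $S$ that contains both $R$ and $M$; moreover $MN=\bigcup_{k\geq1}M^k=N$.

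The heart of the argument uses Nakayama for the local ring $R$ with maximal ideal $\mathfrak{p}$. The module $N$ is finitely generated with $\gamma_R(N)\leq n+1$, so by Lemma~\ref{lem:local_nakayama}(2) the $\kappa(R)$-vector space $N/\mathfrak{p}N$ has dimension $\gamma_R(N)\leq n+1$; also $1\notin\mathfrak{p}N$ (otherwise $N=\mathfrak{p}N$ and Nakayama forces $N=0$, impossible since $1\in N$), so that dimension is at least $1$. The images $\overline{M^k}$ in $N/\mathfrak{p}N$ form an increasing chain of subspaces starting from a nonzero subspace, so among the $n+1$ inclusions in $\overline{M^0}\subseteq\overline{M^1}\subseteq\cdots\subseteq\overline{M^{n+1}}$ at least one is an equality, say $\overline{M^j}=\overline{M^{j+1}}$ with $0\leq j\leq n$. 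I would then argue $N=M^j$: from $M^{j+1}\subseteq M^j+\mathfrak{p}N$ one multiplies repeatedly by $M$, using $MN=N$, to obtain $M^{j+i}\subseteq M^j+\mathfrak{p}N$ for all $i\geq0$, hence $N=M^j+\mathfrak{p}N$, and Lemma~\ref{lem:local_nakayama}(1) gives $N=M^j$.

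To finish, since $j\leq n$ we have $M^j\subseteq M^n\subseteq N=M^j$, so $M^n=N$ is a subring of $S$ and therefore $(M^n:M^n)_S=(N:N)_S=N$ by Lemma~\ref{lem:quotient_facts}(iv). The hypothesis $(M^n:M^n)_S=R$ then yields $N=R$, and with $R\subseteq M\subseteq N$ from the first step we conclude $M=R$. The only genuinely delicate point is the passage from equality modulo $\mathfrak{p}$ to genuine equality of modules: one must see that a single coincidence $\overline{M^j}=\overline{M^{j+1}}$ already propagates up the whole chain, so that $M^n$ — and not merely some far-off power — equals $N$; everything else is bookkeeping with the hypotheses.
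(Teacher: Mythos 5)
Your proof is correct and follows essentially the same route as the paper: introduce $N=\bigcup_k M^k$ as an $R$-subalgebra, reduce modulo the maximal ideal $\mathfrak{p}$ and count dimensions (via Nakayama) to locate a collapse $\overline{M^j}=\overline{M^{j+1}}$ with $j\leq n$, propagate it up the chain by multiplying by $M$, lift back via Nakayama to get $M^j=N$, and then use that $N$ is a ring together with the hypothesis $(M^n:M^n)_S=R$ to collapse everything to $M=R$. The paper phrases the stabilization by taking the minimal $k$ with $N_k=N_{k+1}$ rather than merely some $j\leq n$, but the argument is the same.
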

\begin{proof}
Let \(\fp\subseteq R\) be the maximal ideal.
From \(1\in M\) it follows that \(R=M^0 \subseteq M^1 \subseteq M^2 \subseteq \dotsm\).
We write \(M^\infty=\bigcup_{k\geq 0} M^k\), which is an \(R\)-subalgebra of \(S\).
With \(N_k=(M^k+\fp M^\infty)/(\fp M^\infty)\) we get a chain \(N_0\subseteq N_1\subseteq N_2\subseteq \dotsm\) of \(\kappa(R)\)-vector spaces with limit \(N_\infty=M^\infty/\fp M^\infty\).
We have that \(\dim_{\kappa(R)}(N_\infty) = \gamma_R(M^\infty)\leq n+1\) by Lemma~\ref{lem:local_nakayama}.2.
Hence there exists a minimal \(k\) such that \(N_{k}=N_{k+1}\), and it must satisfy \(k \leq \dim_{\kappa(R)}(N_\infty)-\dim_{\kappa(R)}(N_0)\leq n\).
Then \(N_{k}=N_{k+1}=N_{k+2}=\dotsm\) and thus \(N_k=N_\infty\).
Equivalently, we have \(M^k+\fp M^\infty= M^\infty\), so \(M^k=M^\infty\) by Lemma~\ref{lem:local_nakayama}.1.
It follows from Lemma~\ref{lem:quotient_facts} that \(M\subseteq M^\infty = (M^\infty : M^\infty)_S = (M^n : M^n)_S = R \subseteq M\), so indeed \(M=R\).
\end{proof}

\begin{lemma}\label{lem:tensor_division}
Let \(R\subseteq S\) be commutative subrings, let \(A\) be a commutative \(R\)-algebra that is free as an \(R\)-module, and let \(M\subseteq S\) be an \(R\)-module. Then \((M\tensor_R A : M\tensor_R A)_{S\tensor A} = (M:M)_S \tensor_R A\) as subsets of \(S\tensor A\).
\end{lemma}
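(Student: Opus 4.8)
The plan is to trivialize the statement by choosing an $R$-basis of $A$ and checking everything componentwise. (Throughout, all tensor products are over $R$.) First I would note that $A$, being free over $R$, is flat, so that for any inclusion of $R$-modules the induced map on $-\tensor_R A$ is injective; this is what makes the assertion meaningful, since it lets me view both $M\tensor_R A$ and $(M:M)_S\tensor_R A$ as genuine $R$-submodules of $S\tensor_R A$.

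Next I would fix an $R$-basis $(e_\alpha)_{\alpha\in\Lambda}$ of $A$, giving the decomposition $S\tensor_R A=\bigoplus_\alpha S\tensor_R Re_\alpha$; identifying $S\tensor_R Re_\alpha$ with $S$, this says every element of $S\tensor_R A$ is uniquely of the form $\sum_\alpha s_\alpha\tensor e_\alpha$ with $s_\alpha\in S$ almost all zero. Because $A$ is flat, the same basis decomposes the relevant submodules compatibly: $M\tensor_R A$ is exactly the set of such sums with all $s_\alpha\in M$, and $(M:M)_S\tensor_R A$ the set with all $s_\alpha\in(M:M)_S$.

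With this in place both inclusions are one-line computations using $(s\tensor a)(s'\tensor a')=ss'\tensor aa'$. For $\supseteq$: if $\xi=\sum_\alpha s_\alpha\tensor e_\alpha$ with every $s_\alpha\in(M:M)_S$ and $\eta=\sum_\beta m_\beta\tensor e_\beta\in M\tensor_R A$, then $\xi\eta=\sum_{\alpha,\beta}(s_\alpha m_\beta)\tensor(e_\alpha e_\beta)\in M\tensor_R A$ since each $s_\alpha m_\beta\in M$. For $\subseteq$: given $\xi=\sum_\alpha s_\alpha\tensor e_\alpha$ with $\xi(M\tensor_R A)\subseteq M\tensor_R A$, it already suffices to pair $\xi$ with the elements $m\tensor 1$ for $m\in M$: then $\xi(m\tensor 1)=\sum_\alpha(s_\alpha m)\tensor e_\alpha\in M\tensor_R A$, so $s_\alpha m\in M$ for all $\alpha$ by the description of $M\tensor_R A$; letting $m$ run through $M$ yields $s_\alpha\in(M:M)_S$ for all $\alpha$, i.e.\ $\xi\in(M:M)_S\tensor_R A$.

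I do not expect a real obstacle here; the argument is essentially formal. The one point deserving care is the bookkeeping in the middle step --- verifying that the inclusions $M\tensor_R A\hookrightarrow S\tensor_R A$ and $(M:M)_S\tensor_R A\hookrightarrow S\tensor_R A$ are compatible with the coordinatization furnished by the basis $(e_\alpha)$, which is exactly where the freeness (flatness) of $A$ enters.
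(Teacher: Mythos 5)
Your proof is correct and follows essentially the same approach as the paper: choose an $R$-basis of $A$, use freeness to get the coordinate decomposition of $S\tensor_R A$ (and its submodules), dispose of $\supseteq$ by a direct computation, and establish $\subseteq$ by multiplying a candidate element by $m\tensor 1$ and reading off that each coordinate lies in $(M:M)_S$. The paper just states $\supseteq$ as straightforward and is terser about the flatness bookkeeping, but the argument is the same.
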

\begin{proof}
The inclusion \(\supseteq\) is straight-forward. Choose a basis \(\mathcal{A}\subseteq A\) of \(A\) as an \(R\)-module. 
Let \(x\in (M\tensor_R A : M\tensor_R A)_{S\tensor A}\). 
Then we may uniquely write \(x=\sum_{a\in\mathcal{A}} x_a \tensor a\). Then \(M\tensor_R A \ni  x \cdot (m\tensor 1) = \sum_{a\in\mathcal{A}} x_a m \tensor a\), so \(x_a m \in M\) for all \(a\in\mathcal{A}\). Hence \(x_a\in (M : M)_S\) and \(x\in (M:M)_S \tensor_R A\).
\end{proof}

\begin{theorem}[Dade--Taussky--Zassenhaus \cite{Dade-Zassenhaus}]\label{thm:blowup}
Let \(K\) be a number field, \(R\subseteq K\) a subring and \(I\subseteq K\) a finitely generated non-zero \(R\)-module.
Then \(\textup{Bl}_K(I)=I^n:I^n\) for all \(n\geq [K:\Q]-1\).
For any subring \(T\subseteq K\), it holds that \(\textup{Bl}_K(I)\subseteq T\) if and only if \(TI\) is an invertible ideal of \(T\) within \(K\).
\end{theorem}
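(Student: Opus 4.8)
The theorem has two halves: the equality $\textup{Bl}_K(I)=I^n:I^n$ for $n\ge d-1$, where $d=[K:\Q]$, and the universal property identifying $B:=\textup{Bl}_K(I)$ as the least subring of $K$ over which $I$ becomes invertible. One implication of the universal property, that $TI$ invertible over $T$ within $K$ forces $B\subseteq T$, is exactly Proposition~\ref{prop:blowup}.2 with $S=K$ and $R=T$. So I would reduce the whole theorem to two claims: \textbf{(a)} $B$ is an order and $B\cdot I$ is an invertible ideal of $B$ within $K$; and \textbf{(b)} $B=I^n:I^n$ already for every $n\ge d-1$. Granting \textbf{(a)}, the remaining implication is formal: if $B\subseteq T\subseteq K$ then $TI=T\cdot(BI)$, and with $(BI)^{-1}=(B:BI)_K$ one gets $(T\cdot BI)\cdot(T\cdot(BI)^{-1})=T\cdot\bigl(BI\cdot(BI)^{-1}\bigr)=T$, so $TI$ is invertible over $T$ within $K$.

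Two reductions come first. \emph{Scaling}: by Exercise~\ref{ex:quotient_with_invertible}, replacing $I$ by $\alpha I$ for $\alpha\in K^*$ changes neither $\textup{Bl}_K(I)$ nor any $I^k:I^k$, so I may assume $1\in I$; then $R\subseteq I\subseteq I^2\subseteq\cdots$ and the multiplier rings $R_k:=I^k:I^k$ form an increasing chain of subrings of $K$ with union $B$. By Proposition~\ref{prop:blowup}.4, $B$ is an order and $B=R_N$ for some $N$, whence $R_k=B$ — and therefore $I^k=BI^k=(BI)^k$ — for all $k\ge N$; so \textbf{(b)} says precisely that $N$ may be taken $\le d-1$. \emph{Localization}: both $(-)^k:(-)^k$ and $\textup{Bl}_K$ commute with localizing at a maximal ideal $\fq$ of $R$ (using that $I$ is finitely generated over $R$, that localization commutes with colons of finitely generated modules and with the directed union defining $\textup{Bl}_K$), and Lemma~\ref{lem:intersect_ideals} recovers $I^k:I^k=\bigcap_\fq(I^k:I^k)_\fq$; Exercise~\ref{ex:hereditary_semilocal} makes $B_\fq$ semi-local, and $R_\fq$ is a local Noetherian ring by Exercise~\ref{ex:number_field_noetherian}. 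So it suffices to prove, locally, that $B_\fq I_\fq=xB_\fq$ for some $x\in K^*$ and that $B_\fq=I_\fq^{d-1}:I_\fq^{d-1}$; the first, assembled over all $\fq$ as in Theorem~\ref{thm:localization_inverse}, gives invertibility of $BI$ (Proposition~\ref{prop:semi_local_ring}), the second gives \textbf{(b)}.

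For the local statement I would not look for a generator of $B_\fq I_\fq$ inside $R_\fq$, whose residue field may be too small, but pass as in Lemma~\ref{lem:large_extension} to a free local $R_\fq$-algebra $A$ with $\#\kappa(A)$ exceeding a prescribed bound; Lemma~\ref{lem:tensor_division} guarantees that multiplier rings are unchanged under $-\tensor_{R_\fq}A$, so the stabilization $B_\fq=I_\fq^k:I_\fq^k$ ($k\ge N$) survives the base change. Then $S=K\tensor_{R_\fq}A$, together with the images of the $I_\fq^k$, is a semi-local $A$-algebra whose number of maximal ideals is bounded via the multiplicativity of $\gamma$ (Lemma~\ref{lem:min_gen}) and can be made smaller than $\#\kappa(A)$; feeding to Lemma~\ref{lem:cardinality_unit} the $A$-submodule of $S$ cut out by the relations that would witness principality produces a unit of $S$ lying in it, and descending through the faithfully flat $A$ exhibits $B_\fq I_\fq$ as principal over $B_\fq$. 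By Proposition~\ref{prop:semi_local_ring} this is invertibility, proving \textbf{(a)}. For \textbf{(b)}: once $BI$ is invertible one has $(BI)^k=I^k$ and the full multiplier ring of $I^k$ equal to $B$ for all $k\ge N$, and the ascending union of the relevant powers becomes a rank-one $B_\fq$-module, hence needs at most $d=[K:\Q]$ generators over $B_\fq$; Lemma~\ref{lem:module_has_one} then forces the chain $R_0\subseteq R_1\subseteq\cdots$ to reach $B_\fq$ by index $d-1$, and intersecting over $\fq$ gives $I^{d-1}:I^{d-1}=B$, so $I^n:I^n=B$ for all $n\ge d-1$.

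I expect the main obstacle to be the local computation in \textbf{(a)}: orchestrating the extension $A$ so that its residue field outnumbers the maximal ideals of the base-changed ring while keeping track, via Lemma~\ref{lem:tensor_division}, that the multiplier rings (and hence the stabilization hypothesis) are preserved, and then descending the unit produced by Lemma~\ref{lem:cardinality_unit}. Pinning the constant to exactly $[K:\Q]-1$ in \textbf{(b)} — i.e.\ bounding by $[K:\Q]$ the number of generators of the ring to which Lemma~\ref{lem:module_has_one} is applied, so that it fires with $n=d-1$ — is the second delicate point; the cluster of lemmas \ref{lem:min_gen}, \ref{lem:cardinality_unit}, \ref{lem:large_extension}, \ref{lem:tensor_division}, \ref{lem:module_has_one} is evidently assembled to carry out exactly this plan.
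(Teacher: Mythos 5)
You have correctly located the machinery — the cluster of Lemmas~\ref{lem:min_gen}, \ref{lem:cardinality_unit}, \ref{lem:large_extension}, \ref{lem:tensor_division}, \ref{lem:module_has_one} is indeed assembled exactly to produce a unit over an enlarged residue field and then pin the stabilization index — and your formal deduction of the universal property from claim \textbf{(a)} (via \(TI = T\cdot(BI)\) and \(\textup{Bl}\subseteq T\) via Proposition~\ref{prop:blowup}.2) is fine. But your localization scaffold has a genuine mismatch with what those lemmas require. You localize at maximal ideals \(\fq\) of the \emph{original} subring \(R\), and record that \(R_\fq\) is local and \(B_\fq=(I^n:I^n)_\fq\) is merely semi-local. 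That is exactly the problem: Lemma~\ref{lem:module_has_one} needs the multiplier ring, not the base ring, to be local (with \(1\in M\) and \((M^n:M^n)_S=R\) the \emph{local} ring), and after your base change by a free local \(R_\fq\)-algebra \(A\), the multiplier ring is \(B_\fq\tensor_{R_\fq}A\), which is still only semi-local. Your plan applies Lemma~\ref{lem:module_has_one} with \(A\) in the role of \(R\), but the hypothesis \((M^n_A:M^n_A)=A\) fails whenever \(B_\fq\supsetneq R_\fq\), which is precisely the interesting case. The paper avoids this by localizing at primes of \(S=I^n:I^n\) itself (after first normalizing \(I\mapsto J=SIx^{-1}\) and showing \(J\subseteq \textup{Q}(S)\)), so that the local ring in Lemma~\ref{lem:module_has_one} really is \(S_\fp=(I^n:I^n)_\fp\); you would need an extra localization at maximal ideals of \(B_\fq\), at which point the localization at primes of \(R\) is superfluous.

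A second, related imprecision: you take \(S=K\tensor_{R_\fq}A\) as the ambient semi-local algebra. The paper instead uses \(S_A=(R\cdot\mathcal{O}_K)\tensor_R A\) with \(R=I^n:I^n\); this choice is what makes Lemma~\ref{lem:inv_ideal_surj} available to pull an invertible \(S\)-ideal \(SI\) back to an invertible \(R\)-ideal \(J\) (so that \(M=IJ^{-1}\) satisfies \(SM=S\) and hence lies outside every maximal ideal of \(S_A\), the precise hypothesis of Lemma~\ref{lem:cardinality_unit}). Your ``\(A\)-submodule of \(S\) cut out by the relations that would witness principality'' has to be replaced by this concrete \(M\), and the bound in \textbf{(b)} comes from \(\gamma_R(N^\infty)\le \dim_\Q K_A\le\rk_R(A)\cdot(n+1)\) (via Exercise~\ref{ex:number_field_noetherian}.a and Lemma~\ref{lem:min_gen}), hence \(\gamma_A(N^\infty)\le n+1\) — not from a ``rank-one \(B_\fq\)-module'' count, which does not bound the number of \emph{module} generators of the ring \(N^\infty\). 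Your scaling to \(1\in I\) is a good move and parallels the paper's normalization, but you still need to make explicit that \(\Q I^n\) stabilizes to a subfield \(F\subseteq K\) and that the localized argument lives over \(F\), since \(\textup{Q}(I^n:I^n)\) may be a proper subfield of \(K\).
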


Note that we may take \(R=\Z\) in this theorem. 

\begin{proof}
Choose some \(n\geq [K:\Q]-1\). It suffices by Proposition~\ref{prop:blowup} to show that \((I^n:I^n) I \in \mathcal{I}_K(I^n:I^n)\).

We first prove this under the added assumptions that \(R=I^n:I^n\), that \(\textup{Q}(R)=K\) and that \(R\) is a local ring.
Let \(S=R\cdot\mathcal{O}_K\) and note that it is a ring and a Noetherian \(R\)-module.
In particular, \(S\) is semi-local by Exercise~\ref{ex:maximal_ideals_in_finite_extensions}.
Every fractional ideal of \(S\), in particular \(SI\), is invertible, because \(\mathcal{O}_K\subseteq S\).
Hence there exists by Lemma~\ref{lem:inv_ideal_surj} some \(J\in\mathcal{I}(R)\) such that \(S I= S J\).
Then \(I\in\mathcal{I}(R)\) if and only if \(IJ^{-1}\in\mathcal{I}(R)\), so it suffices to show that \(M=IJ^{-1}\) is invertible.
Note that \(SM=S\).
Consider now the \(R\)-algebra \(A\) from Lemma~\ref{lem:large_extension} satisfying  \(\#\{\textup{maximal }\mathfrak{m}\subset S\}\cdot \rk_R(A) <\#\kappa(A)\), and write \(-_A\) for \(-\tensor_R A\). 
Since \(A\) is free over \(R\), we have \((M^{n}_A:M^{n}_A)_{S_A}=R_A=A\) by Lemma~\ref{lem:tensor_division}. 
As \(S\) is Noetherian, \(S_A\) is free and hence Noetherian over \(S\).
If \(M_A\subseteq \mathfrak{m}\) for some maximal \(\mathfrak{m}\subset S_A\), then also \(M=M_A\cap S\) is contained in the maximal ideal of \(\mathfrak{m}\cap S\) of \(S\) by Exercise~\ref{ex:maximal_ideals_in_finite_extensions}, which is impossible since \(SM = S\). 
From Exercise~\ref{ex:maximal_ideals_in_finite_extensions} we also obtain \(\#\{\textup{maximal }\mathfrak{m}\subset S_A\} \leq \rk_R(A) \cdot \# \{\textup{maximal }\mathfrak{m}\subset S\} < \#\kappa(A)\).
Thus we may apply Lemma~\ref{lem:cardinality_unit} with \(R_A\), \(S_A\) and \(M_A\) in the roles of \(R\), \(S\) and \(M\) respectively.
Hence we may choose some \(u\in M_A\cap S_A^*\).
Now let \(N=u^{-1} M_A\) and \(N^\infty=\bigcup_{k\geq 0} N^k\). 
Since \(N^\infty\subseteq S_A\) is Noetherian as an \(R\)-module, we may choose some finite set of \(R\)-module generators \(X\subseteq N^\infty\).
By Exercise~\ref{ex:number_field_noetherian}.a and Lemma~\ref{lem:min_gen}.1 we have 
\[\gamma_R(N^\infty) = \gamma_R\Big(\sum_{x\in X} Rx\Big) \leq  \gamma_\Z\Big(\sum_{x\in X}\Z x\Big) \leq \dim_\Q K_A \leq \rk_R(A)\cdot (n+1),\]
so \(\gamma_A(N^\infty)\leq n+1\) by Lemma~\ref{lem:min_gen}.2.
Then by Lemma~\ref{lem:module_has_one} we have \(N=A\).
By Lemma~\ref{lem:min_gen}.3 we have \(1=\gamma_A(N)=\gamma_R(M)\), so \(M=Rv\) for some \(v\in S\).
In particular, \(M\) is invertible, as was to be shown.

Now consider the general case. 
Let \(S=I^n:I^n\) and \(F=\textup{Q}(S)\). 
Let \(x\in I\) be a non-zero element and consider \(J=SIx^{-1}\).
Note that \(x^n J^n=SI^n=I^n\) and thus \(J^n:J^n=x^n J^n : x^n J^n=I^n:I^n=S\).
Since \(1\in J\), we have \(J^0\subseteq J^1 \subseteq J^2\subseteq \dotsm\).
From a dimension argument it then follows that \(\Q J^n=\Q J^{n+1} =\dotsm\) and thus \(\Q J^n\) is a ring.
It follows from Lemma~\ref{lem:quotient_facts}.iv and Exercise~\ref{ex:field_in_ideal_quotient} that \(\Q J^n = (\Q J^n):(\Q J^n) = \Q \cdot (J^n:J^n) = F\), so \(J \subseteq F\).
Let \(\fp\subset S\) be a maximal ideal, so that \(S_\fp\) is local with \(S_\fp=J_\fp^n : J_\fp^n\).
Hence \(J_\fp\) is an invertible ideal of \(S_\fp\) by the previous case.
Invertibility is a local property by Theorem~\ref{thm:localization_inverse}, so \(J\) is an invertible \(S\)-ideal.
Then \(x^n J^n=(SI)^n\) is invertible over \(S\) within \(K\).
If \(n=0\), then \(K=\Q\) and the theorem is trivial.
Otherwise, \(SI\) is invertible over \(S\) within \(K\), as was to be shown.
\end{proof}

\begin{corollary}\label{cor:compute_blowup}
There exists a polynomial-time algorithm that, given a number field \(K\) and a finitely generated non-zero subgroup \(I\subseteq K\), computes \(\textup{Bl}(I)\).
\end{corollary}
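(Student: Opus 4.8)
The plan is to reduce the problem to two polynomial-time primitives already at our disposal: the algorithm that multiplies finitely generated subgroups of a number field, and the algorithm that computes ideal quotients $(I:J)_K$ of such subgroups. Applying Theorem~\ref{thm:blowup} with the subring ``$R$'' taken to be $\Z$, we have $\textup{Bl}_K(I)=I^n:I^n$ where $n=[K:\Q]-1$, and $[K:\Q]$ is simply the rank of the order defining $K$, so $n$ is available from the input. It is essential that we have this a priori value of $n$: Exercise~\ref{ex:bad_compute_blowup} shows that the ascending chain of multiplier rings $I^0:I^0\subseteq I^1:I^1\subseteq\dotsm$ may be constant for $[K:\Q]-2$ steps and only then jump to $\textup{Bl}(I)$, so no stopping criterion fires before step $n$ and we genuinely have to form the power $I^n$.

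Concretely, I would first compute $n$, and then compute $I^n$ by repeated multiplication by $I$: set $J_1=I$ and, for $k=1,\dotsc,n-1$, compute $J_{k+1}=J_k\cdot I$ with the product algorithm, each time replacing the returned generating set by one of cardinality at most $[K:\Q]$ by means of Exercise~\ref{ex:number_field_noetherian}.a together with the image algorithm (Theorem~\ref{thm:ker_im_alg}); when $n=0$ one instead takes $J=I^0=I:I$ directly. Finally compute $(J_n:J_n)_K=\textup{Bl}_K(I)$ with the ideal-quotient algorithm. The result is a subring of $K$ containing $1$; from a $\Z$-basis of it (one further call to the image algorithm) and the products of its basis elements re-expressed in that basis, one presents it in the usual encoding of an order in $K$.

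The step that needs care, and the main obstacle, is to verify that these $\sim[K:\Q]$ multiplications compose into a \emph{polynomial-time} procedure, i.e.\ that no intermediate object blows up. Write $I=d^{-1}L$ with $d\in\Z_{>0}$ and $L$ a subgroup of the order $R\cong\Z^{[K:\Q]}$ defining $K$. Then the common denominator of $J_k$ is $d^{k}$, of polynomial bit-length since $k\leq n<[K:\Q]$, and the numerator of $J_k$ is a $k$-fold product of $L$ inside $R$, hence again a subgroup of $R$. The crucial observation is that we multiply by the \emph{fixed} ideal $I$ at every step, so one of the two factors at each multiplication carries the (small) bit-length of $L$. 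Bounding the entries of the products of the two reduced generating sets through the multiplication table of $R$, and then bounding the entries of the generating set that the image algorithm produces by the size estimates for reduced bases (Proposition~\ref{prop:succ_min} and Exercise~\ref{ex:Ge10}), one finds that the bit-length of the stored data increases by only a polynomially bounded \emph{additive} amount from step $k$ to step $k+1$. Since there are fewer than $[K:\Q]$ steps, every intermediate object, and hence the output, has polynomially bounded size, each primitive is then applied to inputs of polynomial size and runs in polynomial time, and correctness is exactly Theorem~\ref{thm:blowup}.
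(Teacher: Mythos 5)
Your proof is correct and takes essentially the same approach as the paper's: compute $I^n$ for $n=[K:\Q]-1$ by iterated multiplication, reducing the generating set to at most $[K:\Q]$ elements at each step via Exercise~\ref{ex:number_field_noetherian}.a together with the image algorithm, and then form the ideal quotient $I^n:I^n$, invoking Theorem~\ref{thm:blowup} for correctness. The extra attention you pay to bit-length growth across the $\sim[K:\Q]$ iterations is detail the paper elides; it is sound in outline, though the ``additive'' bound is really controlled by the fact that $\det(L^k)$ divides $\det(R)\cdot[R:L]^{kn}$ and LLL bounds the reduced basis in terms of the determinant.
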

\begin{proof}
By Exercise~\ref{ex:number_field_noetherian}.a, each subgroup \(I^1,I^2,\dotsc\subseteq K\) can be generated by \([K:\Q]\) elements. Thus we can inductively compute \(I^{k+1}\) as the image of the tensor product \(I^k\tensor I \to K\), and in turn \(\textup{Bl}(I)=I^n:I^n\) for \(n=[K:\Q]-1\), in polynomial time.
\end{proof}

\begin{exercise}
Let \(K\) be a number field and \(I\subseteq K\) a finitely generated non-zero subgroup. Show that \(\Q(\textup{Bl}(I))=\Q(Ix^{-1})\) for any non-zero \(x\in I\).
\end{exercise}

\begin{exercise}\label{ex:torsion_ideals}
Let \(R\subseteq S\subseteq\textup{Q}(R)\) be number rings. 
Show that \(\ker(\mathcal{I}(R)\to\mathcal{I}(S))\cong (S/\mathfrak{f})^*/(R/\mathfrak{f})^*\), where \(\mathfrak{f}=R:S\) is the \emph{conductor}. Conclude that this kernel equals \(\mathcal{I}(R)^\textup{tors}\), the torsion subgroup of \(\mathcal{I}(R)\), when \(S\) is the maximal order.
\end{exercise}

\begin{exercise}
Let \(R\) be an order of a number field \(K\). Show that the following are equivalent:
\begin{enumerate}[nosep,leftmargin=*]
\item There exist a finite {\em Boolean ring} \(B\), i.e.\ a ring \(B\) such that \(x^2=x\) for all \(x\in B\), a subring \(A\subseteq B\) and a surjective ring homomorphism \(f:\mathcal{O}_K\to B\) such that \(R=f^{-1} A\). 
\item There exists an equivalence relation \(\sim\) on the set \(\Hom_\text{Ring}(\mathcal{O}_K,\F_2)\) such that \(R=\{x\in\mathcal{O}_K : f \sim g \Rightarrow f(x)=g(x)\}\).
\item The group \(\mathcal{I}(R)\) is torsion free.
\end{enumerate}
Give an example of a non-maximal order such that \(\mathcal{I}(R)\) is torsion free.
\end{exercise}

\subsection{Ideals generated by two elements}

Every principal fractional ideal of a number ring \(R\) is invertible, and thus its blowup is \(R\) itself.
In this section we give special attention to (computing) the blowup of fractional ideals generated by two elements.
Such ideals are ubiquitous: Every invertible ideal of a number ring can be generated by only two elements (Exercise~\ref{ex:inv_gen_two}). 

\begin{exercise}\label{ex:blowup_ring_extension}
Let \(K\) be a number field and \(R\subseteq K\) a subring. Show that for all finitely generated subgroups \(I,J\subseteq K\) we have \(\textup{Bl}(IJ)=\textup{Bl}(I)\cdot\textup{Bl}(J)\). Conclude that \(\textup{Bl}(R\alpha+R\beta)=R\cdot\textup{Bl}(\Z\alpha+\Z\beta)\) for all \(\alpha,\beta\in K^*\).
\end{exercise}

\begin{exercise}\label{ex:gauss_lemma}
Let \(f\in\Z[X]\) be a \emph{primitive} polynomial, i.e.\ a polynomial for which the only integer divisors are \(\pm 1\). Show that \((f\Q[X]) \cap \Z[X] = f\Z[X]\). \\ \emph{Hint:} Gauss's lemma states that for \(f,g\in\Z[X]\) the product \(fg\) is primitive if and only if \(f\) and \(g\) are primitive.
\end{exercise}

\begin{theorem}\label{thm:MM}
Let \(K=\Q(\gamma)\) be a number field of degree \(n\geq 2\) and let \(f=a_nX^n+\dotsm+a_0X^0\in\Z[X]\) be irreducible with \(f(\gamma)=0\). Consider for \(0\leq j\leq n\) the elements
\begin{align*}
p_j &= a_n \gamma^{j} + a_{n-1}\gamma^{j-1} + \dotsm + a_{n-j} \quad\text{and}\quad q_j = a_j + a_{j-1}\gamma^{-1} +\dotsm + a_0 \gamma^{-j}
\end{align*}
and the groups \(M=\Z+\Z\gamma+\dotsm+\Z\gamma^{n-1}\),
\begin{align*}
D = \Z p_0 + \Z p_1 + \dotsm + \Z p_{n-1} \quad\text{and}\quad N = \Z q_0+\Z q_1 +\dotsm+\Z q_{n-1}.
\end{align*}
Then
\[\textup{Bl}(\Z+\Z\gamma)=M:M=\Z[\gamma]\cap\Z[\gamma^{-1}]=\Z+N=\Z+D\]
and \(N\) and \(D\) are coprime invertible proper ideals of this ring, with quotients isomorphic as rings to \(\Z/a_0\Z\) and \(\Z/a_n\Z\) respectively, such that \(N:D = \gamma\cdot \textup{Bl}(\Z+\Z\gamma)\).
\end{theorem}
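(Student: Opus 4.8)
The plan is to anchor everything on Theorem~\ref{thm:blowup}. Writing $I=\Z+\Z\gamma$, one has $I^{n-1}=\Z+\Z\gamma+\dots+\Z\gamma^{n-1}=M$, and $n-1\ge[K:\Q]-1$, so Theorem~\ref{thm:blowup} immediately gives $\textup{Bl}(\Z+\Z\gamma)=I^{n-1}:I^{n-1}=M:M=:B$, which is the first equality. Everything else is bookkeeping with the elements $p_j,q_j$. First I would record the elementary facts: $p_0=a_n$, $q_0=a_0$, $p_n=q_n=0$, the recursions $p_j=\gamma p_{j-1}+a_{n-j}$ and $q_j=\gamma^{-1}q_{j-1}+a_j$, and—pushing $\gamma$ through the recursion down to $p_n=0$—the identities $\gamma^{n-j}p_j=-(a_0+a_1\gamma+\dots+a_{n-j-1}\gamma^{n-j-1})$, whence $\gamma p_j=-q_{n-1-j}$ and, subtracting $r_{n-j}$ from $r_{n-j+1}$, $p_j+q_{n-j}=a_{n-j}$. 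Three consequences drop out: $N=\gamma D$; $\Z+D=\Z+N$ (from $p_j=a_{n-j}-q_{n-j}$, and replacing $j$ by $n-j$ gives $q_j=a_j-p_{n-j}$); and each $p_j$ lies in $\Z[\gamma]\cap\Z[\gamma^{-1}]$, being a polynomial in $\gamma$ of degree $j$ and, via $p_j=-\sum_{l<n-j}a_l\gamma^{-(n-j-l)}$, also a polynomial in $\gamma^{-1}$—and likewise each $q_j$—so $\Z+D=\Z+N\subseteq\Z[\gamma]\cap\Z[\gamma^{-1}]$. Finally, in the $\Z$-basis $1,\gamma,\dots,\gamma^{n-1}$ of $M$ the vectors $p_0,\dots,p_{n-1}$ form a lower-triangular matrix with diagonal $a_n$, so $D$ is free of rank $n$, $D\cap\Z=a_n\Z$, and $[M:D]=|a_n|^n$.

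\textbf{The inclusion $\Z+D\subseteq B$.} Next I would show $p_j\gamma^i\in M$ for all $0\le i,j\le n-1$ by induction on $j$, using $p_j\gamma^i=\gamma^{i+1}p_{j-1}+a_{n-j}\gamma^i$; the only borderline case is $i=n-1$, handled by $\gamma^np_{j-1}=\gamma^{j-1}\cdot\gamma^{n-j+1}p_{j-1}=-\sum_{l=0}^{n-j}a_l\gamma^{l+j-1}$, whose exponents all lie in $[0,n-1]$. Hence $DM\subseteq M$, so $D\subseteq M:M=B$ and $\Z+D\subseteq B$. Combined with the first paragraph, $B$ is sandwiched by $\Z+D\subseteq\Z[\gamma]\cap\Z[\gamma^{-1}]$ on one side and, since $1\in M$ and $1\in\gamma^{1-n}M$ (a unit multiple of $M$ contained in $\Z[\gamma^{-1}]$), by $B\subseteq\Z[\gamma]\cap\Z[\gamma^{-1}]$ on the other.

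\textbf{The reverse inclusion $B\subseteq\Z+D$, and the $\Z[\gamma]\cap\Z[\gamma^{-1}]$ identification.} For $B\subseteq\Z+D$ I would invoke the universal property in Theorem~\ref{thm:blowup}: if $R':=\Z+D$ is a ring and $R'I$ is an invertible ideal of $R'$ within $K$, then $\textup{Bl}(I)=B\subseteq R'$. That $R'$ is a ring reduces to $p_ip_j\in\Z+D$ (in fact $\in D$), which one proves from the recursions, the point being that the integer remainders that appear are multiples of $a_n$, hence lie in $D\cap\Z=a_n\Z\subseteq D$ (for instance $p_1p_j=a_np_{j+1}+a_{n-1}p_j-a_na_{n-j-1}$, and then $p_ip_j$ via $p_i=\gamma p_{i-1}+a_{n-i}$). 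For invertibility: $R'I=R'+R'\gamma$, so $R'I\cdot D=D+\gamma D=D+N$; and $D+N=R'$ because $a_{n-j}=p_j+q_{n-j}\in D+N$ for every $j$ and $f$ is primitive (being irreducible in $\Z[X]$), hence $1=\gcd(a_0,\dots,a_n)\in D+N$, so $R'=\Z+D\subseteq D+N\subseteq R'$. Thus $R'I\cdot D=R'$, $R'I$ is invertible over $R'$, and $B\subseteq R'$; with the previous step, $B=\Z+D=\Z+N$. The one remaining non-formal inclusion $\Z[\gamma]\cap\Z[\gamma^{-1}]\subseteq B$ I would get prime by prime: when $p\nmid a_n$ (resp.\ $p\nmid a_0$) the leading coefficient of $f$ (resp.\ of its reciprocal) is a unit in $\Z_{(p)}$, so $\Z_{(p)}[\gamma]$ (resp.\ $\Z_{(p)}[\gamma^{-1}]$) is free of rank $n$, equal to $M_{(p)}$ up to a power of the unit $\gamma$, and contains the other factor, so the localized intersection equals $M_{(p)}:M_{(p)}=B_{(p)}$ (using Lemma~\ref{lem:quotient_facts}.iv for the ring in question); at the finitely many primes dividing $\gcd(a_0,a_n)$ one falls back on the explicit generators $p_0,\dots,p_{n-1}$. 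This yields $B=\Z+D=\Z+N=\Z[\gamma]\cap\Z[\gamma^{-1}]$.

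\textbf{Structural claims.} Since $B=\Z+D$ is generated as a ring by $1$, we get $B/D\cong\Z/(D\cap\Z)=\Z/a_n\Z$ as rings, and symmetrically $B/N\cong\Z/a_0\Z$; $D$ and $N$ are proper (e.g.\ $1\notin D$ as $D\cap\Z=a_n\Z$) and coprime ($D+N=B$ was shown above). They are ideals of $B$: $BD=(\Z+D)D=D+D^2=D$, likewise $BN=N$. For invertibility, $BI=B+B\gamma$ is invertible over $B=\textup{Bl}(I)$ by Theorem~\ref{thm:blowup}, and $BI\cdot D=BD+B\gamma D=D+N=B$, so $D$ is invertible with inverse $BI$, and hence so is $N=\gamma D$; the multiplier ring of an invertible ideal is the ambient order, so $D:D=B$. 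Therefore $N:D=\{x\in K:xD\subseteq\gamma D\}=\{x:\gamma^{-1}x\in D:D\}=\gamma\cdot(D:D)=\gamma B=\gamma\,\textup{Bl}(\Z+\Z\gamma)$, as claimed. The hard part I expect is in the third paragraph: the combinatorial verification that $D$ is closed under multiplication (so that $\Z+D$ is a ring) and, above all, the local comparison establishing $\Z[\gamma]\cap\Z[\gamma^{-1}]\subseteq M:M$ at primes dividing $\gcd(a_0,a_n)$.
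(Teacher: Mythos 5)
Your skeleton matches the paper's: both anchor on Theorem~\ref{thm:blowup} to get $\textup{Bl}(\Z+\Z\gamma)=M:M$, both exploit the identity $p_{n-j}+q_j=a_j$ (and its consequences $\gamma D=N$, $q_j=a_j-p_{n-j}$), both prove $\Z+D\subseteq M:M$ by showing $p_j\gamma^i\in M$, both prove $M:M\subseteq\Z[\gamma]\cap\Z[\gamma^{-1}]$ via $1\in M$ and $L=\gamma^{1-n}M$, and the closing structural claims about $N$ and $D$ (ideal, coprime, invertible, quotients $\Z/a_n\Z$ and $\Z/a_0\Z$, $N:D=\gamma B$) are essentially the paper's. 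There are two divergences, one a harmless detour and one a real gap.

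The detour: you derive $M:M\subseteq\Z+D$ by the universal property of the blowup --- verifying $\Z+D$ is a ring via $p_ip_j\in D$ (the induction on $i$ using $\gamma p_{i-1}p_j=p_{i-1}p_{j+1}-a_{n-j-1}p_{i-1}$ does go through) and verifying $(\Z+D)(\Z+\Z\gamma)\cdot D=D+N=\Z+D$ from $\gcd(a_0,\dotsc,a_n)=1$. This is correct but redundant: the paper instead closes a single chain
\[\Z+N\subseteq\Z+D\subseteq M:M\subseteq\Z[\gamma]\cap\Z[\gamma^{-1}]\subseteq\Z+N,\]
which yields all four identifications at once, and the ring/invertibility bookkeeping is then done once (as part of the later structural claims) rather than anticipated.

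The gap is the fourth link, $\Z[\gamma]\cap\Z[\gamma^{-1}]\subseteq\Z+N$, which is the genuinely nontrivial step. Your localization argument only covers primes $p\nmid\gcd(a_0,a_n)$: if $p$ divides both $a_0$ and $a_n$, then neither $\Z_{(p)}[\gamma]$ nor $\Z_{(p)}[\gamma^{-1}]$ collapses to $M_{(p)}$, and ``fall back on the explicit generators'' is not an argument --- the local problem is exactly as hard as the global one. The paper handles every prime uniformly with Gauss's lemma (Exercise~\ref{ex:gauss_lemma}): take $x=\sum_{i\le B}b_i\gamma^i=\sum_{i\le C}c_i\gamma^{-i}$ in $\Z[\gamma]\cap\Z[\gamma^{-1}]\setminus(\Z+N)$ with $C$ minimal (so $C>0$); then $g=X^C\bigl(\sum b_iX^i-\sum c_iX^{-i}\bigr)\in\Z[X]$ vanishes at $\gamma$, hence $g\in f\Q[X]$, and since $f$ is irreducible over $\Q$ with integer coefficients it is primitive, so $g=fh$ with $h\in\Z[X]$; comparing constant terms gives $a_0\mid c_C$, and subtracting $(c_C/a_0)\,q_C\in\Z+N$ from $x$ contradicts minimality of $C$. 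This is the missing step, and without it the identification $M:M=\Z[\gamma]\cap\Z[\gamma^{-1}]$ is not proved.
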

\begin{proof}
It follows from Theorem~\ref{thm:blowup} that \(\textup{Bl}(\Z+\Z\gamma)=M:M\), because \(M=(\Z+\Z\gamma)^{n-1}\). 
We extend the definition of the \(a_i\) so that \(a_i=0\) for \(i<0\) and \(i>n\), and similarly define \(p_i\) and \(q_i\) for all \(i\geq 0\). It follows that \(p_i=q_i=0\) for \(i\geq n\).
We proceed to prove the following inclusions.
\[ \Z+N \stackrel{(1)}{\subseteq} \Z+D \stackrel{(2)}{\subseteq} M:M\stackrel{(3)}{\subseteq} \Z[\gamma]\cap\Z[\gamma^{-1}] \stackrel{(4)}{\subseteq} \Z+N.\]

(1) Note that for \(0\leq j\leq n\) we have
\begin{align*}
p_{n-j}+q_{j} 
&= \gamma^{-j}(a_n\gamma^n+\dotsm+a_{j}\gamma^{j})+\gamma^{-j}(a_j \gamma^j + \dotsm + a_0) \\
&= \gamma^{j-n} f(\gamma) + a_{j} = a_{j}.
\end{align*}
In particular \(q_{j}\in \Z+D\) for all \(0\leq j \leq n\) and thus \(\Z+N\subseteq\Z+D\).

(2) Let \(0\leq i,j\leq n\). If \(i<n-j\), then \(p_j\gamma^i\in M\).
Otherwise, we have \(q_{n-j}\gamma^i\in M\), and \(p_j+q_{n-j}=a_{n-j}\in\Z\) as shown in (1), so \(p_j\gamma^i\in M\).
We conclude that \(p_j M \subseteq M\) and \(\Z+D\subseteq M:M\).

(3) We have \(M:M\subseteq M \subseteq \Z[\gamma]\) since \(1\in M\). 
For \(L=\Z+\Z\gamma^{-1}+\dotsm+\Z(\gamma^{-1})^{n-1}=\gamma^{1-n} M\) we similarly have \(M:M=L:L\subseteq\Z[\gamma^{-1}]\).

(4) Suppose there exists some \(x\in \Z[\gamma]\cap\Z[\gamma^{-1}]\) such that \(x\not\in \Z+N\), and choose \(x=\sum_{i=0}^{B} b_i \gamma^i =\sum_{i=0}^{C} c_i \gamma^{-i}\) to be such an element for which \(C\) is minimal.
As \(x\not\in \Z\), we have \(C>0\).
Then \(g=X^C( \sum_{i=0}^{B} b_i X^i - \sum_{i=0}^C  c_i X^{-i}) \in \Z[X] \). 
As \(g(\gamma)=\gamma^C(x-x)=0\) we conclude \(g\in f\Q[X]\) since \(f\) is irreducible. 
Hence \(g=fh\) for some \(h\in\Z[X]\) by Exercise~\ref{ex:gauss_lemma}. 
It follows that \(a_0 \mid c_C\), so we may subtract \((c_C/a_0) q_C\in \Z+N\) from \(x\), violating minimality of \(C\). This proves all inclusions.

It remains to prove the statements about \(N\) and \(D\). Let \(A=\textup{Bl}(\Z+\Z\gamma)\).
Note that \(\gamma p_j = p_{j+1}-a_{n-j-1}=q_{n-j-1}\) for all \(0\leq j < n\), so \(D\gamma = N\).
For \(j=0\) this gives \(\gamma=(p_1-a_{n-1})/a_n\).
Because the ring \(A=\Z+D\) has \Z-basis \(1,p_1,p_2,\dotsc,p_{n-1}\), we conclude \(A\cap\Z\gamma=a_n \Z \gamma\).
Then \(A:(A+A\gamma)=\{\delta\in A: \delta\gamma\in A\}\) intersects \(\Z\) in \(a_n\Z\) and therefore its index in \(A\) is divisible by \(|a_n|\). 
As \(\gamma D=N\subseteq A\) we have that \(D\subseteq A:(A+A\gamma)\), where we note that \(D\) has index \(|a_n|\) in \(A\) by comparing \Z-bases.
Hence \(D=A:(A+A\gamma)\) and in particular \(D\) is an \(A\)-ideal with \(A/D\cong\Z/a_n\Z\) as rings.

Analogously \(N\) is an ideal of \(A\) with \(A/N\cong\Z/a_0\Z\).
We have \(a_j=p_{n-j}+q_j\in N+D\) for all \(0\leq j\leq n\). 
Since \(f\) is irreducible we have that \(\Z=\sum_j \Z a_j \subseteq N+D\), so \(N\) and \(D\) are coprime.
Moreover, \(D\cdot(A+A\gamma)=D+N=A\), so \(D\) and consequently \(N=D\gamma\) are invertible. Thus \(N:D=A\gamma\).
\end{proof}

For more information about the ring in Theorem~\ref{thm:MM}, see \cite{Simon,Melanie}.

\begin{example}\label{ex:MM}
Let \(K\) be a number field, \(R\) a subring of \(K\), and \(\alpha,\beta\in K^*\).
We will give an explicit expression for \(\textup{Bl}(R\alpha+R\beta)\).
Let \(\gamma=\alpha/\beta\). 
By Exercise~\ref{ex:blowup_ring_extension} we may write
\[\textup{Bl}(\alpha R + \beta R) = \textup{Bl}(R+\gamma R) = R\cdot \textup{Bl}(\Z+\gamma\Z).\]
If \(\gamma\in\Q^*\), then \(\Z+\gamma\Z\subseteq\Q\). Since \(\textup{Bl}(\Z+\gamma\Z)\) is an order it must equal \(\Z\), so \(\textup{Bl}(\alpha R+\beta R)=R\).
Otherwise, we have
\[\textup{Bl}(\alpha R+\beta R) = R+p_1R+\dotsm+p_{n-1}R,\]
where the \(p_i\) are as in Theorem~\ref{thm:MM} and \(n\) is the degree of \(\Q(\gamma)\) over \(\Q\).
\end{example}

\begin{exercise}
Let \(K\) be a number field and \(\alpha_1,\dotsc,\alpha_n\in K^*\) for \(n>0\). 
Consider \(I=\alpha_1\Z+\dotsm+\alpha_n\Z\subseteq K\) and \(R_i=\Z[\alpha_1/\alpha_i,\dotsc,\alpha_n/\alpha_i]\). Show that \(\textup{Bl}(I)=\bigcap_i R_i\).
\end{exercise}

\begin{exercise}\label{ex:inv_gen_two}
Let \(R\) be a number ring. 
\begin{enumex}
\item Show that every invertible ideal is generated by two elements. 
\item It follows from (a) that, if \(R\) is a maximal order, then every ideal is generated by two elements. Show that the converse fails for \(R=\Z[\sqrt{5}]\).
\end{enumex}
\end{exercise}

\subsection{Coprime bases for number rings}

We will proceed as in Section~\ref{sec:coprime} and give a coprime basis algorithm for ideals using the blowup.
Recall the notation \(\mathcal{I}(R)\) for the group of invertible ideals of a commutative ring \(R\).

\begin{definition}\label{def:fractional_blowup}
Let \(R\) be a number ring and let \(X\) be a set of non-zero integral ideals of \(R\). 
For a number ring \(R\subseteq S\subseteq \Q R\) we write \(SX=\{SI : I\in X\}\).
We write \(\overline{X}_R\), or \(\overline{X}\) when the ring is understood, for the closure of \(X\cup\{R\}\) under addition, multiplication and integral division, i.e.\ \(I:J\) for ideals \(I,J\subset R\) for which \(J\) is invertible and \(I:J\subseteq R\). 
A \emph{coprime basis} for \(X\) is a set \(B\) of invertible ideals strictly contained in \(R\) which are pairwise coprime and for which \(X\) is contained in \(\langle B\rangle\), the subgroup of \(\mathcal{I}(R)\) generated by \(B\).
We equip the set of coprime bases of \(X\) with a partial order (cf. Exercise~\ref{ex:partial_order}) where \(B\leq C\) if \(\langle B\rangle\subseteq \langle C\rangle\).
\end{definition}

In particular, for a coprime basis of \(X\) to exist, the elements of \(X\) need to be invertible. 
Hence we cannot have a direct translation of Lemma~\ref{lem:closure} to the case of ideals.

\begin{exercise}\label{ex:also_intersection}
Let \(R\) be a number ring and \(X\) a set of non-zero integral ideals of \(R\). Show that if \(\overline{X}\subseteq \mathcal{I}(R)\), then \(\overline{X}\) is closed under intersection.
\end{exercise}

\begin{exercise}\label{ex:ring_extension_closure}
Let \(R\subseteq S \subseteq \Q R\) be number rings and let \(X\) be a set of non-zero ideals of \(R\). 
\begin{enumex}
\item Show that if \(I\) and \(J\) are fractional \(R\)-ideals with \(J\) invertible, then \(SI : SJ = S(I:J)\), and give a counter example where \(J\) is non-invertible.
\item Show that the map 
\(I\mapsto SI\) that maps ideals of \(R\) to ideals of \(S\) respects addition, multiplication and integral division.
\item With the notation as in Definition~\ref{def:fractional_blowup}, conclude that \(S\overline{X}\subseteq\overline{SX}\).
\end{enumex}
\end{exercise}

\begin{exercise}\label{ex:unique_minimal_invertible_closure}
Let \(R\) be an order and \(X\) a (possibly infinite) set of non-zero ideals of \(R\). Show that there exists a unique number ring \(R\subseteq S\subseteq K\) which is minimal with respect to inclusion such that \(\overline{SX}\subseteq\mathcal{I}(S)\). Show that this \(S\) is an order. \\
\emph{Hint:} Iteratively blow up the ideals of \(X\) and use that the index of \(R\) in \(\mathcal{O}_K\) is finite.
\end{exercise}

\begin{proposition}\label{prop:invertible_closure_iff_coprime_basis}
Let \(R\) be an order and \(X\) a (possibly infinite) set of non-zero ideals of \(R\). 
Then \(\overline{X}\subseteq \mathcal{I}(R)\) if and only if \(X\) has a coprime basis. 
If \(X\) has a coprime basis, then \(\{I\in\overline{X}\,|\, \#\{ J \in\overline{X} \,|\, I\subseteq J\} = 2\}\) is the minimal coprime basis (wrt.\ the partial order of Definition~\ref{def:fractional_blowup}).
\end{proposition}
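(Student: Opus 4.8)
The plan is to prove both implications together, producing in the ``only if'' direction an explicit coprime basis which will turn out to be the stated minimal one. The whole argument is modelled on the proof of Lemma~\ref{lem:closure}, with the finiteness of \(R/I\) (Exercise~\ref{ex:number_field_noetherian}.c) playing the role that well-foundedness of divisibility plays in \(\Z\).

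First I would dispose of ``if''. Suppose \(B\) is a coprime basis for \(X\) and let \(\langle B\rangle^+\) be the set of integral ideals lying in \(\langle B\rangle\). Since \(R\) and every member of \(X\) is an integral ideal in \(\langle B\rangle\), we have \(X\cup\{R\}\subseteq\langle B\rangle^+\), so it suffices to show \(\langle B\rangle^+\) is closed under addition, multiplication and integral division; then \(\overline{X}\subseteq\langle B\rangle^+\subseteq\langle B\rangle\subseteq\mathcal{I}(R)\). Writing elements of \(\langle B\rangle\) as products \(\prod_{P\in B}P^{a_P}\), uniquely --- by the evident analogue of Lemma~\ref{lem:UCPF}, which one verifies by localizing and using that a power of a non-unit in a local domain determines its exponent --- multiplication acts by adding exponents, and integral division \(I:J\) (defined only for \(J\) invertible, which here is automatic, and \(I:J\subseteq R\)) equals \(IJ^{-1}\), again in \(\langle B\rangle^+\). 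For addition the key identity is \(\prod_P P^{a_P}+\prod_P P^{b_P}=\prod_P P^{\min(a_P,b_P)}\): localizing at a maximal ideal \(\fm\), at most one \(P\in B\) is contained in \(\fm\) by pairwise coprimality, so both sides localize to the same power of the principal ideal \(P_\fm\), and the global identity follows from Lemma~\ref{lem:intersect_ideals}. Hence \(\langle B\rangle^+\) is closed under addition.

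Now assume \(\overline{X}\subseteq\mathcal{I}(R)\) and set \(C=\{I\in\overline{X} : \#\{J\in\overline{X} : I\subseteq J\}=2\}\), the ``atoms'' of \(\overline{X}\): proper ideals \(P\in\overline{X}\) with no member of \(\overline{X}\) strictly between \(P\) and \(R\). Each element of \(C\) is a proper invertible ideal. They are pairwise coprime: for \(P,Q\in C\) we have \(P+Q\in\overline{X}\) with \(P,Q\subseteq P+Q\), so atomicity of \(P\) and of \(Q\) forces \(P+Q\in\{R,P\}\cap\{R,Q\}\), whence \(P+Q=R\) or \(P=Q\). To prove \(\overline{X}\subseteq\langle C\rangle\) I would induct on \(\#(R/I)\), a positive integer since every \(I\in\overline{X}\) is a non-zero ideal with \(R/I\) finite. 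The case \(I=R\) is trivial. Otherwise \(\{J\in\overline{X} : I\subseteq J\subsetneq R\}\) is non-empty (it contains \(I\)) and finite (only finitely many ideals contain \(I\)), so has a maximal element \(P\); then \(P\in C\), for any \(Q\in\overline{X}\) with \(P\subsetneq Q\subsetneq R\) would contradict maximality. Now \(I':=I:P=IP^{-1}\) is integral (as \(I\subseteq P\)) and lies in \(\overline{X}\), with \(I\subsetneq I'\) --- equality would give \(PI'=I'\) and hence \(P=R\) --- so \(\#(R/I')<\#(R/I)\), giving \(I'\in\langle C\rangle\) by induction and \(I=PI'\in\langle C\rangle\). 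Since \(X\subseteq\overline{X}\subseteq\langle C\rangle\) and \(C\) consists of pairwise coprime proper invertible ideals, \(C\) is a coprime basis for \(X\), which proves ``only if''.

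For minimality, let \(D\) be any coprime basis for \(X\); the ``if''-direction argument gives \(\overline{X}\subseteq\langle D\rangle^+\subseteq\langle D\rangle\), so \(C\subseteq\overline{X}\subseteq\langle D\rangle\) and \(\langle C\rangle\subseteq\langle D\rangle\), i.e.\ \(C\leq D\). Hence \(C\) is the least element of the poset of coprime bases, in particular its unique minimal element. I expect the only substantial work to be the ``if'' direction, and inside it the addition identity \(\prod P^{a_P}+\prod P^{b_P}=\prod P^{\min}\) together with the underlying unique coprime factorization; the ``only if'' direction is then a routine adaptation of Lemma~\ref{lem:closure}.
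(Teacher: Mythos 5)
Your proposal is correct and follows exactly the approach the paper intends: the paper's proof simply invokes ``the proof of Lemma~\ref{lem:closure} can be followed'' and asserts the converse inclusion \(\overline{X}\subseteq\langle B\rangle\) without further comment, whereas you carry out that adaptation explicitly. The extra material you supply---unique coprime factorization into invertible ideals and the addition identity \(\prod P^{a_P}+\prod P^{b_P}=\prod P^{\min(a_P,b_P)}\), both via localization and Lemma~\ref{lem:intersect_ideals}, together with the induction on \(\#(R/I)\) replacing the well-foundedness of divisibility in \(\Z\)---is precisely the bookkeeping the paper leaves implicit, so this is the same argument, just written out.
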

\begin{proof}
If \(\overline{X}\subset\mathcal{I}(R)\), then the proof of Lemma~\ref{lem:closure} can be followed to show that \(\{I\in\overline{X}\,|\, \#\{ J \in\overline{X} \,|\, I\subseteq J\} = 2\}\) is the minimal coprime basis of \(X\). Conversely, if \(X\) has a coprime basis \(B\), then \(\overline{X}\subseteq \langle B\rangle \subseteq\mathcal{I}(R)\).
\end{proof}

\begin{theorem}\label{thm:coprime_base_ideals}
There exists a polynomial-time algorithm that, given an order \(R\) and a finite set of non-zero ideals \(X\) of \(R\), computes the minimal order \(R\subseteq S\) for which there exists a coprime basis for \(SX\) and computes the minimal coprime basis.
\end{theorem}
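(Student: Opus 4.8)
The plan is to run the graph-based coprime-basis algorithm of Algorithm~\ref{alg:graph_algorithm} with invertible integral ideals in place of integers, enlarging the base order on the fly — by computing blowups (Corollary~\ref{cor:compute_blowup}) — precisely when the algorithm would otherwise have to divide by a non-invertible ideal. Here the ideal sum $I+J$ plays the r\^ole of $\gcd(u,v)$, the integral ideal $I:(I+J)=I(I+J)^{-1}$ plays the r\^ole of $u/w$, and ``coprime'' means $I+J=S$. Concretely, maintain a number ring $S$, initially $R$; first discard each $I\in X$ with $SI=S$, and for the rest replace $S$ by $\textup{Bl}_K(SI)=S\cdot\textup{Bl}_K(I)$ (Exercise~\ref{ex:blowup_ring_extension}) whenever $SI$ is not invertible over $S$, which by Theorem~\ref{thm:blowup} needs only one pass; then run Algorithm~\ref{alg:graph_algorithm} on the complete graph with vertices labelled by the ideals $SI$, deleting any vertex whose label becomes $S$, and whenever step~(2) produces a sum $w=u+v$ that is not invertible over $S$ (testable in polynomial time, since $I+J$, $IJ$ and $I:J$ are computable in polynomial time) replace $S$ by $\textup{Bl}_K(w)$ and restart from scratch. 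On a run that finishes with no edges left and no blowup triggered, output $S$, the surviving labels as the coprime basis, and the exponents expressing each $SI$ in that basis, found by repeated ideal division as in Algorithm~\ref{alg:graph_algorithm}.

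For correctness I would verify three points. \emph{(i)} Within a run, an induction using $z+(u+v)=(z+u)+v$ and $I(I+J)^{-1}\supseteq I$ (as $(I+J)^{-1}\supseteq S$) shows that all labels stay invertible proper ideals of $S$ and that non-adjacent vertices carry coprime labels, so on termination the surviving labels form a pairwise coprime set $B$ of invertible proper $S$-ideals in whose monoid every $SI$ lies; since every label occurring is built from the $SI$ by sums, products and integral divisions we get $B\subseteq\overline{SX}$, and since $\langle B\rangle$ is closed under these operations (for sums via pairwise coprimality and Exercise~\ref{ex:maximal_ideal_coprime}) and contains $SX$ we get $\overline{SX}\subseteq\langle B\rangle$, so Proposition~\ref{prop:invertible_closure_iff_coprime_basis} (the ideal analogue of Lemma~\ref{lem:closure}.3) identifies $B$ as the minimal coprime basis of $SX$ and gives $\overline{SX}\subseteq\mathcal I(S)$. \emph{(ii)} The output $S$ is minimal: writing $S_{\min}$ for the order of Exercise~\ref{ex:unique_minimal_invertible_closure}, minimality gives $S_{\min}\subseteq S$ once $\overline{SX}\subseteq\mathcal I(S)$, while $S\subseteq S_{\min}$ is preserved by every blowup, since each $w$ we blow up lies in $\overline{SX}$, so $S_{\min}w\in S_{\min}\overline{SX}\subseteq\overline{S_{\min}X}\subseteq\mathcal I(S_{\min})$ by Exercise~\ref{ex:ring_extension_closure}, whence $\textup{Bl}_K(w)\subseteq S_{\min}$ by Theorem~\ref{thm:blowup} (and likewise for the initial blowups, using $\textup{Bl}_K(S_{\min}I)=S_{\min}\textup{Bl}_K(I)$). \emph{(iii)} Termination: the run carried out with $S=S_{\min}$ meets only invertible sums $w$, since $\overline{S_{\min}X}\subseteq\mathcal I(S_{\min})$, so it finishes without a restart.

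For the running time, each blowup replaces $S$ by a strictly larger order inside $\mathcal O_K$, so there are at most $\log_2[\mathcal O_K:R]$ of them, polynomially bounded by Exercise~\ref{ex:small_index}, and each is polynomial-time by Corollary~\ref{cor:compute_blowup}; hence polynomially many runs. Inside a run, let $P_n$ be the product of the vertex labels after iteration $n$: it is an integral $S$-ideal with $P_0\subseteq P_1\subseteq\cdots$, and since the labels are invertible the index $[S:-]$ is multiplicative on them, so $2^{v}\le[S:P_n]\le[S:P_0]$ with $v$ the number of vertices. Thus $\log_2[S:P_0]=\sum_{I\in X}\log_2[S:SI]$, which is polynomially bounded, simultaneously bounds the number of vertices, the number of iterations in which $P_n$ grows strictly, and (through the edge count) the number of the remaining iterations, exactly as in the proof of Theorem~\ref{thm:graph_algorithm}, while each iteration costs a bounded number of polynomial-time ideal operations and at most one blowup. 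I expect the main obstacle to be not any single deep step but the accumulation of these checks — porting Algorithm~\ref{alg:graph_algorithm} and its analysis to invertible ideals and pinning down minimality of $S$ — the one genuinely new ingredient being that the blowup lets us divide ideals at all.
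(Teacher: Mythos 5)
Your proposal is correct and follows essentially the same strategy as the paper's Algorithm~\ref{alg:coprime_base_ideals}: run the graph-based factor refinement on ideals, repair non-invertibility with blowups, use Theorem~\ref{thm:blowup} and Exercise~\ref{ex:unique_minimal_invertible_closure} for minimality of \(S\), and Exercise~\ref{ex:small_index} together with the index-of-labels argument for the running time. The only (harmless) difference is that the paper replaces \(S\) by \(\textup{Bl}(S\mathfrak{c})\) in every iteration and continues in place, tolerating that some labels are temporarily ideals of a smaller order, whereas you restart the graph after each strict blowup.
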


\begin{Algorithm}\label{alg:coprime_base_ideals}
Write \(\mathfrak{a}_1,\dotsc,\mathfrak{a}_m\) for the elements of \(X\) not equal to \(R\).
Construct a complete simple graph \(G\) with \(m\) vertices and label the vertices \(\mathfrak{a}_1,\dotsc,\mathfrak{a}_m\). 
Set \(S\) equal to \(R\).
Then, for each \(i\) successively replace \(S\) by \(\textup{Bl}(S\mathfrak{a}_i)\).
While there are edges in \(G\), repeat the following 5 steps:
\begin{enumerate}[topsep=0pt,itemsep=-1ex,partopsep=1ex,parsep=1ex]
\item Choose an edge \(\{a,b\}\) of \(G\) and let \(\mathfrak{a}\) and \(\mathfrak{b}\) be the labels of \(a\) and \(b\).
\item Compute \(\mathfrak{c}=\mathfrak{a}+\mathfrak{b}\), replace \(S\) by \(\textup{Bl}(S\mathfrak{c})\), \(\mathfrak{c}\) by \(S\mathfrak{c}\) and compute \(\mathfrak{c}^{-1}=S:\mathfrak{c}\).
\item Add a vertex \(c\) labeled \(\mathfrak{c}\) to \(G\) and connect it to \(a\), \(b\) and those vertices which are neighbors of both \(a\) and \(b\).
\item Update the labels of \(a\) and \(b\) to \(\mathfrak{a}\mathfrak{c}^{-1}\) and \(\mathfrak{b}\mathfrak{c}^{-1}\) respectively.
\item For each \(s\in\{a,b,c\}\), if the label of \(s\) is \(S\), then delete \(s\) and its incident edges from \(G\). 
\end{enumerate}
With \(L\) the set of labels of the remaining vertices, the required order is \(S\) with basis \(\{S\mathfrak{a} \,|\, \mathfrak{a}\in L\}\) of \(\overline{SX}\).
\end{Algorithm}

Note that after step (4), the labels of \(a\), \(b\) and \(c\) are ideals of \(S\), but this is not necessarily true for the remaining labels if \(S\) has become larger.

\begin{proof}[Proof of Theorem~\ref{thm:coprime_base_ideals}]
We will show that Algorithm~\ref{alg:coprime_base_ideals} is correct and runs in polynomial time.
First note that the output is indeed a coprime basis for \(SX\).
Observe that in each iteration \(S\) is contained in the minimal order \(T\) for which \((\overline{TX})_T\subseteq\mathcal{I}(T)\), since \(\textup{Bl}(S\mathfrak{c})\) is the minimal order containing \(S\) in which \(\mathfrak{c}\) becomes invertible.
By Proposition~\ref{prop:invertible_closure_iff_coprime_basis} this means that \(S\) is indeed the minimal order for which \(SX\) has a coprime basis.
That the output is the minimal coprime basis is analogous to the proof of Theorem~\ref{thm:graph_algorithm}. 
One similarly proves the algorithm runs in polynomial time, with one caveat: We need to remark that by Exercise~\ref{ex:small_index} the growth of \(S\) during the algorithm is sufficiently bounded and thus all operations on the ideals can be performed in polynomial time.
\end{proof}

\subsection{Applications of coprime bases}

In this section we consider some algorithmic applications of the coprime basis algorithm for ideals.

\begin{theorem}\label{thm:multiplicative_inversion}
Let \(K\) be a number field and let \(G\subseteq K^*\) be a subgroup.
Then there exists a unique subring \(S\subseteq K\), minimal with respect to inclusion, for which for all finite non-empty subsets \(Y\subseteq G\) the fractional ideal \(SY\) is invertible. Moreover, this \(S\) is an order.
\end{theorem}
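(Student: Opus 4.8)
The plan is to produce $S$ explicitly: it will be the subring of $K$ generated by the blowups of the $\Z$-modules spanned by finite subsets of $G$, and then I would check it has the stated universal property.

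First I would reduce the condition ``$SY$ is an invertible ideal of $S$ within $K$'' to a statement about blowups. For a finite non-empty $Y\subseteq G$ put $I_Y=\sum_{\alpha\in Y}\Z\alpha$, a finitely generated non-zero subgroup of $K$ (non-zero because $G\subseteq K^*$). For any subring $T\subseteq K$ we have $T\cdot I_Y=TY$, so Theorem~\ref{thm:blowup} gives: $TY$ is an invertible ideal of $T$ within $K$ if and only if $\textup{Bl}_K(I_Y)\subseteq T$. Hence a subring $T\subseteq K$ has the property in the statement precisely when $\textup{Bl}_K(I_Y)\subseteq T$ for every finite non-empty $Y\subseteq G$.

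This suggests setting
\[ S := \bigcup_Y \textup{Bl}_K(I_Y), \]
the union over all finite non-empty $Y\subseteq G$ (this union is non-empty since $1\in G$). The crucial point — and the step I expect to be the main obstacle, since $\textup{Bl}$ is not visibly monotone in its argument — is that this family of rings is directed, and here the group structure of $G$ is exactly what makes it work: given finite non-empty $Y_1,Y_2\subseteq G$, let $Y_3=\{\alpha\beta:\alpha\in Y_1,\ \beta\in Y_2\}$, again a finite non-empty subset of $G$. Then $I_{Y_3}=I_{Y_1}I_{Y_2}$, so Exercise~\ref{ex:blowup_ring_extension} yields $\textup{Bl}_K(I_{Y_3})=\textup{Bl}_K(I_{Y_1})\cdot\textup{Bl}_K(I_{Y_2})$; since each blowup is a ring and hence contains $1$, this product contains both $\textup{Bl}_K(I_{Y_1})$ and $\textup{Bl}_K(I_{Y_2})$. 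Thus any two members of the family lie in a common third member, so $S$ is a directed union of subrings of $K$ and therefore itself a subring of $K$.

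Next I would show $S$ is an order. By Proposition~\ref{prop:blowup}.4 each $\textup{Bl}_K(I_Y)$ is an order, hence integral over $\Z$ and so contained in the maximal order $\mathcal{O}_K$ (Theorem~\ref{thm:has_maximal_order}); therefore $S\subseteq\mathcal{O}_K$. Since $\mathcal{O}_K$ is Noetherian as a $\Z$-module, the submodule $S$ is finitely generated over $\Z$; and, being a subring of the field $K$, $S$ is a domain with torsion-free additive group, so $S\cong\Z^n$ as a group. Hence $S$ is an order. Finally, for the universal property: $S$ has the required property by the reduction above, since $\textup{Bl}_K(I_Y)\subseteq S$ for every $Y$ by construction; and if $T\subseteq K$ is any subring with the property, the reduction gives $\textup{Bl}_K(I_Y)\subseteq T$ for all $Y$, hence $S\subseteq T$. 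So $S$ is contained in every subring with the property, which simultaneously establishes minimality and uniqueness.
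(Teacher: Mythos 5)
Your proof is correct but takes a genuinely different and cleaner route than the paper's. The paper constructs $S$ in two stages: it first sets $T$ to be the compositum of the blowups $\textup{Bl}(\Z+\Z x)$ over single $x\in G$ (the minimal subring making each $T+Tx$ invertible), and then enlarges $T$ to $S$ by invoking the coprime-basis closure construction of Exercise~\ref{ex:unique_minimal_invertible_closure} applied to the ideals $(T+Tx)^{-1}$; that $SY$ is invertible for arbitrary finite $Y\subseteq G$ is verified by a formula writing $SY$ as a quotient of products of the $(S+Sx)^{-1}$ lying in the resulting closure. You bypass the closure machinery entirely by noticing that the family $\{\textup{Bl}_K(I_Y)\}_Y$ is already directed: the group structure of $G$ combined with the multiplicativity of blowups (Exercise~\ref{ex:blowup_ring_extension}) gives $\textup{Bl}(I_{Y_1})\cdot\textup{Bl}(I_{Y_2})=\textup{Bl}(I_{Y_1}I_{Y_2})$, and $I_{Y_1}I_{Y_2}$ is again of the form $I_{Y}$ for a finite $Y\subseteq G$. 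With directedness in hand, the universal property becomes a one-liner from Theorem~\ref{thm:blowup}, and you obtain minimality against arbitrary subrings directly (the paper's minimality step explicitly compares $S$ only against orders $U$, which is slightly weaker than what the statement asserts). Both arguments deduce that $S$ is an order from $S\subseteq\mathcal{O}_K$. What the paper's heavier construction buys is that it exactly mirrors the algorithmic version proved immediately afterward, where the coprime-basis closure is the effective tool; your directed union over all finite $Y\subseteq G$ is cleaner for the existence statement but does not by itself suggest a terminating computation.
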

\begin{proof}
Let \(T\) be the smallest subring for which all fractional ideals \(T+ T x\) with \(x\in G\) are invertible, which is the smallest ring containing \(\textup{Bl}(\Z+\Z x)\) for all \(x\in G\).
One easily shows that \((T+T x)^{-1}\subseteq T\) and \(Tx=(T+Tx^{-1})^{-1} : (T+Tx)^{-1}\) for all \(x\in G\).
Now let \(T\subseteq S\subseteq K\) be the smallest subring for which all ideals in the closure \(\mathcal{C}\) of \(S\cdot\{(T+Tx)^{-1} \,|\, x\in G\}\) are invertible, which exists by Exercise~\ref{ex:unique_minimal_invertible_closure}. 
Write \(I_x=(S+Sx)^{-1}\) for \(x\in G\). Then for \(Y\subseteq G\) we have
\[SY=\sum_{x\in Y} (I_{x^{-1}}:I_x)=\bigg(\sum_{x\in Y} \Big(\prod_{y\in Y\setminus\{x\}} I_y\Big) I_{x^{-1}}\bigg):  \bigg(\prod_{y\in Y} I_y\bigg), \]
where the latter numerator and denominator are in \(\mathcal{C}\) and hence invertible.
Clearly \(S\) is an order.
It remains to show that this \(S\) is minimal.

Let \(U\) be an order such that for all finite non-empty subsets \(Y\subseteq G\) the fractional ideal \(UY\) is invertible.
Clearly \(T\subseteq U\) since \(\{1,x\}\) is a finite subset of \(G\) for all \(x\in G\).
Let \(Z\) be the set of fractional ideals of \(U\) of the form \(UX : UY\) for finite non-empty \(X,Y\subseteq G\).
Then \(Z\subseteq \mathcal{I}(U)\) and \(Z\) is closed under addition, multiplication and division.
Since \(\{(U+Ux)^{-1}\,|\,x\in G\}\subseteq Z\), we conclude that \(Z\) also contains its closure.
Thus \(S\subseteq U\) and \(S\) is minimal.
\end{proof}

\begin{exercise}
Show that the order \(S\) produced by Theorem~\ref{thm:multiplicative_inversion} need not satisfy \(\Q S = K\). 
Can you give an expression for \(\Q S\)?
\end{exercise}

\begin{theorem}
There exists a polynomial-time algorithm that, given an order \(R\) in a number field \(K\) and a finitely generated subgroup \(G=\langle x_1,\dotsc,x_n\rangle\subseteq K^*\), computes the order \(R\subseteq S\), minimal with respect to inclusion, such that for all finite non-empty subsets \(Y\subseteq G\) the fractional ideal \(SY\) is invertible. 
\end{theorem}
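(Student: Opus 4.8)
This is the algorithmic form of Theorem~\ref{thm:multiplicative_inversion}, so the plan is to make that theorem's proof effective, with the order $R$ folded in. (Equivalently, the desired $S$ is $R\cdot S_0$ for $S_0$ the subring produced by Theorem~\ref{thm:multiplicative_inversion} applied to $G$, since any order $\supseteq R$ with the required property contains both $R$ and $S_0$ while $R\cdot S_0$ has the property; so it suffices to carry $R$ through the construction.) Recall the two stages of that proof: first the order $T$, smallest one containing $R$ with $T+Tx$ invertible for all $x\in G$, which by Exercise~\ref{ex:blowup_ring_extension} is the order generated by $R$ together with all $\textup{Bl}_K(\Z+\Z x)$, $x\in G$; and then the order $S\supseteq T$, smallest such that the closure $\mathcal{C}$ of $\{\,S\cdot(T+Tx)^{-1}:x\in G\,\}$ under sum, product and integral division consists of invertible ideals. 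I would make each stage into a finite, polynomial-time computation.

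The single real obstacle is that $G$ is infinite, so ``for all $x\in G$'' must be turned into a finite check. For the first stage I would compute $T$ greedily: set $S\leftarrow R$, and repeatedly look for some $x\in G$ with $S+Sx$ not invertible over $S$ (equivalently $\textup{Bl}_K(\Z+\Z x)\not\subseteq S$); if such an $x$ is found, replace $S$ by $S\cdot\textup{Bl}_K(\Z+\Z x)$, computed via Corollary~\ref{cor:compute_blowup}, which is a strictly larger order; if none exists, output $T:=S$. Because each replacement shrinks $[\mathcal{O}_K:S]$ by a factor at least $2$, and $[\mathcal{O}_K:R]$ has polynomially bounded length by Exercise~\ref{ex:small_index}, the loop runs only polynomially often. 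The technical heart is the search-or-certify step, and the key observation is that it is decided at the finitely many primes of $S$ lying over rational primes dividing $\Delta(R)$: at every other prime $\fp$ the localization $S_\fp$ is a discrete valuation ring --- hence a valuation ring, in which every ideal generated by two elements is principal, so $S_\fp+S_\fp x$ is automatically principal --- by Theorem~\ref{thm:localization_inverse} and Corollary~\ref{cor:locally_principal} together with $[\mathcal{O}_K:S]\mid[\mathcal{O}_K:R]\mid\Delta(R)$. At the remaining primes, ``$S+Sx$ not invertible'' cuts out, via $x=\prod_i x_i^{k_i}$, a region of exponent vectors $(k_i)\in\Z^n$ defined by finitely many congruence and sign conditions on the $k_i$; I would decide whether this region is empty --- and, if not, exhibit a point in it --- using the finitely generated abelian group algorithms of Section~\ref{sec:abgp} and ideal arithmetic in $K$, working modulo a fixed power of $\Delta(R)$ and crucially without factoring it.

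For the second stage, once $T$ is known every $T+Tx$ ($x\in G$) is invertible, so each $(T+Tx)^{-1}$ is an honest invertible fractional ideal of $T$; moreover it is supported at the finitely many primes over $\Delta(R)$ and over the numerators and denominators of $x_1,\dotsc,x_n$, and its class there varies piecewise-linearly with the exponents of $x$. Using this I would show that $\mathcal{C}$ is the closure under sum, product and integral division of a finite, explicitly computable set of fractional ideals of $T$ --- essentially the $S\cdot(T+Tx)^{-1}$ for $x$ in a finite subset $G_0\subseteq G$ containing $x_1^{\pm1},\dotsc,x_n^{\pm1}$ together with the group elements produced in the first stage. Rescaling by a common denominator turns these into non-zero integral ideals of $T$, and feeding them to the coprime basis algorithm for ideals (Theorem~\ref{thm:coprime_base_ideals}) returns the minimal order $S\supseteq T$ making the whole closure invertible, together with its minimal coprime basis $\{c_1,\dotsc,c_k\}$. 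Correctness then follows exactly as in the proof of Theorem~\ref{thm:multiplicative_inversion}: for a finite non-empty $Y\subseteq G$ each $Sy$ ($y\in Y$) is a product $\prod_j c_j^{e_j(y)}$ of powers of the $c_j$, so by pairwise coprimality $SY=\sum_{y\in Y}Sy=\prod_j c_j^{\min_y e_j(y)}$ is a product of powers of invertible ideals and hence invertible, and minimality of $S$ is inherited from Theorem~\ref{thm:coprime_base_ideals} by the argument already used for Theorem~\ref{thm:multiplicative_inversion}.

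I expect the hard part to be exactly the two finiteness reductions: proving that the first-stage search and the second-stage closure $\mathcal{C}$ depend on $G$ only through explicitly computable finite data, and that the resulting exponent-region emptiness problems are decidable in polynomial time without any factorization of $\Delta(R)$. Everything downstream --- blow-ups, coprime bases of ideals, products and quotients of fractional ideals, and the finitely generated abelian group computations --- is already available as polynomial-time machinery in these notes.
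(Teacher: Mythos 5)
Your plan diverges from the paper's proof and, as you yourself flag, leaves the two finiteness reductions unproved; these are precisely the places where your approach would get stuck, while the paper sidesteps them entirely. The paper does \emph{not} compute the order $T$ from Theorem~\ref{thm:multiplicative_inversion} (minimal with $T+Tx$ invertible for \emph{all} $x\in G$): it only computes the minimal order $T\supseteq R$ with $T+Tx_i$ invertible for the finitely many \emph{generators} $x_i$, which is a straightforward blowup product via Example~\ref{ex:MM}. Likewise, $\mathcal{C}$ is taken to be the closure of the explicitly finite set $S\cdot\{(T+Tx_i)^{-1},(T+Tx_i^{-1})^{-1}\}_{i}$, and $S$ is the minimal order making that closure invertible, which is exactly what Theorem~\ref{thm:coprime_base_ideals} returns. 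The entire argument that this finitely-built $S$ nonetheless works for every $x\in G$ is a single algebraic identity: writing $x=y_1\cdots y_k$ with $y_j\in\{x_i^{\pm1}\}$ and $I_y=(S+Sy)^{-1}$, one has
\[
S+Sx=\Bigl(\prod_{j=1}^k I_{y_j}+\prod_{j=1}^k I_{y_j^{-1}}\Bigr):\Bigl(\prod_{j=1}^k I_{y_j}\Bigr),
\]
whose numerator and denominator lie in $\mathcal{C}$ and are hence invertible; combined with the display in the proof of Theorem~\ref{thm:multiplicative_inversion} this gives invertibility of $SY$ for every finite nonempty $Y\subseteq G$, and minimality of $S$ follows from Theorem~\ref{thm:multiplicative_inversion} and the reverse inclusion.

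The concrete gaps in your sketch are therefore: (a) your Stage~1 greedy search ranges over the infinite set $G$ and you never give a terminating, polynomial-time ``search-or-certify'' procedure; the local/valuation-ring argument and the ``exponent-region'' reduction are not worked out, and in particular it is not explained how to decide emptiness of those regions without factoring $\Delta(R)$ --- which is exactly the kind of obstruction the rest of Section~\ref{sec:rings} is warning about; and (b) your Stage~2 claim that $\mathcal{C}$ is the closure of a finite set is asserted, not proved, and proving it is precisely what the displayed identity above is for. Neither gap is needed: once you have the identity, $T$ and $\mathcal{C}$ can be built from the generators alone, Stage~1's infinite loop disappears, and the whole algorithm is Example~\ref{ex:MM} plus Theorem~\ref{thm:coprime_base_ideals}.
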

\begin{proof}
We proceed as in Theorem~\ref{thm:multiplicative_inversion}.
Compute the minimal order \(T\) for which for all \(i\) the \(T+Tx_i\) are invertible, for example using Example~\ref{ex:MM}.
Note that \(T+Tx_i^{-1}=x_i^{-1}\cdot (T+Tx_i)\) is also invertible, and that the \((T+Tx_i^{\pm1})^{-1}\) are integral ideals of \(T\).
Then compute the minimal order \(S\) for which all ideals in the closure \(\mathcal{C}\) of \(S\cdot\{(T+Tx_i)^{-1}, (T+Tx_i^{-1})^{-1}\,|\, i\}\) are invertible using Theorem~\ref{thm:coprime_base_ideals}.
We claim \(S\) is the order as required.

Clearly this \(S\) is contained in the one produced in the proof of Theorem~\ref{thm:multiplicative_inversion}.
For the reverse inclusion it suffices to show that \(S+Sx\) is invertible in \(S\) for all \(x\in Y\).
Write \(I_{y}=(S+Sy)^{-1}\) for \(y\in\{x_1,\dotsc,x_n,x_1^{-1},\dotsc,x_n^{-1}\}\). 
Let \(x\in Y\) and write \(x=y_1\dotsm y_k\) for \(y_1,\dotsc,y_k\in\{x_1,\dotsc,x_n,x_1^{-1},\dotsc,x_{n}^{-1}\}\).
We have
\[S+Sx=S+\prod_{i=1}^k(I_{y_i^{-1}}:I_{y_i}) = \Big(\prod_{i=1}^k I_{y_i}+\prod_{i=1}^k I_{y_i^{-1}}\Big):\Big(\prod_{i=1}^k I_{y_i} \Big),\]
with numerator and denominator in \(\mathcal{C}\) and hence invertible.
\end{proof}

\begin{theorem}\label{thm:compute_unit_product_ideals}
There exists a polynomial-time algorithm that, given an order \(R\) of a number field \(K\) and fractional ideals \(\mathfrak{a}_1,\dotsc,\mathfrak{a}_m\) of \(R\), computes the kernel of the map \(\Z^m\to \mathcal{I}(\mathcal{O}_K)\) given by \((k_1,\dotsc,k_m)\mapsto \prod_i (\mathcal{O}_K\mathfrak{a}_i)^{k_i}\).
\end{theorem}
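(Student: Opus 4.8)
The plan is to follow the template of the rational case (Exercise~\ref{ex:compute_rational_unit_product}): since we cannot compute \(\mathcal{O}_K\), replace it by an accessible order produced by the coprime basis algorithm for ideals, and finish with the kernel-image algorithm.

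First, using our encoding of fractional ideals, write each \(\mathfrak{a}_i=d_i^{-1}\mathfrak{b}_i\) with \(d_i\in\Z_{>0}\) and \(\mathfrak{b}_i\subseteq R\) a non-zero integral ideal, and apply Theorem~\ref{thm:coprime_base_ideals} to the finite set \(X=\{\mathfrak{b}_1,\dotsc,\mathfrak{b}_m,\,d_1R,\dotsc,d_mR\}\) of non-zero integral ideals of \(R\). This produces in polynomial time the minimal order \(R\subseteq S\) for which \(SX\) has a coprime basis, such a coprime basis \(B=(\mathfrak{p}_1,\dotsc,\mathfrak{p}_k)\), and --- by the bookkeeping Algorithm~\ref{alg:coprime_base_ideals} carries along, exactly as in Theorem~\ref{thm:graph_algorithm} --- integers \(e_{ij},f_{ij}\) with \(S\mathfrak{b}_i=\prod_j\mathfrak{p}_j^{e_{ij}}\) and \(d_iS=\prod_j\mathfrak{p}_j^{f_{ij}}\). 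The key preliminary observation is that \(S\subseteq\mathcal{O}_K\): every fractional ideal of \(\mathcal{O}_K\) is invertible, so \(\overline{\mathcal{O}_K X}\subseteq\mathcal{I}(\mathcal{O}_K)\), and then by Proposition~\ref{prop:invertible_closure_iff_coprime_basis} the order \(\mathcal{O}_K\) itself has a coprime basis for \(\mathcal{O}_K X\), hence contains the minimal such order \(S\).

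Next I would rewrite the map \(\phi\colon\Z^m\to\mathcal{I}(\mathcal{O}_K)\) of the statement. Since \(S\subseteq\mathcal{O}_K\) we have \(\mathcal{O}_K(S\mathfrak{b}_i)=\mathcal{O}_K\mathfrak{b}_i\) and \(\mathcal{O}_K(d_iS)=d_i\mathcal{O}_K\), so in \(\mathcal{I}(\mathcal{O}_K)\),
\[\mathcal{O}_K\mathfrak{a}_i=d_i^{-1}\mathcal{O}_K\mathfrak{b}_i=(\mathcal{O}_K\mathfrak{b}_i)(d_i\mathcal{O}_K)^{-1}=\prod_{j=1}^k(\mathcal{O}_K\mathfrak{p}_j)^{e_{ij}-f_{ij}}.\]
Hence \(\phi=\psi\circ E\), where \(E\colon\Z^m\to\Z^k\) is the integer matrix with entries \(e_{ij}-f_{ij}\) and \(\psi\colon\Z^k\to\mathcal{I}(\mathcal{O}_K)\) sends \(l\) to \(\prod_j(\mathcal{O}_K\mathfrak{p}_j)^{l_j}\). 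The crux is to show \(\psi\) is injective, for then \(\ker\phi=\ker E\). Write \(\psi\) as the composite \(\Z^k\to\langle B\rangle\subseteq\mathcal{I}(S)\xrightarrow{\ \sigma\ }\mathcal{I}(\mathcal{O}_K)\) with \(\sigma(I)=\mathcal{O}_K I\), which is a homomorphism by Lemma~\ref{lem:inv_ideal_surj}. On one hand, the \(\mathfrak{p}_j\) are pairwise coprime invertible ideals of \(S\), strictly contained in \(S\), so \(\langle B\rangle\) is a free abelian group with basis \(B\); this is the ideal-theoretic analogue of unique coprime factorization (Lemma~\ref{lem:UCPF}), proved by localizing a hypothetical relation \(\prod_j\mathfrak{p}_j^{n_j}=S\) with \(n_{j_0}\neq0\) at a maximal ideal of \(S\) containing \(\mathfrak{p}_{j_0}\), where the surviving factor is principal and generated by a non-unit (Corollary~\ref{cor:invertible_ideals_local}). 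On the other hand, \(\ker\sigma=\mathcal{I}(S)^{\textup{tors}}\) by Exercise~\ref{ex:torsion_ideals}. A free abelian group is torsion free, so \(\langle B\rangle\cap\ker\sigma=0\), whence \(\sigma\) is injective on \(\langle B\rangle\) and \(\psi\) is injective.

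It then remains to compute \(\ker E\), which the kernel-image algorithm (Theorem~\ref{thm:ker_im_alg}) does in polynomial time; the ideal arithmetic and the blowups used inside Theorem~\ref{thm:coprime_base_ideals} are all polynomial-time, with Exercise~\ref{ex:small_index} bounding the length of the indices of the intermediate orders. The one real obstacle is that \(\mathcal{O}_K\) is not computable in polynomial time, which is exactly why we never form it: we work inside the accessible order \(S\) instead, and the step that makes this legitimate is the injectivity of \(\psi\) --- that pushing the coprime basis of \(SX\) forward into \(\mathcal{I}(\mathcal{O}_K)\) introduces no relation, because the kernel of \(\mathcal{I}(S)\to\mathcal{I}(\mathcal{O}_K)\) is torsion while \(\langle B\rangle\) is free.
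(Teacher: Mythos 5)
Your proposal is correct and follows the paper's proof: compute the minimal order $S$ and a coprime basis for the blown-up numerators and denominators via Theorem~\ref{thm:coprime_base_ideals}, reduce to an integer matrix kernel via Theorem~\ref{thm:ker_im_alg}, and observe that $\mathcal{I}(S)\to\mathcal{I}(\mathcal{O}_K)$ is injective on the subgroup generated by the coprime basis. The paper states this last injectivity in one sentence without justification; your two-step argument (the coprime basis generates a free subgroup of $\mathcal{I}(S)$, and $\ker(\mathcal{I}(S)\to\mathcal{I}(\mathcal{O}_K))=\mathcal{I}(S)^{\textup{tors}}$ by Exercise~\ref{ex:torsion_ideals}) correctly supplies the missing details, as does your check that $S\subseteq\mathcal{O}_K$.
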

\begin{proof}
We may write \(\mathfrak{a}_i\) as a quotient \(N_i:D_i\) for all \(i\) with \(N_i,D_i\subseteq R\), for example with \(D_i=n R\) for some \(n\in\Z_{>0}\).
Compute an order \(R\subseteq S\subseteq K\) and a coprime basis \(\mathfrak{c}_1,\dotsc,\mathfrak{c}_n\) for \(SN_1,\dotsc,SN_m,SD_1,\dotsc,SD_m\), as well as integers \(n_{ij},d_{ij}\) such that \(N_i=\prod_j \mathfrak{c}_j^{n_{ij}}\) and \(D_i=\prod_j \mathfrak{c}_j^{d_{ij}}\).
Then the kernel of \(\Z^m\to\mathcal{I}(S)\) is the kernel of the map \(\Z^m\to\Z^n\) given by \((k_1,\dotsc,k_m)\mapsto (\sum_{i} k_i (n_{ij}-d_{ij}))_j\), which we may compute using Theorem~\ref{thm:ker_im_alg}. Finally, note that the map \(\mathcal{I}(S)\to\mathcal{I}(\mathcal{O}_K)\) is injective on any subgroup generated by a coprime basis.
\end{proof}

In particular, we may decide whether some power-product of invertible ideals over any order \(R\) is torsion in \(\mathcal{I}(R)\) (Exercise~\ref{ex:torsion_ideals}).

\begin{corollary}\label{cor:compute_unit_product_elements}
There exists a polynomial-time algorithm that, given elements \(\alpha_1,\dotsc,\alpha_m\) in a number field \(K\) and integers \(k_1,\dotsc,k_m\in\Z\), computes the kernel of the map \(\Z^m\to K^*/\mathcal{O}_K^*\) given by \((k_1,\dotsc,k_m)\mapsto \prod_i \alpha_i^{k_i} \cdot\mathcal{O}_K^*\).
\end{corollary}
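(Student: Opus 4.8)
The plan is to reduce to Theorem~\ref{thm:compute_unit_product_ideals} via the principal divisor map. Recall that a number field $K$ is given to us as an order $R$ with $\textup{Q}(R)=K$. First I would, for each $i$, form the principal fractional ideal $\mathfrak{a}_i=R\alpha_i$ of $R$; this is a fractional ideal (finitely generated, and $R\alpha_i\cdot K=K$ since $\alpha_i\neq 0$), and it is computable from $\alpha_i$ in our encoding of fractional ideals — write $\alpha_i$ on the standard basis of $R$ over $\Z$ and clear the common denominator. Since $R\subseteq\mathcal{O}_K$ we get $\mathcal{O}_K\mathfrak{a}_i=\alpha_i\mathcal{O}_K$, so the map $\Z^m\to\mathcal{I}(\mathcal{O}_K)$ produced by Theorem~\ref{thm:compute_unit_product_ideals} on input $\mathfrak{a}_1,\dotsc,\mathfrak{a}_m$ is precisely $(k_1,\dotsc,k_m)\mapsto\prod_i(\alpha_i\mathcal{O}_K)^{k_i}=\big(\prod_i\alpha_i^{k_i}\big)\mathcal{O}_K$; that theorem computes its kernel in polynomial time.

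The key point is that this kernel is exactly the one we are after. The divisor map $K^*\to\mathcal{I}(\mathcal{O}_K)$, $\alpha\mapsto\alpha\mathcal{O}_K$, is a group homomorphism — here one uses that every fractional ideal of the maximal order is invertible (Theorem~\ref{thm:has_maximal_order}), with $(\alpha\mathcal{O}_K)^{-1}=\alpha^{-1}\mathcal{O}_K$ — and its kernel is exactly $\mathcal{O}_K^*$: if $\alpha\mathcal{O}_K=\mathcal{O}_K$ then $\alpha=\alpha\cdot 1\in\mathcal{O}_K$ and $\alpha^{-1}=\alpha^{-1}\cdot 1\in\mathcal{O}_K$, and conversely units fix $\mathcal{O}_K$. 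Hence it induces an injection $K^*/\mathcal{O}_K^*\hookrightarrow\mathcal{I}(\mathcal{O}_K)$, and the kernel of $\Z^m\to K^*/\mathcal{O}_K^*$ agrees with the kernel of the composite $\Z^m\to K^*/\mathcal{O}_K^*\hookrightarrow\mathcal{I}(\mathcal{O}_K)$, which is the kernel computed above. (The integers $k_1,\dotsc,k_m$ mentioned in the input play no role beyond indexing; the object to be returned is the kernel.)

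I do not expect a genuine obstacle here: all the substance is already contained in Theorem~\ref{thm:compute_unit_product_ideals}, which is the statement that quietly sidesteps the inaccessibility of $\mathcal{O}_K$ — we never compute $\mathcal{O}_K$ itself, only coprime bases via blowups. What remains is routine bookkeeping: checking that $R\alpha_i$ is representable and computable in the fractional-ideal encoding, that the identities $\mathcal{O}_K\mathfrak{a}_i=\alpha_i\mathcal{O}_K$ and $\prod_i(\alpha_i\mathcal{O}_K)^{k_i}=\big(\prod_i\alpha_i^{k_i}\big)\mathcal{O}_K$ hold, and that the reduction runs in polynomial time — immediate, since it consists of $m$ basic operations in $K$ followed by a single invocation of Theorem~\ref{thm:compute_unit_product_ideals}.
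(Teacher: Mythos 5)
Your proposal is correct and matches the paper's proof, which is the one-liner: pick a full-rank order $R\subseteq K$ and apply Theorem~\ref{thm:compute_unit_product_ideals} to the principal fractional ideals $\alpha_1 R,\dotsc,\alpha_m R$. Your additional justification — that $K^*/\mathcal{O}_K^*\hookrightarrow\mathcal{I}(\mathcal{O}_K)$ is injective, so the two kernels agree — is exactly the (implicit) bookkeeping the paper leaves unstated, and your remark that the input integers $k_i$ are a harmless artifact is also apt.
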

\begin{proof}
Pick a full-rank order \(R\subseteq K\) and apply Theorem~\ref{thm:compute_unit_product_ideals} to \(\alpha_1 R,\dotsc,\alpha_m R\).
\end{proof}

\subsection{Unit products}\label{sec:main_theorem}

In this section we will prove the following generalization of Theorem~\ref{thm:main_intro} from the introduction.

\begin{theorem}[Ge \cite{Ge}]\label{thm:compute_trivial_product_elements}
There exists a polynomial-time algorithm that, given elements \(\alpha_1,\dotsc,\alpha_m\) in a number field \(K\), computes the kernel of the map \(\Z^m\to K^*\) given by \((k_1,\dotsc,k_m)\mapsto \prod_i \alpha_i^{k_i}\).
\end{theorem}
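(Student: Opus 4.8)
The plan is to peel off the two obstructions — the ideal-theoretic part and the infinite part of the unit group — one at a time, reducing each to a result already established. Write $\phi\colon\Z^m\to K^*$ for the map $(k_i)_i\mapsto\prod_i\alpha_i^{k_i}$; we may assume every $\alpha_i$ is nonzero (otherwise replace $\Z^m$ by the coordinate subgroup on which $k_i=0$ for every $i$ with $\alpha_i=0$, which is trivially computable). Since $\phi(k)=1$ forces $\phi(k)\in\mathcal{O}_K^*$, we have $\ker\phi\subseteq M:=\ker\big(\Z^m\to K^*/\mathcal{O}_K^*\big)$, and in fact $\ker\phi=\ker(\phi|_M)$. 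First I would compute $M$ by Corollary~\ref{cor:compute_unit_product_elements} and, applying the kernel--image and LLL machinery of Section~\ref{sec:abgp}, extract a short basis $b_1,\dotsc,b_s$ of $M\subseteq\Z^m$. Setting $u_j:=\phi(b_j)\in\mathcal{O}_K^*$, the sought kernel is the image under the (injective) map $\Z^s\to\Z^m$, $(c_j)_j\mapsto\sum_j c_jb_j$, of the relation lattice $\Lambda^{\mathrm{rel}}:=\{(c_j)_j\in\Z^s:\prod_j u_j^{c_j}=1\}$. A key point is that the $u_j$, though genuine units of $\mathcal{O}_K$, may have astronomically large height, so I must not write them down; they will enter only through approximate archimedean data and, at the very end, through one tightly controlled exact computation.

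The second step is to split $\Lambda^{\mathrm{rel}}$ using the logarithmic embedding $\mathrm{Log}\colon K^*\to\R^{r_1+r_2}$. Using polynomial-time factorisation of polynomials over $\Q$ (cf.\ Exercise~\ref{ex:is_order}) I would determine $r_1,r_2$ and compute, to any prescribed precision $1/t$, $t$-approximations $w_j$ of the vectors $v_j:=\mathrm{Log}(u_j)=\sum_i b_{j,i}\,\mathrm{Log}(\alpha_i)\in\R^{r_1+r_2}$. Put $\Lambda:=\langle v_1,\dotsc,v_s\rangle$. Because a unit of $\mathcal{O}_K$ all of whose archimedean absolute values equal $1$ is a root of unity (Kronecker), the relation lattice $\Lambda^{\log}:=\{(c_j)_j:\sum_j c_jv_j=0\}$ equals exactly $\{(c_j)_j:\prod_j u_j^{c_j}\in\mu_K\}$. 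To compute $\Lambda^{\log}$ I would feed the $w_j$ to Ge's approximate-lattice algorithm (Theorem~\ref{thm:approximate_lattice}), which needs an upper bound $B\geq\max_j\|v_j\|$ and a lower bound $\lambda\leq\sqrt{\lambda_1(\Lambda)}$ of polynomially bounded length. For $B$ one uses $\|v_j\|\leq\sum_i|b_{j,i}|\,\|\mathrm{Log}(\alpha_i)\|$, polynomially bounded since the $b_j$ are short and $\|\mathrm{Log}(\alpha_i)\|$ is controlled by the height of $\alpha_i$. For $\lambda$, every nonzero vector of $\Lambda$ is $\mathrm{Log}$ of a non-torsion unit of $\mathcal{O}_K$, so $\lambda_1(\Lambda)\geq\lambda_1(\mathrm{Log}(\mathcal{O}_K^*))$, and here I would invoke a classical effective lower bound for the height (or ``house'') of a non-torsion unit in a degree-$n$ number field — even a crude bound of Blanksby--Montgomery or Dobrowolski type suffices — whose reciprocal has length polynomial in $n=[K:\Q]$; when $\Lambda=\{0\}$ (all $u_j$ roots of unity) simply set $\Lambda^{\log}=\Z^s$.

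It remains to compute the torsion part: the map $\psi\colon\Lambda^{\log}\to\mu_K$, $c\mapsto\prod_j u_j^{c_j}$, has $\ker\psi=\Lambda^{\mathrm{rel}}$, and $\mu_K$ is a finite cyclic group whose order $w$ is computable as the largest $w$ for which $\Phi_w$ has a root in $K$ (a bounded search, since $\varphi(w)\mid n$). Take an LLL-reduced basis $d_1,\dotsc,d_r$ of $\Lambda^{\log}\subseteq\Z^s$; for each $l$ the root of unity $\zeta_l:=\prod_j u_j^{d_{l,j}}=\prod_i\alpha_i^{\,\sum_j d_{l,j}b_{j,i}}$ has a polynomially bounded exponent vector, hence polynomially bounded height, and every partial product in a square-and-multiply evaluation (Exercise~\ref{ex:square_multiply}) over $K$ stays polynomially bounded, so $\zeta_l$ is computed exactly in polynomial time. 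Identifying $\zeta_l$ with a power of a fixed generator of $\mu_K$ is a discrete logarithm in a cyclic group of polynomially bounded order, done by brute force; this presents $\psi$ on $d_1,\dotsc,d_r$ as a homomorphism to $\Z/w\Z$, and its kernel $\Lambda^{\mathrm{rel}}$ is then computed with the finitely generated abelian group algorithms of Section~\ref{sec:abgp}. Pushing $\Lambda^{\mathrm{rel}}$ forward through $(c_j)_j\mapsto\sum_j c_jb_j$ yields $\ker\phi$.

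The main obstacle is the middle step: making the passage through the infinite part of $\mathcal{O}_K^*$ effective without ever computing $\mathcal{O}_K$. This rests on two things — never materialising the units $u_j$ (only their logarithms, numerically), and supplying Theorem~\ref{thm:approximate_lattice} with a lower bound on $\lambda_1$ of the unit lattice of the correct, polynomial, quality, i.e.\ an unconditional lower bound for heights of non-torsion units. That Corollary~\ref{cor:compute_unit_product_elements}, built on the blowup and coprime basis algorithms, lets us reduce to this situation in the first place is what makes the whole approach possible.
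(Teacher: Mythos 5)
Your route is genuinely different from the paper's. The paper works in a single lattice $\Lambda_H\subseteq\C^{X(K)}$ that explicitly contains $2\pi\textup{i}\Z^{X(K)}$, so ``$\prod u_j^{c_j}=1$'' is directly encoded, and the elementary estimate $\lambda_1(\Lambda_H)\geq(\log 2)^2$ (Exercise~\ref{ex:log_lattice}, which comes down to $|N(\varepsilon-1)|\geq 1$ for an algebraic integer $\varepsilon\neq 1$) furnishes the parameter $\lambda$ for Theorem~\ref{thm:approximate_lattice}. One application of Ge's algorithm then gives the whole kernel. You instead use the real logarithmic embedding into $\R^{r_1+r_2}$, which records only absolute values, so Kronecker's theorem forces a two-pass structure: compute $\Lambda^{\log}=\{c:\prod u_j^{c_j}\in\mu_K\}$, then separately kill the torsion via a discrete logarithm in $\mu_K$. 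The cost of the first pass is that you must feed Theorem~\ref{thm:approximate_lattice} a lower bound on $\lambda_1(\mathrm{Log}\,\mathcal{O}_K^*)$, and for that you invoke a Dobrowolski or Blanksby--Montgomery type theorem. That is considerably deeper input than anything the paper uses, and the $2\pi\textup{i}\Z^X$ term is precisely what renders it unnecessary: the paper's lattice is designed so that ``short'' already means ``zero'', avoiding both the torsion pass and the appeal to transcendence-adjacent height lower bounds.

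One step needs repair. You write that $\zeta_l$ ``has a polynomially bounded exponent vector, hence polynomially bounded height, and every partial product in a square-and-multiply evaluation over $K$ stays polynomially bounded''. The first implication is false in general: a polynomially bounded exponent vector does not bound the height of a power product --- that is exactly what sinks Algorithm~\ref{alg:explict_computation}. The height of $\zeta_l$ is bounded because $\zeta_l$ is a root of unity, full stop. The second claim is true but not for the stated reason, and the argument you need is specific to the torsion setting: writing $e_i=2^{K-k}e_i^{(k)}+r_i$ with $|r_i|<2^{K-k}$ and using $h\bigl(\beta^{2^{K-k}}\bigr)=2^{K-k}h(\beta)$ gives $h\bigl(\prod_i\alpha_i^{e_i^{(k)}}\bigr)\leq\sum_i h(\alpha_i)$, so the partial products are small \emph{because} the final product is torsion. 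You should also handle signs carefully: you must run the joint double-and-add on $|e_i|$ after replacing $\alpha_i$ by $\alpha_i^{-1}$ when $e_i<0$, rather than splitting the product into a numerator and denominator, since the latter two are not roots of unity and their partial products would not stay bounded. None of this is difficult, but as written the step is asserted rather than proved, and the stated justification is wrong.
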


Corollary~\ref{cor:compute_unit_product_elements} is close to being a proof of this. For this, it essentially remains to prove Theorem~\ref{thm:compute_trivial_product_elements} for \(\alpha_i\in\mathcal{O}_K^*\). 

Let \(K\) be a number field and let \(X=X(K)\) be the set of ring homomorphisms \(K\to\C\).
For a subgroup \(G\subseteq K^*\) one obtains a subgroup
\[ \Lambda_G=\Big\{ \big(\log \sigma(\alpha)\big)_{\sigma\in X} : \alpha\in G \Big\} + 2\pi \textup{i} \Z^X \subseteq \C^X, \]
which is independent of the chosen branch of the natural logarithm, and a group isomorphism \(G\to \Lambda_G / 2\pi\textup{i} \Z^X\). 
In our special case where \(G\subseteq\mathcal{O}_K^*\), it is even a lattice (Exercise~\ref{ex:log_lattice}).
Then, we compute the kernel of \(\Z^m\to G\) from the map \(\Z^m\to \Lambda_G\).

\begin{lemma}
Let \(K\) be a number field and let \(\varepsilon\in \mathcal{O}_K\) be non-zero.
Then \(\varepsilon=1\) if and only if for all \(\sigma\in X\) there exists \(k_\sigma\in\Z\) such that 
\(|\log \sigma(\varepsilon) - k_\sigma \cdot 2\pi \textup{i} | < \log 2\).
\end{lemma}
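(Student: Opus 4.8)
The statement is an equivalence, and one direction is trivial: if $\varepsilon = 1$ then $\sigma(\varepsilon) = 1$ for all $\sigma \in X$, so $\log\sigma(\varepsilon)$ is a multiple of $2\pi\ti$ and we may take $k_\sigma$ to be that multiple, giving $|\log\sigma(\varepsilon) - k_\sigma\cdot 2\pi\ti| = 0 < \log 2$. The content is in the converse. So assume $\varepsilon\in\mathcal{O}_K$ is non-zero and that for every $\sigma\in X$ there is $k_\sigma\in\Z$ with $|\log\sigma(\varepsilon) - k_\sigma\cdot 2\pi\ti| < \log 2$. The plan is to show this forces $\sigma(\varepsilon)$ to be very close to $1$ for every embedding, and then use an arithmetic argument to conclude $\varepsilon - 1 = 0$.

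First I would unwind the hypothesis at each $\sigma$. Writing $\log\sigma(\varepsilon) = x + \ti y$ with $x = \log|\sigma(\varepsilon)|$ and $y$ an appropriate argument, the condition $|\log\sigma(\varepsilon) - k_\sigma\cdot 2\pi\ti| < \log 2$ says the complex number $\log\sigma(\varepsilon)$ lies in an open disc of radius $\log 2$ centred at $2\pi\ti k_\sigma$, i.e.\ $x^2 + (y - 2\pi k_\sigma)^2 < (\log 2)^2$. In particular $|x| < \log 2$, so $\frac12 < |\sigma(\varepsilon)| < 2$, and the imaginary part $y$ is within $\log 2 < 1$ of $2\pi k_\sigma$. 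From this I would estimate $|\sigma(\varepsilon) - 1|$: since $|\log\sigma(\varepsilon) - 2\pi\ti k_\sigma| < \log 2$ and $\exp$ maps the disc of radius $r$ about $0$ into the disc of radius $e^r - 1$ about $1$ (because $|e^z - 1| \le e^{|z|} - 1$ for all $z$), and $\exp(\log\sigma(\varepsilon) - 2\pi\ti k_\sigma) = \sigma(\varepsilon)$, we get $|\sigma(\varepsilon) - 1| < e^{\log 2} - 1 = 1$ for every $\sigma\in X$.

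Now the arithmetic step, which is the crux: $\varepsilon - 1 \in \mathcal{O}_K$, and I want to conclude it is $0$ from the fact that all its archimedean absolute values are $< 1$. If $\varepsilon - 1 \ne 0$, then the field norm $N_{K/\Q}(\varepsilon - 1) = \prod_{\sigma\in X}\sigma(\varepsilon - 1)$ is a nonzero rational integer (it lies in $\Z$ because $\varepsilon - 1$ is an algebraic integer, and it is nonzero because $\varepsilon \ne 1$), hence has absolute value at least $1$; but $|N_{K/\Q}(\varepsilon - 1)| = \prod_{\sigma}|\sigma(\varepsilon) - 1| < 1$, a contradiction. Therefore $\varepsilon - 1 = 0$, i.e.\ $\varepsilon = 1$.

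The main obstacle — really the only place requiring care — is making the exponential estimate $|\sigma(\varepsilon) - 1| < 1$ airtight: one must confirm that $\sigma(\varepsilon) = \exp\!\big(\log\sigma(\varepsilon) - 2\pi\ti k_\sigma\big)$ regardless of the branch used (which is fine since any two branches differ by a multiple of $2\pi\ti$, absorbed into $k_\sigma$), and that the bound $e^{\log 2} - 1 = 1$ is achieved only in the limit, so the strict inequality in the hypothesis yields the strict inequality $|\sigma(\varepsilon)-1| < 1$ needed for the norm argument. One should also note that if $K$ is imaginary or complex, $X$ comes in conjugate pairs and the product over $X$ is still the field norm, so the norm argument is unaffected; the identification $\mathcal{O}_K \to \mathcal{O}_K$ here just uses that $\varepsilon - 1$ is integral with integral norm.
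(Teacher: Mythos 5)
Your proof is correct and follows essentially the same route as the paper: both establish $|\sigma(\varepsilon)-1| \le e^{|z|}-1 < 1$ from the power-series bound on $\exp$ and then observe that $N_{K/\Q}(\varepsilon-1)$ is a rational integer of absolute value less than $1$, hence zero. The preliminary unwinding into real and imaginary parts and the bound $\tfrac12 < |\sigma(\varepsilon)| < 2$ are harmless but not needed once you have the exponential estimate.
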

\begin{proof}
The forward implication is trivial.
Suppose now we have such \(k_\sigma\in\Z\) for each \(\sigma\in X\).
Let \(\sigma\in X\). 
With \(z=\log \sigma(\varepsilon)-k_\sigma\cdot2\pi\textup{i}\) we obtain
\begin{align*}
| \sigma(\varepsilon)-1|=|\textup{e}^{z}-1|=\Big|\sum_{n=1}^\infty \frac{z^n}{n!} \Big| \leq \sum_{n=1}^\infty \frac{|z|^n}{n!} = \textup{e}^{|z|}-1 < \textup{e}^{\log 2}-1 = 1.
\end{align*}
Write \(N:K\to\Q\) for the norm function. Then \(N(\varepsilon-1)\) is an integer, because it is both integral and rational, such that
\[ |N(\varepsilon-1)| = \prod_{\sigma\in X} | \sigma(\varepsilon) - 1| < 1.\]
Thus \(N(\varepsilon-1)=0\) and \(\varepsilon=1\).
\end{proof}

\begin{exercise}\label{ex:log_lattice}
Let \(K\) be a number field.
Suppose we consider \(\C\) as a 2-dimensional Euclidean vector space with orthonormal basis \(\{1,\textup{i}\}\) and in turn consider the orthogonal sum \(\C^{X(K)}\) as a Euclidean vector space.
Show that for every subgroup \(H\subseteq \mathcal{O}_K^*\) the subgroup \(\Lambda_H \subseteq \C^X\) is a lattice with \(\lambda_1(\Lambda_H)\geq (\log 2)^2\).
\end{exercise}

\begin{proof}[Proof of Theorem~\ref{thm:compute_trivial_product_elements}]
First compute using Corollary~\ref{cor:compute_unit_product_elements} a basis \(\langle b_1,\dotsc,b_n\rangle\) for 
\[H=\Big\{(k_1,\dotsc,k_m)\in\Z^m : \prod_i \alpha_i^{k_i} \in \mathcal{O}_K^*\Big\}.\]
Consider now the lattice
\[\Lambda_H=\sum_{i=1}^n \Z \cdot \Big( \sum_{j=1}^m b_{ij} \log \sigma(\alpha_j) \Big)_{\sigma\in X(K)} + 2\pi \textup{i} \Z^{X(K)},\]
to which we want to apply Theorem~\ref{thm:approximate_lattice} to deduce the required kernel.
Exercise~\ref{ex:log_lattice} states that \(\lambda_1(\Lambda_H)\geq (\log 2)^2\). 
Hence we may pick \(\omega\) and \(t\) in Theorem~\ref{thm:approximate_lattice} so that their lengths are polynomially bounded. 
Using the work of Sch\"onhage \cite{Schonhage} and Brent \cite{Brent} we can compute in polynomial time a \(t\)-approx\-ima\-tion of the elements generating \(\Lambda_H\).
Hence the algorithm runs in polynomial time.
\end{proof}

\section{Computing the maximal order}\label{sec:rings}

This section we dedicate to the computation of the maximal order of a number field, in particular why it is difficult. We do this by proving that this problem is equally hard in some sense yet to be made precise (Definition~\ref{def:reduction}) as computing the \emph{radical}, written \(\textup{rad}(n)\), of some \(n\in\Z_{>0}\), which is the largest square-free divisor of \(n\) or equivalently the product of all primes dividing \(n\) without multiplicity. Computing radicals is considered difficult, as it seems like we need to factor \(n\), although it is unknown whether it is actually equally hard as factorization.

We will consider rings which we encode as a finitely generated (additive) abelian group \(A\) together with a multiplication morphism \(A\tensor A\to A\).
This includes both orders and finite commutative rings.

Recall the definition of the the nilradical from Definition~\ref{def:basic_ring}.3.

\begin{theorem}[cf.\ Theorem 1.3 in \cite{Buchman-Lenstra}]\label{thm:equivalent_problems}
The following three computational problems are equally hard:
\begin{enumerate}[topsep=0pt,itemsep=-1ex,partopsep=1ex,parsep=1ex]
\item Given a number field $K$, compute $\mathcal{O}_K$.
\item Given a finite ring $A$, compute $\nil(A)$.
\item Given an integer $m > 0$, compute $\rad(m)$.
\end{enumerate}
\end{theorem}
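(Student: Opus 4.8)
The plan is to read ``equally hard'' as mutual polynomial-time Turing reducibility in the sense of Definition~\ref{def:reduction}: writing $P\Rightarrow Q$ for ``an oracle solving $P$ yields a polynomial-time algorithm solving $Q$'', it suffices, since reductions compose, to establish the three reductions $(1)\Rightarrow(3)$, $(2)\Rightarrow(1)$ and $(3)\Rightarrow(2)$ and close the triangle. Two of the three are short, and the real work is concentrated in $(3)\Rightarrow(2)$ together with the correctness and termination analysis of the enlargement loop used for $(2)\Rightarrow(1)$.

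\emph{The reduction $(1)\Rightarrow(3)$.} First I would note that a $\rad$-oracle is no stronger than a square-free-part oracle: with $m=\boxtimes(m)\cdot\square(m)^2$ as in Exercise~\ref{ex:radical_discriminant}, one has $\rad(m)=\operatorname{lcm}\bigl(\boxtimes(m),\rad(\square(m))\bigr)$, and since $\square(m)\le\sqrt m$ this recursion bottoms out after $O(\log\log m)$ calls. So I only need to compute $\boxtimes(m)$. If $m$ is a perfect square this is $1$; otherwise $K=\Q(\sqrt m)=\Q\bigl(\sqrt{\boxtimes(m)}\bigr)$ is a quadratic field, presented by the order $R=\Z[\sqrt m]$, and feeding $R$ to the $(1)$-oracle produces $\mathcal O_K$, from whose discriminant $\Delta(\mathcal O_K)\in\{\boxtimes(m),4\boxtimes(m)\}$ (Exercise~\ref{ex:radical_discriminant}) I read off $\boxtimes(m)$. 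Every remaining step is elementary integer and lattice arithmetic.

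\emph{The reduction $(2)\Rightarrow(1)$.} Here I would run the classical round-$2$ enlargement procedure, using the $\nil$-oracle for its one non-linear step. Present $K$ by an order $R$; at the start of each round put $d=\Delta(R)$, which has polynomially bounded bit length by Exercise~\ref{ex:small_index}, and recall every prime dividing $[\mathcal O_K:R]$ divides $d$ (Exercise~\ref{ex:discriminant_subring}). Form the finite commutative ring $R/dR$ (of cardinality $|d|^{[K:\Q]}$), call the oracle to compute $\nil(R/dR)$, lift it to the radical ideal $I=\sqrt{dR}=\{x\in R:x^k\in dR\text{ for some }k\}$, and replace $R$ by its multiplier ring $(I:I)\supseteq R$ (a ring, by Lemma~\ref{lem:quotient_facts}); stop once $R$ ceases to grow. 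Correctness --- that the stable value is exactly $\mathcal O_K$ --- is the Dedekind test underlying Theorem~\ref{thm:has_maximal_order} (cf.\ \cite{Buchman-Lenstra}), which I would cite rather than reprove; termination in $O(\log|\Delta(R)|)$ rounds holds because each non-trivial step strictly lowers the integer $[\mathcal O_K:R]$ (whose number of prime factors is at most $\log_2|\Delta(R)|$), and each round runs in polynomial time by the kernel--image algorithm (Theorem~\ref{thm:ker_im_alg}) and polynomial-time ideal arithmetic.

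\emph{The reduction $(3)\Rightarrow(2)$, and the anticipated obstacle.} Given a finite commutative ring $A$, I would use the $\rad$-oracle once to reduce to square-free characteristic: with $n=\Char(A)$ and $q=\rad(n)$, the inclusion $qA\subseteq\nil(A)$ holds since $(qa)^k=q^ka^k=0$ whenever $n\mid q^k$ and $nA=0$, so $\nil(A)$ is the full preimage of $\nil(A/qA)$, and it suffices to compute $\nil(B)$ for $B=A/qA$, whose characteristic $q':=\Char(B)$ divides $q$ and is square-free. For such $B$ one has $\nil(B)=\bigcap_{\fm}\fm$ over the finitely many pairwise comaximal maximal ideals (Exercise~\ref{ex:nilradical_prime_intersection}, Exercise~\ref{ex:finite_prime_maximal}), and $B$ is, via $\Z/q'\Z\cong\prod_{p\mid q'}\F_p$, simultaneously a finite-dimensional algebra over each residue field $\F_p$; I would compute $\nil(B)$ by the Friedl--Rónyai/Cohen ``radical'' algorithm for algebras over a field, but carried out entirely by linear algebra over the base ring $\Z/q'\Z$ (kernels and row reduction over $\Z/q'\Z$ being effective, Exercise~\ref{ex:square_multiply}), so that over each factor $\F_p$ it recovers $\nil(B\tensor_\Z\F_p)$ as the kernel of a sufficiently high iterate of the $\F_p$-linear Frobenius, with the single oracle call being that for $\rad(n)$. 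The hardest part will be exactly this last step: one must verify that the field-algebra radical algorithm genuinely lifts to linear algebra over $\Z/q'\Z$ --- equivalently, that it is compatible with the decomposition $\Z/q'\Z\cong\prod\F_p$ and that the refinements classically needed in small residue characteristic can be organised to proceed uniformly in $p\mid q'$, so that the factorization of $q'$ is never isolated --- and that this terminates in polynomially many rounds. Once that is in place, the only thing left to borrow rather than reprove is the correctness of the round-$2$ loop from \cite{Buchman-Lenstra}, and assembling the three reductions into genuine pairwise equivalences is then routine.
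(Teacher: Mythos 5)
Your overall structure---a cycle of three reductions closed by transitivity---is fine, but two issues deserve attention. First, a formalism mismatch: you announce you are working "in the sense of Definition~\ref{def:reduction}", but that definition permits exactly one oracle call (the paper explicitly warns that multi-call reducibility is a different, non-equivalent notion), yet both your $(1)\Rightarrow(3)$ (recursing on $\square(m)$, so $O(\log\log m)$ calls to the $\mathcal{O}_K$-oracle) and your $(2)\Rightarrow(1)$ (one $\nil$-oracle call per round of the enlargement loop, so $O(\log|\Delta(R)|)$ calls) make many. The paper's reductions are deliberately one-call: for $(3)\preceq(1)$ it uses the single field $\Q[X]/(X^l-a)$ with $l$ prime and slightly larger than $\log_2 a$ (Lemma~\ref{lem:OK_to_rad}), reading $\rad(a)$ off one $\mathcal{O}_K$; for $(1)\preceq(3)$ it makes one call to obtain $d=\rad|\Delta(R)|$ (Proposition~\ref{prop:rad_to_OK}), after which the Round-$2$-like loop of Lemma~\ref{lem:compute_OK} needs no further oracle calls, since $\rad\bigl(\#(R/dR)\bigr)=\rad(d^n)=d$ is already in hand. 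If you want your $(2)\Rightarrow(1)$ to fit the stated definition, the cheap fix is to call the $\nil$-oracle once on $\Z/|\Delta(R)|\Z$ to get $\rad|\Delta(R)|$ and then run the loop with no further calls.

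The substantive gap is $(3)\Rightarrow(2)$, where you flag but do not fill the crucial step. After reducing to $B$ of square-free characteristic $q'$, you hope a Friedl--R\'onyai-type radical computation can be run "uniformly in $p\mid q'$, so that the factorization of $q'$ is never isolated". As stated this does not go through: the Frobenius $x\mapsto x^p$ depends on $p$, and the trace form alone fails to detect $\nil$ at primes $p$ not exceeding the relevant rank. The paper's resolution (Theorem~\ref{thm:rad_to_nil}, Algorithm~\ref{alg:comp_ring_rad}) is precisely \emph{not} to be uniform on the small primes: it enumerates all primes $p\le\lfloor\log_2\#A\rfloor$ (polynomially many), computes a coprime basis (Theorem~\ref{thm:graph_algorithm}) of $\{r,\,\#(A/rA)\}$ together with this list of small primes, and then treats each block $c_i$ dichotomously---if $c_i$ is one of the listed small primes, use Frobenius (Algorithm~\ref{alg:nilradical_prime_power}); if not, every prime dividing $c_i$ exceeds $\rk_{\Z/c_i\Z}(A/c_iA)$ and the trace radical suffices (Proposition~\ref{prop:trad_is_rad}). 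You need this explicit separation; without it the plan remains a plan.
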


We prove this theorem at the end of Section~\ref{ssec:singular_ideals}.
As a corollary to this theorem we obtain an algorithm to compute maximal orders, albeit a rather slow one, because we have a naive algorithm to compute radicals.

\subsection{Polynomial-time reductions}

Recall the definition of a problem from Definition~\ref{def:algorithm}.
Intuitively, a problem \(f\) is easier than a problem \(g\) if you can solve \(f\) once you know how to solve \(g\), or more specifically to algorithms, when you can solve them in polynomial time. We can make this more formal.

\begin{definition}\label{def:reduction}
Let \(f:I_f\to f(I_f)\) and \(g:I_g\to g(I_g)\) be two problems. We say \emph{\(f\) can be reduced to \(g\)}, or symbolically \(f\preceq g\), if there exist polynomial-time algorithms \(a:I_f\to I_g\) and \(b:I_f\times g(I_g) \to f(I_f)\) so that \(f(i)=b(i,g(a(i)))\) for all \(i\in I_f\). We say \(f\) and \(g\) are \emph{equally hard} if \(f\preceq g\) and \(g\preceq f\).
\end{definition}

Effectively, this definition states that \(f\preceq g\) if we can efficiently transform the input \(i\) for \(f\) into an input for \(g\), then run \(g\), and from its output efficiently deduce \(f(i)\).
Note that if there exists a polynomial-time algorithm for \(f\), then we always have \(f\preceq g\) for any non-empty problem \(g\). 
This shows the concept of reductions is not very useful for problems for which we know a polynomial-time algorithm exists.
The reader should be aware that there exist non-equivalent definitions of a reduction of problems, for example where it is allowed to make multiple calls to \(g\).

\begin{example}\label{ex:nilrad_to_rad}
The problem of computing radicals can be reduced to computing nilradicals of finite commutative rings. Namely, for \(a\in\Z_{>0}\) we have that \(\textup{nil}(\Z/a\Z)=\textup{rad}(a)\Z/a\Z\), so we can deduce \(\textup{rad}(a)\) given \(\textup{nil}(\Z/a\Z)\).
\end{example}

\begin{exercise}[cf.\ Lemma~8.1.3 in \cite{Radicals}] \label{ex:is_field_rad} 
Let \(l\) be prime, \(K\) a field and \(a\in K\). 
Show that \(f=X^l-a\) is irreducible in \(K[X]\) if and only if \(a\) is not an \(l\)-th power in \(K\). \\
\emph{Hint:} Write the constant term of a factor of \(f\) in terms of a single root of \(f\).
\end{exercise}

\begin{lemma}\label{lem:OK_to_rad}
The problem of computing radicals can be reduced to computing the maximal order of a number field.
\end{lemma}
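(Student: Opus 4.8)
The plan is to strip the square part off $m$ repeatedly, using the oracle applied to quadratic fields. The starting observation, from Exercise~\ref{ex:radical_discriminant}, is that for a squarefree integer $d$ the maximal order of $\Q(\sqrt d)$ has discriminant $d$ if $d\equiv 1\pmod 4$ and $4d$ if $d\equiv 2,3\pmod 4$; in particular $\Delta(\mathcal{O}_{\Q(\sqrt d)})\equiv 0$ or $1\pmod 4$, and since a squarefree $d$ is never a multiple of $4$ we can recover $d$ from $\Delta:=\Delta(\mathcal{O}_{\Q(\sqrt d)})$: it equals $\Delta$ if $\Delta\equiv 1\pmod 4$ and $\Delta/4$ otherwise. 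Writing $m=\square(m)^2\boxtimes(m)$ for the decomposition of $m$ into square and squarefree parts, we have $\Q(\sqrt m)=\Q(\sqrt{\boxtimes(m)})$; so for $m$ that is not a perfect square --- by Exercise~\ref{ex:is_field_rad} with $\ell=2$ exactly the case in which $X^2-m$ is irreducible, so that $\Z[X]/(X^2-m)$ is an order with field of fractions $\Q(\sqrt m)$ --- one oracle call on $\Q(\sqrt m)$ together with a computation of $\Delta(\mathcal{O}_{\Q(\sqrt m)})$ (polynomial time by Exercise~\ref{ex:small_index}) produces $\boxtimes(m)$, a squarefree number whose prime divisors are precisely the primes dividing $m$ to an odd power.

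From this I would build the reduction as follows. On input $N$ with $N>1$: if $N$ is a perfect square (testable in polynomial time), return $\rad(\sqrt N)$, computed recursively, using $\rad(N)=\rad(\sqrt N)$; otherwise compute $d:=\boxtimes(N)$ via the oracle as above, compute $N':=\sqrt{N/d}=\square(N)$ as an exact integer square root, and return $\lcm\!\big(d,\rad(N')\big)$, again recursively; on input $N=1$ return $1$. Since $\square(N)\le\sqrt N<N$ and $\sqrt N<N$ for $N>1$, the argument strictly decreases and the recursion terminates; more precisely, after $k$ rounds the current integer $N^{(k)}$ satisfies $\ord_p(N^{(k)})=\lfloor \ord_p(m)/2^k\rfloor$ for every prime $p$, so it reaches $1$ after $O(\log\log m)$ rounds, and each round costs at most one oracle call plus polynomial-time integer arithmetic.

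Correctness comes down to checking that the returned $\lcm$ of squarefree numbers equals $\rad(m)=\prod_{p\mid m}p$: no $d$ produced along the way has a prime divisor outside $\{p:p\mid m\}$, because each such $d$ divides some $N^{(k)}$, which divides $m$; conversely, a prime $p$ with $e:=\ord_p(m)\ge 1$ divides the $d$ computed in round $k$ precisely when $\lfloor e/2^k\rfloor$ is odd, and taking $k$ to be the position of the top bit of $e$ makes $\lfloor e/2^k\rfloor=1$, so every prime of $m$ is captured. The step I expect to be the real obstacle --- and the reason a single quadratic extension cannot finish the job --- is that $\Q(\sqrt m)$ only exposes the odd-multiplicity primes of $m$, so the substance of the argument is the recursion on the square part: one must verify that it is well-founded, that it produces exactly the remaining primes, and that its depth and the sizes of the intermediate fields stay polynomial. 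The only number-theoretic ingredient beyond that is the quadratic discriminant formula of Exercise~\ref{ex:radical_discriminant}.
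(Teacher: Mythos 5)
Your plan is mathematically sound as a \emph{multi-call} (Turing) reduction: the quadratic discriminant formula does recover $\boxtimes(N)$ from $\mathcal{O}_{\Q(\sqrt N)}$, the recursion on the square part halves all exponents each round and hence terminates in $O(\log\log m)$ rounds, and the lcm of the collected squarefree parts is indeed $\rad(m)$. The trouble is that this does not match the paper's notion of a reduction. Definition~\ref{def:reduction} requires a \emph{single} oracle call: a reduction $f\preceq g$ consists of polynomial-time $a$ and $b$ with $f(i)=b(i,g(a(i)))$, and the text immediately afterwards warns that allowing multiple calls to $g$ gives a non-equivalent notion. Your recursion makes up to $O(\log\log m)$ calls, and they are genuinely adaptive: the input to round $k+1$ is $\Q(\sqrt{N^{(k+1)}})$ with $N^{(k+1)}=\square(N^{(k)})=\sqrt{N^{(k)}/\boxtimes(N^{(k)})}$, which cannot be computed until the oracle has returned $\boxtimes(N^{(k)})$. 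So the calls cannot be batched into a single compound query, and the argument does not yield a reduction in the sense of Definition~\ref{def:reduction}.

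The paper avoids this by making the degree do the work that your recursion depth does. It picks a single prime $l$ with $\log_2 a < l \leq 2\log_2 a$ (Bertrand plus naive primality testing), forms the one field $K=\Q[X]/(X^l-a)$ (irreducible by Exercise~\ref{ex:is_field_rad}), and calls the oracle once to get $\mathcal{O}_K$. Since every exponent $a_p=\ord_p(a)$ satisfies $0<a_p<l$ and $l$ is prime, $\gcd(a_p,l)=1$ for every $p\mid a$, so the residues $ia_p\bmod l$ for $0<i<l$ hit $1$. Reading off, for each $i$, the largest $b_i$ with $\alpha^i/b_i\in\mathcal{O}_K$ and setting $c_i=a^i/b_i^l$, one gets $\ord_p(c_i)=ia_p\bmod l$, so $\gcd\{c_i\}=\rad(a)$ with no further oracle calls. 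Your own remark that a degree-$2$ extension ``only exposes the odd-multiplicity primes'' is exactly the reason one is forced to go to a larger prime degree: it lets a single call see all multiplicities at once, whereas degree $2$ forces the adaptive recursion that the definition forbids.
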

\begin{proof}
Suppose we are given \(a\in\Z_{>1}\) for which to compute \(\textup{rad}(a)\).
Choose a sufficiently small prime \(l>\log_2 a\). 
Such an \(l\) will be at most \(2\log_2 a\) by Bertrand's postulate, and \(2\log_2 a\) is sufficiently small to do a naive primality test.
Let \(K=\Q(\alpha)=\Q[X]/(X^l-a)\), which is a field by Exercise~\ref{ex:is_field_rad}.
Compute the maximal order \(\mathcal{O}\) of \(K\).
For each \(0 < i < l\) express \(\alpha^i\) in terms of a basis for \(\mathcal{O}\), from which we may compute the largest integer \(b_i\) such that \(\alpha^i/b_i\in \mathcal{O}\).
Then \(c_i=a^i/b_i^l=(\alpha^i/b_i)^l\in\mathcal{O}\cap\Q=\Z\).
Now compute \(\gcd\{c_i \,|\, 0 < i < l\}\).
We claim that this equals \(\textup{rad}(a)\).

Write \(a=\prod_p p^{a_p}\) and similarly \(b_i\) and \(c_i\).
Fix some prime \(p\mid a\).
Then \(0<a_p\leq\log_2 a < l\). 
As \(b_i\) is maximal such that \(a^i/b_i^l\in\Z\), we have that \(b_{ip}=\lfloor ia_{p} / l \rfloor\) and that \(c_{ip}\) is the remainder of \(ia_p\) upon division by \(l\).
This remainder is positive for all \(i\), and equals \(1\) for some \(0<i<l\).
It follows that \(\gcd\{c_i\,|\, 0<i<l\}=\prod_{p\mid a} p = \textup{rad}(a)\), as was to be shown.
\end{proof}

Example~\ref{ex:nilrad_to_rad} and Lemma~\ref{lem:OK_to_rad} already prove a part of Theorem~\ref{thm:equivalent_problems} stated in the introduction. Each of the problems in this theorem has an associated decision problem, which we will show are all equally hard as well.

Each of the problems in Theorem~\ref{thm:equivalent_problems} has an associated decision problem, which we will show are all equally hard as well.

\begin{theorem}[cf.\ Theorem 6.12 in \cite{Buchman-Lenstra}]\label{thm:equivalent_decision_problems}
The following three decision problems are equally hard:
\begin{enumerate}[topsep=0pt,itemsep=-1ex,partopsep=1ex,parsep=1ex]
\item Given an order $R$ in a number field \(K\), decide whether $R = \mathcal{O}_{K}$.
\item Given a finite commutative ring $A$, decide whether $\textup{nil}(A)=0$.
\item Given an integer $m > 0$, decide whether $m$ is square-free.
\end{enumerate}
\end{theorem}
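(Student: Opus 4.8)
The plan is to close a cycle of polynomial-time reductions $(3)\preceq(1)\preceq(2)\preceq(3)$; since $\preceq$ is transitive this yields all three equivalences at once. Two of the links I would carry out by hand, and I expect the remaining one — bringing maximality-testing down to reducedness-testing — to be where the real content lies.

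\emph{$(3)\preceq(1)$.} Given $m\in\Z_{>0}$: if $4\mid m$ then $m$ is not square-free, so the input-transformation may output any fixed non-maximal pair (say $\Z[\sqrt 8]\subsetneq\mathcal{O}_{\Q(\sqrt 2)}$) and the post-processing may ignore the oracle answer. Otherwise I would take $D=m$ when $m\equiv 2,3\pmod 4$ and $D=2m$ when $m\equiv 1\pmod 4$; in both cases $D\equiv 2,3\pmod 4$, $D$ is not a perfect square, and $D$ is square-free if and only if $m$ is. The reduction outputs the quadratic order $\Z[\sqrt D]$ in $\Q(\sqrt D)$: by Exercise~\ref{ex:radical_discriminant}(b),(d),(e) (together with the elementary remark that $\Z[\sqrt{c^2D'}]\subsetneq\Z[\sqrt{D'}]$ when $c>1$), an order $\Z[\sqrt D]$ with $D\equiv 2,3\pmod 4$ non-square equals $\mathcal{O}_{\Q(\sqrt D)}$ exactly when $D$ is square-free, so the oracle's answer for $(1)$ is precisely the answer for $(3)$.

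\emph{$(2)\preceq(3)$.} Given a finite commutative ring $A$, compute $e=\exp(A^+)$ and query the square-freeness oracle on $e$. If $e$ is not square-free then $A$ is not reduced, since a reduced finite ring is a product of finite fields and hence has square-free additive exponent. If $e$ is square-free then $eA=0$, so $A$ is a $\Z/e\Z$-module, and it is projective by Exercise~\ref{ex:projective}; writing $\Z/e\Z\cong\prod_{\ell\mid e}\F_\ell$ gives $A\cong\prod_\ell A_\ell$ with each $A_\ell$ a finite $\F_\ell$-algebra, and $A$ is reduced iff each $A_\ell$ is étale over $\F_\ell$, iff the trace pairing $A\to\Hom_{\Z/e\Z}(A,\Z/e\Z)$ sending $x$ to $y\mapsto\Tr_{A/(\Z/e\Z)}(m_{xy})$ (with $m_{xy}$ multiplication by $xy$) is an isomorphism — this is Exercise~\ref{ex:trace_iff_det} applied on each factor, together with the standard fact that a finite algebra over a perfect field is reduced iff its trace form is non-degenerate. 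Since $A$ is projective this trace is well-defined and, after splitting a surjection $(\Z/e\Z)^s\twoheadrightarrow A$, computable as an ordinary matrix trace (Exercise~\ref{ex:square_multiply}); one then builds $\Hom_{\Z/e\Z}(A,\Z/e\Z)$ (Theorem~\ref{thm:compute_hom}), assembles the pairing, and decides whether it is an isomorphism (the remark after Exercise~\ref{ex:split_exact}), all in polynomial time. Hence the post-processing resolves $(2)$ once it knows whether $e$ is square-free.

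\emph{$(1)\preceq(2)$ — the main obstacle.} Here I would invoke the round-$2$ criterion: for an order $R$ with $d=\Delta(R)$ (computable by Exercise~\ref{ex:small_index}), one has $R=\mathcal{O}_K$ if and only if $(\sqrt{dR}:\sqrt{dR})_K=R$, where $\sqrt{dR}$ is the preimage in $R$ of $\nil(R/dR)$ — because $R$ can fail to be $\fp$-maximal only at primes $\fp$ over rational $p\mid d$, and $p$-maximality of $R$ is detected by invertibility of the Jacobson radical of $R_{(p)}$. Granting this, deciding $R=\mathcal{O}_K$ reduces to computing $\nil(R/dR)$ for the finite ring $R/dR$, after which forming $(\sqrt{dR}:\sqrt{dR})$ and testing equality with $R$ is routine linear algebra over $K$. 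The hard part will be turning this into a genuine reduction to the \emph{decision} problem $(2)$: one would extract $\nil(R/dR)$ by repeatedly asking whether a quotient $R/(dR+J)$ is reduced and, if not, enlarging the nilpotent ideal $J$ — and making this precise, in particular reconciling it with the single-call notion of Definition~\ref{def:reduction} (or else appealing to the multiple-call variant mentioned there, or routing through the search-level equivalences of Theorem~\ref{thm:equivalent_problems}), is the crux of the argument; this is essentially the substance of the cited result \cite{Buchman-Lenstra}. Combined with the two reductions above, this closes the cycle, and Example~\ref{ex:nilrad_to_rad} together with Lemma~\ref{lem:OK_to_rad} records the parallel reductions on the search side.
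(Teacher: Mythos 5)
Your plan closes the cycle $(3)\preceq(1)\preceq(2)\preceq(3)$, and two of the three links are sound. The link $(3)\preceq(1)$ via $\Z[\sqrt D]$ with $D\equiv 2,3\pmod 4$ is correct. Your $(2)\preceq(3)$ is also correct and in fact a pleasantly different route from the paper's: the paper, once it knows $\exp(A)$ is square-free, invokes Theorem~\ref{thm:rad_to_nil} (Algorithm~\ref{alg:comp_ring_rad}), which combines the trace radical at large primes with Frobenius computations at small primes; you instead observe that for the \emph{decision} version the characteristic restriction on $\Trad=\nil$ (Exercise~\ref{ex:field_nilpotent}) is irrelevant, because over a perfect base ``reduced'' is equivalent to ``trace form non-degenerate'' at every prime, so a single non-degeneracy test over $\Z/e\Z$ suffices. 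That is a genuine simplification worth noting.

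The gap is exactly where you suspect: $(1)\preceq(2)$. The round-$2$ step needs the ideal $\sqrt{dR}$, i.e.\ the \emph{set} $\nil(R/dR)$, and that is a search output, not a yes/no answer. A single query to the decision oracle for $(2)$ cannot produce it, and your sketch of repeatedly enlarging a nilpotent ideal $J$ by asking whether $R/(dR+J)$ is reduced is not a reduction in the sense of Definition~\ref{def:reduction} (and it is not obvious how to make it converge in polynomially many rounds even in a multi-call model). Routing through Theorem~\ref{thm:equivalent_problems} does not help either, since that gives equivalences of \emph{search} problems and one cannot pass from search back to decision.

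The paper avoids $(1)\preceq(2)$ entirely. It proves $(3)\preceq(1)$ and $(3)\preceq(2)$ (Exercise~\ref{ex:equivalent_decision_problems}), and then reduces \emph{both} $(1)$ and $(2)$ \emph{to} $(3)$. The reduction $(1)\preceq(3)$ is the content you are missing, and it hinges on the \emph{reduced discriminant} $\delta(R)=\exp(R^\dagger/R)$, which you never bring in. Proposition~\ref{prop:red_disc} says that for a full-rank order $R$ in a field of degree $n$ and a prime $p>n$, $p\mid\#(\mathcal{O}_K/R)$ if and only if $p^2\mid\delta(R)$. So after stripping from $\delta(R)$ the contribution of the (polynomially many) primes $p\le n$, a single square-freeness query tells you whether any large prime obstructs maximality; if not, Lemma~\ref{lem:compute_OK} applied with $d=\prod_{p\le n}p$ computes $\mathcal{O}_K$ and you compare. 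That single-query reduction to $(3)$, together with your $(2)\preceq(3)$ and $(3)\preceq(1)$, closes the cycle without ever needing $(1)\preceq(2)$ directly.
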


We will prove this theorem at the end of Section~\ref{ssec:reduced_discriminants}.
As of \today, for none of the six problems in Theorem~\ref{thm:equivalent_problems} and Theorem~\ref{thm:equivalent_decision_problems} there are known polynomial-time algorithms.
Note that each decision problem trivially reduces to its corresponding computational problem.

\begin{exercise}\label{ex:equivalent_decision_problems}
Prove \(3\preceq 1\) and \(3\preceq 2\) for Theorem~\ref{thm:equivalent_decision_problems}. \\
\emph{Hint: } Use Exercise~\ref{ex:radical_discriminant}.
\end{exercise}

\begin{exercise}
Show that there exists a polynomial-time algorithm that, given \(a,b\in\Z_{>0}\), decides whether \(\textup{rad}(a)=\textup{rad}(b)\).
\end{exercise}

\begin{exercise}
Show that prime factorization is equally hard as decomposing a finite abelian group as a product of cyclic groups of prime-power order.
\end{exercise}

\subsection{Nilradicals}

In this section we will compute \(\textup{nil}(A)\) for a finite commutative ring \(A\) given information about \(\# A\), which will prove part of Theorem~\ref{thm:equivalent_problems}. We will need to treat the small primes in \(\#A\) separately.

\begin{lemma}\label{lem:reduce_powers_radical}
Let \(A\) be a commutative ring and let \(n,k\in\Z_{\geq 1}\). Then \(\nil(A/n^kA)\) is the inverse image of \(\nil(A/nA)\) under the natural map \(A/n^kA\to A/nA\). 
\end{lemma}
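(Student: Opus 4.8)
The plan is to work directly with the natural ring homomorphism. Write $\pi\colon A/n^kA \to A/nA$ for the natural surjection, which is well-defined because $n^kA \subseteq nA$, and whose kernel is $nA/n^kA$. The claim is the set equality $\nil(A/n^kA) = \pi^{-1}\big(\nil(A/nA)\big)$, which I would prove by checking the two inclusions separately.

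For the inclusion $\subseteq$, I would simply use that ring homomorphisms send nilpotents to nilpotents: if $\bar x \in A/n^kA$ satisfies $\bar x^N = 0$, then $\pi(\bar x)^N = \pi(\bar x^N) = 0$, so $\pi(\bar x) \in \nil(A/nA)$ and hence $\bar x \in \pi^{-1}(\nil(A/nA))$. This direction is immediate and requires nothing special about the particular quotients involved.

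For the inclusion $\supseteq$, which is where the hypothesis that we quotient by powers of the \emph{same} integer $n$ gets used, I would argue as follows. Suppose $\bar x \in A/n^kA$ with $\pi(\bar x) \in \nil(A/nA)$, say $\pi(\bar x)^m = 0$. Then $\bar x^m$ lies in $\ker\pi = nA/n^kA$, so $\bar x^m = n\bar y$ for some $\bar y \in A/n^kA$. Raising to the $k$-th power and using that $n$ is a central integer,
\[ \bar x^{mk} = (n\bar y)^k = n^k \bar y^k = 0, \]
since multiplication by $n^k$ annihilates $A/n^kA$ (for every $z \in A$ we have $n^k z \in n^kA$, hence $\overline{n^k z} = 0$). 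Thus $\bar x$ is nilpotent, completing the proof.

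I do not expect any real obstacle here: the only point to state carefully is that $n$ being an ordinary integer lets us pull it out of the $k$-th power, and that $n^k A = 0$ in $A/n^kA$ kills the resulting term; everything else is formal. If desired I could phrase the two inclusions uniformly by noting $\nil$ is the preimage of $0$ and chasing the square of quotient maps $A/n^kA \to A/nA \to$ (further quotients), but the direct argument above is the shortest.
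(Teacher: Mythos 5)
Your proof is correct and takes essentially the same approach as the paper. The paper's proof consists of the single observation that $(nA)^k \subseteq n^k A$, hence $\ker(\pi) = nA/n^kA \subseteq \nil(A/n^kA)$, leaving implicit the standard fact that a surjection with nilpotent kernel identifies nilradicals via preimage; your argument spells out exactly this, with the computation $\bar x^{mk} = n^k\bar y^k = 0$ being the same calculation behind $(nA)^k \subseteq n^kA$.
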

\begin{proof}
We have \((nA)^k\equiv 0 \bmod n^k A\), so \(nA/n^kA\subseteq \nil(A/n^kA)\).
\end{proof}

\begin{lemma}\label{lem:frobenius}
There exists a polynomial time algorithm that, given a prime \(p\) and a finite commutative \(\F_p\)-algebra \(R\), computes the Frobenius morphism \(x\mapsto x^p\) of \(R\).
\end{lemma}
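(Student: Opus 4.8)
The plan is to reduce the computation to evaluating $x\mapsto x^p$ on a basis and then exponentiating by repeated squaring. First I would put $R$ into a convenient form: since $R$ is a finite commutative $\F_p$-algebra we have $p\cdot 1_R=0$ and hence $pR=0$, so $R$ is a finite $\F_p$-vector space and the structure theorem (Theorem~\ref{thm:fundament_fingen}) lets us compute in polynomial time an isomorphism $R\cong(\Z/p\Z)^n$. Transporting the multiplication morphism through this isomorphism we obtain structure constants $c_{ijk}\in\F_p$, represented by integers in $[0,p)$, with $e_ie_j=\sum_k c_{ijk}e_k$ on the standard basis $e_1,\dotsc,e_n$. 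For $x=\sum_i x_ie_i$ and $y=\sum_j y_je_j$ one then has $xy=\sum_k\bigl(\sum_{i,j}x_iy_jc_{ijk}\bigr)e_k$ with coefficients reduced modulo $p$, so multiplication in $R$ is a polynomial-time operation.

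Next, the Frobenius map $\phi\colon R\to R$, $x\mapsto x^p$, is additive: by the binomial theorem $(x+y)^p=\sum_{k=0}^{p}\binom{p}{k}x^ky^{p-k}$ and $p\mid\binom{p}{k}$ for $0<k<p$, so $(x+y)^p=x^p+y^p$ in $R$. (In fact $\phi$ is an $\F_p$-algebra homomorphism, but we only need additivity.) Hence $\phi$ is the unique group homomorphism $R\to R$ with $\phi(e_i)=e_i^p$ for all $i$, and it suffices to compute the $n$ elements $e_i^p$ and output the matrix whose $i$-th column is the coefficient vector of $e_i^p$.

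Finally, to compute $e_i^p$ in polynomial time I would use the square-and-multiply method, exactly as in Exercise~\ref{ex:square_multiply}.b: writing $p=\sum_{l=0}^{L}b_l2^l$ in binary with $L=O(\log p)$, compute $e_i,e_i^{2},e_i^{4},\dotsc,e_i^{2^L}$ by successive squaring in $R$ and multiply together those $e_i^{2^l}$ with $b_l=1$. This costs $O(\log p)$ multiplications in $R$ per basis vector, hence $O(n\log p)$ multiplications overall, each polynomial-time, so the whole procedure runs in polynomial time; conjugating the resulting endomorphism back through $R\cong(\Z/p\Z)^n$ returns $\phi$ in terms of the original encoding of $R$. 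The one genuine obstacle is the exponentiation: computing $e_i^p$ by $p-1$ successive multiplications would take exponentially many steps in $\log p$, just as in Algorithm~\ref{alg:explict_computation}, so fast exponentiation is essential; the rest is routine bookkeeping with the structure constants.
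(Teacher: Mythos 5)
Your proof is correct and follows essentially the same approach as the paper: exploit the additivity of Frobenius to reduce to computing $x^p$ on a generating set, then use square-and-multiply to perform each exponentiation in $O(\log p)$ multiplications rather than $p-1$. You simply spell out a few details the paper leaves implicit (the binomial-theorem argument for additivity, the structure-constant bookkeeping), but the underlying algorithm and the key observation about fast exponentiation are identical.
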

\begin{proof}
It suffices to compute \(x^p\) for each \(x\) in some additively generating set of \(R\). Note that \(p\) can be too large to naively compute \(x^p\) with \(p-1\) multiplications.
Iteratively compute \(x^{2^0},x^{2^1},x^{2^2},\dotsc,x^{2^n}\) by squaring for \(n\leq \log_2 p\), and by writing \(p\) in base-2 then compute \(x^p\) in at most \(\log_2 p\) multiplications.
\end{proof}

\begin{proposition}[cf.\ Algorithm 6.1 in \cite{Buchman-Lenstra}]\label{prop:nilradical_prime_power}
There is a polynomial-time algorithm that, given a finite commutative ring $A$ with $\# A$ a prime power, computes the nilradical $\nil(A)$. 
\end{proposition}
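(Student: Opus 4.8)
The plan is to reduce, via Lemma~\ref{lem:reduce_powers_radical}, to the nilradical of a finite commutative $\F_p$-algebra, where the Frobenius turns the problem into linear algebra. First I would determine the prime $p$: compute $\# A$ (it is finite, so Exercise~\ref{ex:compute_order} applies), which by hypothesis equals $p^k$ for some prime $p$ and $k\ge 0$; then $p$ is recovered as $\lfloor N^{1/j}\rfloor$ for the largest $j\le\log_2 N$ (with $N=\#A$) for which $\lfloor N^{1/j}\rfloor^j=N$, each integer root being computed by binary search. Since the additive group of $A$ is a $p$-group of order $p^k$ we have $p^kA=0$, so $A=A/p^kA$, and Lemma~\ref{lem:reduce_powers_radical} (with $n=p$) shows that $\nil(A)$ is the inverse image of $\nil(A/pA)$ under the natural surjection $\pi\colon A\to A/pA$. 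Using the algorithms of Section~\ref{sec:abgp} I would compute $R:=A/pA$ with its induced multiplication; as an abelian group $R\cong(\Z/p\Z)^d$ with $d\le k$, which furnishes an explicit $\F_p$-basis of $R$ and realizes $R$ as a finite commutative $\F_p$-algebra.

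On $R$ I would exploit that the Frobenius $F\colon R\to R$, $x\mapsto x^p$, is not merely a ring endomorphism but an $\F_p$-linear one (because $a^p=a$ for $a\in\F_p$), and that it is computable in polynomial time by Lemma~\ref{lem:frobenius}. Writing $F^N$ for its $N$-fold iterate, so that $F^N(x)=x^{p^N}$, I claim $\nil(R)=\ker(F^N)$ as soon as $p^N\ge k$. The inclusion $\ker(F^N)\subseteq\nil(R)$ is immediate. Conversely, $\nil(R)$ equals the intersection of all primes (Exercise~\ref{ex:nilradical_prime_intersection}), hence of all maximal ideals of the finite ring $R$ (Exercise~\ref{ex:finite_prime_maximal}); so by Nakayama (Lemma~\ref{lem:local_nakayama}.1, applied to $M=\nil(R)^i$, $N=0$, $I=\nil(R)$) the chain $\nil(R)\supseteq\nil(R)^2\supseteq\cdots$ is strictly decreasing until it reaches $0$, whence $\nil(R)^d=0$ for $d=\dim_{\F_p}R$. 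Since $d\le k\le p^N$, every $x\in\nil(R)$ satisfies $x^{p^N}=0$, i.e.\ $x\in\ker(F^N)$. Thus I would take $N=\lceil\log_2 k\rceil$, compute the matrix of $F$ over $\F_p$, raise it to the $N$-th power by repeated squaring, and compute the kernel of the resulting matrix by Gaussian elimination over the field $\F_p=\Z/p\Z$ (all via Exercise~\ref{ex:square_multiply}); this produces $\nil(A/pA)$.

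Finally I would compute $\nil(A)=\pi^{-1}(\nil(A/pA))$, which is generated by $pA$ together with lifts to $A$ of generators of $\nil(A/pA)$, once more using the abelian-group machinery of Section~\ref{sec:abgp}. Every step is polynomial time: $N$ is of size $O(\log\log\#A)$, all matrices have dimension $d\le\log_p\#A$ with entries in $\{0,\dots,p-1\}$, and Lemma~\ref{lem:frobenius} absorbs the potentially enormous exponent $p$. The one place needing care — and the reason the hypothesis that $\#A$ be a prime power is essential — is the passage to characteristic $p$: it requires knowing $p$, which for a prime power $\#A$ is the readily computable quantity $\rad(\#A)$, but which for a general finite ring would amount to factoring $\#A$.
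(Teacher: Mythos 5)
Your proposal is correct and takes essentially the same route as the paper: reduce to $B=A/pA$ via Lemma~\ref{lem:reduce_powers_radical}, exploit the $\F_p$-linearity of the Frobenius $F$ (Lemma~\ref{lem:frobenius}) to compute $\nil(B)=\ker(F^t)$ once $p^t\geq\dim_{\F_p}B$, and pull back along $\pi\colon A\to B$. The only variation is cosmetic: you justify $\nil(B)^d=0$ by a Nakayama descending-chain argument, whereas the paper uses the more immediate observation that for nilpotent $x$ multiplication by $x$ is a nilpotent operator on the $d$-dimensional space $B$, so $x^d=0$.
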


We first state the algorithm.

\begin{Algorithm}\label{alg:nilradical_prime_power}
Let \(A\) be a finite commutative ring of prime power order.
\begin{enumerate}[topsep=0pt,itemsep=-1ex,partopsep=1ex,parsep=1ex]
\item Compute \(\# A\) and find \(p\) prime and \(n\in\Z_{\geq 0}\) such that \(\#A=p^n\).
\item Compute \(B=A/pA\), which is an \(\F_p\)-algebra, and the least \(t\in\Z_{\geq 0}\) such that \(\dim_{\F_p} B \leq p^t\).
\item Compute the Frobenius \(F\) of \(B\) with Lemma~\ref{lem:frobenius} and compute \(K=\ker(F^t)\).
\item Return the inverse image of \(K\) under the natural map \(\pi:A\to B\).
\end{enumerate}
\end{Algorithm}

\begin{proof}[Proof of Proposition~\ref{prop:nilradical_prime_power}]
We will show Algorithm~\ref{alg:nilradical_prime_power} satisfies our requirements.
Note that all steps in the algorithm involve only basic computations as described in Section~\ref{sec:basic_comp} and Section~\ref{sec:abgp}.
Most importantly, we use that \(F\) and thus \(F^t\) is a linear map, to compute \(K\).
Hence the algorithm runs in polynomial time.
We will thus show it is correct.
For each \(x\in\nil(B)\) and \(m\geq \dim_{\F_p} B\) we have \(x^m=0\), so \(x\in\nil(B)\) if and only if \(F^t(x)=0\).
Then \(\pi^{-1}K=\nil(A)\) by Lemma~\ref{lem:reduce_powers_radical}, so the algorithm is correct.
\end{proof}

\begin{exercise}\label{ex:test_local_ring}
You may assume the existence of a polynomial-time primality test, for example the AKS algorithm~\cite{AKS}.
\begin{enumex}
\item Show that a finite commutative ring \(R\) is local if and only if \(\# R=p^k\) for some prime \(p\) and integer \(k>0\) and \(\textup{nil}(R)\) is a maximal ideal.
\item Show that a finite commutative ring \(R\) with \(\textup{nil}(R)=0\) is a field if and only if there exists some prime \(p\) such that  \(\# R=p^k\) for some \(k\in\Z_{>0}\) and \(\dim_{\F_p} \ker(x\mapsto x^p-x)=1\).
\item Conclude that there exist a polynomial-time algorithm that, given a finite commutative ring \(R\), decides whether \(R\) is local and if so computes its maximal ideal.
\end{enumex}
\end{exercise}

Recall the definition of the trace from Section~\ref{sec:numberring}.

\begin{definition}
Let \(A\subseteq B\) be commutative rings such that \(B\) is free of finite rank over \(A\) as an \(A\)-module. Then we define the \emph{trace radical} of \(B\) over \(A\) as
\[\Trad(B/A)=\{x\in B \,|\, \Tr_{B/A}(xB)=0\} = \ker( x \mapsto (y \mapsto \Tr_{B/A}(xy)) ), \]
where the kernel is taken of a homomorphism of \(B\)-modules \(B \to \Hom_A(B,A)\).
\end{definition}

It follows trivially that \(\Trad(B/A)\) is an ideal of \(B\).
In fact, it is the largest ideal of \(B\) contained in \(\ker(\Tr_{B/A})\).
For \(A\) a commutative ring that is finitely generated as a module over \(\Z\) we may compute \(\Trad(B/A)\) in polynomial time using the theory of Section~\ref{sec:abgp}.
As the following exercise in linear algebra shows, the trace function can in some sense test nilpotency.
Similarly, we hope to compute \(\nil(B)\) from \(\Trad(B/A)\).

\begin{exercise}\label{ex:field_nilpotent}
Let \(A\) be a field and let \(B\) be a finite-dimensional \(A\)-vector space.
\begin{enumex}
\item Suppose \(A\) has characteristic \(0\) or \(p\) for some prime \(p>\dim_A(B)\).
Show that \(M\in \End_A(B)\) is nilpotent if and only if \(\Tr(M^n)=0\) for all \(n\in\Z_{>0}\). \\ 
\emph{Hint:} Apply Newton's identities to the characteristic polynomial of \(M\).
\item Suppose \(A\) has characteristic \(0\) or \(p\) for some prime \(p>\dim_A(B)\) and that \(B\) is a commutative \(A\)-algebra. 
Show that \(\Trad(B/A)=\nil(B)\).
\item Give a counter-example to (b) when \(A\) has prime characteristic \(p\leq \dim_A(B)\).
\item What happens when we replace \(A\) by a local ring and \(B\) by a free \(A\)-module? 
\end{enumex}
\end{exercise}

\begin{exercise}\label{ex:trace_reductions}
Let \(A\subseteq B\) be a commutative rings such that \(B\) is free of finite rank over \(A\). Let \(S\subseteq A\) be a multiplicative set, \(\mathfrak{a}\subseteq A\) an ideal and \(x\in B\).
\begin{enumex}
\item Show that the image of \(\textup{Tr}_{B/A}(x)\) in \(S^{-1}A\) equals \(\textup{Tr}_{S^{-1}B/S^{-1}A}(x)\).
\item Show that the image of \(\textup{Tr}_{B/A}(x)\) in \(A/\mathfrak{a}\) equals \(\textup{Tr}_{(B/\mathfrak{a}B)/(A/\mathfrak{a})}(x)\).
\end{enumex}
\end{exercise}

\begin{exercise}\label{ex:trace_det_product}
Let \(A\subseteq B_1,B_2\) be commutative rings such that \(B_1\) and \(B_2\) are free \(A\)-modules of finite rank, and let \(b=(b_1,b_2)\in B_1\times B_2\). Show that
\begin{align*}
\Tr_{(B_1\times B_2)/A}(b)&=\Tr_{B_1/A}(b_1)+\Tr_{B_2/A}(b_2) \quad\text{and} \\
\det{}_{(B_1\times B_2)/A}(b)&=\det{}_{B_1/A}(b_1)\cdot\det{}_{B_2/A}(b_2).
\end{align*}
\end{exercise}

\begin{exercise}\label{ex:canon_trace_det}
Suppose \(B\) is a free \(A\)-module and let \(I,J\subseteq A\) be coprime ideals with \(I\cap J=0\).
Then \(B\cong (B/BI)\times (B/BJ)\) and \(B/BI\) is a free \(A/I\)-module.
Show that \(\Tr\) and \(\det\) commute with this isomorphism when \(B\) has finite rank.
\end{exercise}

\begin{proposition}\label{prop:trad_is_rad}
Let \(m\) be a square-free integer and \(B\) a commutative algebra over \(A=\Z/m\Z\) that is free of finite rank as a module.
If \(p>\textup{rk}_A(B)\) for all primes \(p\) dividing \(m\), then \(\Trad(B/A)=\nil(B)\).
\end{proposition}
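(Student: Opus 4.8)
The plan is to reduce to the case treated in Exercise~\ref{ex:field_nilpotent}.b by localizing $A=\Z/m\Z$ at each of its maximal ideals and using the Chinese remainder theorem. Since $m$ is square-free, say $m=p_1\cdots p_r$, we have $A\cong \prod_{i}\F_{p_i}$ by the Chinese remainder theorem, and correspondingly $B\cong\prod_i B_i$ where $B_i=B/p_iB$ is a free $\F_{p_i}$-module of rank $\rk_A(B)$ (each idempotent in $A$ gives an idempotent in $B$, splitting $B$ as in Exercise~\ref{ex:canon_trace_det}). Both $\nil$ and $\Trad$ respect this product decomposition: for $\nil$ this is immediate, and for $\Trad$ it follows from the additivity of the trace over products, Exercise~\ref{ex:trace_det_product}, together with Exercise~\ref{ex:canon_trace_det} applied to the coprime ideals cutting out the factors. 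Hence it suffices to prove $\Trad(B_i/\F_{p_i})=\nil(B_i)$ for each $i$.

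Now for each $i$ the ring $B_i$ is a finite-dimensional commutative $\F_{p_i}$-algebra of dimension $\rk_A(B)<p_i$, so the hypothesis of Exercise~\ref{ex:field_nilpotent}.b is met: $\F_{p_i}$ has characteristic $p_i>\dim_{\F_{p_i}}(B_i)$. That exercise gives exactly $\Trad(B_i/\F_{p_i})=\nil(B_i)$, which is what we want. The one point requiring care is compatibility of the trace form with base change, i.e.\ that the trace radical of $B_i$ over $\F_{p_i}$ computed intrinsically agrees with the reduction mod $p_i$ of the trace form of $B$ over $A$; this is precisely Exercise~\ref{ex:trace_reductions}.b (reduction modulo the ideal $p_iA$), so $\Trad(B/A)$ reduces modulo $p_i$ to $\Trad(B_i/\F_{p_i})$.

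Assembling the pieces: under the isomorphism $B\cong\prod_i B_i$ we have
\[
\Trad(B/A)\;\cong\;\prod_i \Trad(B_i/\F_{p_i})\;=\;\prod_i \nil(B_i)\;\cong\;\nil(B),
\]
where the first isomorphism uses Exercises~\ref{ex:trace_reductions}.b, \ref{ex:trace_det_product} and~\ref{ex:canon_trace_det}, the middle equality is Exercise~\ref{ex:field_nilpotent}.b, and the last is the elementary fact that the nilradical of a finite product is the product of nilradicals. This proves $\Trad(B/A)=\nil(B)$. The main obstacle is purely bookkeeping: checking that the product decomposition of $A$ induced by square-freeness carries the trace form of $B/A$ to the product of the trace forms of the $B_i/\F_{p_i}$, for which the cited exercises are tailored.
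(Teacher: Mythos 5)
Your proof is correct and follows exactly the same route as the paper's: CRT to decompose $A\cong\prod_{p\mid m}\F_p$ and $B\cong\prod_{p\mid m}B/pB$, observe that both $\nil$ and $\Trad$ respect this decomposition (via Exercise~\ref{ex:canon_trace_det}), and then invoke Exercise~\ref{ex:field_nilpotent}.b for each prime factor using $p>\rk_A(B)=\rk_{\F_p}(B/pB)$. The extra citations to Exercises~\ref{ex:trace_det_product} and~\ref{ex:trace_reductions} are harmless but redundant; Exercise~\ref{ex:canon_trace_det} already handles the compatibility.
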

\begin{proof}
We have \(A\cong \prod_{p\mid m} (\Z/p\Z)\) and \(B\cong \prod_{p\mid m}(B/p B)\).
Clearly, under these isomorphisms we have \(\nil(B)\cong \prod_{p\mid m} \nil(B/pB)\), and from Exercise~\ref{ex:canon_trace_det} it follows that also \(\Trad(B/A)\cong \prod_{p\mid m}\Trad((B/pB)/(A/pA))\).
Note that \(p>\rk_A(B) = \rk_{A/pA}(B/pB)\) for all \(p\mid m\).
Hence it suffices to prove the proposition for \(m\) prime, which is Exercise~\ref{ex:field_nilpotent}.
\end{proof}

With this proposition, we may prove the following.

\begin{theorem}\label{thm:rad_to_nil}
There exists a polynomial-time algorithm that, given a finite commutative ring $A$ and the integer \(\rad(\#A)\), computes \(\nil(A)\).
\end{theorem}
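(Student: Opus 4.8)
The strategy is to reduce, at the cost of passing to the quotient by a nilpotent ideal, to a ring of square-free additive exponent, and then to split the prime divisors of its order into the two regimes handled by Proposition~\ref{prop:nilradical_prime_power} (primes so small that raising to the $p$-th power is affordable) and Proposition~\ref{prop:trad_is_rad} (primes so large that the trace form already detects nilpotency). So first I would set $r=\rad(\#A)$, which is the second input, and choose $N$ with $\#A\mid r^N$, for instance $N=\lceil\log_2\#A\rceil$; then $r^N A=0$, so $A/r^NA=A$, and by Lemma~\ref{lem:reduce_powers_radical} the ideal $\nil(A)$ is the inverse image of $\nil(A/rA)$ under the natural map $A\to A/rA$. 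It therefore suffices to compute $\nil(B)$ for the computable $\Z/r\Z$-algebra $B=A/rA$, where now $r$ is square-free and $rB=0$.

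Next, compute the number $g$ of invariant factors of the abelian group $B$ via Theorem~\ref{thm:fundament_fingen}; then $\dim_{\F_p}(B/pB)\le g$ for every prime $p$, and $g\le\log_2\#A$, so there are only polynomially many primes $p\le g$. Let $r_s$ be the product of those $p\le g$ with $p\mid r$, let $r_\ell=r/r_s$, and use a B\'ezout relation $ur_s+vr_\ell=1$ to form the idempotent $e=vr_\ell\in B$ (indeed $e^2-e=vr_\ell(vr_\ell-1)=-uv\,r\equiv 0$ since $rB=0$). Then $B\cong eB\times(1-e)B$ as rings, with $eB$ of additive exponent dividing $r_s$ and $C:=(1-e)B$ of additive exponent dividing $r_\ell$, so $\nil(B)=\nil(eB)\times\nil(C)$ and the two factors are treated separately. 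For $eB$ the order is a product of prime powers for the explicitly known primes $p\le g$, so decomposing $eB$ by the Chinese remainder theorem into its $p$-primary ideals (once more using B\'ezout idempotents), each of prime-power order, and applying Proposition~\ref{prop:nilradical_prime_power} to each factor, one obtains $\nil(eB)$ in polynomial time.

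For $C$, every prime $p\mid r_\ell$ satisfies $p>g\ge\dim_{\F_p}(C/pC)$, but $C$ need not be \emph{free} over $\Z/r_\ell\Z$, so Proposition~\ref{prop:trad_is_rad} does not apply on the nose; I would force freeness by padding. Compute an invariant-factor decomposition $C\cong\bigoplus_{j=1}^c\Z/m_j\Z$ with $1<m_j\mid r_\ell$, so that $c\le g$; since $r_\ell$ is square-free, $m_j$ and $r_\ell/m_j$ are coprime, whence $\Z/m_j\Z\oplus\Z/(r_\ell/m_j)\Z\cong\Z/r_\ell\Z$, and so the commutative $\Z/r_\ell\Z$-algebra $\widetilde C=C\times\prod_{j=1}^c\Z/(r_\ell/m_j)\Z$ is free of rank $c$ over $\Z/r_\ell\Z$. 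As $c\le g<p$ for all $p\mid r_\ell$, Proposition~\ref{prop:trad_is_rad} gives $\nil(\widetilde C)=\Trad(\widetilde C/(\Z/r_\ell\Z))$, and the right-hand side is computable in polynomial time by the module-theoretic algorithms of Section~\ref{sec:abgp} over the base ring $\Z/r_\ell\Z$ — crucially, without ever factoring $r_\ell$. Because $\nil$ and $\Trad$ both respect the product decomposition $\widetilde C=C\times\prod_j\Z/(r_\ell/m_j)\Z$ (Exercises~\ref{ex:trace_det_product} and \ref{ex:canon_trace_det}), the $C$-component of $\Trad(\widetilde C/(\Z/r_\ell\Z))$ is exactly $\nil(C)$.

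Reassembling $\nil(eB)$ and $\nil(C)$ yields $\nil(B)\subseteq B$, and pulling back along $A\to B$ yields $\nil(A)$. Every step is either one of the polynomial-time primitives of Sections~\ref{sec:basic_comp} and \ref{sec:abgp} (structure theorem, B\'ezout, idempotent splittings, kernels of module maps and hence $\Trad$) or a polynomially bounded number of calls to Propositions~\ref{prop:nilradical_prime_power} and \ref{prop:trad_is_rad}, on inputs of size polynomial in $\log\#A$ and $\log r$, so the algorithm runs in polynomial time. I expect the only real obstacle to be the large-prime case: recognising that $C$ can fail to be free over $\Z/r_\ell\Z$ even though each $C/pC$ is a vector space, and arranging the padding $\widetilde C$ so that Proposition~\ref{prop:trad_is_rad} applies while the trace radical still cuts out $\nil(C)$ — everything else is bookkeeping with established subroutines.
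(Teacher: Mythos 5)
Your proof is correct, but you take a genuinely different route to the same theorem. The shared skeleton is: reduce to $B=A/rA$ via Lemma~\ref{lem:reduce_powers_radical}, then handle small primes by Proposition~\ref{prop:nilradical_prime_power} and large primes by the trace radical, Proposition~\ref{prop:trad_is_rad}. The difference is entirely in how one arranges for the large-prime piece to be a \emph{free} module so that the trace-radical machinery applies. The paper's Algorithm~\ref{alg:comp_ring_rad} feeds $\{r,\#(A/rA)\}\cup\{p\leq\lfloor\log_2\#A\rfloor\}$ into the coprime basis algorithm of Theorem~\ref{thm:graph_algorithm}. That has a hidden payoff: any resulting large factor $c_i$ is square-free, and since $c_i^{k_i}$ is the $c_i$-part of $\#(A/rA)=\prod_{p\mid r}p^{\dim_{\F_p}(A/pA)}$, every prime $p\mid c_i$ has $\dim_{\F_p}(A/pA)=k_i$; hence $A/c_iA\cong\prod_{p\mid c_i}A/pA$ is automatically free of rank $k_i$ over $\Z/c_i\Z$, with no padding. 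You instead split $B$ by the explicit idempotent $e=vr_\ell$ into a small-prime piece $eB$ and a large-prime piece $C$, observe correctly that $C$ need not be free over $\Z/r_\ell\Z$, and repair this by adjoining the complementary cyclic ring factors $\Z/(r_\ell/m_j)\Z$. Both constructions are valid and polynomial time; the paper's is more economical in that the coprime basis absorbs the freeness issue, while yours makes the obstruction and its fix explicit, at the cost of one additional use of the structure theorem.

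One small imprecision worth fixing: you justify projecting $\Trad(\widetilde C/(\Z/r_\ell\Z))$ to its $C$-component by saying that $\Trad$ "respects the product decomposition", citing Exercises~\ref{ex:trace_det_product} and~\ref{ex:canon_trace_det}. But $\Trad(C/(\Z/r_\ell\Z))$ is not defined, since $C$ is not free over $\Z/r_\ell\Z$ — that was the whole point of the padding. The correct, and simpler, justification is that Proposition~\ref{prop:trad_is_rad} gives $\Trad(\widetilde C/(\Z/r_\ell\Z))=\nil(\widetilde C)$, and since $\widetilde C$ is a product of rings with the extra factors reduced, $\nil(\widetilde C)=\nil(C)\times 0$; projecting onto the $C$-coordinate then yields $\nil(C)$. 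No decomposition of $\Trad$ itself is needed.
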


\begin{Algorithm}\label{alg:comp_ring_rad}
Let \(A\) be a finite commutative ring and let \(r=\rad(\#A)\) and compute \(l=\lfloor\log_2(\#A)\rfloor\).
\begin{enumerate}[topsep=0pt,itemsep=-1ex,partopsep=1ex,parsep=1ex]
\item Apply Algorithm~\ref{alg:graph_algorithm} to \(\{r,\#(A/rA)\}\cup\{p\mid p\leq l,\, p \text{ prime}\}\) to compute a coprime basis \(c_1,\dotsc,c_n\).
\item Factor \(\#(A/rA)=\prod_{i\in I} c_i^{k_i}\) with \(I\) such that \(k_i\geq 1\) for all \(i\in I\).
\item For each \(i\in I\) compute \(\nil(A/c_iA)\) as follows:
\begin{enumerate}[topsep=-10pt,itemsep=-1ex,partopsep=1ex,parsep=1ex]
\item If \(c_i\leq l\), then \(c_i\) is prime and apply Algorithm~\ref{alg:nilradical_prime_power}.
\item If \(c_i>l\), then \(A/c_i A\) is free over \(\Z/c_i\Z\) of rank \(k_i\) and compute \(\nil(A/c_iA)\) as \(\Trad((A/c_iA)/(\Z/c_i\Z))\) using linear algebra.
\end{enumerate}
\item Return the inverse image of \(\prod_{i\in I} \nil(A/c_iA)\) under the natural map \(A\to\prod_{i\in I} (A/c_i A)\).
\end{enumerate}
\end{Algorithm}

\begin{proof}[Proof of Theorem~\ref{thm:rad_to_nil}]
Clearly Algorithm~\ref{alg:comp_ring_rad} runs in polynomial time if it is correct.
For \(i\in I\) we have \(\gcd(c_i,r) > 1\), so \(c_i\mid r\) and \(c_i\) is square-free.
If \(c_i\leq l\), then it is prime, so Algorithm~\ref{alg:nilradical_prime_power} correctly computes \(\nil(A/c_iA)\) by Proposition~\ref{prop:nilradical_prime_power} in step (3a).
Now suppose \(c_i>l\).
As \(c_i\) is square-free we have \(A/c_i A\cong \prod_{p\mid c_i} A/pA\).
Clearly \(A/pA\) is free over \(\Z/p\Z\) since the latter is a field, and its rank must be \(k_i\).
Hence \(A/c_i A\) is free over \(\Z/c_i\Z\) of rank \(k_i\).
Thus the conditions to Proposition~\ref{prop:trad_is_rad} are satisfied and step (3b) correctly computes \(\textup{nil}(A/c_iA)\).
By Lemma~\ref{lem:reduce_powers_radical} we compute \(\textup{nil}(A)\) in step (4).
\end{proof}

\subsection{Non-invertible ideals}\label{ssec:singular_ideals}
By Theorem~\ref{thm:blowup}, finding larger orders within the same number field can be done by finding non-invertible ideals.

\begin{lemma}\label{lem:singular_discriminant}
Let \(R\) be a full-rank order in a number field \(K\) and let \(p\) be prime. Then \(p \mid \#(\mathcal{O}_K/R)\) if and only if \(R\) has a non-invertible prime ideal containing \(p\).
\end{lemma}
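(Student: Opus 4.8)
The plan is to relate divisibility of the index $\#(\mathcal{O}_K/R)$ by $p$ to a local statement at $p$, and then to invertibility of primes above $p$. First I would observe that both conditions can be checked after localizing at $p$: we have $p \mid \#(\mathcal{O}_K/R)$ if and only if $R_{(p)} \neq (\mathcal{O}_K)_{(p)} = \mathcal{O}_{K,(p)}$, since the index $\#(\mathcal{O}_K/R)$ is the product of the local indices $\#(\mathcal{O}_{K,(p)}/R_{(p)})$ over primes $p$ of $\Z$, and the $p$-part of $\#(\mathcal{O}_K/R)$ is exactly $\#(\mathcal{O}_{K,(p)}/R_{(p)})$ (using Exercise~\ref{ex:number_field_noetherian} and that localization is exact). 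On the other side, the prime ideals of $R$ containing $p$ are in bijection with the maximal ideals of $R_{(p)}$ (equivalently with the primes of the finite ring $R/pR$), and a prime $\mathfrak{p}$ of $R$ containing $p$ is invertible over $R$ if and only if $\mathfrak{p} R_{(p)}$ is invertible over $R_{(p)}$, by Theorem~\ref{thm:localization_inverse} (invertibility is local). So the statement reduces to: $R_{(p)} \subsetneq \mathcal{O}_{K,(p)}$ if and only if $R_{(p)}$ has a non-invertible maximal ideal.

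Next I would handle the two directions of this local equivalence. For the contrapositive of one direction: suppose every maximal ideal of the semi-local ring $R_{(p)}$ is invertible. Then, since $\mathcal{I}(R_{(p)}) \cong \mathrm{Q}(R)^*/R_{(p)}^*$ (Corollary~\ref{cor:invertible_ideals_local}) is a group and the maximal ideals generate the monoid of nonzero integral ideals (by coprimality of distinct maximal ideals, Exercise~\ref{ex:maximal_ideal_coprime}, together with $R_{(p)}$ being Noetherian so every ideal contains a product of maximal ideals), one concludes every fractional ideal of $R_{(p)}$ is invertible; then $R_{(p)}$ is its own maximal order by Theorem~\ref{thm:has_maximal_order}, forcing $R_{(p)} = \mathcal{O}_{K,(p)}$, a contradiction. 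Alternatively and more cleanly, one can cite the equivalence ``$R = \mathcal{O}_{\mathrm{Q}(R)}$ iff every maximal ideal of $R$ is invertible'' (stated as an exercise just after Theorem~\ref{thm:has_maximal_order} in the number-rings subsection), applied to the order $R_{(p)}$ — although since $R_{(p)}$ is only a localization and not literally an order, I'd prefer to argue directly via blowups as in Theorem~\ref{thm:blowup}. For the other direction: if $R_{(p)} \subsetneq \mathcal{O}_{K,(p)}$, pick any $x \in \mathcal{O}_{K,(p)} \setminus R_{(p)}$; writing $x = \beta/\alpha$ with $\alpha,\beta \in R_{(p)}$ one gets a finitely generated ideal $I = R_{(p)}\alpha + R_{(p)}\beta$ whose blowup $\mathrm{Bl}(I)$ strictly contains $R_{(p)}$ (by Theorem~\ref{thm:blowup}, since $R_{(p)}$ does not contain $x$ whereas $\mathrm{Bl}(I)$ does), so $I$ is a non-invertible ideal of $R_{(p)}$; localizing $I$ further at a maximal ideal $\mathfrak{m}$ of $R_{(p)}$ at which it is non-invertible (one exists by Theorem~\ref{thm:localization_inverse}) and using that in the local ring $(R_{(p)})_\mathfrak{m}$ a non-invertible ideal forces a non-invertible maximal ideal (an ideal of a local ring is invertible iff it is principal generated by a regular element, so a proper non-principal ideal sits inside the maximal ideal and one can reduce to showing $\mathfrak{m}$ itself fails to be invertible — or just take $I$ itself, since an ideal contained in the maximal ideal that fails to be invertible shows $\mathfrak{m}$ fails to be invertible because invertible ideals of a local ring are principal and a principal proper ideal's radical being $\mathfrak{m}$ would make $\mathfrak{m}$ invertible too). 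I'd streamline this last point using Proposition~\ref{prop:semi_local_ring}: in a semi-local ring, an ideal is invertible iff it is generated by a unit times a single element, so non-invertibility of *some* ideal propagates to non-invertibility of a maximal ideal.

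The main obstacle I anticipate is the bookkeeping in the ``$R_{(p)} \subsetneq \mathcal{O}_{K,(p)} \Rightarrow$ non-invertible prime'' direction: one must go from ``some ideal is non-invertible'' to ``some \emph{prime} ideal is non-invertible,'' and the cleanest route is to argue that if all maximal ideals of the semi-local ring were invertible then all ideals would be (via unique-ish factorization into maximal ideals in this Dedekind-like local setting), contradicting $R_{(p)} \neq \mathcal{O}_{K,(p)}$; making that factorization argument precise without simply invoking ``$R_{(p)}$ is Dedekind'' — which it need not literally be as a localization of an order — is the delicate part. I expect to resolve it by the blowup machinery of Theorem~\ref{thm:blowup}: if every maximal ideal $\mathfrak{m}_i$ of $R_{(p)}$ is invertible, then for any $x \in \mathcal{O}_{K,(p)}$ the conductor-type ideal $(R_{(p)} : R_{(p)}[x])$ is a product of powers of the $\mathfrak{m}_i$, hence invertible, hence $R_{(p)}[x] = R_{(p)}$, so $R_{(p)} = \mathcal{O}_{K,(p)}$. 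Finally I would descend from the local statement back to $R$: the prime $\mathfrak{p} = \mathfrak{m}_i \cap R$ contains $p$, and $\mathfrak{p}$ is non-invertible over $R$ because its localization $\mathfrak{p} R_{(p)} = \mathfrak{m}_i$ is non-invertible over $R_{(p)}$ (invertibility being local, Theorem~\ref{thm:localization_inverse}).
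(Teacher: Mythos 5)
Your localization-at-$p$ framework matches the paper's: the $p$-part of $\#(\mathcal{O}_K/R)$ is $\#((\mathcal{O}_K)_{(p)}/R_{(p)})$, primes of $R$ containing $p$ correspond to maximal ideals of $R_{(p)}$, and invertibility is checked locally (Theorem~\ref{thm:localization_inverse}), so the lemma reduces to: $R_{(p)} = (\mathcal{O}_K)_{(p)}$ if and only if every maximal ideal of $R_{(p)}$ is invertible. The paper's proof dispatches exactly this in a single clause, ``every prime of $R_p$ is invertible if and only if $R_p$ is integrally closed,'' which is the classical Dedekind criterion for one-dimensional Noetherian domains.

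The gap in your proposal is in the attempt to re-derive that criterion, and both of your sketches beg the question. Passing from ``every nonzero ideal contains a product of maximal ideals'' (true by Noetherian induction) to ``the maximal ideals generate the monoid of nonzero integral ideals'' is precisely what the Dedekind criterion delivers, not an ingredient of it; likewise, asserting that $R_{(p)}:R_{(p)}[x]$ ``is a product of powers of the $\mathfrak{m}_i$, hence invertible'' presupposes the prime factorization you are trying to establish. A non-circular argument (which is what the paper's one-liner silently invokes) runs: if every maximal ideal of $R_{(p)}$ is invertible, then by Proposition~\ref{prop:semi_local_ring} each is principal, so each further localization $(R_{(p)})_{\mathfrak m}$ is a one-dimensional Noetherian local domain with principal maximal ideal, hence a discrete valuation ring, hence integrally closed; then $(\mathcal{O}_K)_{(p)} \subseteq \bigcap_{\mathfrak m}(R_{(p)})_{\mathfrak m} = R_{(p)}$ by Lemma~\ref{lem:intersect_ideals}. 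A second, smaller gap: in the converse direction you assert $x\in\textup{Bl}(R_{(p)}\alpha+R_{(p)}\beta)$ without justification. It does hold -- since $x$ is integral over $\Z$, dividing the monic relation by $x^{n-1}$ shows $x\in\Z[x^{-1}]$, and $\textup{Bl}(\Z+\Z x)=\Z[x]\cap\Z[x^{-1}]$ by Theorem~\ref{thm:MM} -- but it needs saying; a simpler route there is to note that the conductor $R_{(p)}:(\mathcal{O}_K)_{(p)}$ is a nonzero proper integral ideal whose multiplier ring contains $(\mathcal{O}_K)_{(p)}\supsetneq R_{(p)}$, hence cannot be invertible.
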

\begin{proof}
Every prime of \(R\) containing \(p\) is invertible if and only if every prime of \(R_p\) is invertible if and only if \(R_p\) is integrally closed if and only if \(R_p=(\mathcal{O}_K)_p\) if and only if \((\mathcal{O}_K/R)_p=1\) if and only if \(p\nmid \#(\mathcal{O}_K/R)\).
\end{proof}

\begin{lemma}[cf.\ Theorem~1.2 in \cite{Buchman-Lenstra}]\label{lem:compute_OK}
There is a polynomial-time algorithm that, given a full-rank order $R$ in a number field $K$ and square-free integer $d$ contained in all non-invertible prime ideals of \(R\), computes $\mathcal{O}_K$. 
\end{lemma}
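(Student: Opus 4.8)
The plan is to reach $\mathcal{O}_K$ by repeatedly enlarging $R$ through blowups of radical ideals, exploiting that the only primes that can divide $[\mathcal{O}_K:R]$ are the ones dividing $d$. Replacing $d$ by $|d|$ I may assume $d\ge 1$, and if $d=1$ then no prime divides $\#(\mathcal{O}_K/R)$, so $R=\mathcal{O}_K$ already by Lemma~\ref{lem:singular_discriminant}. In general, if $p\mid\#(\mathcal{O}_K/R)$, Lemma~\ref{lem:singular_discriminant} gives a non-invertible prime $\mathfrak{p}\subset R$ with $p\in\mathfrak{p}$, whence $d\in\mathfrak{p}\cap\Z=p\Z$ and $p\mid d$. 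So for any order $R'$ with $R\subseteq R'\subseteq\mathcal{O}_K$ the index $\#(\mathcal{O}_K/R')$ divides $\#(\mathcal{O}_K/R)$, every prime dividing it divides $d$, and (Lemma~\ref{lem:singular_discriminant} for $R'$, contracted to $\Z$ as above) $d$ lies in every non-invertible prime of $R'$. Thus the hypothesis propagates automatically to all the intermediate orders produced below.

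The algorithm I propose keeps a full-rank order $R'$, initially $R'=R$, and iterates the following. Form the finite commutative ring $A=R'/dR'$; since the additive group of $R'$ is $\Z^n$ with $n=[K:\Q]$, we get $\#A=d^n$, and as $d$ is square-free $\rad(\#A)=\rad(d)=d$, which we know. Apply Theorem~\ref{thm:rad_to_nil} to compute $\nil(A)$ in polynomial time, and let $I\subseteq R'$ be its preimage; by Exercise~\ref{ex:nilradical_prime_intersection}, $I$ is the intersection of the primes of $R'$ containing $dR'$, i.e.\ the radical ideal of $dR'$, which is finitely generated and nonzero (it contains $d$). Compute $R''=\textup{Bl}_K(I)$ using Corollary~\ref{cor:compute_blowup}. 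If $R''=R'$, output $R'$; otherwise set $R':=R''$ and repeat.

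The hard part will be the correctness claim that the loop can only halt at $\mathcal{O}_K$; everything else is bookkeeping. One always has $R'\subseteq\textup{Bl}_K(I)\subseteq\mathcal{O}_K$, the last inclusion by Theorem~\ref{thm:blowup} since every fractional ideal of $\mathcal{O}_K$ is invertible. If the loop halts then $\textup{Bl}_K(I)=R'$, so by Theorem~\ref{thm:blowup} the ideal $I$ is invertible over $R'$, and I would show this forces $R'=\mathcal{O}_K$. Suppose not: by Lemma~\ref{lem:singular_discriminant} there is a non-invertible prime $\mathfrak{p}\subset R'$, and $d\in\mathfrak{p}$ by the first paragraph, so $I\subseteq\mathfrak{p}$. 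Localizing at $\mathfrak{p}$ and using that $R'_{\mathfrak{p}}$ is a one-dimensional local domain (Exercise~\ref{ex:number_field_noetherian}), every prime of $R'_{\mathfrak{p}}$ containing the nonzero ideal $dR'_{\mathfrak{p}}$ is maximal, so $I_{\mathfrak{p}}$, the radical of $dR'_{\mathfrak{p}}$, equals $\mathfrak{p}R'_{\mathfrak{p}}$. As $I_{\mathfrak{p}}$ is invertible over the local ring $R'_{\mathfrak{p}}$ it is principal (Proposition~\ref{prop:semi_local_ring}), so $\mathfrak{p}R'_{\mathfrak{p}}$ is principal; since $\mathfrak{p}R'_{\mathfrak{q}}=R'_{\mathfrak{q}}$ for every maximal $\mathfrak{q}\ne\mathfrak{p}$, the ideal $\mathfrak{p}$ is locally principal, hence invertible by Corollary~\ref{cor:locally_principal}, contradicting the choice of $\mathfrak{p}$. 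So the output, when it appears, is $\mathcal{O}_K$; and since $\textup{Bl}_K$ of any ideal of $\mathcal{O}_K$ is $\mathcal{O}_K$, the loop does halt.

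For the running time: run when $R'\ne\mathcal{O}_K$, the same local computation shows $I$ is non-invertible over $R'$, so $R''=\textup{Bl}_K(I)\supsetneq R'$, and each non-halting step at least halves $[\mathcal{O}_K:R']$. Hence there are at most $\log_2\#(\mathcal{O}_K/R)+1$ steps, which by Exercise~\ref{ex:small_index} is polynomial in the input length; that same bound controls the bit-size of every intermediate order, so forming $A$, invoking Theorem~\ref{thm:rad_to_nil} and Corollary~\ref{cor:compute_blowup}, and taking preimages are all polynomial-time, and so is the whole algorithm. The main obstacle is thus the local step identifying $I_{\mathfrak{p}}$ with the maximal ideal $\mathfrak{p}R'_{\mathfrak{p}}$ at a non-invertible prime, which is what lets an invertible $d$-radical certify maximality.
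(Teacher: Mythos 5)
Your algorithm is identical to the paper's: iterate $R' \mapsto \textup{Bl}_K(\mathfrak{a})$ where $\mathfrak{a}$ is the preimage of $\nil(R'/dR')$, computed via Theorem~\ref{thm:rad_to_nil} using $\rad(\#(R'/dR'))=\rad(d^n)=d$, until the blowup stabilizes; and the observation that the hypothesis on $d$ propagates to all intermediate orders is exactly the paper's first paragraph. The only difference is the halting argument: the paper notes that $\mathfrak{a}$, being the product of the (maximal, pairwise coprime) primes above $d$, can only be invertible if every such prime factor is invertible, which together with the hypothesis leaves no non-invertible primes; you instead localize at a hypothetical non-invertible prime $\mathfrak{p}\supseteq dR'$, identify $I_{\mathfrak{p}}$ with $\mathfrak{p}R'_{\mathfrak{p}}$, and derive a contradiction from local principality via Proposition~\ref{prop:semi_local_ring} and Corollary~\ref{cor:locally_principal}. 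Both conclusions are correct and of comparable length. You also spell out the strict growth $R''\supsetneq R'$ and the $O(\log\#(\mathcal{O}_K/R))$ iteration bound more explicitly than the paper does, which is a welcome addition; the paper leaves the run-time implicit.
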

\begin{proof}
For all \(R\subseteq S\subseteq\mathcal{O}_K\) it holds that \(d\) is contained in all non-invertible prime ideals of \(S\).
Namely, this is equivalent to \(p\mid d\) for all primes \(p\mid\#(\mathcal{O}_K/S)\) by Lemma~\ref{lem:singular_discriminant}, and we have that \(\#(\mathcal{O}_K/S) \mid \#(\mathcal{O}_K/R)\).

Using Algorithm~\ref{alg:comp_ring_rad} we may compute \(\rad(R/dR)\) and in turn its inverse image \(\mathfrak{a}\) in \(R\).
We compute \(\textup{Bl}(\mathfrak{a})\), and repeat the procedure with \(R\) replaced by \(\textup{Bl}(\mathfrak{a})\) with the same \(d\) until \(R=\textup{Bl}(\mathfrak{a})\).
Then \(\mathfrak{a}\) is the product of all prime ideals containing \(d\), so by assumption every non-invertible prime ideal of \(R\) is a factor of \(\mathfrak{a}\).
Because \(\mathfrak{a}\) is invertible over \(R\) by Theorem~\ref{thm:blowup}, as is each factor, we have \(R=\mathcal{O}_K\).
\end{proof}

\begin{proposition}\label{prop:rad_to_OK}
Computing the maximal order of a number field reduces to computing radicals of integers.
\end{proposition}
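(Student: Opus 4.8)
The plan is to produce the two polynomial-time maps required by Definition~\ref{def:reduction} for the problems $f$ = ``given a number field $K$, compute $\mathcal{O}_K$'' and $g$ = ``given $m\in\Z_{>0}$, compute $\rad(m)$'', using a single call to the oracle for $g$. A number field $K$ is presented as a full-rank order $R$ with $\textup{Q}(R)=K$; using Exercise~\ref{ex:small_index} I would first compute the discriminant $\Delta(R)\in\Z$, which is non-zero because $K/\Q$ is separable. The preprocessing algorithm $a$ outputs the positive integer $m=|\Delta(R)|$.

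The point is that $d:=\rad(m)$ is precisely an input of the kind demanded by Lemma~\ref{lem:compute_OK}. By Exercise~\ref{ex:discriminant_subring}, $\Delta(R)=\#(\mathcal{O}_K/R)^2\cdot\Delta(\mathcal{O}_K)$, so every rational prime $p$ dividing $\#(\mathcal{O}_K/R)$ divides $m$ and hence divides the square-free integer $d$. By Lemma~\ref{lem:singular_discriminant}, every non-invertible prime ideal $\mathfrak p$ of $R$ contains some rational prime $p$ with $p\mid\#(\mathcal{O}_K/R)$; since $p\mid d$ this gives $d\in p\Z\subseteq\mathfrak p$. Thus $d$ is square-free and contained in every non-invertible prime ideal of $R$, and the postprocessing algorithm $b$ just runs the algorithm of Lemma~\ref{lem:compute_OK} on $(R,d)$ to obtain $\mathcal{O}_K$. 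As $a$, the single oracle call $m\mapsto\rad(m)$, and $b$ are each polynomial-time, this yields $f\preceq g$.

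The only subtlety worth checking is the one-oracle-call restriction: the algorithm of Lemma~\ref{lem:compute_OK} iterates Algorithm~\ref{alg:comp_ring_rad}, which at each step wants $\rad(\#A)$ for a finite ring $A=R'/dR'$ with $R\subseteq R'\subseteq\mathcal{O}_K$. But $R'$ is a free $\Z$-module of rank $n=[K:\Q]$, so $\#(R'/dR')=d^{\,n}$, and since $d$ is square-free $\rad(d^{\,n})=d$ is already in hand; hence Lemma~\ref{lem:compute_OK} really is an oracle-free polynomial-time procedure. There is no serious obstacle here: the substance of the reduction is carried by Theorem~\ref{thm:blowup} (making ideals invertible) and Theorem~\ref{thm:rad_to_nil} (computing nilradicals from a radical), and this proof only has to observe that $\rad(|\Delta(R)|)$ supplies the square-free modulus needed to kick off Lemma~\ref{lem:compute_OK}.
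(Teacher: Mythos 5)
Your proof matches the paper's own argument exactly: compute $\Delta(R)$ via Exercise~\ref{ex:small_index}, call the radical oracle once on $|\Delta(R)|$ to obtain the square-free $d$, invoke Exercise~\ref{ex:discriminant_subring} and Lemma~\ref{lem:singular_discriminant} to see $d$ lands in every non-invertible prime, and finish with Lemma~\ref{lem:compute_OK}. Your additional remark that $\#(R'/dR')=d^n$ makes the internal calls to Algorithm~\ref{alg:comp_ring_rad} oracle-free is a correct and worthwhile sanity check of the single-call reduction, but it does not change the route.
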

\begin{proof}
Let \(R\) be any order of \(K\). 
Then we may compute \(d=\rad|\Delta(R)|\).
As \(\#(\mathcal{O}_K/R) \mid \Delta(R)\) by Exercise~\ref{ex:discriminant_subring} we get that \(\rad(\#(\mathcal{O}_K/R))\mid d\).
Then by Lemma~\ref{lem:singular_discriminant} every prime \(p\) for which there exists a non-invertible prime of \(R\) above \(p\) we have \(p\mid d\).
Since \(d\) is square-free we obtain \(\mathcal{O}_K\) from Lemma~\ref{lem:compute_OK}.
\end{proof}

We can now prove the main theorem.

\begin{proof}[Proof of Theorem~\ref{thm:equivalent_problems}]
Example~\ref{ex:nilrad_to_rad}, Lemma~\ref{lem:OK_to_rad}, Theorem~\ref{thm:rad_to_nil} and Proposition~\ref{prop:rad_to_OK} combined  prove Theorem~\ref{thm:equivalent_problems}.
\end{proof}

\subsection{Reduced discriminants}\label{ssec:reduced_discriminants}
Lemma~\ref{lem:singular_discriminant} states that we can find the non-inverti\-ble prime ideals of an order \(R\) of a number field \(K\) precisely above rational primes dividing \(\#(\mathcal{O}_K/R)\). However, algorithmically we do not have access to this index. 
In the search of these rational primes \(p\), we could consider \(p\) for which \(p^2 \mid \Delta(R)\), as \(\Delta(R)=\#(\mathcal{O}_K/R)^2 \cdot \Delta(\mathcal{O}_K)\) by Exercise~\ref{ex:discriminant_subring}.
This generally gives `false positives', as \(\Delta(\mathcal{O}_K)\) could have independent square divisors.
To recover an equally strong result as Lemma~\ref{lem:singular_discriminant} that is algorithmically accessible, we turn to the reduced discriminant.

We equip a number field \(K\) with the symmetric \(\Q\)-bilinear map \(K\times K\to \Q\) given by \((x,y)\mapsto \Tr_{K/\Q}(xy)\). By Exercise~\ref{ex:trace_iff_det} the hypothesis of Exercise~\ref{ex:general_dagger} is satisfied and thus we have a duality as follows.

\begin{definition}[cf.\ Exercise~\ref{ex:general_dagger}]\label{def:trace_dual}
Let \(I\) be a subgroup of a number field \(K\).
We define the \emph{trace dual} \(I^\dagger\) of \(I\) as 
\[I^\dagger = \{ x\in K \,|\, \Tr_{K/\Q}(xI) \subseteq \Z \}.\]
We define the \emph{reduced discriminant} \(\delta(R)\) of a full-rank order \(R\) in \(K\) to be the exponent of the abelian group \(R^\dagger / R\).
\end{definition}

Since \(\Tr_{K/\Q}(R)\subseteq \Z\) for any order \(R\), we have \(R\subseteq R^\dagger\) and thus \(R^\dagger/R\) is a well-defined group.
Moreover, \(R^\dagger\) is a fractional \(R\)-ideal, so the quotient is in fact finite, and \(\delta(R)\) is well-defined.
Note that we can compute \(R^\dagger\) and \(\delta(R)\) in polynomial time. 
As is the case for the trace radical in its relation to the nilradical (Proposition~\ref{prop:trad_is_rad}), the reduced discriminant has good properties only at the `large primes'.

\begin{exercise}\label{ex:trace_dual_properties}
Let \(K\) be a number field and let \(0\subsetneq I,J\subseteq K\) be full-rank finitely generated additive subgroups. 
Show that if \(I\subseteq J\), then \(J^\dagger \subseteq I^\dagger\).
Conclude that \(S \subseteq R^\dagger\) for all full-rank orders \(R\) and \(S\) in \(K\).
Show that \((I^\dagger)^\dagger=I\), \((I+J)^\dagger=I^\dagger \cap J^\dagger\) and \((I\cap J)^\dagger=I^\dagger+J^\dagger\). 
\end{exercise}

The following proposition motivates the name `reduced discriminant'.

\begin{proposition}
Let \(R\) be an order. Then \(|\Delta(R)|=\#(R^\dagger/R)\).
\end{proposition}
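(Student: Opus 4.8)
Write $n=[K:\Q]$ and fix a $\Z$-basis $e_1,\dotsc,e_n$ of $R$. By definition $\Delta(R)=\det(M)$ with $M=(\Tr_{K/\Q}(e_ie_j))_{1\le i,j\le n}$, and since $R$ is a ring the products $e_ie_j$ lie in $R$, so each entry $\Tr_{K/\Q}(e_ie_j)$ is a rational integer; thus $M\in\Z^{n\times n}$. The plan is to identify $M$ with the change-of-basis matrix between $R$ and $R^\dagger$ and read off the index.

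First I would apply Exercise~\ref{ex:general_dagger} with $R$ there taken to be $\Q$, $Z=\Z$, $V=K$, and $\varphi$ the trace form $(x,y)\mapsto\Tr_{K/\Q}(xy)$, whose nondegeneracy is exactly the hypothesis already checked via Exercise~\ref{ex:trace_iff_det}. Taking $L=\sum_i\Z e_i=R$, this yields a $\Z$-basis $e_1^\dagger,\dotsc,e_n^\dagger$ of $L^\dagger=R^\dagger$ (this is the trace dual of Definition~\ref{def:trace_dual}) with $\Tr_{K/\Q}(e_ie_j^\dagger)=\delta_{ij}$. Since $R\subseteq R^\dagger$ (Exercise~\ref{ex:trace_dual_properties}), I can write $e_i=\sum_j c_{ij}e_j^\dagger$ with $C=(c_{ij})\in\Z^{n\times n}$.

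The key computation is then immediate: for all $i,k$,
\[
\Tr_{K/\Q}(e_ie_k)=\Tr_{K/\Q}\Big(\Big(\sum_j c_{ij}e_j^\dagger\Big)e_k\Big)=\sum_j c_{ij}\,\Tr_{K/\Q}(e_j^\dagger e_k)=\sum_j c_{ij}\delta_{jk}=c_{ik},
\]
so $M=C$. In particular $\det C=\Delta(R)\ne 0$ (nonvanishing again from nondegeneracy of the trace form, or from $\Delta(\mathcal O_K)\ne0$ together with Exercise~\ref{ex:discriminant_subring}). Identifying $R^\dagger$ with $\Z^n$ via the basis $(e_j^\dagger)$, the subgroup $R$ is the image of the map $\Z^n\to\Z^n$ given by $C$ (up to transpose), so by Exercise~\ref{ex:compute_order}.b we get $\#(R^\dagger/R)=\#\coker(C)=|\det C|=|\Delta(R)|$, as desired.

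I do not expect a genuine obstacle here: the statement reduces to a one-line linear-algebra identity once Exercise~\ref{ex:general_dagger} supplies the dual basis. The only points requiring care are purely bookkeeping — confirming that the base-change matrix $C$ is integral (automatic, since $C=M$), getting the direction of the index right, and invoking nonvanishing of $\det M$ to apply the index-equals-determinant fact.
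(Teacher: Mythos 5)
Your proof is correct and uses essentially the same approach as the paper: both take the dual basis from Exercise~\ref{ex:general_dagger}, observe that the change-of-basis matrix from $(e_j^\dagger)$ to $(e_i)$ coincides with the Gram matrix of the trace form, and then read off the index $\#(R^\dagger/R)$ as the absolute value of its determinant. The only cosmetic difference is that the paper phrases the change of basis as a linear map $f$ with $f(b_i^\dagger)=b_i$ rather than as an explicit matrix $C$.
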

\begin{proof}
Let \(b_1,\dotsc,b_n\) be a basis of \(R\) and consider its dual basis \(b_1^\dagger,\dotsc,b_1^\dagger\) from Exercise~\ref{ex:general_dagger}.
Consider then the linear map \(f:\textup{Q}(R)\to\textup{Q}(R)\) that maps \(b_i^\dagger\) to \(b_i\) for all \(i\). Then \(\#(R^\dagger/R)=|\det(f)|\) and
\[\Delta(R) = \det( (\Tr(b_i b_j))_{i,j}) = \det(f) \cdot \det( (\Tr(b_i f^{-1} (b_j)))_{i,j}) = \det(f), \]
as was to be shown.
\end{proof}

It follows that \(\textup{rad}|\Delta(R)|=\textup{rad}(\delta(R))\).

\begin{lemma}\label{lem:better_reduced_discriminant}
Let \(R\subseteq S\) be full-rank orders of a number field of degree \(n\) and let \(p>n\) be prime. Then \(S/R\subseteq p(R^\dagger /R) \).
\end{lemma}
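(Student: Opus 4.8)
The plan is to reduce the statement to a fact about the trace form of the finite $\F_p$-algebra $\bar R = R/pR$. Recall (as noted just before Definition~\ref{def:trace_dual}, via Exercise~\ref{ex:general_dagger}) that the trace pairing identifies $R^\dagger$ with $\Hom_\Z(R,\Z)$ in such a way that the inclusion $R \hookrightarrow R^\dagger$ becomes the trace-form map $T\colon R \to \Hom_\Z(R,\Z)$, $r \mapsto (s \mapsto \Tr_{K/\Q}(rs))$; hence $\coker(T) = R^\dagger/R$. Since $R$ and $R^\dagger$ are free $\Z$-modules of rank $n$, tensoring the exact sequence $R \xrightarrow{\,T\,} R^\dagger \to R^\dagger/R \to 0$ with $\F_p$ yields a natural isomorphism $R^\dagger/(R + pR^\dagger) \cong \coker\!\big(\bar R \xrightarrow{\,\bar T\,} \Hom_{\F_p}(\bar R,\F_p)\big)$, and by Exercise~\ref{ex:trace_reductions}(b) the map $\bar T$ is precisely the trace form $\bar y \mapsto (\bar s \mapsto \Tr_{\bar R/\F_p}(\bar y\,\bar s))$ of $\bar R$ over $\F_p$.

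By Exercise~\ref{ex:trace_dual_properties} we have $S \subseteq R^\dagger$, so $S/R \subseteq p(R^\dagger/R)$ is equivalent to the statement that every $x \in S$ maps to $0$ in $R^\dagger/(R + pR^\dagger)$; through the isomorphism above this says the functional $\phi_x \colon \bar R \to \F_p$, $\bar y \mapsto \Tr_{K/\Q}(xy)\bmod p$, lies in the image of $\bar T$. For a symmetric bilinear form, the image of the induced map to the dual space is exactly the annihilator of the radical, and the radical of the trace form of $\bar R$ is $\Trad(\bar R/\F_p)$, which equals $\nil(\bar R)$ because $p > n = \dim_{\F_p}\bar R$ (Exercise~\ref{ex:field_nilpotent}(b)). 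Thus it suffices to show that $\phi_x$ vanishes on $\nil(\bar R)$ for every $x \in S$.

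This is the crux, and the point where the ring structure of $S$ is used. Fix $x \in S$ and $\bar y \in \nil(\bar R)$, so $y \in R$ with $y^k \in pR$ for some $k \geq 1$. Then $xy \in S$ and $(xy)^k = x^k y^k \in x^k\cdot pR \subseteq pS$, so the image $\overline{xy}$ of $xy$ in $S/pS$ is nilpotent. Since $\Tr_{K/\Q}$ agrees with $\Tr_{S/\Z}$ on $S$, Exercise~\ref{ex:trace_reductions}(b) gives $\Tr_{K/\Q}(xy)\bmod p = \Tr_{(S/pS)/\F_p}(\overline{xy})$, the trace of multiplication by $\overline{xy}$ on the $n$-dimensional $\F_p$-vector space $S/pS$. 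Multiplication by a nilpotent element is a nilpotent operator, whose trace is $0$. Hence $\phi_x(\bar y) = 0$, which completes the proof.

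The only routine ingredients are the identification $R^\dagger \cong \Hom_\Z(R,\Z)$, the compatibility of $\coker$ with $-\otimes\F_p$ (both immediate since every module in sight is $\Z$-free of rank $n$), and the standard description of the image of a symmetric form in terms of its radical. The substantive step is the observation that $\nil(\bar R)$ carries $S$ into the set of nilpotents of $S/pS$, so that traces against such elements vanish; the hypothesis $p > n$ enters only through $\Trad(\bar R/\F_p) = \nil(\bar R)$, which is what makes $\nil(\bar R)$ the radical of the trace form in the first place.
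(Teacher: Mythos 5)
Your proof is correct, and it turns on the same two key ingredients as the paper's: the identity \(\Trad((R/pR)/\F_p)=\nil(R/pR)\) for \(p>n\), and the fact that multiplication by elements of \(S\) carries nilpotents of \(R/pR\) to nilpotents of \(S/pS\). The organization of the final step is different, though. The paper establishes the ideal-theoretic inclusion \(pR^\dagger\cap R\subseteq pS^\dagger\cap S\) (quoting \(\Trad=\nil\) for both \(R/pR\) and \(S/pS\)) and then applies the inclusion-reversing duality \((-)^\dagger\) from Exercise~\ref{ex:trace_dual_properties} to obtain \(S\subseteq R+pR^\dagger\). You instead identify \(R^\dagger/(R+pR^\dagger)\) with the cokernel of the trace-form map \(\bar T\colon \bar R\to\Hom_{\F_p}(\bar R,\F_p)\), use that the image of a symmetric form is the annihilator of its radical to write \(\im(\bar T)=\nil(\bar R)^\perp\), and verify directly that the functional attached to each \(x\in S\) lies in that annihilator. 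A small dividend of your route is that it makes the hypothesis \(p>n\) do work only on the \(R\)-side, via the nontrivial inclusion \(\Trad\subseteq\nil\) for \(\bar R\); on the \(S\)-side you need only the trivial fact that multiplication by a nilpotent of \(S/pS\) is a nilpotent operator with trace zero, rather than the full \(\Trad=\nil\) for \(S/pS\).
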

\begin{proof}
By Proposition~\ref{prop:trad_is_rad} we have that 
\[ \textup{nil}(R/pR) = \textup{Trad}((R/pR)/\F_p) = \{x\in R \,|\,\Tr(xR)\subseteq p\Z \}/pR=(pR^\dagger \cap R)/pR, \]
and similarly for \(S\).
The ring homomorphism \(R/pR\to S/pS\) preserves nilpotents, hence \(pR^\dagger \cap R \subseteq pS^\dagger \cap S\).
We then apply \((-)^\dagger\) to the inclusion \(R^\dagger \cap \frac{1}{p} R \subseteq S^\dagger\) to obtain \(R+pR^\dagger \supseteq S\) from Exercise~\ref{ex:trace_dual_properties}. Hence \(S/R\subseteq (R+pR^\dagger)/ R=p(R^\dagger/R)\).
\end{proof}

\begin{lemma}\label{lem:local_principal_comparable}
Let \(R\) be a local ring with ideals \(I,J\subseteq R\). If \(I+J\) is principal, then \(I\subseteq J\) or \(J\subseteq I\).
\end{lemma}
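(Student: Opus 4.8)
Write $I+J=(c)$ for some $c\in R$. Since $c\in I+J$ we can write $c=a+b$ with $a\in I$ and $b\in J$, and since $I,J\subseteq (c)$ we can write $a=xc$ and $b=yc$ for some $x,y\in R$. Substituting gives $c=xc+yc$, hence $(1-x-y)c=0$. Setting $z=1-x-y$ we obtain the single identity $x+y+z=1$ together with $zc=0$. The whole proof hinges on this bookkeeping.

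The key step is then to invoke locality. By Exercise~\ref{ex:units_local_ring}, $\fm=R\setminus R^*$ is an ideal of $R$. If all three of $x,y,z$ lay in $\fm$, then $1=x+y+z\in\fm$, which is absurd; so at least one of $x$, $y$, $z$ is a unit. I will split into these three cases. If $x\in R^*$, then $c=x^{-1}(xc)=x^{-1}a\in I$, so $(c)\subseteq I$ and therefore $J\subseteq I+J=(c)\subseteq I$. Symmetrically, if $y\in R^*$ then $I\subseteq J$. Finally, if $z\in R^*$, then $c=z^{-1}(zc)=0$, so $I+J=(0)$ and $I=J=0$, and both inclusions hold trivially.

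I do not expect any real obstacle here: once the relation $x+y+z=1$ with $zc=0$ is written down, the argument is just the observation that a local ring cannot have all of $x,y,z$ be non-units, followed by three short case computations. The only point to be slightly careful about is that $c$ need not be a regular element, which is exactly why the third case ($z$ a unit, forcing $c=0$) must be handled separately rather than dismissed.
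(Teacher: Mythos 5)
Your proof is correct, but it takes a genuinely different route from the paper's. The paper reduces modulo $x\fm$: with $I+J=xR$, it forms the at-most-one-dimensional $R/\fm$-vector space $A=xR/x\fm$, observes that the image of $I$ or of $J$ must already span $A$, and then lifts this via Nakayama's lemma (Lemma~\ref{lem:local_nakayama}.1) to conclude $I=xR$ or $J=xR$. Your argument instead stays entirely at the level of elements: writing $c=a+b$ with $a\in I$, $b\in J$, expressing $a=xc$, $b=yc$, and extracting the relation $x+y+z=1$ with $zc=0$; locality (via Exercise~\ref{ex:units_local_ring}) then forces one of $x,y,z$ to be a unit, and each case finishes in one line. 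Your approach is more elementary — it avoids Nakayama and the passage to the residue field — and it has the merit of making the degenerate case $c=0$ explicit (the $z$-is-a-unit case) rather than absorbing it silently. The paper's approach, on the other hand, leans on machinery it has already set up and generalizes more cleanly to finitely generated modules, which is in keeping with its broader development. Both proofs are sound; yours is a fine alternative.
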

\begin{proof}
Write \(\fm\subseteq R\) for the maximal ideal and \(I+J=xR\) for some \(x\in R\).
Consider the \(R/\fm\)-vector space \(A=xR/x\fm\).
Note that the image of \(I+J\) in \(A\) generates \(A\). 
Since \(A\) is at most one-dimensional, being generated by \(x\) over \(R/\fm\), we may assume without loss of generality that the image of \(I\) generates \(A\).
Hence \(I+x\fm=xR\), so by Lemma~\ref{lem:local_nakayama}.1 we have \(I=xR\) and thus \(J\subseteq I\).
\end{proof}

\begin{definition}
Let \(R\) be an order and \(p\) a rational prime. 
We say a fractional ideal \(I\) of \(R\) is {\em invertible at \(p\)} if \(IR_p\) is an invertible ideal of \(R_p\), where \(R_p\) is the localization of \(R\) at \(p\) as a \(\Z\)-algebra.
\end{definition}

\begin{exercise}
Show that a fractional ideal is invertible if and only if it is invertible at all rational primes.
\end{exercise}

\begin{proposition}\label{prop:delta_singular_ideal}
Let \(R\) be an order of rank \(n\), let \(p\) be a prime and let \(a\in\Z_{>0}\) be such that \(0<\ord_p(a)<\ord_p(\delta(R))\).
If \(\delta(R) R^\dagger + a R \subseteq R\) is invertible at \(p\), then \(p\leq n\).
\end{proposition}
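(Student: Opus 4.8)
The plan is to argue by contradiction: I would assume $p > n$, where $n = [K:\Q] = \rk(R) \geq 1$, so that $0 < n < p$ and in particular $p \nmid n$. Write $e = \ord_p(\delta(R))$ and $f = \ord_p(a)$, so the hypothesis reads $0 < f < e$, whence $e - f \geq 1$. Put $I = \delta(R)R^\dagger + aR$.

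First I would pass to the localization $R_p$ of $R$ at the rational prime $p$: it is free of rank $n$ over $\Z_{(p)}$ and is a semi-local domain with fraction field $K$. Since $\delta(R)$ and $a$ differ from $p^e$ and $p^f$ by units of $\Z_{(p)}$, and since the fractional ideal $(R^\dagger)_p = R^\dagger R_p$ satisfies $(R^\dagger)_p/R_p \cong (R^\dagger/R)\otimes_\Z\Z_{(p)}$, a group of exponent exactly $p^e$, we have $IR_p = p^e(R^\dagger)_p + p^fR_p$. By hypothesis $IR_p$ is invertible over $R_p$, hence principal by Proposition~\ref{prop:semi_local_ring}, and so is $\mathfrak{c} := p^{-f}IR_p = p^{e-f}(R^\dagger)_p + R_p$. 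Note $R_p \subseteq \mathfrak c$ and $\mathfrak c \neq R_p$: otherwise $p^{e-f}(R^\dagger)_p \subseteq R_p$ would force the exponent of $(R^\dagger)_p/R_p$ to divide $p^{e-f}$, which is impossible as $e - f < e$.

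The core step is to extract, from invertibility of $\mathfrak c$, that $p^{-1}$ already lies in $(R^\dagger)_\fq$ for a well-chosen maximal ideal $\fq$ of $R$ above $p$. Since $\mathfrak c$ properly contains $R_p$, Lemma~\ref{lem:intersect_ideals} supplies such a $\fq$ with $\mathfrak cR_\fq \neq R_\fq$ (were $\mathfrak cR_\fq = R_\fq$ for every $\fq$ above $p$, then $\mathfrak c$ would equal the intersection of these localizations, namely $R_p$). Over the local ring $R_\fq$ the invertible ideal $\mathfrak cR_\fq = p^{e-f}(R^\dagger)_\fq + R_\fq$ is principal, say $\theta R_\fq$; since $1 \in \theta R_\fq \neq R_\fq$, the element $\theta^{-1}$ lies in $R_\fq$ and is a non-unit, hence $\theta^{-1} \in \fq R_\fq$. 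Expanding $(\mathfrak cR_\fq)(\theta^{-1}R_\fq) = R_\fq$ and using $\theta^{-1} \in R_\fq$, $e - f \geq 1$, and $\theta^{-1}R_\fq \subseteq \fq R_\fq$, one finds $R_\fq \subseteq p(R^\dagger)_\fq + \fq R_\fq$; writing $1 = p\rho + m$ with $\rho \in (R^\dagger)_\fq$ and $m \in \fq R_\fq$, the element $1 - m = p\rho$ is a unit of $R_\fq$, so $p^{-1} \in (R^\dagger)_\fq$, i.e.\ $p^{-1}s \in R^\dagger$ for some $s \in R \setminus \fq$.

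Finally I would close with the contradiction, which is where $p > n$ enters. From $p^{-1}s \in R^\dagger$ it follows that $\Tr_{K/\Q}(sR) \subseteq p\Z$, i.e.\ $s \in pR^\dagger \cap R = \{x \in R : \Tr_{K/\Q}(xR) \subseteq p\Z\}$. Exactly as in the proof of Lemma~\ref{lem:better_reduced_discriminant}, this set is the preimage in $R$ of $\Trad((R/pR)/\F_p)$, which by Proposition~\ref{prop:trad_is_rad} (applicable because $\rk_{\F_p}(R/pR) = n < p$) equals the preimage of $\nil(R/pR)$; as $R/pR$ is finite, hence Artinian, its nilradical is the intersection of all its maximal ideals, so $pR^\dagger \cap R = \bigcap_{\fq' \mid p}\fq' \subseteq \fq$. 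Thus $s \in \fq$, contradicting $s \in R \setminus \fq$, and therefore $p \leq n$. The hard part is the bookkeeping around localization at $p$ — that the localized trace duals, the exponent count for $(R^\dagger)_p/R_p$, and the identity $\Trad((R/pR)/\F_p) = \nil(R/pR)$ all behave as claimed; conceptually, $p > n$ is precisely what makes the trace form on $R/pR$ detect its radical, and it is this which collides with the constraint that invertibility of $\mathfrak c$ imposes on $(R^\dagger)_\fq$.
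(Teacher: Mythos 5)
Your proof is correct, and the second half is genuinely different from the paper's. Both proofs begin by localizing at $p$ and using Lemma~\ref{lem:intersect_ideals} to locate a maximal ideal above $p$ where the ideal $\delta(R)R^\dagger + aR$ is nontrivial, and both invoke local principality (via Proposition~\ref{prop:semi_local_ring} / Corollary~\ref{cor:locally_principal}). From there the arguments diverge. The paper works directly: it sets $A = (R/\delta(R)R)_p = \prod_{\fp\mid p} A_\fp$ free over $k=(\Z/\delta(R)\Z)_p$, uses Lemma~\ref{lem:local_principal_comparable} to reduce to an inclusion $J_\fp \subseteq I_\fp$, and then runs a chain of trace inclusions culminating in $p \mid \frac{\delta(R)}{a} \mid \Tr_{A_\fp/k}(1) \equiv \rk_k(A_\fp) \pmod p$ — a direct, constructive conclusion that names the factor whose rank $p$ divides. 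Your argument instead is by contradiction: after scaling by $p^{-\ord_p(a)}$ you exhibit a proper principal overideal $\theta R_\fq$ of $R_\fq$ and use the unit $1-m$ to extract $p^{-1} \in (R^\dagger)_\fq$, whence $s \in pR^\dagger\cap R$ for some $s\notin\fq$; you then appeal to Proposition~\ref{prop:trad_is_rad} (the only place where $p>n$ is used) to identify $pR^\dagger\cap R$ as the preimage of $\nil(R/pR) = \bigcap_{\fq'\mid p}\fq'$, forcing $s\in\fq$. What your route buys is economy: it delegates the ``large primes make the trace form detect nilpotents'' content to the already-proved Proposition~\ref{prop:trad_is_rad} and avoids both Lemma~\ref{lem:local_principal_comparable} and the chain of explicit trace estimates. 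What the paper's route buys is the sharper quantitative output $p \mid \rk_k(A_\fp)$, which your contradiction argument discards. Your step that $\theta^{-1}\mathfrak{c}R_\fq = R_\fq$ together with $\theta^{-1}\in\fq R_\fq$ and $e-f\geq 1$ gives $R_\fq\subseteq p(R^\dagger)_\fq + \fq R_\fq$ is correct but is the one place a reader might pause; it may be worth spelling out that $\theta^{-1}p^{e-f}(R^\dagger)_\fq \subseteq p^{e-f}(R^\dagger)_\fq \subseteq p(R^\dagger)_\fq$ because $\theta^{-1}, p^{e-f-1}\in R_\fq$.
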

\begin{proof}
We will write \((-)_p\) for the localization at the prime \(p\).
Let \(k=(\Z/\delta(R)\Z)_p\) and consider the \(k\)-algebra
\[A=(R/\delta(R)R)_p=\prod_{\fp\mid p}(R/\delta(R) R)_\fp =: \prod_{\fp\mid p} A_\fp,\]
where the product ranges over all prime ideals of \(R\) above \(p\).
Note that \(A\) is free over \(k\) of rank \(n\). Since \(k\) is local, the \(A_\fp\) are also free.
We will show that \(p\mid \rk_k(A_\fp)\) for some \(\fp\), from which it follows that \(p\leq n\).

First we identify \(\fp\).
Let \(I=\delta(R) (R^\dagger)_p\) and \(J=a R_p\) and suppose \(I\subseteq J\).
Then \(\tfrac{\delta(R)}{a} (R^\dagger)_p \subseteq R_p\) and \(\tfrac{\delta(R)}{a}\) annihilates the \(\Z_p\)-module \((R^\dagger/R)_p\).
Note that \((R^\dagger/R)_p\) is the \(p\)-power torsion subgroup of \(R^\dagger/R\).
Thus the exponent \(\delta(R)\) of \(R^\dagger/R\) divides \(\tfrac{\delta(R)}{a}\) in \(\Z_p\).
But \(a\not\in\Z_p^*\) by assumption on \(p\), so this is a contradiction.
Thus \(I\not\subseteq J\).
By Lemma~\ref{lem:intersect_ideals} we may choose some prime \(\fp\mid p\) such that \(I_\fp\not\subseteq J_\fp\).
If we now assume that \(\delta(R) R^\dagger + a R \subseteq R\) is invertible at \(p\), then \(I_\fp+J_\fp\) is invertible.
Hence \(J_\fp\subseteq I_\fp\) by Corollary~\ref{cor:locally_principal} and Lemma~\ref{lem:local_principal_comparable}.

Recall that \(A_\fq\) is free over \(k\) for all \(\fq\mid p\).
By Exercise~\ref{ex:trace_det_product} the trace map \(\Tr_{A/k} : \prod_{\fq\mid p} A_\fq \to k\) is given by \((x_\fq)_\fq \mapsto \sum_{\fq\mid p} \Tr_{A_\fq/k}(x_\fq)\). 
Note that the image of \(I\) in \(A\) is simply \(\prod_{\fq \mid p} A I_\fq\) and similarly for \(J\).
With Exercise~\ref{ex:trace_reductions} we compute

\begin{align*} 
a\Tr_{A_\fp/k}(A_\fp) 
&= a \Tr_{A/k}(A_\fp) 
= \Tr_{A/k}(AJ_\fp) 
\subseteq \Tr_{A/k}(AI_\fp) \\
&\subseteq \Tr_{A/k}(AI) 
\subseteq \Tr_{R_p/\Z_p}(I) + \delta(R) \Z_p \\
&= \delta(R) \Tr_{\textup{Q}(R)/\Q}((R^\dagger)_p)+\delta(R)\Z_p
\subseteq \delta(R) \Z_p.
\end{align*}
It follows that \(p \mid \tfrac{\delta(R)}{a} \mid \Tr_{A_\fp/k}(1) \equiv \rk_k(A_\fp) \ (\textup{mod }p)\), as was to be shown.
\end{proof}

\begin{proposition}\label{prop:red_disc}
Let \(R\) be a full-rank order of a number field \(K\) of degree \(n\) and let \(p>n\) be prime.
Then \(p \mid \#(\mathcal{O}_K/R)\) if and only if \(p^2 \mid \delta(R)\).
\end{proposition}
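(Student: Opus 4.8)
The plan is to prove the two implications separately; both are short deductions from results already established, once the right objects are in place. Write $M=R^\dagger/R$, a finite abelian group whose exponent is by definition $\delta(R)$. By Exercise~\ref{ex:trace_dual_properties} one has $\mathcal{O}_K\subseteq R^\dagger$, so $N:=\mathcal{O}_K/R$ is naturally a subgroup of $M$ of order $\#(\mathcal{O}_K/R)$, and everything will be phrased in terms of this inclusion $N\subseteq M$.

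For the implication $p\mid\#(\mathcal{O}_K/R)\Rightarrow p^2\mid\delta(R)$ I would apply Lemma~\ref{lem:better_reduced_discriminant} with $S=\mathcal{O}_K$: since $R$ and $\mathcal{O}_K$ are both full-rank orders of $K$ and $p>n$, it yields $N\subseteq pM$. A little group theory then finishes it. Decompose $M=M_p\oplus M'$ into its $p$-primary and prime-to-$p$ parts; if $p^2\nmid\delta(R)$ then $pM_p=0$ while multiplication by $p$ is an automorphism of $M'$, so $pM=M'$ has order coprime to $p$. But $N\subseteq pM$ then forces $p\nmid\#N$, contradicting $p\mid\#(\mathcal{O}_K/R)$; hence $p^2\mid\delta(R)$.

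For the converse I would argue through local invertibility. Assuming $p^2\mid\delta(R)$, take $a=p$, so that $0<\ord_p(a)=1<\ord_p(\delta(R))$, and form $I:=\delta(R)R^\dagger+pR$; this is a non-zero integral ideal of $R$ because $\delta(R)$ annihilates $R^\dagger/R$, so $\delta(R)R^\dagger\subseteq R$. Proposition~\ref{prop:delta_singular_ideal}, read in contrapositive form and using $p>n$, then says $I$ is not invertible at $p$, i.e.\ $IR_p$ is not an invertible ideal of $R_p$. On the other hand, if $p\nmid\#(\mathcal{O}_K/R)$ then $R_p=(\mathcal{O}_K)_p$ (the $p$-primary part of the finite group $\mathcal{O}_K/R$ vanishes), and every ideal of $(\mathcal{O}_K)_p$ is invertible --- every ideal of $\mathcal{O}_K$ is, by Theorem~\ref{thm:has_maximal_order}, and invertibility localises --- so $IR_p$ would be invertible, a contradiction. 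Hence $p\mid\#(\mathcal{O}_K/R)$. One could instead invoke Lemma~\ref{lem:singular_discriminant} directly, noting that if all primes of $R$ above $p$ were invertible then $R_p$ would be integrally closed, hence a Dedekind domain, hence have all non-zero ideals invertible.

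I do not anticipate a genuine obstacle: the two substantive ingredients, Lemma~\ref{lem:better_reduced_discriminant} and Proposition~\ref{prop:delta_singular_ideal}, are already proved, and the rest is routine --- checking that $I$ is a non-zero integral ideal (immediate from $pR\subseteq I\subseteq R$), that the hypotheses of those two statements hold exactly as written, and that invertibility of ideals passes to and from localisations. The one point deserving a moment's care is to confirm that the definition of $\delta(R)$ as the exponent of $R^\dagger/R$ and the notion of ``invertible at $p$'' line up precisely with what Proposition~\ref{prop:delta_singular_ideal} consumes, which they do by construction.
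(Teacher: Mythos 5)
Your proof is correct and follows essentially the same route as the paper's: the forward direction applies Lemma~\ref{lem:better_reduced_discriminant} to deduce $\mathcal{O}_K/R\subseteq p(R^\dagger/R)$ and extract $p^2\mid\delta(R)$, and the converse combines Proposition~\ref{prop:delta_singular_ideal} (contrapositive) with Lemma~\ref{lem:singular_discriminant}. The only cosmetic difference is that you make the forward direction's group theory explicit via the primary decomposition $M=M_p\oplus M'$, whereas the paper compresses it into the one-line observation that $p(R^\dagger/R)$ has nontrivial $p$-torsion, forcing $p^2\mid\exp(R^\dagger/R)$; both are the same argument.
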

\begin{proof}
If \(p \mid \#(\mathcal{O}_K/R)\), then \(\mathcal{O}_K/R\) has non-trivial \(p\)-torsion, and so does \(p(R^\dagger/R)\) by Lemma~\ref{lem:better_reduced_discriminant}. It follows that \(p^2 \mid \exp(R^\dagger/R)=\delta(R)\). Conversely, if \(p^2\mid\delta(R)\), then we obtain from Proposition~\ref{prop:delta_singular_ideal} an ideal that is not invertible at \(p\), hence \(p\mid \#(\mathcal{O}_K/R)\) by Lemma~\ref{lem:singular_discriminant}.
\end{proof}

\begin{exercise}\label{ex:small_prime_counter}
Show that there exist:
\begin{enumex}
\item an order $R$ with \(\rk(R)=2\) and \(2\mid \#(\mathcal{O}_{\Q R}/R)\) and $2^2 \nmid \delta(R)$;
\item an order $R$ with \(\rk(R)=2\) and $2 \nmid \#(\mathcal{O}_{\Q R}/R)$ and $2^2 \mid \delta(R)$.
\end{enumex}
\end{exercise}

Exercise~\ref{ex:small_prime_counter} shows that the equivalence of Proposition~\ref{prop:red_disc} fails to hold when we drop the assumption that \(p>n\).
The proposition also has an algorithmic version.

\begin{exercise}[cf.\ Theorem 6.9 in \cite{Buchman-Lenstra}]
Show that the following problems are computationally equally hard:
Given an order \(R\) of rank \(n\), decide if there exist and if so compute 
\begin{enumerate}
\item an order \(R\subsetneq S \subseteq \Q R\) such that \(p \mid \#(S/R)\) for some prime \(p > n\);
\item an \(a\in\Z_{>0}\) such that \(0<\ord_p(a)<\ord_p(\delta(R))\) for some prime \(p > n\).
\end{enumerate}
\end{exercise}

We can now prove the equivalence of several decision problems.

\begin{proof}[Proof of Theorem~\ref{thm:equivalent_decision_problems}]
By Exercise~\ref{ex:equivalent_decision_problems} it suffices to reduce the problems to decide (i) whether a finite commutative ring is reduced and (ii)~whether an order is maximal, to the problem of testing whether a positive integer is square-free.

(i) Suppose \(A\) is a finite commutative ring. 
We compute the exponent \(d=\exp(A)\) and note that if \(\textup{nil}(A)\) is to be trivial we need \(d\) to be square-free.
We test this and proceed under the assumption that \(d\) is square-free.
In this case \(d=\rad(\# A)\), so we may apply Theorem~\ref{thm:rad_to_nil} to compute \(\nil(A)\) and verify whether it is trivial.

(ii) We first verify whether the order \(R\) has full rank.
Then we may simply check whether \(\delta(R)\) is square-free to conclude that either \(R\neq \mathcal{O}_K\) or that the non-invertible primes of \(R\) must lie over primes \(p\leq [K:\Q]\) by Proposition~\ref{prop:red_disc}. 
We then apply Lemma~\ref{lem:compute_OK} with \(d\) the product of all such small primes.
\end{proof}

\begin{exercise}
Recall the definition of a projective module from Exercise~\ref{ex:projective}.
Show that there exist polynomial-time algorithms for the following problems: 
\begin{enumex}
\item Given an integer \(q>0\) and a non-zero \(\Z/q\Z\)-module \(M\), computes the maximal integer \(e\) such that \(M/eM\) is a projective \(\Z/e\Z\)-module.
\item Given an order \(R\) of degree \(n\), compute an order \(R\subseteq S\subseteq \Q R\) such that \(S^\dagger\) is invertible and \(S^\dagger/S\) is projective over \(\Z/\delta(S)\Z\) at all primes \(p>n\).
\end{enumex}
\end{exercise}

\begin{exercise}[Proposition~2.5 in \cite{Buchman-Lenstra}]\label{ex:BL}
Let \(R\) be an order and \(I\) a fractional ideal of \(R\). Show that the following are equivalent: (1) \(I\) is invertible; (2) \((R:I):(R:I)=R\); (3) \(R:(R:I)=I\) and \(I:I=R\).
\end{exercise}

\begin{exercise}[Proposition 2.7 in \cite{Buchman-Lenstra}, difficult]\label{ex:gorenstein}
Let \(R\) be an order in a number field. Show that the following are equivalent: (1) \(R^\dagger\) is invertible; (2) for all fractional ideals \(I\) of \(R\) it holds that \(I:I=R\) if and only \(I\) is invertible. We call an order with the above equivalent properties a {\em Gorenstein order}. \\
\emph{Hint:} Show that \(I^\dagger = R^\dagger : I\) for all \(I\). For (1 \(\Rightarrow\) 2), use Exercise~\ref{ex:BL}. 
\end{exercise}

\begin{exercise}
Let the notation and hypotheses be as in Theorem~\ref{thm:MM}, denote by \(f'\) the derivative of \(f\), and write \(A=\textup{Bl}(\Z+\Z\gamma)\).
\begin{enumex}[label={\textbf{\alph*}\hspace{.24em}}]
\item \hspace{-.4em}(Euler; see \cite{Weiss}*{Proposition 3-7-12})\textbf{.} Prove that the dual (Exercise~\ref{ex:general_dagger}) of the \(\Q\)-basis \(1, \gamma, \ldots, \gamma^{n-1}\) of \(K\) is \(p_{n-1}/f'(\gamma), p_{n-2}/f'(\gamma), \ldots, p_0/f'(\gamma)\).
\end{enumex}\begin{enumex}[resume]
\item Prove that \(M=D^{1-n}\) and  \(M^\dagger=D/f'(\gamma)\). 
\item Prove that \(A^\dagger=D^{1-n}\cdot M^\dagger=D\cdot M/f'(\gamma)\) and that \(A\) is a Gorenstein order (Exercise~\ref{ex:gorenstein}).
\item Prove that \(1/f'(\gamma), \gamma/f'(\gamma), \ldots, \gamma^{n-2}/f'(\gamma), a_n\gamma^{n-1}/f'(\gamma)\) is a \Z-basis for \(A^\dagger\). 
\item Show that \(f'(\gamma)=\sum_{i=0}^{n-1}p_i\gamma^{n-1-i}\in D\cdot M\).
\end{enumex}
\end{exercise}

\begin{exercise}
Let $A$ be a finite commutative ring, and let \(m\in\Z_{>0}\) such that $m A = 0$.
\begin{enumex}
\item Prove that for some $t, b\in \Z_{>0}$ the following is true: for all sequences $(d_i)_{i = 1}^t$ consisting of $t$ integers $d_i\ge b$ there exists a sequence $(e_i)_{i = 1}^t$ of integers satisfying $0\le e_i<d_i$ such that for all sequences $(f_i)_{i = 1}^t$ of monic polynomials $f_i \in \Z[X]$ satisfying $f_i \equiv X^{d_i} - X^{e_i} \bmod m\Z[X]$ there exists a surjective ring homomorphism
\[ \Z[X_1,\ldots,X_t]/(f_1(X_1),\ldots,f_t(X_t))\to A. \]
\item Prove that there exist an order $R$ in some number field, an ideal \(I\subseteq R\), and a ring isomorphism $R/I\to A$.
\item \textbf{(open)} Can we compute a triple as in (b) in polynomial time given \(A\)?
\end{enumex}
\end{exercise}

\section{Computing symbols}\label{sec:symbols}

This section will be an application of the theory from Section~\ref{sec:abgp} which is independent of the rest of this document.
In algebraic number theory we find lots of `symbols'.
It is not well-defined what a symbol is, but generally they are maps which encode algebraic properties of its parameters which also satisfy some reciprocity law.
In this section we will define and give algorithms for computing some of these symbols.
The father of all symbols is the Legendre symbol.
We will use \cite{ComputeJacobi} as a reference.

\begin{definition}
Let \(p\) be an odd prime and let \(a\) be an integer coprime to \(p\).
We define the \emph{Legendre symbol} 
\[\leg{a}{p} = \begin{cases} +1 & a\text{ is a square in } \Z/p\Z \\ -1 & \text{otherwise} \end{cases} \]
so that \(\leg{a}{p}\equiv a^{\frac{p-1}{2}} \bmod p\).
\end{definition}

We can easily compute the Legendre symbol directly from the second definition in polynomial time using a square-and-multiply algorithm modulo~\(p\).

\begin{theorem}[Quadratic reciprocity]\label{thm:quad_rec_legendre}
Suppose \(p\) and \(q\) are distinct odd primes. Then
\[ \leg{p}{q} \leg{q}{p} = (-1)^{\frac{p-1}{2}\cdot\frac{q-1}{2}}. \qed \]
\end{theorem}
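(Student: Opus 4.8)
The plan is to prove quadratic reciprocity via quadratic Gauss sums living in a finite ring of characteristic $q$. First I would abbreviate $p^* = (-1)^{(p-1)/2}\, p$ and observe that $p^* = \leg{-1}{p}\, p$: indeed $\leg{-1}{p}\equiv (-1)^{(p-1)/2}\bmod p$ by the defining congruence of the Legendre symbol, and since both sides are $\pm 1$ while $p$ is odd, they are equal. Since $\gcd(p,q)=1$, the polynomial $1+X+\dots+X^{p-1}$ is separable over $\F_q$, so I can form the $\F_q$-algebra $R=\F_q[X]/(1+X+\dots+X^{p-1})$, let $\zeta\in R$ be the image of $X$ (so $\zeta^p=1$ and $\sum_{i=0}^{p-1}\zeta^i=0$), and define the quadratic Gauss sum $\tau=\sum_{a\in\F_p^*}\leg{a}{p}\,\zeta^a\in R$.

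The first key computation is $\tau^2=p^*$ in $R$. I would expand $\tau^2=\sum_{a,b\in\F_p^*}\leg{ab}{p}\zeta^{a+b}$, substitute $b=ac$ with $c$ ranging over $\F_p^*$, use $\leg{ab}{p}=\leg{a^2c}{p}=\leg{c}{p}$ to get $\tau^2=\sum_{c\in\F_p^*}\leg{c}{p}\sum_{a\in\F_p^*}\zeta^{a(1+c)}$, and then evaluate the inner sum as $p-1$ when $c=-1$ and as $\sum_{b\in\F_p^*}\zeta^b=-1$ otherwise. Combined with $\sum_{c\in\F_p^*}\leg{c}{p}=0$ this yields $\tau^2=\leg{-1}{p}\,p\cdot 1_R=p^*\cdot 1_R$; in particular $\tau$ is a unit of $R$ because $q\nmid p$.

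Next I would compute $\tau^q$ in $R$ in two ways. As $R$ has characteristic $q$, the Frobenius is a ring homomorphism, so $\tau^q=\sum_{a\in\F_p^*}\leg{a}{p}\zeta^{aq}$; reindexing $a\mapsto q^{-1}a\bmod p$ and using multiplicativity gives $\tau^q=\leg{q}{p}\,\tau$. On the other hand $\tau^q=\tau\cdot(\tau^2)^{(q-1)/2}=\tau\cdot(p^*)^{(q-1)/2}$, and $(p^*)^{(q-1)/2}\equiv\leg{p^*}{q}\pmod q$ by Euler's criterion, so $\tau^q=\leg{p^*}{q}\,\tau$ in $R$. Cancelling the unit $\tau$ gives $\leg{q}{p}=\leg{p^*}{q}$ in $\F_q$, and since $q$ is odd these two elements $\pm 1$ are distinct in $\F_q$, so the equality holds in $\Z$. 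Finally $\leg{p^*}{q}=\leg{-1}{q}^{(p-1)/2}\leg{p}{q}=(-1)^{\frac{p-1}{2}\frac{q-1}{2}}\leg{p}{q}$ using $\leg{-1}{q}=(-1)^{(q-1)/2}$, and multiplying through by $\leg{p}{q}$ gives the claimed identity.

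The main obstacle is the bookkeeping behind $\tau^2=p^*$: getting the exponent substitution and the two cases of the inner character sum correct inside the abstract ring $R$ rather than in $\C$, along with the preliminary verification that $1+X+\dots+X^{p-1}$ is genuinely separable modulo $q$ so that $\zeta$ has exact order $p$ and $R$ behaves like a cyclotomic ring. Once $\tau$ is known to be a unit with $\tau^2=p^*$, everything else is routine. (An alternative, equally acceptable route would be Gauss's lemma together with Eisenstein's lattice-point count, but the Gauss-sum argument is more in keeping with the algebraic methods used elsewhere in these notes.)
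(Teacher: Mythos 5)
The paper states this theorem as a known fact and does not prove it: the \(\qed\) sits directly after the displayed formula and no argument is supplied, so there is nothing to compare your proof against. That said, your Gauss-sum argument is correct and complete. The computation \(\tau^2 = p^*\) is carried out correctly (in particular, for \(c\neq -1\) the inner sum \(\sum_{a\in\F_p^*}\zeta^{a(1+c)}\) equals \(-1\) because \(1+c\) is a unit mod \(p\) so the exponents just permute \(\F_p^*\), and \(\sum_{j=0}^{p-1}\zeta^j=0\)), the Frobenius step and the reindexing \(a\mapsto q^{-1}a\) use only that \(R\) is an \(\F_q\)-algebra and that \(\leg{\cdot}{p}\) is multiplicative, and the cancellation of \(\tau\) is justified since \(\tau^2=p^*\cdot 1_R\) is a unit. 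One small remark: you invoke separability of \(1+X+\dots+X^{p-1}\) over \(\F_q\), but the argument does not actually need it. All you need is that in \(R\) the element \(\zeta\) has exact order \(p\), which already follows from \(\zeta^p=1\), \(p\) prime, and \(\sum_{i=0}^{p-1}\zeta^i=0\) (were the order \(1\) this sum would be \(p\cdot 1_R\neq 0\) in characteristic \(q\)); the ring \(R\) is allowed to have nilpotents or zero divisors without affecting any step. The proof would also be slightly cleaner if you pointed out explicitly that the final equality \(\leg{q}{p}=\leg{p^*}{q}\) is first obtained as a congruence mod \(q\) between integers in \(\{\pm 1\}\), and that \(q>2\) forces equality in \(\Z\) — you do say this, but it is the kind of step that is easy to elide. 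In the broader context of these notes, where the authors prefer constructive, algebraically flavoured arguments (blowups, lattices, coprime bases), a Gauss-sum proof in a quotient of \(\F_q[X]\) is very much in the spirit of the surrounding material, and more so than, say, Eisenstein's lattice-point count.
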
 \leavevmode

\begin{definition}
Let \(b\) be a positive odd integer and \(a\) an integer coprime to \(b\).
In terms of the prime factorization \(b=p_1^{k_1} \dotsm p_n^{k_n}\) of \(b\) we define the \emph{Jacobi symbol}
\[ \leg{a}{b} = \leg{a}{p_1}^{k_1} \dotsm\ \  \leg{a}{p_n}^{k_n}, \]
where \(\leg{a}{p_i}\) is the Legendre symbol defined previously.
\end{definition}

Note that the Jacobi symbol extends the Legendre symbol, which justifies using the same notation for both.
To compute the Jacobi symbol directly from the definition we need to be able to factor \(b\), which is unfeasible.
Quadratic reciprocity for the Jacobi symbol gives us a better method.

\begin{proposition}[Quadratic reciprocity]\label{prop:quad_rec_jacobi}
Suppose \(a\) and \(b\) are coprime positive odd integers. Then
\[ \leg{a}{b}\leg{b}{a} = (-1)^{\frac{a-1}{2}\cdot\frac{b-1}{2}}. \] 
\end{proposition}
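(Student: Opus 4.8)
The plan is to deduce quadratic reciprocity for the Jacobi symbol from quadratic reciprocity for the Legendre symbol (Theorem~\ref{thm:quad_rec_legendre}) by a multiplicativity argument. First I would record the two auxiliary facts that make the bookkeeping work: for coprime odd $a,b$ the Jacobi symbol is multiplicative in each argument separately (immediate from the definition, since the prime factorization of a product is the concatenation of the factorizations), and the exponent $\tfrac{a-1}{2}\bmod 2$ is ``additive'' under multiplication of odd integers, i.e.\ for odd $a,a'$ one has $\tfrac{aa'-1}{2}\equiv \tfrac{a-1}{2}+\tfrac{a'-1}{2}\pmod 2$, which follows from $(a-1)(a'-1)\equiv 0\pmod 4$. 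Equivalently, the map sending an odd integer $a$ to $(-1)^{(a-1)/2}\in\{\pm1\}$ is a homomorphism from the multiplicative monoid of positive odd integers.

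Next I would write $a=\prod_{i} p_i$ and $b=\prod_j q_j$ as products of (not necessarily distinct) primes; since $\gcd(a,b)=1$, every $p_i$ is distinct from every $q_j$, so Theorem~\ref{thm:quad_rec_legendre} applies to each pair. Then I would compute
\[
\leg{a}{b}\leg{b}{a}
= \prod_{i,j}\leg{p_i}{q_j}\leg{q_j}{p_i}
= \prod_{i,j} (-1)^{\frac{p_i-1}{2}\cdot\frac{q_j-1}{2}}
= (-1)^{\left(\sum_i \frac{p_i-1}{2}\right)\left(\sum_j \frac{q_j-1}{2}\right)},
\]
where the first equality uses bimultiplicativity of the Jacobi symbol, the second is Legendre reciprocity applied termwise, and the third collects the exponents. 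Finally, by the additivity fact above, $\sum_i \tfrac{p_i-1}{2}\equiv \tfrac{a-1}{2}$ and $\sum_j \tfrac{q_j-1}{2}\equiv \tfrac{b-1}{2}$ modulo $2$, so the product of the two sums is congruent to $\tfrac{a-1}{2}\cdot\tfrac{b-1}{2}$ modulo $2$, giving the claimed identity.

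There is no real obstacle here; the only point requiring a little care is the parity arithmetic — making sure the congruences $\tfrac{a-1}{2}\equiv\sum_i\tfrac{p_i-1}{2}\pmod 2$ are handled cleanly (including the edge case $a=1$, the empty product, where both sides are $0$) and that it is the product of the sums, not the sum of products, that appears in the exponent. I would state the homomorphism property as a small separate lemma (or inline remark) so the main computation is a clean three-line chain of equalities. An alternative packaging, which I would mention but not pursue, is induction on the number of prime factors of $ab$, peeling off one prime at a time using the single-prime case as base and the homomorphism property in the inductive step; this is essentially the same proof reorganized.
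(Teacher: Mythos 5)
Your proof is correct and follows essentially the same route as the paper's: reduce to the Legendre case by multiplicativity in each argument, using the parity identity $\frac{aa'-1}{2}\equiv\frac{a-1}{2}+\frac{a'-1}{2}\pmod 2$ to control the exponent. The paper packages this more compactly by observing that for fixed $b$ both $a\mapsto\leg{a}{b}\leg{b}{a}$ and $a\mapsto(-1)^{\frac{a-1}{2}\cdot\frac{b-1}{2}}$ are multiplicative and hence it suffices to check at primes, then repeats the argument in $b$; your version simply unfolds the same reduction into explicit prime factorizations and a chain of equalities.
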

\begin{proof}
Note that for fixed \(b\) the maps \(a\mapsto \leg{a}{b}\leg{b}{a}\) and \(a\mapsto (-1)^{\frac{a-1}{2} \cdot \frac{b-1}{2}}\)  are multiplicative. Hence it suffices to prove the proposition for \(a\) prime.
Applying the same reasoning to \(b\) we have reduced to Theorem~\ref{thm:quad_rec_legendre}.
\end{proof}

\begin{exercise}
Let \(p\) be an odd prime. 
Show that \(2\) is a square in \(\F_p\) if and only if \(p\equiv \pm 1\bmod 8\). 
Conclude that \(\leg{2}{b}=(-1)^{(b^2-1)/8}\) for all odd \(b>0\). \\
\emph{Hint:} Write \(\sqrt{2}\) in terms of 8-th roots of unity.
\end{exercise}

\begin{exercise}\label{ex:jacobi_induction}
Let \(x_0,x_1,x_2,x_3\in\Z_{\geq 0}\) such that \(x_1\) is odd, \(x_0\) and \(x_1\) are coprime, and \(x_{n+2}\equiv x_{n} \bmod x_{n+1}\) for \(n\in\{0,1\}\). Write \(x_2=2^uv\) for \(u\in\Z_{\geq0}\) and \(v\) odd.
Show that:
\vspace{.5em}
\begin{enumex}
\item \(\displaystyle \leg{x_0}{x_1} = (-1)^{(x_1-1)(x_2-1)/4} \cdot \leg{x_1}{x_2} \quad\text{if } u=0\).
\item \(\displaystyle \leg{x_0}{x_1} = (-1)^{u\cdot\frac{x_1^2-x_3^2}{8}}(-1)^{\frac{v-1}{2}\cdot\frac{x_1-x_3}{2}} \cdot \leg{x_2}{x_3} \quad\text{if }u>0\).
\end{enumex} 
\end{exercise}\leavevmode

\begin{theorem}\label{thm:compute_jacobi}
There exists a polynomial-time algorithm that, given coprime positive odd integers \(a\) and \(b\), computes the Jacobi symbol \(\leg{a}{b}\). 
\end{theorem}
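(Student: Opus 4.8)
The plan is to turn the recursive identities of Exercise~\ref{ex:jacobi_induction} into a Euclidean-style algorithm, exactly mirroring the way the extended Euclidean algorithm (Exercise~\ref{ex:mod_gcd}) computes $\gcd(a,b)$. First I would note that we may assume $a<b$: if $a\geq b$, replace $a$ by $a\bmod b$, which changes nothing since $\leg{a}{b}=\leg{a\bmod b}{b}$ by periodicity of the Jacobi symbol in its numerator, and this reduction is a single polynomial-time division. If $a=0$ then $b=1$ (coprimality) and the symbol is $1$; if $a=1$ the symbol is $1$. Otherwise $0<a<b$ with both odd and coprime, so we are in the situation of Exercise~\ref{ex:jacobi_induction} with $x_0=b$, $x_1=a$: set $x_2=b\bmod a$ and $x_3=a\bmod x_2$ (taking the least non-negative residues), write $x_2=2^u v$ with $v$ odd.

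The recursion is then: the identity in Exercise~\ref{ex:jacobi_induction}.a (when $u=0$, i.e.\ $x_2$ odd) or Exercise~\ref{ex:jacobi_induction}.b (when $u>0$) expresses $\leg{b}{a}=\leg{x_0}{x_1}$ as an explicit sign $(-1)^{(\cdots)}$ times $\leg{x_1}{x_2}$ or $\leg{x_2}{x_3}$ — in either case a Jacobi symbol whose entries are strictly smaller. The sign is computed from the parities of $x_1,x_2,x_3$ and $v$ and from $u$, all of which are read off in polynomial time (only the parities of the relevant quantities mod $8$ matter, so the exponents need never be evaluated as large integers — one reduces mod~$2$ or mod~$8$ first). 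I would package this as: while the current pair $(x_0,x_1)$ has $x_1>1$, perform one or two steps of the Euclidean reduction, accumulate the sign factor, and recurse; when $x_1=1$ (or the numerator becomes $0$, forcing $x_1=1$) return the accumulated sign.

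Correctness is immediate by induction on $\max(a,b)$ using Exercise~\ref{ex:jacobi_induction} and the base cases $\leg{1}{b}=1$. For the running time, the key observation is that each iteration performs a Euclidean step $x_{n+2}\equiv x_n\bmod x_{n+1}$, so after two iterations the larger of the two entries is at least halved — exactly as in the analysis of the Euclidean algorithm — giving $O(\log b)$ iterations, each of polynomial cost (one division with remainder, one extraction of the power of $2$ dividing $x_2$, and a constant amount of arithmetic mod $8$ to get the sign). Hence the total run-time is polynomial in $l(a)+l(b)$.

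The main obstacle, such as it is, is purely bookkeeping: Exercise~\ref{ex:jacobi_induction} is stated for a four-term window $(x_0,x_1,x_2,x_3)$, so one has to be slightly careful that after applying, say, part~(b), the symbol $\leg{x_2}{x_3}$ that remains is again of the required form (numerator and denominator positive, denominator odd, coprime) so the induction can continue — and to handle the edge case where a residue vanishes (then the other entry is $1$ by coprimality and we stop). None of this is deep; it is the standard ``binary-aware'' Euclidean Jacobi algorithm, and the only thing that needs emphasis in the write-up is that the exponents in the sign factors are reduced modulo $2$ or $8$ before being computed, so that the $B$-bit-integer blow-up of Algorithm~\ref{alg:explict_computation} is avoided.
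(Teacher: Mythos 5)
Your algorithm is the same as the paper's: walk down the Euclidean remainder sequence, invoking Exercise~\ref{ex:jacobi_induction} at each step to accumulate a sign factor, and stop when the denominator reaches~$1$; the run-time analysis via the Euclidean algorithm is also identical. But there is a bookkeeping slip in the initialization. You set $x_0=b$, $x_1=a$, so the recursion actually computes $\leg{x_0}{x_1}=\leg{b}{a}$, not the requested $\leg{a}{b}$; by Proposition~\ref{prop:quad_rec_jacobi} these differ by $(-1)^{\frac{a-1}{2}\cdot\frac{b-1}{2}}$, so your answer is off by a sign precisely when $a\equiv b\equiv 3\bmod 4$. Moreover, after your preliminary reduction $a\mapsto a\bmod b$ the new numerator need not be odd, so the assertion ``$0<a<b$ with both odd and coprime'' is false and $\leg{b}{a}$ may not even be a well-defined Jacobi symbol, since the symbol requires an odd denominator.

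Both issues disappear, and you land exactly on the paper's proof, if you drop the preliminary reduction and take $x_0=a$, $x_1=b$: then $x_1=b$ is odd, $\leg{x_0}{x_1}=\leg{a}{b}$ is the target, and if $a<b$ the first step of the remainder sequence has $x_2=a\bmod b=a$ (odd and nonzero), so Exercise~\ref{ex:jacobi_induction}.a applies --- that step is just quadratic reciprocity --- and the sequence strictly decreases from $x_1$ onward. Your remark about reducing the exponents modulo $2$ or $8$ before exponentiating is sound, although not strictly necessary for polynomial time: $u$, $v$, $x_1$ and $x_3$ all have length bounded by that of the input, so the exponents themselves are already of polynomially bounded length.
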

\begin{proof}
We define the \emph{gcd sequence} of positive integers \(x_0\) and \(x_1\) to be the sequence \((x_0,\dotsc,x_N)\) where 
\(x_{n+2}\) is the unique integer such that \(0\leq x_{n+2} < x_{n+1}\) and \(x_{n+2} \equiv x_{n} \bmod x_{n+1}\) and with \(x_N=0\). The proof of the Euclidean algorithm shows that this sequence contains only linearly many elements and can be computed in polynomial time. Moreover, \(x_{N-1} = \gcd(x_0,x_1)\).

Compute the gcd sequence of \(a\) and \(b\). 
Note that \(\gcd(x_{n},x_{n+1})=\gcd(a,b)=1\) for all \(n<N\).
Using Exercise~\ref{ex:jacobi_induction} we may express \(\leg{a}{b}=s_n\cdot \leg{x_n}{x_{n+1}}\) for some \(s_n\in\{\pm 1\}\) iteratively for \(n\) with \(x_{n+1}\) odd.
Clearly we may compute these \(s_n\) in polynomial time.
As \(\leg{x_{N-2}}{x_{N-1}}=\leg{x_{N-2}}{1}=1\), we simply return \(s_{N-1}\).
\end{proof}

The Jacobi symbol is defined on a subset of \(\Z^2\).
As is the theme in this document, we will `extend' the Jacobi symbol to number rings.

\begin{remark}
One is sometimes interested in a more general Jacobi symbol \(\leg{a}{b}\) for coprime integers \(a\) and \(b\), where \(b\) is not necessarily odd or positive, by defining 
\[\leg{a}{2}=(-1)^{\frac{a^2-1}{8}} \quad\text{and}\quad \leg{a}{-1} = \frac{a}{|a|} = \begin{cases} +1 & \text{if } a > 0 \\ -1 & \text{if } a < 0 \end{cases}.\]
This is called the \emph{Kronecker symbol}.
The Kronecker symbol can be computed in polynomial time by writing \(b=uc 2^k\) where \(u=\pm 1\) and \(c\) is odd and positive, and applying Theorem~\ref{thm:compute_jacobi} to the factor \(\leg{a}{c}\) in \(\leg{a}{b}=\leg{a}{u} \leg{a}{c} \leg{a}{2}{}^k\).
\end{remark}

\begin{exercise}
The Euclidean algorithm implied by Exercise~\ref{ex:mod_gcd} produces a different type of `gcd sequence' than those used in Theorem~\ref{thm:compute_jacobi}, namely those where \(|x_{n+2}|\leq |x_{n+1}|/2\) and \(x_{n+2} \equiv x_n \bmod x_{n+1}\) for all \(n\).
Give a proof of Theorem~\ref{thm:compute_jacobi} for Kronecker symbols using such gcd sequences.
\end{exercise}

\subsection{Jacobi symbols in number rings}

First we define the Legendre symbol for a general number ring.

\begin{definition}
Let \(\fp\) be a prime ideal in a number ring \(R\) of odd index \(n_\fp=\#(R/\fp)\) and let \(a\in R\) such that \(aR+\fp=R\). We define the \emph{Legendre symbol}
\[\leg{a}{\fp} = \begin{cases} +1 & \text{if \(a\) is a square in \(R/\fp\)} \\ -1 & \text{otherwise} \end{cases},\]
so that \(\leg{a}{\fp}=a^{\frac{n_\fp-1}{2}}\bmod\fp\).
\end{definition}

Extending the definition to general ideals as for the Jacobi symbol cannot be done similarly, unless \(R\) is a Dedekind domain, because it would require prime factorization of ideals.
Instead, we consider the following.

\begin{definition}
Suppose \(\fb\) is an ideal of a number ring \(R\).
For a prime \(\fp\) we define \(l_\fp(\fb)\in\Z_{\geq0}\) such that \(\#(R_\fp/\fb_\fp)=\#(R/\fp)^{l_\fp(\fb)}\).
For \(\#(R/\fb)\) odd and \(a\in R\) with \(aR+\fb=R\) we define the \emph{Jacobi symbol} by
\[\leg{a}{\fb} = \prod_{\textup{max. }\fp\subset R} \leg{a}{\fp}^{l_\fp(\fb)}.\]
\end{definition}

\begin{theorem}[Theorem in \cite{ComputeJacobi}]\label{thm:compute_jacobi_number_field}
There exists a polynomial-time algorithm that, given an order \(R\), an ideal \(\fb\) of \(R\) such that \(\#(R/\fb)\) is odd and \(a\in R\) such that \(aR+\fb=R\), computes the Jacobi symbol \(\leg{a}{\fb}\).
\end{theorem}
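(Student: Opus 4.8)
The plan is to reduce the problem to the integer Jacobi symbol algorithm (Theorem~\ref{thm:compute_jacobi}) by way of a Zolotarev-type description of $\leg{a}{\fb}$ as the sign of a permutation. First I would prove the identity
\[ \leg{a}{\fb}=\sgn(m_a),\qquad m_a\colon R/\fb\to R/\fb,\ x\mapsto ax, \]
where $\sgn(m_a)$ denotes the sign of $m_a$ viewed as a permutation of the finite set $R/\fb$. To prove it, decompose $R/\fb\cong\prod_{\fp\supseteq\fb}R_\fp/\fb_\fp$ into its finitely many local factors, compatibly with $m_a$ (the primary decomposition: Exercise~\ref{ex:crt} together with Lemma~\ref{lem:intersect_ideals}). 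Since each factor has odd order, $\sgn(m_a)=\prod_\fp\sgn(m_a\text{ on }R_\fp/\fb_\fp)$. On a local factor $S=R_\fp/\fb_\fp$ with maximal ideal $\fm$ and residue field $k=R/\fp$, put $q=\#k$ (odd) and $p=\Char(k)$; multiplication by the unit $a$ respects the filtration $S\supseteq\fm\supseteq\fm^2\supseteq\cdots$, whose graded pieces are $k$-vector spaces on which $m_a$ acts as the scalar $\bar a\in k^*$. Using multiplicativity of the sign along a stable filtration (the index factors are again odd, hence trivial), $\sgn(m_a\text{ on }S)$ is the product over the graded pieces $V$ of $\sgn(\bar a\text{ on }V)=\leg{a}{\fp}^{\dim_k V}$, by the $\F_p$-module form of Zolotarev's lemma together with $N_{k/\F_p}(\bar a)^{(p-1)/2}\equiv\bar a^{(q-1)/2}\equiv\leg{a}{\fp}$ in $k$. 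As $\sum_V\dim_k V=\dim_k S=l_\fp(\fb)$, this gives $\sgn(m_a\text{ on }S)=\leg{a}{\fp}^{l_\fp(\fb)}$, and the product over $\fp$ is exactly $\leg{a}{\fb}$.

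Given the identity, it remains to compute, in polynomial time, the sign of an automorphism $\Phi$ of a finite abelian group $H$ of odd order. Using the structure theorem (Theorem~\ref{thm:fundament_fingen}), compute an isomorphism $H\cong\bigoplus_k\Z/n_k\Z$ with $n_{k+1}\mid n_k$, together with the inclusions and projections; each $n_k$ is an odd divisor of $\#H$. For each distinct value $v$ among the $n_k$, say with multiplicity $r$, there is a characteristic homogeneous subquotient $H_v\cong(\Z/v\Z)^r$ of $H$ — assembled from characteristic subgroups of the form $dH$ and $H[d]=\ker(d\cdot{-})$ — and $\Phi$ induces an automorphism $\Phi_v\in\mathrm{GL}_r(\Z/v\Z)$ on it, computable from $\Phi$ with the algorithms of Section~\ref{sec:abgp}. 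Then
\[ \sgn(\Phi)=\prod_v\leg{\det\Phi_v}{v}. \]
Each factor on the right is a Jacobi symbol: $v$ is odd and $\det\Phi_v$, computed modulo $v$ by Exercise~\ref{ex:square_multiply}, is a unit modulo $v$, so it may be replaced by a positive odd representative and the symbol evaluated via Theorem~\ref{thm:compute_jacobi}. Applying this with $H=R/\fb$ and $\Phi=m_a$ — whose matrix in the chosen coordinates is obtained by a routine matrix computation — yields the algorithm; it runs in polynomial time because the $n_k$ and the entries of $\Phi$ have polynomially bounded length.

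The hard part will be the displayed formula $\sgn(\Phi)=\prod_v\leg{\det\Phi_v}{v}$, i.e.\ computing the sign of an automorphism of a finite abelian group without factoring $\#H$. It rests on the Levi decomposition of $\Aut(H)$: the kernel of the natural surjection $\Aut(H)\to\prod_v\mathrm{GL}_r(\Z/v\Z)$ is a product of groups of odd prime-power order and so acts on $H$ by even permutations, whence $\sgn(\Phi)$ depends only on the $\Phi_v$; and for a block-diagonal automorphism the claim reduces to the $\Z/d\Z$-module form of Zolotarev's lemma — that the sign of $g\in\mathrm{GL}_r(\Z/d\Z)$ acting on $(\Z/d\Z)^r$ is $\leg{\det g}{d}$ for odd $d$ — proved by checking that the two homomorphisms $\mathrm{GL}_r(\Z/d\Z)\to\{\pm1\}$ agree on elementary matrices and on the diagonal scalar matrices. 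Pinning down the characteristic subquotients $H_v$ and the action of $\Phi$ on them is where the real bookkeeping lies; the Zolotarev identity of the first step is routine once the local decomposition is in hand. Finally, I note that this route requires no reciprocity law for the number ring — which for non-maximal orders is genuinely delicate — and no blow-up or coprime-basis machinery: only the structure theorem of Section~\ref{sec:abgp} and the integer Jacobi algorithm of the present section.
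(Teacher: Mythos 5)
Your first step --- reducing the Jacobi symbol to the sign of the multiplication map $m_a$ on $R/\fb$, via the primary decomposition into local factors and the $\fm$-adic filtration on each, with Zolotarev on the graded $k$-vector-space pieces --- is correct and essentially equivalent to the paper's Lemma~\ref{lem:reduction_jacobi_to_permutation}, which instead runs Proposition~\ref{prop:exact_sign} down an abstract composition series and invokes Exercise~\ref{ex:composition_series}.

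The second step has a genuine gap, exactly where you flag the hard part. You assert that for $H\cong\prod_k\Z/n_k\Z$ with $n_{k+1}\mid n_k$, each distinct invariant-factor value $v$ of multiplicity $r$ yields a \emph{characteristic} subquotient $H_v\cong(\Z/v\Z)^r$ assembled from $dH$ and $H[d]$, and that $\sgn(\Phi)=\prod_v\leg{\det\Phi_v}{v}$ because the kernel of $\Aut(H)\to\prod_v\mathrm{GL}_r(\Z/v\Z)$ has odd order. This fails as soon as $H$ is not a $p$-group, and that case cannot be isolated without factoring $\#H$. Take $H=\Z/15\Z\times\Z/3\Z$ and $\Phi(e_1)=e_1+e_2$, $\Phi(e_2)=5e_1+e_2$, a bona fide automorphism. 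No subquotient built from $dH$ and $H[d]$ is isomorphic to $\Z/15\Z$ (the only characteristic subquotients available are copies of $(\Z/3\Z)^2$ and $\Z/5\Z$); if one nevertheless reads off the diagonal blocks of $\left(\begin{smallmatrix}1&5\\1&1\end{smallmatrix}\right)$, one gets $\Phi_{15}=1$ and $\Phi_3=1$, so your formula predicts $\sgn(\Phi)=\leg{1}{15}\leg{1}{3}=+1$. But a direct count shows $\Phi$ has exactly $5$ fixed points and partitions the remaining $40$ elements into five $8$-cycles, so $\sgn(\Phi)=-1$; moreover $\Phi$ has order $8$, so the putative kernel is not of odd order, let alone a product of groups of odd prime-power order --- indeed the map $\Phi\mapsto(\Phi_v)_v$ is not even a group homomorphism. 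The paper's Theorem~\ref{thm:compute_sign} avoids this by filtering $0=n_1H\subset n_2H\subset\cdots\subset n_mH\subset H$: the successive quotients $n_{i+1}H/n_iH$ \emph{are} characteristic and are free $\Z/(n_i/n_{i+1})\Z$-modules of rank~$i$, so the moduli fed into Theorem~\ref{thm:as_det} are the successive \emph{ratios} $n_i/n_{i+1}$ rather than the invariant factors themselves. In the example those layers are $H/3H\cong(\Z/3\Z)^2$ and $3H\cong\Z/5\Z$, giving $\leg{\det\left(\begin{smallmatrix}1&2\\1&1\end{smallmatrix}\right)}{3}\cdot\leg{1}{5}=\leg{2}{3}=-1$, the correct answer. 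Replacing your $H_v$ and $\Phi_v$ with these quotient layers would repair the argument, but at that point you are running the paper's recursion.
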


We will prove this theorem by expressing the Jacobi symbol in terms of yet another symbol.

\begin{exercise}\label{ex:composition_series}
Let \(\fp\) and \(\fb\) be ideals in a number ring \(R\) with \(\fp\) prime and take any composition series \(0=M_0\subsetneq M_1 \subsetneq \dotsm \subsetneq M_n= R/\fb\) of \(R/\fb\) as an \(R\)-module.
Show that \(l_\fp(\fb)\) equals the number of quotients \(M_{i+1}/M_i\) that are isomorphic to \(R/\fp\) as an \(R\)-module.
\end{exercise}

\subsection{Signs of automorphisms}

In this section we will consider the following symbol.

\begin{definition}
Let \(B\) be a finite abelian group and let \(\sigma\in\Aut(B)\). 
We define \((\sigma, B)\) to be the sign of \(\sigma\) as an element of the permutation group on \(B\).
\end{definition}

\begin{lemma}\label{lem:set_permutation}
Suppose \(A\) and \(C\) are sets and \(\alpha\in\Aut(A)\) and \(\gamma\in\Aut(C)\) are permutations.
Write \(\alpha \sqcup \gamma\) and \(\alpha\times\gamma\) respectively for the induced permutation on the disjoint union \(A\sqcup C\) and product \(A\times C\).
Then \(\sgn(\alpha\sqcup\gamma)=\sgn(\alpha)\cdot \sgn(\gamma)\) and \(\sgn(\alpha\times \gamma) = \sgn(\alpha)^{\# C} \cdot \sgn(\gamma)^{\#A}\).
\end{lemma}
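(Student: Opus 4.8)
Both sets $A$ and $C$ are finite here (this is the setting in which the sign of a permutation, and the exponents $\#A$, $\#C$, make sense), so $\sgn$ is the usual group homomorphism $\Sym(X)\to\{\pm1\}$. The plan is to reduce everything to two facts: multiplicativity of $\sgn$ under composition, and the observation that for a finite set $X$ with $X=Y\sqcup Z$, the inclusion $\Sym(Y)\hookrightarrow\Sym(X)$ given by $\beta\mapsto\beta\sqcup\id_Z$ preserves signs. The latter holds because $\Sym(Y)$ is generated by transpositions, a transposition of $Y$ maps to a transposition of $X$ (sign $-1$), and both $\sgn$ and $\sgn$ precomposed with this inclusion are homomorphisms into $\{\pm1\}$, hence agree since they agree on a generating set.

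For the disjoint union, I would write $\alpha\sqcup\gamma=(\alpha\sqcup\id_C)\circ(\id_A\sqcup\gamma)$ in $\Sym(A\sqcup C)$, apply multiplicativity of $\sgn$, and then apply the sign-preservation observation to each factor to conclude $\sgn(\alpha\sqcup\gamma)=\sgn(\alpha)\,\sgn(\gamma)$.

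For the product, first note $\alpha\times\gamma=(\alpha\times\id_C)\circ(\id_A\times\gamma)$, so by multiplicativity it suffices to compute $\sgn(\alpha\times\id_C)$ and $\sgn(\id_A\times\gamma)$ separately. Now $A\times C=\bigsqcup_{c\in C}(A\times\{c\})$, and $\alpha\times\id_C$ restricts on the slice $A\times\{c\}$ to a copy of $\alpha$; hence $\alpha\times\id_C$ is a $\#C$-fold disjoint union of $\alpha$ with itself, and an induction on the disjoint-union case gives $\sgn(\alpha\times\id_C)=\sgn(\alpha)^{\#C}$. Symmetrically, using $A\times C=\bigsqcup_{a\in A}(\{a\}\times C)$ gives $\sgn(\id_A\times\gamma)=\sgn(\gamma)^{\#A}$. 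Combining yields $\sgn(\alpha\times\gamma)=\sgn(\alpha)^{\#C}\,\sgn(\gamma)^{\#A}$.

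This is all elementary; the only point requiring a little care — the ``main obstacle'', such as it is — is the bookkeeping identifying $\alpha\times\id_C$ with an iterated disjoint union and invoking the first identity inductively rather than re-deriving it. As an alternative that avoids the generating-set argument entirely, one can use the orbit-count formula $\sgn(\sigma)=(-1)^{\#X-\#(X/\langle\sigma\rangle)}$ for a permutation $\sigma$ of a finite set $X$: the orbit structure of $\alpha\sqcup\gamma$ and of $\alpha\times\id_C$ is immediate, and the two identities drop out after a short arithmetic of exponents.
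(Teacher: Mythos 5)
Your proposal is correct and follows essentially the same route as the paper: both prove the product formula by factoring $\alpha\times\gamma=(\alpha\times\id_C)\circ(\id_A\times\gamma)$, identifying $\alpha\times\id_C$ with a $\#C$-fold disjoint union of copies of $\alpha$, and invoking the disjoint-union identity together with multiplicativity of $\sgn$. The extra detail you supply (the transposition argument for why $\Sym(Y)\hookrightarrow\Sym(X)$ preserves signs, and the alternative orbit-count route) is sound but not materially different from what the paper leaves implicit.
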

\begin{proof}
That \(\sgn(\alpha\sqcup\gamma)=\sgn(\alpha)\cdot \sgn(\gamma)\) follows from the fact that \(\alpha\sqcup \gamma = \alpha \gamma\) when \(\Aut(A)\) and \(\Aut(C)\) are naturally embedded in \(\Aut(A\sqcup C)\).

For the second part write \(\alpha'=\alpha\times\id_C\) and \(\gamma'=\id_A\times \gamma\) and note that \(\alpha\times\gamma=\alpha'\cdot\gamma'\).
Now \(\alpha'\) acts as \(\alpha\) on \(\#C\) disjoint copies of \(A\), hence by the previous \(\sgn(\alpha')=\sgn(\alpha)^{\#C}\). Mutatis mutandis we obtain the same for \(\gamma'\), and the lemma follows from multiplicativity of the sign.
\end{proof}

\begin{proposition}\label{prop:exact_sign}
Suppose \(B\) is a finite abelian group and \(\beta\in\Aut(B)\).  
Suppose we have an exact sequence \(0\to A\to B \to C\to 0\) such that \(\beta\) restricts to an automorphism \(\alpha\) of \(A\).
Then \(\beta\) induces an automorphism \(\gamma\) of \(C\) such that the following diagram commutes
\begin{center}
\begin{tikzcd}[row sep = 1.4em, column sep = 2em]
0 \arrow{r}& A \arrow{r}{f}\arrow{d}{\alpha} & B \arrow{r}{g} \arrow{d}{\beta} & C \arrow{r} \arrow{d}{\gamma} & 0 \\
0 \arrow{r}& A \arrow{r}{f} & B \arrow{r}{g} & C \arrow{r} & 0
\end{tikzcd}
\end{center}
and if \(\#C\) is odd we have \((\beta,B)=(\alpha,A)\cdot(\gamma,C)^{\#A}\).
\end{proposition}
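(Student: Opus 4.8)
The plan is to first extract $\gamma$ from $\beta$, and then reduce the sign computation, via a carefully chosen set-theoretic section of $g$, to the sign of a translation together with one application of Lemma~\ref{lem:set_permutation}. For the automorphism $\gamma$: since $\beta\circ f=f\circ\alpha$ with $\alpha$ surjective, $\beta$ carries $f(A)=\ker(g)$ onto itself, so it descends to an endomorphism $\gamma$ of $C\cong B/\ker(g)$ with $g\circ\beta=\gamma\circ g$, and being induced by the bijection $\beta$ it is an automorphism; commutativity of the displayed diagram is then immediate.

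Now assume $\#C$ is odd. First I would pass to a convenient section. Write $B=\bigoplus_q B_q$ for the decomposition into $q$-primary components; each $B_q$ is characteristic, so $\beta=\bigoplus_q\beta_q$, and since $\beta$ preserves $A$ we get $\beta_q(A_q)=A_q$ for $A_q=A\cap B_q$, with induced automorphism $\gamma_q$ of $C_q=B_q/A_q$. Because $\#C$ is odd we have $C_2=0$, hence $C=\bigoplus_{q\text{ odd}}C_q$. Choosing any set-theoretic section $s_q\colon C_q\to B_q$ of $g|_{B_q}$ for each odd $q$ and putting $s=\bigoplus_q s_q\colon C\to B$ yields a section of $g$, and via the bijection $B\xrightarrow{\,\sim\,}A\times C$, $(a,c)\mapsto f(a)+s(c)$, the permutation $\beta$ becomes $(a,c)\mapsto(\alpha(a)+\tau(c),\gamma(c))$, where $\tau\colon C\to A$ is defined by $\beta(s(c))=f(\tau(c))+s(\gamma(c))$. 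A short check shows $\tau(c)=\sum_{q\text{ odd}}\tau_q(c_q)$ with $\tau_q(c_q)\in A_q$, so every value of $\tau$ has odd order.

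For the sign I would factor the permutation $\beta$ of $A\times C$ as $\beta=\mu\circ(\alpha\times\gamma)$, where $(\alpha\times\gamma)(a,c)=(\alpha(a),\gamma(c))$ and $\mu(a,c)=(a+\tau(\gamma^{-1}(c)),c)$. On each fibre $A\times\{c\}$ the map $\mu$ is translation by the odd-order element $\tau(\gamma^{-1}(c))$, hence a product of odd-length cycles and therefore an even permutation, so $\sgn(\mu)=1$. By Lemma~\ref{lem:set_permutation}, $\sgn(\alpha\times\gamma)=\sgn(\alpha)^{\#C}\sgn(\gamma)^{\#A}$. Multiplying, and using once more that $\#C$ is odd, gives $(\beta,B)=\sgn(\beta)=\sgn(\alpha)^{\#C}\sgn(\gamma)^{\#A}=(\alpha,A)(\gamma,C)^{\#A}$.

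The only real content is the vanishing $\sgn(\mu)=1$: a priori the ``cocycle'' $\tau$ might take even-order values and break the identity, and indeed the statement is false without the hypothesis on $\#C$. The genuine idea is thus that $\#C$ odd forces $C_2=0$, so a section assembled from the $q$-primary pieces of $g$ automatically has $\tau$ valued in the odd part of $A$; everything else is routine bookkeeping with Lemma~\ref{lem:set_permutation} and the sign of a group translation.
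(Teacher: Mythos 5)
Your proof is correct and takes a genuinely different route from the paper's. The paper picks a section $h\colon C\to B$ that is \emph{odd}, i.e.\ $h(-c)=-h(c)$ and $h(0)=0$, which is possible because $\#C$ odd lets one write $C=\{0\}\sqcup D\sqcup(-D)$; the resulting correction permutation $\sigma=(\alpha\times\gamma)^{-1}\beta'$ then commutes with $x\mapsto -x$, so its restrictions to $A\times\{d\}$ and $A\times\{-d\}$ are conjugate and their signs cancel in pairs, while on $A\times\{0\}$ it is the identity. You instead pick a section compatible with the $q$-primary decompositions of $B$ and $C$; since $C_2=0$, the cocycle $\tau$ lands in the odd part of $A$, and the correction permutation $\mu$ is a disjoint union of translations by odd-order elements, each a product of odd-length cycles and hence even. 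Both arguments correctly isolate exactly how the hypothesis on $\#C$ enters. The paper's route avoids invoking the primary decomposition and is arguably shorter, while yours pins down the correction permutation explicitly as a translation and reduces the key vanishing to the concrete fact that translation by an odd-order element is even — a slightly more computational but equally valid justification.

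One very small notational remark: in the displayed bijection you write ``the permutation $\beta$ becomes'' and then later factor ``the permutation $\beta$''; it would be cleaner to consistently use $\beta'$ (or similar) for the conjugate of $\beta$ under the bijection $A\times C\xrightarrow{\sim}B$, as the paper does, since it is $\beta'$, not $\beta$ itself, that you factor as $\mu\circ(\alpha\times\gamma)$. This is purely cosmetic and does not affect correctness.
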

\begin{proof}
The map \(\gamma\) exists by a diagram chasing argument.
Since \(\#C\) is odd we may write \(C=\{0\}\sqcup D \sqcup(-D)\) for some subset \(D\subseteq C\).
Choosing any section \(D\to B\) of \(g\) (not necessarily a group homomorphism!), we may extend it to a section \(h:C\to B\) in such a way that \(h(-c)=-h(c)\) and \(h(0)=0\). 
Now the maps \(f\) and \(h\) together give a bijection of sets \(A\times C\to B\). 
Let \(\beta'\) be the induced action of \(\beta\) on \(A\times C\).
By Lemma~\ref{lem:set_permutation} we have that \((\alpha\times\gamma,A\times C)=(\alpha,A)^{\#C}\cdot (\gamma,C)^{\#A}\), hence to prove the proposition it suffices to show that \(\sigma = (\alpha\times\gamma)^{-1} \cdot \beta'\) is an even permutation. 
For all \(d\in D\) the action of \(\sigma\) restricts to \(A\times\{d\}\).
Note that \(\sigma\) commutes with \(-1\), hence the action of \(\sigma\) on \(A\times\{-d\}\) is isomorphic to the action on \(A\times\{d\}\).
Hence the restriction of \(\sigma\) to \(A\times (C\setminus\{0\})\) is even.
Finally, \(\sigma\) is the identity on \(A\times\{0\}\), so we conclude \(\sigma\) is even, as was to be shown.
\end{proof}

The exact sequence \(0\to 2\Z/4\Z \to \Z/4\Z \to \Z/2\Z \to 0\) resists application of Proposition~\ref{prop:exact_sign}.
If we choose the non-trivial automorphism \(\sigma\) given by \(x\mapsto -x\) on \(\Z/4\Z\) we see that its sign is \(-1\), while the induced maps on the other terms are trivial.
Hence \((\sigma,\Z/4\Z)\) is not an \(\F_2\)-linear combination of \((\sigma,2\Z/4\Z)\) and \((\sigma,\Z/2\Z)\).

\begin{exercise}\label{ex:2_power_cyclic}
Show that for \(k\in\Z_{\geq 2}\) and \(a\in(\Z/2^k\Z)^*\) we have 
\[(x\mapsto ax, \Z/2^k\Z)=(-1)^{\frac{a-1}{2}}.\] 
\emph{Hint:} Write \(\Z/2^k\Z = (\Z/2^k\Z)^* \sqcup (2\Z/2^k\Z)\) and show \((x\mapsto ax, (\Z/2^k\Z)^*)=-1\) if and only if \(a\) generates \((\Z/2^k\Z)^*\).
\end{exercise}

\begin{exercise}\label{ex:2_groups}
Suppose \(B\) is a non-trivial finite abelian group. 
For \(b\in B\) write \(\lambda_b:B\to B\) for the map \(x\mapsto b+x\). 
\begin{enumex}
\item  Show that \(\sgn(\lambda_b)=-1\) if and only if \(\#(B/\langle b\rangle)\) is odd and \(\# \langle b\rangle\) is even.
\end{enumex}
Let \(\beta\in\Aut(B)\) and suppose both \(B\) and \(\beta\) have \(2\)-power order.
\begin{enumex}[resume]
\item Show that there exists a subgroup \(C\subseteq B\) such that \(\beta(C) = C\) and \(\#(B/C)=2\).
\item Let \(b\in B\) such that \(B=C \cup (b+C)\) with \(C\) as in (b). Show that \((\beta,B)=(\lambda_{\beta(b)-b},C)\), and that \((\beta,B)=-1\) if and only if \(C=\langle \beta(b) -b\rangle\) and \(4 \mid \# B\).
\end{enumex}
We now drop the assumption that \(\beta\) has \(2\)-power order.
\begin{enumex}[resume]
\item Suppose \((\beta,B)=-1\). Show that \(B\cong\Z/2^k\Z\) for some \(k\geq 2\) or \(B\cong(\Z/2\Z)^2\).
\item Exhibit a polynomial-time algorithm that, given a finite abelian group \(A\) and \(\alpha\in\Aut(A)\) such that \(2\mid \#A\), computes \((\alpha,A)\).
\end{enumex}
\end{exercise}

\subsection{Computing signs of group automorphisms}

In this section we will prove we can compute the sign of automorphisms of finite abelian groups in polynomial time.
We will need an elementary lemma about determinants that mirrors Proposition~\ref{prop:exact_sign}.

\begin{lemma}\label{lem:mult_det}
Let \(\F\) be a field and let \(0\to A \to B \to C \to 0\) be an exact sequence of finite dimensional \(\F\)-vector spaces together with automorphism \(\alpha\), \(\beta\) and \(\gamma\) such that the diagram \begin{center}
\begin{tikzcd}[row sep = 1.4em, column sep = 2em]
0 \arrow{r}& A \arrow{r}\arrow{d}{\alpha} & B \arrow{r} \arrow{d}{\beta} & C \arrow{r} \arrow{d}{\gamma} & 0 \\
0 \arrow{r}& A \arrow{r} & B \arrow{r} & C \arrow{r} & 0
\end{tikzcd}
\end{center}
commutes. Then \(\det(\beta)=\det(\alpha)\cdot\det(\gamma)\). \qed
\end{lemma}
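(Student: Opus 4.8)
The plan is to reduce the statement to the standard fact that the determinant of a block-triangular matrix is the product of the determinants of its diagonal blocks. First I would use that we are working over a field, so the surjection $g\colon B\to C$ admits an $\F$-linear section $s\colon C\to B$ (i.e.\ $g\circ s=\id_C$), since $C$ is free. Identifying $A$ with the subspace $f(A)\subseteq B$, this gives a direct sum decomposition $B=f(A)\oplus s(C)$ of $\F$-vector spaces.

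Next I would pick an $\F$-basis $(a_1,\dots,a_k)$ of $A$ and an $\F$-basis $(c_1,\dots,c_l)$ of $C$, and assemble the basis $\mathcal{B}=\bigl(f(a_1),\dots,f(a_k),\,s(c_1),\dots,s(c_l)\bigr)$ of $B$. I then read off the matrix of $\beta$ with respect to $\mathcal{B}$. Commutativity of the left-hand square gives $\beta(f(a_i))=f(\alpha(a_i))$, so $\beta$ preserves $f(A)$, its restriction to $f(A)$ has matrix $[\alpha]$, and the lower-left block of $[\beta]$ vanishes. Commutativity of the right-hand square gives $g(\beta(s(c_j)))=\gamma(g(s(c_j)))=\gamma(c_j)$, hence $\beta(s(c_j))-s(\gamma(c_j))\in\ker g=f(A)$; so, modulo $f(A)$, the images $\beta(s(c_j))$ have coordinates given by $[\gamma]$, i.e.\ the bottom-right block of $[\beta]$ is $[\gamma]$ (while the top-right block is some possibly nonzero matrix that is irrelevant).

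Therefore $[\beta]$ is block upper triangular with diagonal blocks $[\alpha]$ and $[\gamma]$, whence $\det(\beta)=\det[\beta]=\det[\alpha]\cdot\det[\gamma]=\det(\alpha)\cdot\det(\gamma)$. I do not expect any genuine obstacle; the one point worth a sentence is that the section $s$ need not be compatible with $\alpha,\beta,\gamma$ in any way, but this is harmless since $s$ is used only to produce a basis of $B$ in which $f(A)$ is a coordinate subspace.
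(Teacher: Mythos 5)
Your proof is correct and takes essentially the same approach the paper intends: the paper marks this lemma \(\qed\) without proof and, in the remark immediately following it, observes that the statement is just the block upper triangular determinant formula \(\det\left(\begin{smallmatrix} A & P \\ 0 & C \end{smallmatrix}\right)=\det(A)\det(C)\). Your argument fills in the details cleanly---choosing a linear section of \(g\) to build a basis of \(B\) adapted to the filtration, then reading off the block structure of \([\beta]\)---so there is nothing to add.
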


\noindent In terms of matrices, the above lemma simply states that for square matrices \(A\) and \(C\) and a matrix \(P\) that fits, the block matrix \(B=\left(\begin{smallmatrix} A & P \\ 0 & C \end{smallmatrix}\right)\) satisfies \(\det(B)=\det(A)\cdot\det(C)\).

\begin{theorem}[Proposition~2.3 in \cite{ComputeJacobi}]\label{thm:as_det}
Suppose \(b\in\Z_{>0}\) is odd and \(B\) is a free \((\Z/b\Z)\)-module of finite rank. Then for all \(\sigma\in\Aut(B)\) we have \((\sigma,B)=\leg{\det(\sigma)}{b}\).
\end{theorem}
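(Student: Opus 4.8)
The plan is to reduce to the case where $b$ is a prime power via the Chinese remainder theorem, and then to reduce from a prime power to a prime using a filtration argument based on Proposition~\ref{prop:exact_sign}. First I would observe that both sides of the claimed identity are multiplicative in $b$: if $b = b_1 b_2$ with $b_1, b_2$ coprime and odd, then $\Z/b\Z \cong \Z/b_1\Z \times \Z/b_2\Z$, so $B \cong B_1 \times B_2$ where $B_i = B \otimes \Z/b_i\Z$ is free over $\Z/b_i\Z$ of the same rank, and $\sigma$ decomposes as $\sigma_1 \times \sigma_2$. By Lemma~\ref{lem:set_permutation} (with both $\#B_1$ and $\#B_2$ odd, so the exponents are irrelevant mod $2$) we get $(\sigma, B) = (\sigma_1, B_1)(\sigma_2, B_2)$, while $\leg{\det(\sigma)}{b} = \leg{\det(\sigma_1)}{b_1}\leg{\det(\sigma_2)}{b_2}$ by Exercise~\ref{ex:canon_trace_det} (the determinant commutes with the product decomposition) and multiplicativity of the Jacobi symbol. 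So it suffices to treat $b = p^k$ for an odd prime $p$.

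For $b = p^k$, I would induct on $k$. The base case $k=1$: here $B$ is an $\F_p$-vector space and $\sigma$ a linear automorphism; this is exactly Theorem~\ref{thm:as_det} for a field, which I expect is the genuinely nontrivial input and which I would either cite or prove separately (the standard argument: $\Aut_{\F_p}(B) = \GL_n(\F_p)$ is generated by transvections and one diagonal matrix $\mathrm{diag}(g,1,\dots,1)$ with $g$ a generator of $\F_p^*$; a transvection fixes a hyperplane and permutes the remaining cosets in cycles of length $p$, which is odd, hence is an even permutation and has determinant $1$; the diagonal generator acts on $B \setminus \ker$ by multiplication, and on the $\F_p$-line it is $x \mapsto gx$, a single $(p-1)$-cycle on $\F_p^*$, which is an odd permutation since $p-1$ is even, matching $\leg{g}{p} = -1$). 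For the inductive step, consider the exact sequence
\[
0 \to pB \to B \to B/pB \to 0.
\]
Since $B$ is free over $\Z/p^k\Z$, the submodule $pB$ is free over $\Z/p^{k-1}\Z$ of the same rank $n$, and $B/pB$ is free over $\F_p$ of rank $n$; moreover $\sigma$ restricts to an automorphism $\alpha$ of $pB$ and induces an automorphism $\gamma$ of $B/pB$, with the diagram of Proposition~\ref{prop:exact_sign} commuting. Because $\#(B/pB) = p^n$ is odd, that proposition gives $(\sigma, B) = (\alpha, pB) \cdot (\gamma, B/pB)^{\#(pB)}$; as $\#(pB) = p^{(k-1)n}$ is odd, the exponent drops out modulo $2$ and $(\sigma, B) = (\alpha, pB)(\gamma, B/pB)$.

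Now I would apply the inductive hypothesis to $(\alpha, pB)$ — giving $(\alpha, pB) = \leg{\det_{\Z/p^{k-1}\Z}(\alpha)}{p^{k-1}} = \leg{\det(\alpha)}{p}^{k-1}$ — and the base case to $(\gamma, B/pB)$, giving $\leg{\det_{\F_p}(\gamma)}{p}$. It remains to reconcile the determinants: I claim $\det(\alpha) \bmod p$, $\det(\gamma)$, and $\det_{\Z/p^k\Z}(\sigma) \bmod p$ all agree in $\F_p$. This is because one can choose an ordered basis of $B$ whose first $n$ elements... actually, more simply: pick a basis $e_1,\dots,e_n$ of the free module $B$, so $pe_1,\dots,pe_n$ is a basis of $pB$ and $\bar e_1,\dots,\bar e_n$ a basis of $B/pB$; writing $\sigma(e_j) = \sum_i m_{ij} e_j$ with $M = (m_{ij}) \in \Mat_n(\Z/p^k\Z)$, the matrix of $\alpha$ on $pB$ is the reduction $M \bmod p^{k-1}$ and the matrix of $\gamma$ on $B/pB$ is $M \bmod p$. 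Hence $\det(\gamma) = \det(M) \bmod p$ and $\det(\alpha) \equiv \det(M) \bmod p^{k-1}$, so $\det(\alpha) \equiv \det(M) \equiv \det(\gamma) \pmod p$, and $\leg{\det(\sigma)}{p^k} = \leg{\det(M)}{p}^k = \leg{\det(\alpha)}{p}^{k-1} \leg{\det(\gamma)}{p} = (\alpha, pB)(\gamma, B/pB) = (\sigma, B)$, completing the induction. The main obstacle is the $\F_p$ base case (Theorem~\ref{thm:as_det} for fields); everything else is bookkeeping, and the one subtlety to be careful about is that all the "stray exponents" produced by Lemma~\ref{lem:set_permutation} and Proposition~\ref{prop:exact_sign} are powers of odd numbers and therefore vanish modulo $2$ — which is precisely why the hypothesis that $b$ is odd is used.
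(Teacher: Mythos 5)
Your proof is correct but takes a genuinely different route from the paper's. For the prime case, the paper first handles rank $1$ directly, then inducts on the rank: a triangular $\sigma$ preserves a proper subspace, so Proposition~\ref{prop:exact_sign} together with Lemma~\ref{lem:mult_det} reduce to smaller rank, and the general case follows because every element of $\GL_n(\F_p)$ is a product of upper and lower triangular matrices. You instead invoke the sharper fact that $\GL_n(\F_p)$ is generated by transvections together with the single diagonal matrix $\mathrm{diag}(g,1,\dots,1)$, $g$ a generator of $\F_p^*$, and compute the sign of each generator by hand; this collapses the rank induction into one orbit count (all transvection orbits have length $p$, the diagonal generator is a product of $p^{n-1}$ cycles of length $p-1$), at the cost of needing the generation fact and that orbit bookkeeping. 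For general $b$ the paper inducts on the number of divisors, taking $A = dB$ for an arbitrary proper divisor $d$; you split this into a CRT reduction to prime powers followed by an induction on the exponent $k$ with $A = pB$, which is the same filtration applied in a more rigid order. The determinant compatibility argument (a basis $e_1,\dots,e_n$ of $B$ induces bases of $pB$ and $B/pB$, and the matrix of $\sigma$ reduces correctly) is common to both, and both proofs lean on the same key observation you flag at the end: every exponent that Lemma~\ref{lem:set_permutation} and Proposition~\ref{prop:exact_sign} introduce is a power of an odd number and therefore vanishes modulo $2$, which is exactly where the hypothesis that $b$ is odd enters.
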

\begin{proof}
For \(B=0\) the theorem clearly holds, so assume \(b\neq 1\) and that \(B\) has rank at least 1.

First suppose \(b\) is prime and \(B\) has rank \(1\). 
Then \(\sigma\) is given by multiplication with \(a\in(\Z/b\Z)^*\).
If \(a\) generates \((\Z/b\Z)^*\), then the corresponding permutation fixes \(0\) and acts transitively on the \(b-1\) remaining elements of \(B\), so that \((\sigma,B)=-1=\leg{a}{b}=\leg{\det(\sigma)}{b}\).
By multiplicativity of both symbols in \(\sigma\), this also proves the case for prime \(b\) where \(a\) is not a generator.

Now we prove using induction the case for \(\rk B \geq 2\).
Suppose \(\sigma\) is given by an upper or lower triangular matrix. 
Then there exists a subspace \(0\subsetneq A \subsetneq B\) such that \(\sigma\) restricts to \(A\).
Hence we have a split exact sequence \(0\to A\to B\to C\to0\) with \(C=B/A\) and let \(\alpha\) and \(\gamma\) be the induced maps on \(A\) and \(C\) respectively.
Then by Proposition~\ref{prop:exact_sign}, the induction hypothesis and Lemma~\ref{lem:mult_det} we get 
\[(\sigma,B)=(\alpha,A)\cdot(\gamma,C) = \leg{\det(\alpha)}{b}\cdot \leg{\det(\gamma)}{b} = \leg{\det(\alpha)\det(\gamma)}{b} = \leg{\det(\sigma)}{b}.\]
Since every matrix can be written as a product of upper and lower triangular matrices, the case for general \(\alpha\) follows.

Now we prove the theorem for general \(b\) with induction on the number of divisors of \(b\).
We have just proven the induction base with \(b\) prime.
For \(b\) not prime we may take a divisor \(1 < d < b\) of \(b\).
Let \(A=dB\) and \(C=B/A\) and note that they are free modules over \(\Z/\frac{b}{d}\Z\) and \(\Z/d\Z\) respectively.
Moreover, \(\sigma\) induces maps \(\alpha\) and \(\gamma\) on \(A\) and \(C\) respectively that make the usual diagram commute.
It follows from the definition of the determinant that \(\det(\alpha)\equiv \det(\sigma) \bmod \frac{b}{d}\) and \(\det(\gamma)\equiv\det(\sigma) \bmod d\).
Then
\[(\sigma,B)=(\alpha,A)(\gamma,C) = \leg{\det(\alpha)}{b/d} \leg{\det(\gamma)}{d} = \leg{\det(\sigma)}{b/d} \leg{\det(\sigma)}{d}=\leg{\det(\sigma)}{b}.\]
The theorem now follows by induction.
\end{proof}

\begin{theorem}[cf.\ Proposition~2.4 in \cite{ComputeJacobi}]\label{thm:compute_sign}
There exists a polynomial-time algorithm that, given an finite abelian group \(B\) and an automorphism \(\sigma\) of \(B\), computes the symbol \((\sigma, B)\).
\end{theorem}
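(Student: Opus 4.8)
The strategy is to reduce the general case to the case of a free $(\Z/b\Z)$-module handled by Theorem~\ref{thm:as_det}, by splitting off the $2$-part of $B$. Write $B=B_2\times B'$, where $B_2$ is the $2$-primary component and $B'$ has odd order. This decomposition is canonical, and by Exercise~\ref{ex:group_exponent} (or Theorem~\ref{thm:fundament_fingen}) we may compute it, together with the projections, in polynomial time. Since $\sigma$ is an automorphism it preserves each characteristic subgroup, so $\sigma$ restricts to automorphisms $\sigma_2$ of $B_2$ and $\sigma'$ of $B'$, and we may compute these restrictions. As $B=B_2\sqcup\cdots$ is (as a set) a product of $B_2$ and $B'$ with $\sigma$ acting as $\sigma_2\times\sigma'$, Lemma~\ref{lem:set_permutation} gives
\[(\sigma,B)=(\sigma_2,B_2)^{\#B'}\cdot(\sigma',B')^{\#B_2}.\]
Here $\#B_2$ is a power of $2$, hence even unless $B_2=0$, and $\#B'$ is odd; so $(\sigma,B)=(\sigma_2,B_2)\cdot(\sigma',B')^{\#B_2}$, and the second factor is $1$ whenever $B_2\neq0$.

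So it remains to compute $(\sigma_2,B_2)$ when $2\mid\#B$, and $(\sigma',B')$ when $B$ has odd order. The odd-order case is Theorem~\ref{thm:as_det} once we exhibit $B'$ as a free module over $\Z/b\Z$ for suitable odd $b$: take $b=\exp(B')$ (computable by Exercise~\ref{ex:group_exponent}), write $B'\cong\prod_{k}(\Z/n_k\Z)$ with $n_m\mid\dotsm\mid n_1=b$ using Theorem~\ref{thm:fundament_fingen}, and observe $(\sigma',B')=(\sigma'\times\id,B'\times(\Z/b\Z)^{m})\cdot(\id,\ldots)^{-1}=\ldots$; more directly, by Lemma~\ref{lem:set_permutation} applied to $B'$ and a complementary free module, $(\sigma',B')$ equals the sign computed on $B'\times F$ with $B'\times F$ free over $\Z/b\Z$, and since padding by the identity on $F$ multiplies the sign by a square (the action on copies of $F$ is the identity), one reduces cleanly to the free case and then quadratic reciprocity via Theorem~\ref{thm:compute_jacobi}. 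The $2$-primary case $(\sigma_2,B_2)$ is exactly what Exercise~\ref{ex:2_groups}.e provides a polynomial-time algorithm for: given a finite abelian group $A$ and $\alpha\in\Aut(A)$ with $2\mid\#A$, compute $(\alpha,A)$. Combining these, the algorithm is: decompose $B=B_2\times B'$; if $B_2\neq0$ return $(\sigma_2,B_2)$ via Exercise~\ref{ex:2_groups}.e; otherwise compute $\det(\sigma)\bmod\exp(B)$ via Exercise~\ref{ex:square_multiply} and return the Jacobi symbol $\leg{\det(\sigma)}{\exp(B)}$ via Theorem~\ref{thm:compute_jacobi}, after writing $B$ as a free module over $\Z/\exp(B)\Z$.

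The main obstacle is not any single hard computation but assembling the pieces correctly: one must verify that the $2$-part/odd-part splitting is canonical and $\sigma$-equivariant, that the reduction of the odd case to an honest free $(\Z/b\Z)$-module does not change the symbol (the padding-by-identity argument: extra direct summands on which $\sigma$ acts trivially contribute a sign equal to a perfect power and, because the complementary copies carry the identity permutation, contribute $+1$), and that the determinant $\det(\sigma)$ needed in Theorem~\ref{thm:as_det} can be extracted in polynomial time — for this one picks a matrix representing $\sigma$ with respect to the cyclic decomposition and computes its determinant modulo $b$ using Exercise~\ref{ex:square_multiply}.d. All of these are routine given the tools already developed in Sections~\ref{sec:abgp} and~\ref{sec:symbols}; the content of the theorem is precisely that all the bookkeeping can be done within polynomial time, which follows since each invoked subroutine is polynomial-time and only a bounded number of them are called.
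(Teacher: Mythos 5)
The splitting of $B$ into its $2$-primary part $B_2$ and odd part $B'$ is fine, and your observation that $(\sigma,B)=(\sigma_2,B_2)$ when $B_2\neq 0$ is correct (and the paper simply cites Exercise~\ref{ex:2_groups} for the even-order case, without bothering to isolate $B_2$). But the odd-order case has a genuine gap. You claim to reduce it to Theorem~\ref{thm:as_det} by finding a ``complementary free module'' $F$ such that $B'\times F$ is free over $\Z/b\Z$ with $b=\exp(B')$. Such an $F$ does not exist in general. If $B'\times F\cong(\Z/b\Z)^j$, then for every prime $p\mid b$ all $p$-power elementary divisors of $B'\times F$ equal $p^{v_p(b)}$, hence so do those of $B'$ alone — i.e.\ $B'$ was already free. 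For example $B'=\Z/p\Z\times\Z/p^2\Z$ has exponent $p^2$ and admits no complement $F$ making $B'\times F$ free over $\Z/p^2\Z$. For the same reason your final step, ``after writing $B$ as a free module over $\Z/\exp(B)\Z$'', does not make sense, and $\det(\sigma)$ is not defined when $B$ is not free.

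The fix, and what the paper actually does, is to filter rather than complement. Write $B\cong\prod_{k=1}^m(\Z/n_k\Z)$ with $n_m\mid\dotsm\mid n_1$ via Theorem~\ref{thm:fundament_fingen}. Then $C=B/n_mB\cong(\Z/n_m\Z)^m$ is an honest free $(\Z/n_m\Z)$-module (precisely because $n_m$ divides every $n_k$), so Theorem~\ref{thm:as_det} and Theorem~\ref{thm:compute_jacobi} compute $(\sigma,C)$. The subgroup $A=n_mB\cong\prod_{k<m}\Z/(n_k/n_m)\Z$ is $\sigma$-invariant and has strictly fewer cyclic factors, so recurse on it; Proposition~\ref{prop:exact_sign} then gives $(\sigma,B)=(\sigma|_A,A)\cdot(\sigma|_C,C)^{\#A}$ (and $\#A$ is odd). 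The recursion depth is at most $m$, which is polynomially bounded. This peeling-off-a-free-quotient step is the key idea your proof is missing.
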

\begin{proof}
If \(2\mid \#B\) we have Exercise~\ref{ex:2_groups}, so suppose \(B\) has odd order.
Using Theorem~\ref{thm:fundament_fingen}, write \(B\) as a product \(\prod_{k=1}^m (\Z/n_k\Z)\) of non-trivial cyclic groups such that \(n_{j} \mid n_k \) for all \(j > k\). 
Note that \(B\) fits in an exact sequence \(0\to A\to B\to C\to 0\) with \(A=n_m B\) and \(C=B/A\).
We have \(\sigma(A)=A\) and hence \(\sigma\) induces maps \(A\to A\) and \(C\to C\).
Then
\[ A=\prod_{k=1}^m  (n_m\Z/n_k\Z) \cong \prod_{k=1}^{m-1}\bigg(\Z\bigg/\frac{n_k}{n_m}\Z\bigg) \quad\text{and}\quad C\cong (\Z/n_m\Z)^{m}.\]
Since \(C\) is a free \((\Z/n_m\Z)\)-module, we may compute \((\sigma,C)\) using Theorem~\ref{thm:as_det} in polynomial time.
Note that \(A\) is a product of strictly fewer cyclic groups than \(B\), as well as having smaller order.
While \(A\neq 0\) we compute \((\sigma,A)\) recursively and apply Proposition~\ref{prop:exact_sign} to compute \((\sigma,B)\).
Since \(m\) is polynomially bounded in the length of the input, there are only polynomially many recursive steps and the algorithm runs in polynomial time.
\end{proof}

\subsection{Computing Jacobi symbols in number rings}

To compute Jacobi symbols in polynomial time it now suffices to reduce to Theorem~\ref{thm:compute_sign}.

\begin{lemma}\label{lem:reduction_jacobi_to_permutation}
Suppose \(\fb\) is an ideal in a number ring \(R\) odd index \((R:\fb)\), and suppose \(a\in R\) satisfies \(aR+\fb=R\).
Then \(\leg{a}{\fb}=( \alpha, R/\fb )\) where the map \(\alpha\) is multiplication by \(a\). 
\end{lemma}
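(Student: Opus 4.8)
The plan is to compute $(\alpha, R/\fb)$ by peeling off a composition series of $R/\fb$ as an $R$-module, applying Proposition~\ref{prop:exact_sign} at each step, and recognising each simple quotient's contribution as a Legendre symbol. First I would record the preliminaries: from $aR + \fb = R$ we obtain $ax \equiv 1 \bmod \fb$ for some $x\in R$, so $a$ is a unit modulo $\fb$ and $\alpha$ is a genuine automorphism of the finite group $R/\fb$; likewise $aR + \fp = R$ for every maximal $\fp \supseteq \fb$, so each $\leg{a}{\fp}$ is defined (these $\fp$ have index dividing $(R:\fb)$, hence odd). Then choose a composition series $0 = M_0 \subsetneq M_1 \subsetneq \dots \subsetneq M_n = R/\fb$ of $R/\fb$ as an $R$-module. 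Each $M_{i+1}/M_i$ is a simple $R$-module killed by $\fb$, hence isomorphic to $R/\fp_i$ for a maximal ideal $\fp_i \supseteq \fb$. As $\alpha$ is $R$-linear it stabilises each $M_i$ and, being bijective, restricts to an automorphism of $M_i$ and induces an automorphism $\gamma_i$ of $M_{i+1}/M_i$; transported along the isomorphism $M_{i+1}/M_i \cong R/\fp_i$, the map $\gamma_i$ is multiplication by $a \bmod \fp_i$.

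Next I would run the filtration. Every $M_i$ and every quotient $M_{i+1}/M_i$ has order dividing $(R:\fb)$, hence odd. Applying Proposition~\ref{prop:exact_sign} to $0 \to M_{i-1} \to M_i \to M_i/M_{i-1} \to 0$, with $\beta$ the restriction of $\alpha$, gives
\[ (\alpha|_{M_i}, M_i) = (\alpha|_{M_{i-1}}, M_{i-1}) \cdot (\gamma_{i-1}, M_i/M_{i-1})^{\#M_{i-1}}, \]
and since $\#M_{i-1}$ is odd the exponent may be dropped from the $\pm 1$-valued second factor. Starting from $(\alpha|_{M_0}, M_0) = 1$ and iterating up to $i = n$, this yields $(\alpha, R/\fb) = \prod_{i=0}^{n-1} (\gamma_i, M_{i+1}/M_i)$.

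Finally comes the single-prime computation, which is the only non-formal point. For a finite field $F = R/\fp$ of odd order $q$ and $c \in F^*$, the maps $c \mapsto (x \mapsto cx,\, F)$ and the quadratic character $c \mapsto c^{(q-1)/2}$ are both homomorphisms $F^* \to \{\pm1\}$, and $F^*$ is cyclic, so it suffices to check they agree on a generator $g$: multiplication by $g$ fixes $0$ and is a single $(q-1)$-cycle on $F^*$, of sign $(-1)^{q-2} = -1$ because $q$ is odd, while $\leg{g}{\fp} = -1$ since $g$ is a non-square. Hence $(x \mapsto cx,\, F) = \leg{c}{\fp}$ for every $c$, so $(\gamma_i, M_{i+1}/M_i) = \leg{a}{\fp_i}$. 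Grouping the factors by the prime that occurs and invoking Exercise~\ref{ex:composition_series} to count them,
\[ (\alpha, R/\fb) = \prod_{i=0}^{n-1} \leg{a}{\fp_i} = \prod_{\textup{max.\ }\fp \subset R} \leg{a}{\fp}^{l_\fp(\fb)} = \leg{a}{\fb}, \]
as desired. One could instead first split $R/\fb \cong \prod_{\fp \supseteq \fb} R_\fp/\fb_\fp$ and use Lemma~\ref{lem:set_permutation}, but a composition series of each local factor is still needed, so the direct argument above is more economical; in either route the substantive input is just the finite-field analogue of Zolotarev's lemma, everything else being bookkeeping with Proposition~\ref{prop:exact_sign} and the oddness of $(R:\fb)$.
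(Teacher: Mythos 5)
Your proof is correct and follows essentially the same route as the paper's: compute the single-prime case by checking the two $\{\pm 1\}$-valued homomorphisms $(R/\fp)^*\to\{\pm1\}$ on a generator, then reduce the general case to the prime case via a composition series, Proposition~\ref{prop:exact_sign}, and Exercise~\ref{ex:composition_series}. The only cosmetic difference is the order (you handle the filtration before the local computation; the paper does the reverse) and your slightly more explicit bookkeeping of the odd exponents.
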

\begin{proof}
First suppose \(\fb\) is a prime ideal and choose a generator \(a\) of  \((R/\fb)^*\).
Then \(\leg{a}{\fb}=-1\).
As \(a\) acts transitively on an even number of elements \((R/\fb)\setminus\{0\}\), we conclude that \((x\mapsto ax,R/\fb)=-1=\leg{a}{\fb}\).
The case for general \(a\) follows from multiplicativity of both symbols.

Now consider the case of general \(\fb\).
Choose some composition series \(0=M_0\subsetneq M_1 \subsetneq \dotsm \subsetneq M_n = R/\fb\) of \(R/\fb\) as an \(R\)-module.
Consider the exact sequence \(0\to M_i \to M_{i+1} \to M_{i+1}/M_i \to 0\) and note that \(M_{i+1}/M_{i} \cong R/\fp\) as \(R\)-modules for some prime ideal \(\fp_i\) of \(R\).
By applying Proposition~\ref{prop:exact_sign} inductively we obtain \((\alpha,B) = \prod_{i=1}^n (\alpha,M_{i+1}/M_i) = \prod_{i=1}^n \leg{\alpha}{\fp_i}\).
It follows from Exercise~\ref{ex:composition_series} that the latter equals \(\smash{\leg{\alpha}{\fb}}\).
\end{proof}

\begin{proof}[Proof of Theorem~\ref{thm:compute_jacobi_number_field}]
Compute \(R/\fb\) and the map \(\alpha:R/\fb\to R/\fb\) given by \(x\mapsto a x\).
Using Theorem~\ref{thm:compute_sign} compute \((\alpha,R/\fb)\), which equals \(\leg{a}{\fb}\) by Lemma~\ref{lem:reduction_jacobi_to_permutation}.
\end{proof}

\bibspread
\bibliography{citations}

\end{document}